 \newtheorem{The}{Theorem}[section]
 \newtheorem{Cor}[The]{Corollary}
 \newtheorem{Lem}[The]{Lemma}
 \newtheorem{Pro}[The]{Proposition}
 \theoremstyle{definition}
 \newtheorem{defn}[The]{Definition}
 \theoremstyle{remark}
 \newtheorem{Rem}[The]{Remark}
 \numberwithin{equation}{section}
\begin{document}
\title[global viscosity solutions]{global viscosity solutions for eikonal equations on class A Lorentzian 2-tori}
\author{Liang Jin \and Xiaojun Cui}
\address{Department of Mathematics, Nanjing University,
Nanjing 210093, China}
\email{jinliangbruce@gmail.com}
\address{Department of Mathematics, Nanjing University,
Nanjing 210093, China}
\email{xcui@nju.edu.cn}
\subjclass[2010]{49L25, 53B30,53C22,70H20,70G75}
\date{\today}
\keywords{}

\thanks{Both authors are supported by the National Natural Science Foundation of China (Grants 11271181, 11571166), the Project Funded by the Priority Academic Program Development of Jiangsu Higher Education Institutions (PAPD) and the Fundamental Research Funds for the Central Universities}
\maketitle

\begin{abstract}
  On the Abelian cover $(\mathbb{R}^{2},g)$ of a class A Lorentzian 2-torus $(\mathbb{T}^{2},g)$, we showed the existence of global viscosity solutions to the eikonal equation
  $$
  g(\nabla u,\nabla u)=-1
  $$ associated to those homologies in the interior of the homology cone. Some other related dynamical properties are also considered. As an application of the main results, we study the differentiability of the unit sphere of the stable time separation associated to the class A Lorentzian 2-torus.
\end{abstract}

\section{Introduction and a brief survey of preceding works}
Aubry-Mather theory for geodesic flows on class A Lorentzian 2-tori was established in E. Scheling's diploma thesis \cite{Sc}. More recently, higher dimensional generalizations to class A spacetimes were obtained by S. Suhr \cite{Su1},\cite{Su2}, where spacetime means a time-oriented $C^{\infty}$ Lorentzian manifold. Motivated by the relationship between Aubry-Mather theory and weak KAM theory in Tonelli Lagrangian systems, one would like to investigate an analogy of weak KAM theory in the setting of Lorentzian geodesic flow. However, due to the non-positive-definiteness of the Lorentzian metric, problems seem to be much more complicated. Fortunately, based on the topological properties deduced by the fact that the dimension of the configuration space we considered in this article is of 2, we could obtain the existence of global viscosity solutions to the eikonal equation on the Abelian cover associated to every homology in the interior of the homology cone. Moreover, the viscosity solutions we obtained in the present article present some properties of weak KAM type just as in the classical, namely positive-definite, case.

One might be interested in the definition and basic properties of class A spacetimes, since class A spacetimes are proved to be suitable settings for developing variational methods for  geodesic flows on Lorentzian manifolds. We strongly recommend \cite{Su1} as a considerable comprehensive reference on this topic, and our reformulation could be seen as a brief survey of this paper. Here, some notations must be clarified.

Let us consider a closed and connected spacetime $(M,g)$. For $p\in M$, it will be called an event from the viewpoint of general relativity, or be called a point from the viewpoint of mathematics. The wording we shall use will depend on the context. As usual, $\pi:\overline{M}\rightarrow M$ is the Abelian cover of $M$. We concentrate on two metrics on $M$: one is of course the Lorentzian metric $g$, the other is an auxiliary complete Riemannian metric $g_{R}$. We shall denote the lifts of $g$ and $g_{R}$ to $\overline{M}$ and all other objects associated with them by the same letter as on $M$ when there is no confusion. Let us define these objects one by one: the length functionals associated to $g$ and $g_{R}$ are denoted by $L^{g}(\cdot)$ and $L^{g_{R}}(\cdot)$ respectively; the distance functions associated to $g$ and $g_{R}$ are denoted by $d(\cdot,\cdot)$ and $d_{R}(\cdot,\cdot)$ respectively. For the Lorentzian manifold $(M,g)$ or $(\overline{M},g)$, we denote the causal future (past) at an event $p$ by $J^{+}(p)$ ($J^{-}(p)$); the chronological future (past) at event $p$ by $I^{+}(p)$ ($I^{-}(p)$). For the Riemannian manifold $(M,g_{R})$ or $(\overline{M},g_{R})$, the induced norm on $T_{p}M$ or $T_{p}\overline{M}$ is denoted by $|\cdot|_{p}$, and dist$_{p}(\cdot,\cdot)$ is the corresponding metric on tangent spaces; the stable norm associated to $(M,g_{R})$ (not $(\overline{M},g_{R})$ !) on $H_{1}(M,\mathbb{R})$ is denoted by $\|\cdot\|$, and dist$_{\|\cdot\|}(\cdot,\cdot)$ is the corresponding metric on $H_{1}(M,\mathbb{R})$.

\begin{defn}[\cite{B-E-E},\cite{Su2}]\label{class A}
Let $(M,g)$ be a connected spacetime, 
\begin{enumerate}
  \item $(M,g)$ is causal if there is no causal loops.
  \item $(M,g)$ is globally hyperbolic if it is causal and $J^{+}(p)\cap J^{-}(q)$ is compact (could be empty) for every pair of events $p, q\in M$.
  \item $(M,g)$ is vicious if $M=I^{+}(p)\cap I^{-}(p)$ for some event $p\in M$.
  \item $(M,g)$ is of class A if it is compact, vicious and the Abelian cover $(\overline{M},g)$ is globally hyperbolic.
\end{enumerate}
\end{defn}

For a general closed, vicious spacetime $(M,g)$, there exists a cone $\mathfrak{T}$ in $H_{1}(M,\mathbb{R})$, which is an approximation to the causal future of every $p\in \overline{M}$ in $(\overline{M},g)$. Class A spacetimes could be easily characterized from general closed, vicious spacetimes by using the topological properties of such a homology cone and its dual.

\begin{Pro}[{\cite[Propositions 8, 9; Theorem 11]{Su1}}]\label{cone}
Let $(M,g)$ be a closed and vicious spacetime.
\begin{itemize}
  \item There is a unique cone $\mathfrak{T}$ in $H_{1}(M,\mathbb{R})$ such that there exists a constant $D(g,g_{R})<\infty$ with $dist_{\|\cdot\|}(J^{+}(p)-p,\mathfrak{T})\leq D(g,g_{R})$ for all $p\in\overline{M}$, where $J^{+}(p)-p:=\{q-p|q\in J^{+}(p)\}\subseteq H_{1}(M,\mathbb{R})$. Such a cone is called the stable time cone and one could define its dual $\mathfrak{T}^{*}\subseteq H^{1}(M,\mathbb{R})$ by
      \begin{equation}
      \mathfrak{T}^{*}=\{\alpha\in H^{1}(M,\mathbb{R})|\alpha|_{\mathfrak{T}}\geq0\}.
      \end{equation}
  \item The following three statements are equivalent:
      \begin{enumerate}
      \item $(M,g)$ is of class A;
      \item $\mathfrak{T}$ is a compact cone with nonempty interior $\mathfrak{T}^{\circ}$;
      \item The interior $(\mathfrak{T}^{*})^{\circ}$ of $\mathfrak{T}^{*}$ is nonempty and for every $\alpha\in(\mathfrak{T}^{*})^{\circ}$, there is a smooth closed transversal 1-form $\omega$ with $[\omega]=\alpha$ such that $ker\omega_{p}$ is spacelike in $(TM_{p},g_{p})$ for all $p\in M$.
\end{enumerate}
\end{itemize}
\end{Pro}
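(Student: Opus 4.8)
The plan is to handle the two bullets in turn: first construct the stable time cone $\mathfrak{T}$ by a subadditive (Fekete-type) argument on the deck group, then deduce the trichotomy (1)$\Leftrightarrow$(2)$\Leftrightarrow$(3) from convex duality together with a current-theoretic realization of cohomology classes. Fix a lift $\bar p$ of a base point and identify the deck group with the lattice $\Gamma = H_{1}(M,\mathbb{Z})/\mathrm{tors}\subseteq H_{1}(M,\mathbb{R})$. The Abel-type map $\overline{M}\to H_{1}(M,\mathbb{R})$ recording the homology class of a path from $\bar p$ is, by a Milnor--\v{S}varc type (orbit-counting) argument, a quasi-isometry from $(\overline{M},d_{R})$ onto $(\Gamma,\|\cdot\|)$ with constants controlled by $(g,g_{R})$; hence $J^{+}(\bar p)-\bar p$ coincides, up to bounded $\|\cdot\|$-distance, with the set $S=\{h\in\Gamma : h\bar p\in J^{+}(\bar p)\}$. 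Transitivity of the causal relation and commutativity of $\Gamma$ give $S+S\subseteq S$, and viciousness forces $S$ to span $H_{1}(M,\mathbb{R})$ and to contain genuinely positive elements (closed timelike loops through every event). I would then set $\mathfrak{T}=\overline{\bigcup_{n}\tfrac1n S}$; subadditivity of the associated gauge function yields Hausdorff convergence of $\tfrac1n S$ on compacta, which is exactly the uniform estimate $\mathrm{dist}_{\|\cdot\|}(J^{+}(p)-p,\mathfrak{T})\le D(g,g_{R})$, uniform in $p$ because the entire picture is deck-invariant. Uniqueness is automatic: two cones at bounded Hausdorff distance from the same family lie at bounded Hausdorff distance from each other, and two cones at bounded Hausdorff distance must coincide.

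For (1)$\Leftrightarrow$(2) I would translate global hyperbolicity of $\overline{M}$ into the geometry of $\mathfrak{T}$. Under the quasi-isometry above, compactness of the causal diamonds $J^{+}(\bar p)\cap J^{-}(\bar q)$ is equivalent to $\mathfrak{T}$ being salient, $\mathfrak{T}\cap(-\mathfrak{T})=\{0\}$ (in Suhr's terminology a \emph{compact cone} is one with compact projectivization, i.e.\ a salient one; a line inside $\mathfrak{T}$ would produce unbounded diamonds and, with causality, is precisely what is excluded), while nonemptiness of $\mathfrak{T}^{\circ}$ is the full-dimensionality already extracted from viciousness, sharpened so that interior directions are realized by timelike curves. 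Conversely, a salient solid cone bounds the diamonds and so returns compactness.

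The equivalence (2)$\Leftrightarrow$(3) splits into a formal part and the real content. The assertion $(\mathfrak{T}^{*})^{\circ}\neq\emptyset$ is pure convex duality: for a closed convex cone the dual is solid iff the cone is salient, and the dual is salient iff the cone is solid, so $\mathfrak{T}$ salient and solid is equivalent to $\mathfrak{T}^{*}$ salient and solid. The substantive statement is the realization: given $\alpha\in(\mathfrak{T}^{*})^{\circ}$, that is a class pairing strictly positively with every nonzero causal homology direction, produce a smooth closed $\omega$ with $[\omega]=\alpha$ that is pointwise positive on all causal tangent vectors and whose kernel is everywhere spacelike. I expect this to be the main obstacle. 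The natural route is Sullivan's theory of structure cones: the closed currents carried by causal curves form a cone dual to $\mathfrak{T}$, so $\alpha$ pairs positively with all of them; a Hahn--Banach separation in the space of de Rham currents produces a closed current representing $\alpha$ that is strictly positive on the causal cone, and a mollification that preserves closedness and cohomology class on the compact $M$ yields a smooth representative. Pointwise positivity on the closed causal cone is exactly the condition that $\ker\omega_{p}$ be spacelike, and preserving this condition uniformly over the compact $M$ through the separation and smoothing steps is the delicate point and the crux of the proposition.
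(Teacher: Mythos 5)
You should first be aware that the paper contains no proof of this proposition: it is quoted verbatim from Suhr \cite{Su1} (Propositions 8, 9 and Theorem 11) as survey material in Section 1, so there is no internal argument to compare against, and the comparison below is with Suhr's published proof. On that basis, your reconstruction of the skeleton is largely on target. The construction of $\mathfrak{T}$ from the sub-semigroup $S=\{h : h\bar p\in J^{+}(\bar p)\}$ (semigroup property from transitivity of causality, cone from the rescalings $\tfrac1n S$, uniqueness from scale-invariance of Hausdorff distance between cones) is essentially Suhr's argument, and your route to condition (3) via Sullivan's structure cycles, Hahn--Banach separation in the space of currents, and mollification on the compact $M$ is exactly the method Suhr uses to produce the closed transversal form. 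Two smaller points you wave at too quickly: uniformity of $D(g,g_{R})$ over all $p\in\overline{M}$ is not a consequence of deck-invariance alone (that only handles the orbit of $\bar p$); one needs viciousness plus compactness of $M$ to compare $J^{+}(p)-p$ across a fundamental domain. And "viciousness forces $S$ to span $H_{1}(M,\mathbb{R})$" needs an argument (concatenating closed timelike curves through the generators of the deck group), since a priori the homology classes of causal loops could be degenerate.

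The genuine gap is in your treatment of $(1)\Leftrightarrow(2)$. You assert that compactness of the diamonds $J^{+}(\bar p)\cap J^{-}(\bar q)$ is "equivalent" to salience of $\mathfrak{T}$, and that a salient solid cone "bounds the diamonds and so returns compactness." Neither direction is available at that stage. Salience of $\mathfrak{T}$ is an asymptotic, homological statement; it does not by itself exclude causal loops in $\overline{M}$ (a causal loop has zero displacement, which is compatible with any cone), nor does it prevent causal curves between two fixed endpoints from having unbounded $g_{R}$-length (partial imprisonment), which is what actually threatens compactness of the diamonds. The estimate that rules this out is precisely $L^{g_{R}}(\gamma)\leq C(g,g_{R})\,d_{R}(p,q)$ for causal $\gamma$ (Corollary \ref{causal} of this paper, Corollary 12 in \cite{Su1}), and that estimate is itself derived from the existence of the closed transversal $1$-form with spacelike kernel, i.e.\ from condition (3). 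Suhr's logic accordingly runs $(2)\Rightarrow(3)\Rightarrow(1)\Rightarrow(2)$, with the transversal form (which also yields a temporal function on $\overline{M}$, hence causality and, with the length bound, global hyperbolicity) doing the work you are trying to extract directly from the cone geometry. As written, your direct $(1)\Leftrightarrow(2)$ would not close.
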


By Proposition \ref{cone}, the class A condition for spacetime $(M,g)$ implies the existence of a closed transversal 1-form for the cone structure of future-directed vectors in $(M,g)$. This leads to an easy corollary that will be used frequently later.

\begin{Cor}[{\cite[Corollary 12]{Su1}}]\label{causal}
Let $(M,g)$ be a class A spacetime. Then there exists a constant $C(g,g_{R})<\infty$ such that $L^{g_{R}}(\gamma)\leq C(g,g_{R})d_{R}(p,q)$ for all $p,q\in\overline{M}$ and all causal curves $\gamma$ connecting $p$ with $q$.
\end{Cor}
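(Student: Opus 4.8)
The plan is to exploit the closed transversal $1$-form furnished by Proposition~\ref{cone}. Since $(M,g)$ is of class A, the interior $(\mathfrak{T}^{*})^{\circ}$ is nonempty, so I fix any $\alpha\in(\mathfrak{T}^{*})^{\circ}$ and let $\omega$ be a smooth closed $1$-form on $M$ with $[\omega]=\alpha$ and $\ker\omega_{p}$ spacelike in $(T_{p}M,g_{p})$ for every $p\in M$, as provided by part (3). Because a spacelike hyperplane contains no nonzero causal vector, $\omega_{p}$ never vanishes on the punctured causal cone; since that cone minus the origin is connected, $\omega_{p}$ has constant sign there, and transversality fixes it so that $\omega_{p}(v)>0$ for every future-directed causal vector $v$. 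I then pass to the Abelian cover $\overline{M}$: every loop in $\overline{M}$ projects to a loop in $M$ that is null-homologous in $H_{1}(M,\mathbb{R})$, so all periods $\int_{\gamma}\omega=\langle[\omega],[\gamma]\rangle$ vanish and the lift (still denoted $\omega$) is exact, $\omega=dF$ for some smooth $F:\overline{M}\to\mathbb{R}$. This $F$ will serve as a global potential.

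The key quantitative input is a uniform lower bound obtained from compactness. The set of $g_{R}$-unit future-directed causal vectors is a closed, hence compact, subset of the unit sphere bundle of $(M,g_{R})$, and $\omega$ is strictly positive on it by the previous paragraph; therefore it attains a minimum $c>0$. Consequently, for any future-directed causal curve $\gamma:[a,b]\to\overline{M}$ (which a priori is only locally Lipschitz), at almost every parameter $t$ one has $\omega(\dot\gamma(t))\geq c\,|\dot\gamma(t)|_{\gamma(t)}$, since $\pi$ is a local $g_{R}$-isometry and the bound downstairs transfers verbatim. Integrating and using $\omega=dF$,
\begin{equation*}
F(\gamma(b))-F(\gamma(a))=\int_{a}^{b}\omega(\dot\gamma(t))\,dt\geq c\int_{a}^{b}|\dot\gamma(t)|_{\gamma(t)}\,dt=c\,L^{g_{R}}(\gamma).
\end{equation*}

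It remains to bound $F$ by Riemannian distance. As $\omega$ is a smooth $1$-form on the compact manifold $M$, its $g_{R}$-dual norm $|\omega_{p}|_{g_{R}}$ is bounded by some $K<\infty$, and since $\pi$ is a local $g_{R}$-isometry the same bound holds for $dF$ on $\overline{M}$; hence $F$ is $K$-Lipschitz, i.e. $|F(p)-F(q)|\leq K\,d_{R}(p,q)$ for all $p,q\in\overline{M}$. Combining the two estimates, for a future-directed causal $\gamma$ from $p$ to $q$ we obtain $c\,L^{g_{R}}(\gamma)\leq F(q)-F(p)\leq K\,d_{R}(p,q)$; a past-directed curve, or one traversed from $q$ to $p$, is handled identically since only $|F(p)-F(q)|$ enters. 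Taking $C(g,g_{R})=K/c$ finishes the proof.

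The conceptual heart of the argument is Proposition~\ref{cone}(3), which supplies the spacelike-kernel transversal form; given it, the rest is compactness together with the exactness of $\omega$ on $\overline{M}$. The only point demanding a little care is that causal curves are merely Lipschitz, so the pointwise inequality $\omega(\dot\gamma)\geq c\,|\dot\gamma|_{g_{R}}$ and the subsequent integration must be read in the almost-everywhere sense; this is routine and poses no genuine obstacle.
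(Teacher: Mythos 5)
Your argument is correct: the paper itself gives no proof of this corollary (it is quoted from \cite[Corollary 12]{Su1}), and your derivation --- lifting the closed transversal $1$-form with spacelike kernel from Proposition~\ref{cone} to an exact form $dF$ on $\overline{M}$, bounding $\omega$ below by $c>0$ on $g_{R}$-unit causal vectors via compactness, and sandwiching $c\,L^{g_{R}}(\gamma)\leq F(q)-F(p)\leq K\,d_{R}(p,q)$ --- is essentially the standard argument used in the cited source. No gaps; the a.e.\ treatment of the Lipschitz tangent is handled appropriately.
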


Now we restrict ourselves to consider a two dimensional connected, oriented, closed spacetime $(M^{2},g)$. It is well known that in this case $M^{2}$ is diffeomorphic to $\mathbb{T}^{2}:=\mathbb{R}^{2}/\mathbb{Z}^{2}$. Replacing $M$ and $\overline{M}$ by $\mathbb{T}^{2}$ and $\mathbb{R}^{2}$ respectively, we continue to use the notations defined in the third paragraph. By Definition \ref{class A}, $(\mathbb{T}^{2},g)$ is of class A if and only if it is vicious and its Abelian cover $(\mathbb{R}^{2},g)$ is globally hyperbolic. From now on, every causal curve under consideration is future-directed, for the definition of future-directed, see \cite[Chapter 3, Page 54]{B-E-E}.

The following concept named asymptotic direction is convenient for our statements in the setting of class A 2-torus, it will appear in almost all theorems in this article. Before giving the definition, we would like to clarify a related concept. We say an arbitrary causal curve $\gamma:I\rightarrow\mathbb{R}^{2}$ is unbounded in both directions if $I$ is open, say $I=(a,b)$, and for every $t\in I$,
\begin{equation}\label{eq:ub}
\begin{split}
\lim_{s\searrow a}d_{R}(\gamma(t),\gamma(s))=\infty,\\
\lim_{s\nearrow b}d_{R}(\gamma(t),\gamma(s))=\infty.
\end{split}
\end{equation}
Similarly we can define unboundedness in the past (future) direction by requiring that $I$ is one-sided open and the first (second) equality of Equation \ref{eq:ub} is satisfied. One may notice that the concept of unboundedness coincides with the concept of partially imprisoned, see \cite[Page 62]{B-E-E}.

It is obvious that for every future inextendible causal curve $\gamma:I\rightarrow(\mathbb{R}^{2},g)$, $I=[a,b)$ or $(a,b)$ and for any $t\in I$, $L^{g_{R}}(\gamma|_{[t,b)})=\infty$ since otherwise it will have a future endpoint. Then Corollary \ref{causal} implies that
\begin{equation}
\lim_{s\nearrow b}d_{R}(\gamma(t),\gamma(s))=\infty.
\end{equation}
So every future inextendible causal curve is unbounded in the future direction. One could easily formulate and prove that every inextendible causal curve is unbounded in both directions.

Thus if we parametrize a future inextendible causal curve with a past endpoint (resp. an inextendible causal curve) by the $g_{R}$-arc length, the domain should be $\mathbb{R}_{+}:=[0,\infty)$ (resp. $\mathbb{R}$). In the remaining context of this section, causal curves are always parametrized by $g_{R}$-arc length.

In the following definition, a curve $\gamma:I\rightarrow\mathbb{T}^{2}, I=\mathbb{R}_{+}:=[0,\infty)$ or $\mathbb{R}$ is unbounded in the future direction if any of its lifts satisfies the corresponding conditions above.
\begin{defn}\label{asym}
Denote the unique half line in a vector space that contains a given non-zero vector $\alpha$ by $\overline{\alpha}$.
\begin{enumerate}
  \item Let $\gamma:I\rightarrow\mathbb{T}^{2}$ be a causal curve unbounded in the future direction. If there exists a half line $l\subseteq H_{1}(\mathbb{T}^{2},\mathbb{R})$ emanating from the null-homology such that $dist_{\|.\|}(\tilde{\gamma}(T_{2})-\tilde{\gamma}(T_{1}),l)$ has a uniform upper bound independent of $[T_{1},T_{2}]\subseteq I$, where $\tilde{\gamma}$ is a lift of $\gamma$ to $\mathbb{R}^{2}$ and $\tilde{\gamma}(T_{2})-\tilde{\gamma}(T_{1})\in H_{1}(\mathbb{T}^{2},\mathbb{R})$, then we say $\gamma$ has the same asymptotic direction as the half line $l$. Since all half lines in $H_{1}(\mathbb{T}^{2},\mathbb{R})$ emanating from the null-homology form the spherization $SH_{1}(\mathbb{T}^{2},\mathbb{R})$, which is isomorphic to $S^{1}:=\{h|h\in H_{1}(\mathbb{T}^{2},\mathbb{R}),\|h\|=1\}$ in the sense of topology, we shall call the unique vector $\alpha\in S^{1}$ satisfying $l=\overline{\alpha}$ the asymptotic direction of $\gamma$. One easily see that this definition is independent of the choice of the lift $\tilde{\gamma}$.

  \item If the set $\overline{\alpha}\cap H_{1}(\mathbb{T}^{2},\mathbb{Z})_{\mathbb{R}}$ is nonempty, where $H_{1}(\mathbb{T}^{2},\mathbb{Z})_{\mathbb{R}}$ is the image of $H_{1}(\mathbb{T}^{2},\mathbb{Z})$ in $H_{1}(\mathbb{T}^{2},\mathbb{R})$, we call $\alpha\in S^{1}$ a rational asymptotic direction. Otherwise, we call $\alpha$ an irrational asymptotic direction.
  \item Define the set of causal asymptotic directions to be the set of asymptotic directions $\alpha\in S^{1}$ such that there exists a future inextendible causal curve $\gamma:I\rightarrow(\mathbb{T}^{2},g)$ that has the asymptotic direction $\alpha$.
  \item We shall say that  a future inextendible causal curve $\gamma:I\rightarrow(\mathbb{R}^{2},g)$ has asymptotic direction $\alpha$ if $\pi \circ \gamma$, the projection of $\gamma$ onto $\mathbb{T}^{2}$, has asymptotic direction $\alpha$.
\end{enumerate}
\end{defn}

\begin{Rem}
We equip $S^{1}$ with the induced metric topology from $(H_{1}(\mathbb{T}^{2},\mathbb{R}),\|\cdot\|)$.
\end{Rem}

By the time orientability of $(\mathbb{T}^{2},g)$, the Lorentzian metric $g$ uniquely defines two smooth future-directed lightlike vector fields $X_{1},X_{2}$ on $\mathbb{T}^{2}$ with $|X_{1,2}|_{p}=1$ for any  $p\in \mathbb{T}^2$.  By the foliation theory \cite[Part A, Section 4.3]{H-H}, there is a straight line (not half line!) $M^{i}\subseteq H_{1}(\mathbb{T}^{2},\mathbb{R})$ passing through the origin such that for any integral curve  $\gamma_{i}$ of $X_{i}$ and for any $s\leq t$,
$$
dist_{\|\cdot\|}(\tilde{\gamma_{i}}(t)-\tilde{\gamma_{i}}(s), M^{i})\leq D(g,g_{R}),
$$
where $\tilde{\gamma_i}$ is a lift of $\gamma_i$, $D(g,g_{R})$ is a constant depending only on $g$ and $g_{R}$ and $i=1,2$.

The class A condition for $(\mathbb{T}^{2},g)$ is equivalent to $M^{1}\neq M^{2}$ \cite[Satz 4.1]{Sc}. In this case, the integral curves of $X_{i} (i=1,2)$ do have an asymptotic direction defined before \cite[Section 4.1]{Su2}. We denote the asymptotic directions of the integral curves of $X_{i}$ and corresponding half lines by $m^{i}$ and $\overline{m}^{i}$ respectively. Using the concept of asymptotic direction, one could give another beautiful definition of stable time cone, which makes this concept more concrete in two the dimensional case, see \cite[Section 3.1, Page 6]{Su3}.

\begin{Pro}[{\cite[Section 3.1]{Su2}}]\label{cone'}
Let $(\mathbb{T}^{2},g)$ be a class A Lorentzian 2-torus. Then the convex hull of $\overline{m}^{1}\cup\overline{m}^{2}$ is the stable time cone $\mathfrak{T}$.
\end{Pro}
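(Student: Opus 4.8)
The plan is to pass to the dual cones in $H^{1}(\mathbb{T}^{2},\mathbb{R})$, identify their interiors, and then dualize back. Write $\mathfrak{C}:=\operatorname{conv}(\overline{m}^{1}\cup\overline{m}^{2})$ for the cone we must identify with $\mathfrak{T}$; since the class A condition gives $M^{1}\neq M^{2}$, the vectors $m^{1},m^{2}$ are linearly independent, so $\mathfrak{C}$ is a genuine two–dimensional convex cone with $\mathfrak{C}^{*}=\{\alpha\in H^{1}(\mathbb{T}^{2},\mathbb{R})\mid \alpha(m^{1})\ge 0,\ \alpha(m^{2})\ge 0\}$. The crucial dictionary is that, in dimension two, $\ker\omega_{p}$ is spacelike precisely when $\omega_{p}$ is strictly positive on both null directions, i.e.\ $\omega_{p}(X_{1})>0$ and $\omega_{p}(X_{2})>0$. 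Thus Proposition \ref{cone}(3) says exactly that every $\alpha\in(\mathfrak{T}^{*})^{\circ}$ admits a closed representative $\omega$ with $\omega(X_{1}),\omega(X_{2})>0$ everywhere; conversely, if a class admits such a representative then $\int_{\gamma}\omega>0$ for every future causal $\gamma$ (see below), so $\alpha$ is positive on all causal displacements and $\alpha\in(\mathfrak{T}^{*})^{\circ}$. Setting $P:=\{\alpha\mid \alpha \text{ has a closed representative positive on } X_{1},X_{2}\}$, the whole proposition reduces to the single equality $P=(\mathfrak{C}^{*})^{\circ}$, because $P=(\mathfrak{T}^{*})^{\circ}$, and taking closures and dualizing then yields $\mathfrak{T}=\mathfrak{C}$.

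First I would record the easy inclusion $\mathfrak{C}\subseteq\mathfrak{T}$, equivalently $P\subseteq(\mathfrak{C}^{*})^{\circ}$. The two lightlike fields are causal, so the integral curves of $X_{i}$ are future causal curves which, by the uniform tracking stated before the proposition, forward–track the ray $\overline{m}^{i}$; hence $\overline{m}^{i}\subseteq\mathfrak{T}$, and convexity of the stable time cone gives $\mathfrak{C}\subseteq\mathfrak{T}$. Dually, if $\omega$ represents $\alpha\in P$, then parametrizing an $X_{i}$–integral curve by $g_{R}$–arclength and using $\int_{0}^{L}\omega(X_{i})\,ds=\alpha\big(\tilde\gamma_{i}(L)-\tilde\gamma_{i}(0)\big)$ together with the tracking, which keeps $\tilde\gamma_{i}(L)-\tilde\gamma_{i}(0)$ within bounded distance of the ray $\mathbb{R}_{+}m^{i}$, shows that the average of the positive function $\omega(X_{i})$ is a positive multiple of $\alpha(m^{i})$, so $\alpha(m^{i})>0$; this is $P\subseteq(\mathfrak{C}^{*})^{\circ}$. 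The one computation I would spell out here is the decomposition of a future causal velocity: at every point a future–directed causal vector is $aX_{1}+bX_{2}$ with $a,b\ge 0$, not both zero, so for $\omega$ positive on $X_{1},X_{2}$ one gets $\omega(\dot\gamma)=a\,\omega(X_{1})+b\,\omega(X_{2})>0$ along any causal $\gamma$, which is the fact used twice above.

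The heart of the matter is the reverse inclusion $(\mathfrak{C}^{*})^{\circ}\subseteq P$: given a class $\alpha$ with $\alpha(m^{1})>0$ and $\alpha(m^{2})>0$, I must produce a \emph{single} closed representative that is pointwise positive on \emph{both} null fields at once. This is where the uniform tracking is indispensable. For each $i$, the tracking forces every $X_{i}$–invariant probability measure $\mu$ to have Schwartzman asymptotic cycle a nonnegative multiple of $m^{i}$, so that $\int\omega(X_{i})\,d\mu$ is a positive multiple of $\alpha(m^{i})>0$ for \emph{every} invariant $\mu$; by the minimax description of ergodic averages this upgrades to a uniform bound $\frac{1}{L}\int_{0}^{L}\omega(X_{i})\,ds\ge \delta>0$ along all $X_{i}$–orbits of large length. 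Equivalently, in the language of Sullivan's duality between closed transversal forms and structure cycles, $\alpha$ is strictly positive on every structure cycle of the causal cone field spanned by $X_{1},X_{2}$, which is exactly the condition for the existence of a closed $1$–form in the class $\alpha$ transverse to that cone field, i.e.\ positive on both $X_{1}$ and $X_{2}$. Once such $\omega$ is in hand, integrating along causal curves (using the velocity decomposition above) gives $\alpha(q-p)=\int_{\gamma}\omega>0$, indeed bounded below by $c\,d_{R}(p,q)-C$ via Corollary \ref{causal}, so $\alpha\in(\mathfrak{T}^{*})^{\circ}$ and $\mathfrak{T}\subseteq\mathfrak{C}$.

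I expect the construction in the last paragraph to be the only real obstacle: producing one representative simultaneously transverse to the two null foliations, rather than one for each, is a genuine cohomological constraint, and it is precisely the uniform tracking near the lines $M^{i}$ (equivalently the class A hypothesis $M^{1}\neq M^{2}$, which makes $m^{1},m^{2}$ independent and the two foliations quasi–linear with controlled holonomy) that rules out stray invariant measures and lets the averaged positivity upgrade to a pointwise one. With $P=(\mathfrak{T}^{*})^{\circ}=(\mathfrak{C}^{*})^{\circ}$ established, taking closures and passing to the dual cones yields $\mathfrak{T}=\mathfrak{C}=\operatorname{conv}(\overline{m}^{1}\cup\overline{m}^{2})$, as claimed.
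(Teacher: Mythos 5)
The paper states this proposition without proof (it is quoted from \cite[Section 3.1]{Su2}), so there is no internal proof to compare against and I am judging your argument on its own terms.

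Your framework and the easy half are fine: $\mathfrak{C}:=\operatorname{conv}(\overline{m}^{1}\cup\overline{m}^{2})\subseteq\mathfrak{T}$ follows as you say, the dictionary between spacelike kernels and positivity on $X_{1},X_{2}$ is correct in dimension two, and your control of the asymptotic cycles of each individual flow $X_{i}$ (all positive multiples of $m^{i}$, nonzero thanks to Corollary \ref{causal}) is sound. The gap is exactly where you suspected it, and it is not closed by the appeal to Sullivan. Schwartzman's criterion applied to each flow separately produces two different representatives $\omega_{1},\omega_{2}$ with $\omega_{i}(X_{i})>0$, and no convex combination of them is positive on both fields. To get a single form you invoke Sullivan's duality for the cone field spanned by $X_{1}$ and $X_{2}$; but the hypothesis of that criterion is positivity of $\alpha$ on \emph{every} structure cycle of the cone field, and these are not exhausted by the invariant measures of the two individual flows. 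The structure cycles of the two-field cone structure are precisely the (limits of) normalized long causal curves, so the cone of their homology classes is the asymptotic cone of causal displacements, i.e.\ $\mathfrak{T}$ itself. Hence the ``equivalently'' in your third paragraph asserts $\alpha>0$ on $\mathfrak{T}\setminus\{0\}$, which is $\alpha\in(\mathfrak{T}^{*})^{\circ}$ --- the very statement you are trying to derive. What you have actually verified is positivity of $\alpha$ only on the subcone $\mathfrak{C}$ of structure-cycle classes; the missing ingredient, that every structure cycle has class in $\mathfrak{C}$, is exactly the nontrivial inclusion $\mathfrak{T}\subseteq\mathfrak{C}$, so the argument is circular at the decisive step.

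That inclusion has a direct proof which bypasses the dual machinery altogether, and this is essentially Suhr's route. Let $\ell_{1},\ell_{2}$ be the edge functionals of $\mathfrak{C}$, i.e.\ $\ker\ell_{i}=M^{i}$ and $\ell_{i}(m^{j})>0$ for $j\neq i$, so that $\mathfrak{C}=\{\ell_{1}\geq0\}\cap\{\ell_{2}\geq0\}$. A future-directed causal curve has $\dot{\gamma}=aX_{1}+bX_{2}$ with $a,b\geq0$, so it meets the leaves of the lifted $X_{1}$-foliation monotonically: it can only cross a leaf towards the side into which $X_{2}$ points. Since each lifted leaf stays within the uniform distance $D(g,g_{R})$ of a translate of $M^{i}=\ker\ell_{i}$, a Jordan-curve argument then gives $\ell_{i}(\gamma(L)-\gamma(0))\geq-2D\|\ell_{i}\|$ for every causal $\gamma$ and $i=1,2$. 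Thus all causal displacements lie within a uniformly bounded distance of $\mathfrak{C}$, and the characterization of $\mathfrak{T}$ in Proposition \ref{cone} forces $\mathfrak{T}\subseteq\mathfrak{C}$. If you prefer to keep the dual formulation, you would in any case have to prove that every structure cycle of the causal cone field has class in $\mathfrak{C}$, and the proof of that fact is this same monotone-crossing argument.
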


\begin{Rem}
Since there are  Lorentz metrics $g$ on 2-torus such that the integral curves of $X_{i}$ do not have an asymptotic direction (see \cite[Section 8.3]{Sc}), so we could not replace the straight lines $M_{i}$ in the above by half lines. Moreover, such phenomenon is stable under the perturbation of $g$. Of course, these metrics are not of class A. This fact also leads to the conclusion that class A metrics are not dense in the set of all smooth time orientable Lorentz metric on 2-torus. Interested readers could also refer to \cite{Su2}, which contains a description of lightlike foliations for non-class A metrics on $\mathbb{T}^{2}$.
\end{Rem}

It is now clear that the set of causal directions is just the set $S^{1}\cap\mathfrak{T}$. We identify $H_{1}(\mathbb{T}^{2},\mathbb{R})$ with $\mathbb{R}^{2}$ and give an orientation on it as usual, i.e.  the counter-clockwise is positive orientation. This orientation leads to a natural order on $S^{1}\cap\mathfrak{T}$.

\begin{defn}\label{ord}
Let $\alpha,\beta\in S^{1}\cap\mathfrak{T}$ be two distinct causal asymptotic directions. We define $\alpha<\beta$ if and only if the order pair $\{\alpha,\beta\}$ is positively oriented. The above order is well defined since $\mathfrak{T}$ cannot contain any one dimensional subspace of $\mathbb{R}^{2}$. We denote $(S^{1}\cap\mathfrak{T},<)$ by $[m^{-},m^{+}]$, where $m^{\pm}$ is defined by $\{m^{1,2}\}=\{m^{\pm}\}$ and $m^{-}<m^{+}$. Notice that no matter in sense of topology structure or order structure, $S^{1}\cap\mathfrak{T}$ is isomorphic to a closed interval, so the symbol we choose makes no confusion. We also define $(m^{-},m^{+}):=[m^{-},m^{+}]\setminus\{m^{\pm}\}$, we say the asymptotic directions belong to $(m^{-},m^{+})$ are timelike.
\end{defn}

To introduce the results of E. Scheling, we need one more definition, which is just an analogy of minimal geodesics in Riemannian case.

\begin{defn}\label{maximizer, ray, line}
Let $I$ be an arbitrary interval. We call a timelike (causal) curve $\gamma:I\rightarrow(\mathbb{R}^{2},g)$ a timelike (causal) maximizer if it satisfies
\begin{equation}
L^{g}(\tilde{\gamma}|_{[a,b]})=d(\tilde{\gamma}(a),\tilde{\gamma}(b))
\end{equation}
for all $[a,b]\subseteq I$, here $\tilde{\gamma}$ is a lift of $\gamma$.

We call a timelike (causal) curve $\gamma:\mathbb{R}_{+}\rightarrow(\mathbb{R}^{2},g)$ with a past endpoint a timelike (causal) ray if it is future inextendible and is a timelike (causal) maximizer; similarly, we call a timelike (causal) curve $\gamma:\mathbb{R}\rightarrow(\mathbb{R}^{2},g)$ a timelike (causal) line if it is inextendible and is a timelike (causal) maximizer.

We also call a timelike (causal) curve on $(\mathbb{T}^{2},g)$ a timelike (causal) maximizer (ray, line) if any of its lift is a timelike (causal) maximizer (ray, line).
\end{defn}

In his diploma thesis \cite{Sc}, E. Scheling proved the following results under the setting of geodesic flows on Lorentzian class A 2-tori, as an analogy of the results obtained by V. Bangert under the setting of monotone twist maps on annulus and geodesic flows on Riemannian 2-tori in his celebrated paper \cite{Ba 1}. To simplfy the notation, we identify  $H_{1}(\mathbb{T}^{2},\mathbb{Z})_{\mathbb{R}}$ with $H_{1}(\mathbb{T}^{2},\mathbb{Z})$ in the following context.

\begin{Pro}[\cite{Sc},\cite{Su2}]\label{causal maximizer}
Let $(\mathbb{T}^{2},g)$ be a class A Lorentzian 2-torus. Then
\begin{itemize}
  \item For every $h\in H_{1}(\mathbb{T}^{2},\mathbb{Z})\cap\mathfrak{T}$, there exists a closed causal line $\gamma:\mathbb{R}\rightarrow(\mathbb{T}^{2},g)$ with homology class $h$.
  \item Let $\gamma:\mathbb{R}\rightarrow(\mathbb{T}^{2},g)$ be a closed timelike line with minimal period $T$. Set $h=[\gamma|_{[0,T]}]\in H_{1}(\mathbb{T}^{2},\mathbb{Z})$. Then the class $h$ is irreducible in $H_{1}(\mathbb{T}^{2},\mathbb{Z})$, i.e. for any $h'\in H_{1}(\mathbb{T}^{2},\mathbb{Z})$ and $\lambda>0$ with $h=\lambda h'$, we have $\lambda=1$ and $h'=h$.
  \item For any asymptotic direction $\alpha\in[m^{-},m^{+}]$, there is a causal line $\gamma:\mathbb{R}\rightarrow(\mathbb{T}^{2},g)$ with asymptotic direction $\alpha$. In addition, for all $T_{1}\leq T_{2}$, $\tilde{\gamma}(T_{2})-\tilde{\gamma}(T_{1})$ lies at bounded distance from $\overline{\alpha}$. This distance only depends on $g$ and $g_{R}$.
\end{itemize}
\end{Pro}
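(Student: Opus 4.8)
The three assertions are the Lorentzian counterparts of Hedlund's and Bangert's results on minimal geodesics, so the plan is to replace \emph{minimization} of Riemannian length by \emph{maximization} of the Lorentzian length $L^{g}$, using four tools: the reverse triangle inequality $d(p,r)\ge d(p,q)+d(q,r)$ (valid when $q$ is causally between $p$ and $r$); the Avez--Seifert theorem (in the globally hyperbolic cover $(\mathbb{R}^{2},g)$ any two causally related points are joined by a maximal causal geodesic realizing $d$); the upper semicontinuity of $L^{g}$ along $C^{0}$-convergent causal curves; and Corollary \ref{causal}, which bounds the $g_{R}$-length of a causal curve by the $g_{R}$-distance of its endpoints and so furnishes the equicontinuity needed for Arzel\`a--Ascoli. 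Throughout I identify the deck group of $\mathbb{R}^{2}\to\mathbb{T}^{2}$ with $H_{1}(\mathbb{T}^{2},\mathbb{Z})\cong\mathbb{Z}^{2}$ acting by translations $p\mapsto p+h$. For the first bullet I fix $h\in H_{1}(\mathbb{T}^{2},\mathbb{Z})\cap\mathfrak{T}$ and maximize $L^{g}$ over causal curves in $\mathbb{R}^{2}$ joining a lift $p$ to $p+h$, equivalently over closed causal curves on $\mathbb{T}^{2}$ of class $h$. The supremum is finite since such lengths are bounded by $d(p,p+h)$, which stays bounded as $p$ ranges over a fundamental domain, and Corollary \ref{causal} confines a maximizing sequence (based in a fundamental domain) to a compact family; upper semicontinuity of $L^{g}$ shows the supremum is attained by a closed causal curve $\gamma^{\ast}$ of class $h$. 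A standard surgery shows $\gamma^{\ast}$ is a line: were some lifted sub-arc not maximal I would replace it, via Avez--Seifert, by a maximal geodesic with the same endpoints, strictly increasing $L^{g}$ without changing the homology class, a contradiction; choosing the arc to straddle the closing seam rules out a corner there as well.

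For the second bullet, let $\gamma$ be a closed timelike line of minimal period $T$ and homology $h=k h_{0}$ with $h_{0}$ primitive and $k\ge1$; I must show $k=1$. A lifted timelike maximizer $\tilde\gamma$ is injective (a self-intersection would yield a causal loop, impossible in the causal cover) and its projection is non-contractible (again, no causal loops), so $h\neq0$. The technical heart is the Lorentzian no-crossing lemma: \emph{two distinct timelike maximizers meet at most once}. I would prove it by an exchange argument -- if they met at two points, one in the timelike future of the other, then swapping the two arcs between these points produces causal curves with a genuine corner, which by the strict reverse triangle inequality at a timelike corner can be lengthened, contradicting maximality. Granting this, I compare $\tilde\gamma$ with the translate $\tilde\gamma+h_{0}$: both are timelike maximizers invariant under $p\mapsto p+h$, so a single intersection would propagate along an infinite $h$-orbit, forcing the two either to coincide or to be disjoint. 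If $\tilde\gamma+h_{0}=\tilde\gamma$ as sets, injectivity produces a period $T/k<T$, contradicting minimality; if they are disjoint, the same dichotomy applied to every translate $\tilde\gamma+v$ with $v\notin h\mathbb{Z}$ shows the projection of $\gamma$ is a simple closed curve on $\mathbb{T}^{2}$, whose homology is necessarily primitive, forcing $k=1$.

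For the third bullet I first note that existence is immediate for rational $\alpha$: a positive integer multiple $h\in H_{1}(\mathbb{T}^{2},\mathbb{Z})$ lies on $\overline{\alpha}\subseteq\mathfrak{T}$, and the closed causal line of the first bullet has asymptotic direction $\overline{h}=\overline{\alpha}$. For general $\alpha\in(m^{-},m^{+})$ I take maximal geodesics (Avez--Seifert) from points $p_{n}$ receding into the past to points $q_{n}$ receding into the future roughly along the direction of $\alpha$, reparametrize by $g_{R}$-arclength, and extract through Corollary \ref{causal} and Arzel\`a--Ascoli a limiting causal line; the lightlike endpoints $\alpha=m^{\pm}$ are treated separately by taking the integral curve of the lightlike field $X_{i}$, which has asymptotic direction $m^{i}$ and realizes $d=0$ along itself, hence is a (lightlike) causal line.

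The genuinely hard point, and the one I expect to be the main obstacle, is the uniform strip estimate: that $\tilde\gamma(T_{2})-\tilde\gamma(T_{1})$ stays within a distance depending only on $g,g_{R}$ of the \emph{single} ray $\overline{\alpha}$, for every causal maximizer with asymptotic direction $\alpha$ (closed ones included, so that even the rational lines inherit a width independent of the period). Proposition \ref{cone} only confines causal displacements to within $D(g,g_{R})$ of the entire cone $\mathfrak{T}$; upgrading this to proximity to one ray must exploit the maximizer property. The idea is that a large transverse excursion followed by a return would, by the reverse triangle inequality combined with the compactness of $\mathfrak{T}$ (the class A hypothesis, Proposition \ref{cone}) and the two lightlike directions bounding it (Proposition \ref{cone'}), cost strictly more $L^{g}$ than a straighter causal competitor between the same endpoints, contradicting maximality; quantifying this loss in terms of the transverse displacement is what pins the excursion to a constant independent of $\alpha$ and of the chosen times.
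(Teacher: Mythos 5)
First, a point of comparison: the paper itself gives no proof of Proposition \ref{causal maximizer} --- it is quoted from Scheling's thesis \cite{Sc} and Suhr's paper \cite{Su2} --- so there is no internal argument to measure yours against. Your overall strategy (maximize $L^{g}$ in a fixed homology class; Avez--Seifert plus upper semicontinuity of $L^{g}$ and Corollary \ref{causal} for compactness; Morse's non-crossing lemma and the translate dichotomy for irreducibility; limits of maximal segments for irrational directions) is indeed the Lorentzian Morse--Hedlund--Bangert scheme that those references carry out, and your second bullet is essentially complete modulo routine details.

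There is, however, a genuine gap in your first bullet at the step ``a standard surgery shows $\gamma^{\ast}$ is a line.'' If the non-maximal sub-arc of the lift $\tilde\gamma^{\ast}$ spans more than one period, replacing it by a maximal segment destroys the $h$-periodicity of the lift, so the modified curve no longer projects to a closed curve of class $h$ and you cannot contradict the maximality of $\gamma^{\ast}$ within its class; your surgery only shows that $\tilde\gamma^{\ast}$ is a geodesic and is maximal on arcs contained in roughly one period. To conclude that the lift is maximal between points $n$ periods apart you need $\max\{L^{g}(\sigma):[\sigma]=nh\}=n\cdot\max\{L^{g}(\sigma):[\sigma]=h\}$ (equivalently, linearity of the stable time separation along $\overline{h}$, cf.\ Theorem \ref{sts}), together with the fact that the $n$-fold iterate of $\gamma^{\ast}$ attains the maximum in class $nh$; this is the Lorentzian analogue of Hedlund's lemma and is a separate, non-trivial step that your sketch omits. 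Two smaller issues: for $h\in\partial\mathfrak{T}$ you must first show that class $h$ contains a closed causal curve at all (Proposition \ref{cone} only bounds the distance of $J^{+}(p)-p$ to $\mathfrak{T}$; it does not produce representatives of boundary classes), and in the third bullet your mechanism for the uniform strip estimate (``a transverse excursion costs length'') is left unquantified --- the argument in \cite{Sc},\cite{Su2} instead pins a maximizer near $\overline{\alpha}$ by counting its crossings with translates of the periodic maximizers already constructed, via Morse's lemma. You flag this last point honestly, but as written it is an idea rather than a proof.
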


We need several definitions to proceed. Let $\gamma$ be an inextendible causal line on $(\mathbb{R}^{2},g)$, by Proposition \ref{causal} and elementary topological knowledge, we know that $\mathbb{R}^{2}\setminus Im(\gamma)=U^{-}\cup U^{+}$, where $U^{\pm}$ are two connected components of $\mathbb{R}^{2}\setminus Im(\gamma)$ and are diffeomorphic to $\mathbb{R}^{2}$. One of these two connected components, say $U^{+}$ ($U^{-}$) satisfies the following condition: for every spacelike smooth curve $\xi$ that initiates from $p=\gamma(t_{0})$ such that $\xi(t),t>0$ is contained in $U^{+}$ ($U^{-}$), $\{\dot{\gamma}(t_{0}),\dot{\xi}(0)\}$ is positively (negatively) oriented. Using these obserations, we can define a relation between a point in $\mathbb{R}^{2}\setminus Im(\gamma)$ and the causal line $\gamma$.

\begin{defn}\label{<,>}
Let $(\mathbb{R}^{2},g)$ be the Abelian cover of a class A Lorentzian 2-torus and $\gamma$ a causal line on it.  A point $p\in\mathbb{R}^{2}\setminus Im(\gamma)$ satisfies $p>\gamma$ ($p<\gamma$) if and only if $p\in U^{+}$ ($p\in U^{-}$); a point $p\in\mathbb{R}^{2}$ satisfies $p\geq\gamma$ ($p\leq\gamma$) if and only if $p\in Im(\gamma)\cup U^{+}$ ($p\in Im(\gamma)\cup U^{-}$). A causal curve $\zeta>\gamma$ ($\zeta<\gamma$) if and only if $Im(\zeta)\subseteq U^{+}$ ($Im(\zeta)\subseteq U^{-}$); a causal curve $\zeta\geq\gamma$ ($\zeta\leq\gamma$) if and only if $Im(\zeta)\subseteq Im(\gamma)\cup U^{+}$ ($Im(\zeta)\subseteq Im(\gamma)\cup  U^{-}$).
\end{defn}

\begin{defn}\label{asy}
Let $\gamma_{1},\gamma_{2}:I\rightarrow(\mathbb{R}^{2},g)$ be two future inextendible causal curves, here $I=\mathbb{R}_{+}\hspace{0.1cm}\text{or}\hspace{0.1cm}\mathbb{R}$. We say $\gamma_{1}$ and $\gamma_{2}$ are asymptotic in the future direction if
$$
\lim_{t\rightarrow\infty}\max\{d_{R}(\gamma_{1}(t),\gamma_{2}(I)),d_{R}(\gamma_{2}(t),\gamma_{1}(I))\}=0.
$$
One easily formulate the definition of two inextendible causal curves that are asymptotic in the past direction.
\end{defn}

Like before, $\pi:\mathbb{R}^{2}\rightarrow\mathbb{T}^{2}$ is the Abelian cover over $\mathbb{T}^{2}$. The Deck transformations ($\cong\mathbb{Z}^{2}$) associated to $\pi$ act on $\mathbb{R}^{2}$ and are defined as the following:
\begin{equation*}
T:\mathbb{R}^{2}\times\mathbb{Z}^{2}\rightarrow\mathbb{R}^{2},
\hspace{0.2cm}(p,(j,k))\mapsto p+(j,k),\hspace{0.2cm}p\in\mathbb{R}^{2}.
\end{equation*}
$T_{(j,k)}:=T|_{(j,k)\times\mathbb{R}^{2}}$ is the translation by $(j,k)\in\mathbb{Z}^{2}$. By definition, $T_{(j,k)}:\mathbb{R}^{2}\rightarrow\mathbb{R}^{2}$ is an isometry w.r.t both $g$ and $g_{R}$ for every $(j,k)\in\mathbb{Z}^{2}$.

\begin{defn}
Let $(\mathbb{R}^{2},g)$ be the Abelian cover of a class A Lorentzian 2-torus and $\alpha$ be an asymptotic direction in $(m^{-},m^{+})$. We denote the set of all timelike lines on $(\mathbb{R}^{2},g)$ with asymptotic direction $\alpha$ by $\mathscr{M}_{\alpha}$.

If $\alpha$ is irrational, by the theorem follows and the periodicity of the Lorentzian metric, $\mathscr{M}_{\alpha}$ is totally ordered by $<$ defined in Definition \ref{<,>} and is invariant under the actions of $T_{(j,k)}$. Restricting the actions of $T_{j,k}$ on $\mathscr{M}_{\alpha}$, there is a unique minimal set, $\mathscr{M}_{\alpha}^{rec}$, in the sense of topological dynamics. We say timelike lines in $\mathscr{M}_{\alpha}^{rec}$ and their projections onto $\mathbb{T}^{2}$ are recurrent.

If $\alpha$ is a rational asymptotic direction, and $(q,p)\in\overline{\alpha}$ ($p,q$ relatively prime), define
$$
\mathscr{M}_{\alpha}^{+}:=\{\gamma\in\mathscr{M}_{\alpha}|T_{(q,p)}\gamma<\gamma\},\hspace{0.2cm}
\mathscr{M}_{\alpha}^{-}:=\{\gamma\in\mathscr{M}_{\alpha}|T_{(q,p)}\gamma>\gamma\}
$$
and
$$
\mathscr{M}_{\alpha}^{per}:=\{\gamma\in\mathscr{M}_{\alpha}|T_{(q,p)}\gamma=\gamma\}.
$$
In this case, the last class of timelike lines are called periodic, and we  also call they are recurrent.
\end{defn}

\begin{defn}\label{nb}
In the sequel, we call two causal lines $\overline{\gamma}, \underline{\gamma}:\mathbb{R}\rightarrow(\mathbb{R}^{2},g)$ with the same asymptotic direction are neighboring if the unique strip bounded by $\underline{\gamma}$ and $\overline{\gamma}$ does not contain any other recurrent (including periodic cases) causal lines.
\end{defn}

\begin{The}[\cite{Sc},\cite{Su2}]\label{structure for lines}
Let $(\mathbb{R}^{2},g)$ be the Abelian cover of a class A Lorentzian 2-torus, then:
\begin{enumerate}
  \item There exist causal lines for every asymptotic direction $\alpha\in[m^{-},m^{+}]$ and every causal line has an asymptotic direction $\alpha\in[m^{-},m^{+}]$. In addition, if $\gamma:\mathbb{R}\rightarrow(\mathbb{R}^{2},g)$ is a causal line with asymptotic direction $\alpha$, then for all $T_{1}\leq T_{2}$, $\gamma(T_{2})-\gamma(T_{1})$ lies at bounded distance from $\overline{\alpha}$. This distance depends only on $g$ and $g_{R}$.
  \item Let $\alpha\in(m^{-},m^{+})$ be an irrational asymptotic direction. Then any two distinct timelike lines with the same asymptotic direction $\alpha$ are disjoint. The set of points in $\mathbb{R}^{2}$ lying on a recurrent timelike line with asymptotic direction $\alpha$ either constitute a foliation on $\mathbb{R}^{2}$ or intersect every transversal in a Cantor set.  Moreover, there is a bijective mapping between pairs of neighboring timelikes lines $(\underline{\gamma}, \overline{\gamma})$ with asymptotic direction $\alpha$ and gaps of the Cantor set mentioned above, and any pair of such neighboring timelike lines $\underline{\gamma}, \overline{\gamma}:\mathbb{R}\rightarrow(\mathbb{R}^{2},g)$ are asymptotic in both directions.
  \item Let $\alpha\in(m^{-},m^{+})$ be a rational asymptotic direction. Then $\mathscr{M}_{\alpha}=\mathscr{M}_{\alpha}^{per}\cup\mathscr{M}_{\alpha}^{+}\cup\mathscr{M}_{\alpha}^{-}$.
      For $\gamma\in\mathscr{M}_{\alpha}^{+}\cup\mathscr{M}_{\alpha}^{-}$, there exist two neighboring periodic timelike lines $\underline{\gamma}, \overline{\gamma}$ such that $\underline{\gamma}<\gamma<\overline{\gamma}$. Each timelike line $\gamma\in\mathscr{M}_{\alpha}^{+}$ ($\gamma\in\mathscr{M}_{\alpha}^{-}$) is asymptotic to $\overline{\gamma}$ in the future (past) direction and asymptotic to $\underline{\gamma}$ in the past (future) direction. Besides these, in every strip bounded by two neighboring periodic timelike lines $\underline{\gamma}, \overline{\gamma}$, there exist at least two timelike lines: one in $\mathscr{M}_{\alpha}^{+}$ and the other in $\mathscr{M}_{\alpha}^{-}$.
  \item The asymptotic direction is continuous w.r.t. the $C^{0}$-topology on the space of causal lines, i.e. if $\gamma_{k}$ is a series of causal lines with asymptotic directions $\alpha_{k}$ and $\gamma_{k}$ converges to $\gamma$ in the $C^{0}$-topology with asymptotic direction $\alpha$, then $\alpha_{k}\rightarrow\alpha$ w.r.t. the topology defined on $[m^{-},m^{+}]$. Moreover, for any timelike asymptotic directions $\alpha_{k}\rightarrow\alpha$ and any $\gamma\in\mathscr{M}_{\alpha}^{rec}(\mathscr{M}_{\alpha}^{per})$, $\gamma$ is the $C^{0}$-limit of a sequence $\gamma_{k}\in\mathscr{M}_{\alpha_{k}}^{rec}(\mathscr{M}_{\alpha}^{per})$.
\end{enumerate}
\end{The}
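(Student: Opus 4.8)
\emph{Proof proposal.} The plan is to treat Theorem \ref{structure for lines} as the Lorentzian transcription of V. Bangert's theory of minimal geodesics, so that the whole argument rests on one foundational device, a \emph{crossing lemma}: any two causal maximizers in $(\mathbb{R}^{2},g)$ meet in at most one connected piece. I would prove this by contradiction using the reverse triangle inequality. If two maximizers crossed transversally at two distinct points $x,y$, then swapping the arcs between $x$ and $y$ produces a causal curve with corners at $x$ and $y$; rounding each corner strictly increases the Lorentzian length of a causal curve, a standard fact (see \cite{B-E-E}), so one obtains a causal curve from $x$ to $y$ strictly longer than the two maximizing arcs, contradicting $L^{g}=d$. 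Granting this, causal maximizers are simple curves in $\mathbb{R}^{2}$ and $\mathscr{M}_{\alpha}$ is linearly organized by the order $<$ of Definition \ref{<,>}. Part (1) is then almost entirely Proposition \ref{causal maximizer}: existence of a causal line for each $\alpha\in[m^{-},m^{+}]$ and the uniform bounded distance from $\overline{\alpha}$ are exactly its content, while Corollary \ref{causal} supplies, through the bound $L^{g_{R}}\le C(g,g_{R})d_{R}$, the equicontinuity needed to take $C^{0}$-limits within the class of causal curves. The only new point is that an \emph{arbitrary} causal line has an asymptotic direction; here I would argue that a simple causal maximizer whose tangents lie in the compact cone $\mathfrak{T}$ cannot reverse its net displacement, so the normalized displacements $(\gamma(T_{2})-\gamma(T_{1}))/\|\gamma(T_{2})-\gamma(T_{1})\|$ are eventually monotone along the order and converge to the sought $\alpha$.

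For part (2) I would fix an irrational $\alpha$ and a spacelike transversal $\Sigma$. Disjointness of distinct timelike lines of direction $\alpha$ is forced by the crossing lemma together with irrationality: a single crossing, propagated by the deck group $\{T_{(j,k)}\}$, would generate a second crossing, which is impossible; hence each line meets $\Sigma$ once and the order on lines matches the order on $\Sigma$. The $\mathbb{Z}^{2}$-action then descends to an order-preserving circle dynamics of irrational rotation number, and the Denjoy dichotomy gives the claimed alternative: $\mathscr{M}_{\alpha}^{rec}\cap\Sigma$ is either all of $\Sigma$ (a foliation) or a Cantor set, whose gaps biject with pairs of neighboring recurrent lines. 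That a neighboring pair $\underline{\gamma},\overline{\gamma}$ is asymptotic in both directions (Definition \ref{asy}) follows because the open strip between them contains no recurrent line; any $T_{(j,k)}$-translate accumulating inside it would produce one, so by minimality the translates crowd toward the two boundary lines, and the a priori bounded-distance estimate upgrades this to $d_{R}$-convergence.

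For part (3), let $(q,p)\in\overline{\alpha}$ be primitive, so $T_{(q,p)}$ preserves $\mathscr{M}_{\alpha}$. By the crossing lemma $T_{(q,p)}\gamma$ cannot cross $\gamma$, so exactly one of $T_{(q,p)}\gamma<\gamma$, $T_{(q,p)}\gamma=\gamma$, $T_{(q,p)}\gamma>\gamma$ holds, yielding $\mathscr{M}_{\alpha}=\mathscr{M}_{\alpha}^{per}\cup\mathscr{M}_{\alpha}^{+}\cup\mathscr{M}_{\alpha}^{-}$. The fixed lines $\mathscr{M}_{\alpha}^{per}$ are closed and ordered, so every $\gamma\in\mathscr{M}_{\alpha}^{\pm}$ sits in a strip between neighboring periodic lines $\underline{\gamma}<\gamma<\overline{\gamma}$; monotone iteration of $T_{(q,p)}$ inside this bounded strip pins a $+$ line to $\overline{\gamma}$ in the future and to $\underline{\gamma}$ in the past (reversed for $-$ lines). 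The substantive assertion is the existence, in each such strip, of at least one $+$ and one $-$ line, which I would obtain by a variational construction: maximize $L^{g}$ over causal curves crossing one period of the strip from $\underline{\gamma}$ to $\overline{\gamma}$, extract a limit using Corollary \ref{causal} and the confinement to the bounded strip, and check the maximizer is non-periodic, whence it lies in $\mathscr{M}_{\alpha}^{+}$; the time-reversed construction supplies the $\mathscr{M}_{\alpha}^{-}$ line.

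Part (4) is a compactness statement: continuity of the asymptotic direction follows by passing the uniform bounded-distance estimate of part (1) to a $C^{0}$-limit, forcing $\alpha=\lim\alpha_{k}$, and the approximation claim is upper semicontinuity of the recurrent (periodic) sets, obtained by choosing recurrent lines of direction $\alpha_{k}\to\alpha$ through a fixed point of the target $\gamma$, extracting a convergent subsequence via Corollary \ref{causal} and Arzel\`a--Ascoli, and identifying the limit with $\gamma$ by minimality. I expect the main obstacle to be the variational existence of the heteroclinic lines in part (3): the maximization runs over a non-compact family with the Lorentzian length functional, which is degenerate on the light cone and only upper semicontinuous, so the crux is to use the uniform length bound of Corollary \ref{causal} together with confinement to a bounded strip to recover a maximizing limit that is genuinely timelike and non-periodic. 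A secondary delicate point is verifying the crossing lemma itself, namely that corner-rounding strictly increases proper time even when the crossing is only topologically transverse.
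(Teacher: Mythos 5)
This theorem is not proved in the paper at all: it is imported wholesale from Scheling's Diplomarbeit \cite{Sc} and Suhr's work \cite{Su2}, exactly as Proposition \ref{causal maximizer} is, so there is no in-paper proof to measure your proposal against. What the paper does do is take as given, in Section 4, precisely the two devices you make foundational --- Morse's lemma (two timelike maximal segments meet at most once away from endpoints) and the curve lengthening lemma --- and then use them together with the limit-curve machinery of \cite{G-H} and Corollary \ref{causal}. Your outline is therefore the standard Bangert/Aubry--Mather template that \cite{Sc} and \cite{Su2} adapt to the Lorentzian setting, and at the level of architecture it is the right route: crossing lemma, deck-group order on $\mathscr{M}_{\alpha}$, Denjoy dichotomy for irrational $\alpha$, the $T_{(q,p)}$-trichotomy for rational $\alpha$, and compactness for part (4).

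Three soft spots deserve naming. First, your corner-rounding argument is stated for \emph{causal} maximizers, but the first-variation/lengthening argument degenerates when the arcs at the corner are lightlike; the paper itself is careful to state Morse's lemma only for \emph{timelike} maximal segments, and part (1) concerns general causal lines, so the single-intersection property there needs the causality-theoretic version (a broken causal curve containing a genuine corner or a timelike piece can be deformed to a timelike curve of greater length, cf. \cite{B-E-E}), not just geodesic transversality. Second, "the normalized displacements are eventually monotone along the order" is not an argument that an arbitrary causal line has an asymptotic direction; the actual proof in \cite{Su2} runs through a rotation-number construction (the line is simple and separates $\mathbb{R}^{2}$, its translates are ordered, and the induced circle map has a rotation number), and that is the step you should spell out. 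Third, in part (3) maximizing over causal curves crossing \emph{one period} of the strip cannot produce a line, let alone a heteroclinic one; the construction must connect $\underline{\gamma}(-kT)$-type points to $\overline{\gamma}(+kT)$-type points with $k\to\infty$ and pass to a limit, using Proposition \ref{A1-1} to keep the limit uniformly timelike --- which is the obstacle you correctly identify but do not resolve.
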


This article is organized as follows. In Section 2, we introduce the definition of  viscosity solutions to the Lorentzian eikonal equation and state our main results, namely Theorems \ref{main theorem 1}, \ref{main theorem 2} and \ref{non-diff sts}. Section 3 is devoted to a proof of geodesical completeness of timelike rays (lines) with asymptotic directions in $(m^{-},m^{+})$. The intersection properties of timelike rays with asymptotic directions in $(m^{-},m^{+})$ is discussed in detail in Section 4. We show some priori regularities of the  Lorentzian Busemann functions that are necessary for the proof of our main results in Section 5. We give the proof of Theorems \ref{main theorem 1}, \ref{main theorem 2} in Section 6. As an application of Theorems \ref{main theorem 1}, \ref{main theorem 2}, we prove Theorem \ref{non-diff sts} in Section 7. The last section is given as an appendix, where we introduce some basic tools and results that are necessary in our proof of the main results.

\section{Statement of the main results}
In this section, we shall state the main results of this article. We need to introduce two preliminary notions: viscosity solution of eikonal equation and semiconcavity.

\begin{defn}\label{eq:eikonal}
Let $(M,g)$ be a Lorentzian manifold, and $\nabla$ be the gradient induced by the Lorentzian metric $g$. Associated to the Lorentzian metric $g$, there is a canonical Hamilton-Jacobi type equation on $(\overline{M},g)$, i.e. the eikonal equation
\begin{equation*}
g(\nabla u, \nabla u)=-1    \tag{$*$}
\end{equation*}
lies at the center of the study of global Lorentzian geometry.
\end{defn}

Let us interpret what is the viscosity solution of the eikonal equation ($*$). First we introduce the definition of generalized gradients.

\begin{defn}[{\cite[Definition 3.1.6, Proposition 3.1.7]{C-S}}]\label{generalized gradients}
If $u:\overline{M}\rightarrow\mathbb{R}$ is a continuous function defined on Lorentzian manifold $(\overline{M},g)$, then a vector $V\in T_{q}\overline{M}$ is called to be a subgradient (resp. supergradient) of $u$ at $q\in\overline{M}$, if there exists a neighborhood $O$ of $q$ and a $C^{1}$ function $\phi:O\rightarrow\mathbb{R}$ satisfies that $\phi(q)=u(q)$, $\phi(p)\leq u(p)$ (resp. $\phi(p)\geq u(p)$) for every $p\in O$ and $\nabla \phi(q)=V$.
\end{defn}
We denote by $\nabla^{-}u(q)$ (resp. $\nabla^{+}u(q)$) the set of subgradients (resp. supergradients) of $u$ at $q$.

\begin{defn}\label{vis}
A continuous function is called a viscosity subsolution of equation ($*$) if for any $q \in\overline{M}$,
$$g(V,V) \leq -1 \text{ for every } V \in \nabla^+u(q).$$

Similarly, a continuous function is called a viscosity supersolution of equation ($*$) if for any $q \in\overline{M}$,
$$g(V,V) \geq -1 \text{ for every } V \in \nabla^-u(q).$$

A continuous function is a viscosity solution if it is  a viscosity subsolution and a viscosity supersolution simultaneously.
\end{defn}

The second notion we need is semiconcavity.

\begin{defn}[{\cite[Definition 10.10]{V}}]\label{semiconcave}
Fix an auxiliary Riemannian metric $g_{R}$ on a smooth manifold $\overline{M}$ and let $U$ be an open subset of $\overline{M}$, a function $u:U\rightarrow\mathbb{R}$ is said to be semiconcave if there exists a $K>0$ such that for any constant-speed $g_{R}$-geodesic path $\gamma(t), t\in[0,1]$, whose image is included in $U$,
\begin{equation}\label{ieq:semiconcave}
(1-t)u(\gamma(0))+tu(\gamma(1))-u(\gamma(t))\leq K\frac{t(1-t)}{2}d^{2}_{R}(\gamma(0),\gamma(1)).
\end{equation}

A function $u:\overline{M}\rightarrow \mathbb{R}$ is said to be locally semiconcave if for each $q\in\overline{M}$ there is a neighborhood $U$ of $q$ in $\overline{M}$ such that \ref{ieq:semiconcave} holds true as soon as $\gamma(0),\gamma(1)\in U$.
\end{defn}

For $\alpha\in(m^{-},m^{+})$, let $\mathscr{R}_{\alpha}$ denote the set of timelike rays on $(\mathbb{R}^{2},g)$ with asymptotic direction $\alpha$. Based on all the definitions and results surveyed in Section 1, we could prove the following two theorems. In these two theorems, timelike rays in $\mathscr{R}_{\alpha}$ are parametrized as $g$-geodesics with domain $\mathbb{R}_{+}$.

\begin{The}\label{main theorem 1}
Let $(\mathbb{R}^{2},g)$ be the Abelian cover of a class A Lorentzian 2-torus. Then for every asymptotic direction $\alpha\in(m^{-},m^{+})$, there exist at least one weakly closed form $\omega_{\alpha}$ on $\mathbb{T}^{2}$ and a corresponding continuous function $b_{\alpha}:\mathbb{R}^{2}\rightarrow\mathbb{R}$ such that:
\begin{enumerate}
  \item $db_{\alpha}=(\pi^{-1})^{\ast}\omega_{\alpha}$ and $b_{\alpha}$ is a viscosity solution of the Lorentzian eikonal equation ($*$).

  \item $b_{\alpha}$ is Lipschitz and locally semi-concave, so its first and second derivatives exist almost everywhere.

  \item $b_{\alpha}$ is differentiable at  $q\in \mathbb{R}^{2}$ if and only if there exists a unique timelike ray $\gamma_{q}:\mathbb{R}_{+}\rightarrow \mathbb{R}^{2}$ in $\mathscr{R}_{\alpha}$, that satisfies
      \begin{equation}\label{eq:weakam1}
      b_{\alpha}(\gamma_{q}(t))-b_{\alpha}(\gamma_{q}(0))=t;
      \hspace{0.2cm}\gamma_{q}(0)=q
      \end{equation}
      and
      \begin{equation}\label{eq:weakam2}
      \dot{\gamma_{q}}(0)= -\nabla b_{\alpha}(q).
      \end{equation}
      For fixed $\alpha$ and $q\in\mathbb{R}^{2}$, $b_{\alpha}$ is differentiable at any $\gamma_{q}(t)$ for $t\in(0,\infty)$ and $\dot{\gamma_{q}}(t)=-\nabla b_{\alpha}(\gamma_{q}(t))$.

  \item Denote the set of all lifted timelike rays $\zeta:\mathbb{R}_{+}\rightarrow(\mathbb{R}^{2},g)$ that satisfy
      \begin{equation}
      b_{\alpha}(\zeta(t))-b_{\alpha}(\zeta(0))=t\hspace{0.2cm}\text{for any} \hspace{0.2cm}t\in\mathbb{R}_{+}
      \end{equation}
      by $\mathfrak{C}_{\alpha}$.
      Then $\mathfrak{C}_{\alpha}\subseteq\mathscr{R}_{\alpha}$.
\end{enumerate}
\end{The}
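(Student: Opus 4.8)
The plan is to realize $b_\alpha$ as a Lorentzian Busemann function attached to a recurrent timelike line of asymptotic direction $\alpha$, and then to run a weak-KAM argument based on the twin properties of \emph{domination} and \emph{calibration}.

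First I would fix a recurrent timelike line $\gamma\in\mathscr{M}_\alpha^{rec}$ (which exists by Theorem \ref{structure for lines}), parametrized as a unit-speed $g$-geodesic so that $d(\gamma(s),\gamma(t))=t-s$ for $s\le t$, and set
$$ b_\alpha(x)=\lim_{t\to\infty}\big(t-d(x,\gamma(t))\big). $$
Since $\alpha$ is timelike-interior, every $x$ eventually lies in $I^{-}(\gamma(t))$, so $d(x,\gamma(t))>0$; the reverse triangle inequality makes $t\mapsto t-d(x,\gamma(t))$ non-increasing, and Corollary \ref{causal} together with the bounded-distance-to-$\overline\alpha$ estimate of Theorem \ref{structure for lines}(1) bounds it below, so the limit exists. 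The same reverse-triangle estimate and Corollary \ref{causal} give the global Lipschitz bound $|b_\alpha(x)-b_\alpha(y)|\le C\,d_R(x,y)$. For local semiconcavity I would use that each $x\mapsto -d(x,\gamma(t))$ is locally semiconcave with a constant independent of $t$ (a property of the Lorentzian time-separation on the globally hyperbolic cover, to be recorded in the appendix), whence the monotone limit $b_\alpha$ is locally semiconcave; this gives (2) together with a.e. first- and second-order differentiability.

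Next, domination follows from the reverse triangle inequality: for $p\le q$ and any causal $\zeta$ from $p$ to $q$ one has $b_\alpha(q)-b_\alpha(p)\ge d(p,q)\ge L^g(\zeta)$, while calibration $b_\alpha(\gamma(t))-b_\alpha(\gamma(s))=t-s$ holds along $\gamma$. Differentiating the domination inequality along a timelike direction at a point of differentiability and invoking the wrong-way Cauchy--Schwarz inequality for timelike vectors forces $g(\nabla b_\alpha,\nabla b_\alpha)\le -1$ with $\nabla b_\alpha$ past-directed timelike. As $\{V\ \text{timelike}: -g(V,V)\ge 1\}$ is convex inside a time cone and the supergradients of a semiconcave function are convex combinations of limits of gradients at nearby differentiability points, $g(V,V)\le -1$ propagates to every $V\in\nabla^{+}b_\alpha$, so $b_\alpha$ is a viscosity subsolution. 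For the reverse inequality I would, through each $q$, build a calibrating ray as a limit $\gamma_q$ of maximizers $\sigma_n$ from $q$ to $\gamma(t_n)$: global hyperbolicity of the cover together with the length control of Corollary \ref{causal} supplies the compactness (limit curve theorem), and one checks that $\gamma_q$ is a timelike ray, that $b_\alpha(\gamma_q(t))-b_\alpha(q)=t$, and that its asymptotic direction is $\alpha$ (the bounded-distance structure prevents the limit from drifting off $\overline\alpha$). This simultaneously yields the existence of $\gamma_q$ in (3) and the inclusion $\mathfrak{C}_\alpha\subseteq\mathscr{R}_\alpha$ in (4). Along such a ray $b_\alpha$ is differentiable for $t>0$ with $\nabla b_\alpha(\gamma_q(t))=-\dot\gamma_q(t)$, hence $g(\nabla b_\alpha,\nabla b_\alpha)=-1$ there; since a semiconcave function is differentiable wherever a subgradient exists and subgradients occur only at differentiability points, this gives the supersolution inequality and finishes (1). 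The equivalence in (3) then drops out of semiconcavity: differentiability forces the supergradient, hence the velocity $-\dot\gamma_q(0)$, to be unique, and conversely uniqueness of the ray pins down a single limiting gradient.

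Finally, to extract the weakly closed form $\omega_\alpha$ I would show $db_\alpha$ is $\mathbb{Z}^2$-periodic. Here recurrence is essential: the minimal set $\mathscr{M}_\alpha^{rec}$ is invariant under the deck translations $T_{(j,k)}$, so the field of calibrating rays through differentiability points is deck-invariant, and because each $T_{(j,k)}$ is a translation (its differential is the identity) we get $\nabla b_\alpha(q+(j,k))=\nabla b_\alpha(q)$ almost everywhere; thus $db_\alpha$ descends to an $L^\infty$, distributionally closed form $\omega_\alpha$ on $\mathbb{T}^2$ with $db_\alpha=(\pi^{-1})^{\ast}\omega_\alpha$. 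I expect the main obstacle to be the construction in the third paragraph: producing, at every point, a calibrating timelike ray that genuinely carries the asymptotic direction $\alpha$ and controlling the limit of the maximizers $\sigma_n$ so that it cannot degenerate to a null curve or acquire a different direction. This is exactly where global hyperbolicity of $(\mathbb{R}^2,g)$, Corollary \ref{causal}, and the two-dimensional structure theorem for maximizers carry the argument, and it is also the linchpin for both the supersolution property and the periodicity.
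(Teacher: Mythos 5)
Your overall strategy---realizing $b_{\alpha}$ as the Lorentzian Busemann function of a timelike line, establishing Lipschitz continuity and semiconcavity via smooth upper support functions, getting the subsolution property from domination plus convexity of supergradients and the supersolution property from calibrating co-rays---is essentially the route the paper takes (Theorems \ref{lip Bu}, \ref{sca Bu}, \ref{vis}, \ref{weakkam}, \ref{weakkam2}). The genuine gap is in your final paragraph, and it occurs precisely in the rational case. For rational $\alpha$, the Busemann function $b_{\gamma}$ of a \emph{single} periodic line $\gamma\in\mathscr{M}_{\alpha}^{per}$ does \emph{not} have a $\mathbb{Z}^{2}$-periodic differential, so it does not descend to a weakly closed form on $\mathbb{T}^{2}$. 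The reason is that the field of calibrating rays of $b_{\gamma}$ is not deck-invariant: by Theorem \ref{gls}, $\mathfrak{C}_{\gamma}=\mathscr{R}_{\alpha}^{per}\cup\{\zeta\in\mathscr{R}_{\alpha}^{+}\mid\zeta\leq\gamma\}\cup\{\zeta\in\mathscr{R}_{\alpha}^{-}\mid\zeta\geq\gamma\}$, so which heteroclinic ray calibrates $b_{\gamma}$ at a point $q$ depends on whether $q>\gamma$ or $q<\gamma$, and a translation $T_{h}$ transverse to $\gamma$ moves points across $\gamma$. Invariance of the \emph{set} $\mathscr{M}_{\alpha}^{per}$ under the deck group is not enough; what would be needed is invariance of the calibrating field of the particular function $b_{\gamma}$, and that fails whenever the periodic lines do not foliate (indeed Theorem \ref{per fol} shows $b_{\underline{\gamma}}(\overline{\gamma}(0))+b_{\overline{\gamma}}(\underline{\gamma}(0))>0$ across any genuine gap, which is exactly the failure of $db_{\gamma}$ to be periodic). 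The paper repairs this by the monotone exhaustion of Theorem \ref{rational global vis}: one takes $b_{\alpha}^{+}=\lim_{k\to\infty}b_{T_{kh}\gamma}$ (suitably normalized), which stabilizes on the increasing domains $\Xi_{k}=\{q<T_{kh}\gamma\}$ and has the deck-invariant calibrated set $\mathscr{R}_{\alpha}^{per}\cup\mathscr{R}_{\alpha}^{+}$, so its differential does descend. Your argument is correct as written for irrational $\alpha$, where $\mathfrak{C}_{\gamma}=\mathscr{R}_{\alpha}$ is $\mathbb{Z}^{2}$-invariant.

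A secondary point: the inclusion $\mathfrak{C}_{\alpha}\subseteq\mathscr{R}_{\alpha}$ in item (4) is about \emph{all} rays satisfying the calibration identity, not only the co-rays you construct as limits of maximizers to $\gamma(t_{n})$. To close this you need the step the paper supplies in Theorems \ref{weakkam} and \ref{weakkam2}: any calibrated ray is differentiable along its interior with $-\dot{\zeta}(t)=\nabla b_{\alpha}(\zeta(t))$, hence its initial velocity is a limiting gradient, hence by uniqueness of geodesics with prescribed initial data it coincides with one of the constructed co-rays, which do carry direction $\alpha$ by Corollary \ref{ray osc}. You gesture at this via semiconcavity, but the identification of arbitrary calibrated rays with co-rays should be made explicit.
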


\begin{The}\label{main theorem 2}
Let $(\mathbb{R}^{2},g)$ be the Abelian cover of a class A Lorentzian 2-torus.
\begin{enumerate}
  \item For every irrational $\alpha\in(m^{-},m^{+})$, the function $b_{\alpha}$ satisfying all conditions listing in Theorem \ref{main theorem 1} is unique up to a constant. In addition, the set equation $\mathfrak{C}_{\alpha}=\mathscr{R}_{\alpha}$ always holds in this case.
  \item For every rational $\alpha\in(m^{-},m^{+})$, if there does not exist a foliation of $\mathbb{T}^{2}$ consisting of all periodic lines with asymptotic direction $\alpha$ as its leaves, then there are at least two different functions, say $b_{\alpha}^{+},b_{\alpha}^{-}$, satisfying all conditions listed in Theorem \ref{main theorem 1}. Moreover, the corresponding weakly closed forms represent different cohomology classes, i.e.
      \begin{equation}\label{ieq:cohomology class}
      [\omega_{\alpha}^{+}]\neq[\omega_{\alpha}^{-}].
      \end{equation}
\end{enumerate}
\end{The}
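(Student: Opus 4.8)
The common engine is a domination inequality for any viscosity solution $b_\alpha$ of $(*)$. By Theorem~\ref{main theorem 1}(3) the vector $-\nabla b_\alpha$ is future-directed timelike wherever it exists, so the reverse Cauchy--Schwarz inequality gives, for every future-directed timelike curve $\zeta$ parametrised by $g$-arclength,
\begin{equation*}
b_\alpha(\zeta(t))-b_\alpha(\zeta(0))\ \ge\ t,
\end{equation*}
with equality exactly on $\mathfrak{C}_\alpha$. Hence the discrepancy $D_\zeta(t):=b_\alpha(\zeta(t))-b_\alpha(\zeta(0))-t$ is nonnegative, nondecreasing in $t$, and vanishes at $t=0$. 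For Part~(1) I first prove $\mathfrak{C}_\alpha=\mathscr{R}_\alpha$; the inclusion $\subseteq$ is Theorem~\ref{main theorem 1}(4). For a recurrent ray $\eta$ the deck relation $b_\alpha(x+h)-b_\alpha(x)=\langle[\omega_\alpha],h\rangle$, together with the normalisation that fixes the growth rate of $b_\alpha$ along the minimal set to be $1$, shows $D_\eta$ is bounded; being nonnegative, nondecreasing, and recurrent it must vanish, so recurrent rays calibrate.

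For a general $\zeta\in\mathscr{R}_\alpha$ the foliated subcase is then finished, while in the Cantor subcase $\zeta$ lies in a gap whose two bounding recurrent lines $\underline\eta,\overline\eta$ are, by Theorem~\ref{structure for lines}(2), asymptotic to one another in both directions; thus $\zeta$ is a co-ray coalescing with the calibrated curve $\underline\eta$, and the maximiser property together with the Busemann definition of $b_\alpha$ forces the limiting discrepancy to vanish, so $D_\zeta\equiv0$. This gives $\mathfrak{C}_\alpha=\mathscr{R}_\alpha$, and uniqueness follows at once. Fixing one solution $b_\alpha$ with (by Theorem~\ref{main theorem 1}(2)) $g_R$-null non-differentiability set $N$, on $\R^{2}\setminus N$ Theorem~\ref{main theorem 1}(3) produces a unique calibrating ray through each point, which now means the unique timelike ray of $\mathscr{R}_\alpha$ there---a condition independent of the solution. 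Any second solution $b_\alpha'$ also satisfies $\mathfrak{C}_\alpha'=\mathscr{R}_\alpha$, so it is differentiable on the same set with $-\nabla b_\alpha'(q)=\dot\gamma_q(0)=-\nabla b_\alpha(q)$; hence $d(b_\alpha-b_\alpha')=0$ almost everywhere and, by Lipschitz continuity, $b_\alpha-b_\alpha'$ is constant.

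For Part~(2) I construct $b_\alpha^{\pm}$ by running the proof of Theorem~\ref{main theorem 1} with the calibrating family drawn from $\mathscr{M}_\alpha^{+}$ resp.\ $\mathscr{M}_\alpha^{-}$; the argument is insensitive to this choice. Every periodic line calibrates both functions, so $f:=b_\alpha^{+}-b_\alpha^{-}$ is constant along each periodic line, with value $c_{\overline\gamma}$ on $\overline\gamma$. The domination inequality makes $f$ nonincreasing along the $\mathscr{M}_\alpha^{+}$-lines and nondecreasing along the $\mathscr{M}_\alpha^{-}$-lines; letting $t\to\pm\infty$ and invoking the boundary asymptotics of Theorem~\ref{structure for lines}(3) yields $c_{\overline\gamma}\le c_{\underline\gamma}$ for every pair of neighbouring periodic lines. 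On the other hand the deck relation reads $\langle[\omega_\alpha^{+}]-[\omega_\alpha^{-}],h\rangle=f(\cdot+h)-f(\cdot)$: on the primitive class $h_0$ of $\alpha$ both pairings equal the $g$-length $T_0$ of the periodic line, while on a complementary class $h_1$ the pairing equals $c_{T_{h_1}\overline\gamma_0}-c_{\overline\gamma_0}$, the accumulated drop of the boundary constants across the gaps $h_1$ crosses. Consequently $[\omega_\alpha^{+}]=[\omega_\alpha^{-}]$ if and only if $c_{\overline\gamma}=c_{\underline\gamma}$ in every gap.

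To finish, suppose every drop vanishes; then $f$ is constant and $b_\alpha^{+}=b_\alpha^{-}$ up to a constant. Since the $+$ construction yields a Busemann function whose calibrated co-rays are forward-asymptotic to the upper boundaries, they lie in $\mathscr{M}_\alpha^{+}\cup\mathscr{M}_\alpha^{per}$, and symmetrically for $-$; thus this common function would have every calibrated line in $\mathscr{M}_\alpha^{per}$. But Theorem~\ref{main theorem 1}(3) supplies a calibrated ray through almost every point, so the periodic lines would fill $\T^{2}$ and, being a disjoint closed family, foliate it---against the hypothesis. Hence some gap has $c_{\overline\gamma}<c_{\underline\gamma}$, giving $[\omega_\alpha^{+}]\neq[\omega_\alpha^{-}]$. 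The main obstacle, in both parts, is the transfer of calibration through the non-recurrent region: in Part~(1) the co-ray vanishing $D_\zeta(\infty)=0$ for wandering rays inside a Cantor gap, and in Part~(2) the orientation-consistency pinning the calibrated co-rays of $b_\alpha^{\pm}$ to $\mathscr{M}_\alpha^{\pm}$. Each reduces to a parametrisation-matched comparison of Busemann values along the two asymptotic curves bounding a gap, and this is exactly where the disjointness and the sharp asymptotics of Theorem~\ref{structure for lines} must be pushed to the limit.
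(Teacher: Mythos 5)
Your architecture is partly parallel to the paper's (uniqueness via a.e.\ matching of gradients, as in Theorem \ref{unique}; construction of $b_{\alpha}^{\pm}$ by exhausting $\mathbb{R}^{2}$ with the two families, as in Theorem \ref{rational global vis}), but Part (2) has a genuine gap. Your control of $f=b_{\alpha}^{+}-b_{\alpha}^{-}$ lives entirely on the \emph{gaps} of $\mathscr{M}_{\alpha}^{per}$: the heteroclinic lines of Theorem \ref{structure for lines}(3) exist only between \emph{neighbouring} periodic lines, so your monotonicity gives $c_{\overline{\gamma}}\leq c_{\underline{\gamma}}$ only for neighbouring pairs, and the squeeze $c_{\overline{\gamma}}\leq f\leq c_{\underline{\gamma}}$ only inside a gap. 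Since $\pi\mathscr{M}_{\alpha}^{per}$ may meet a transversal in a Cantor set, the identification of $\langle[\omega_{\alpha}^{+}]-[\omega_{\alpha}^{-}],h_{1}\rangle$ with the ``accumulated drop across the gaps'' is unjustified: a continuous function that is monotone on each gap closure can still vary on the Cantor set itself (devil's-staircase phenomenon), so $[\omega_{\alpha}^{+}]=[\omega_{\alpha}^{-}]$ does not force the individual gap drops to vanish, $f$ need not be constant, and your final contradiction never launches. The missing ingredient is exactly the paper's Theorem \ref{per fol}: $A(\gamma_{1},\gamma_{2})=b_{\gamma_{1}}(\gamma_{2}(0))+b_{\gamma_{2}}(\gamma_{1}(0))$, which equals $c_{\gamma_{1}}-c_{\gamma_{2}}$, is nonnegative for \emph{arbitrary} pairs $\gamma_{1}<\gamma_{2}$ (this is the global transverse monotonicity you need) and is at least a definite $2\epsilon_{0}>0$ across a genuine gap, extracted from the transversal crossing of $\gamma^{+}\in\mathscr{M}_{\alpha}^{+}$ with $\gamma^{-}\in\mathscr{M}_{\alpha}^{-}$ plus the curve-lengthening lemma. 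Your soft endgame (equal cohomology $\Rightarrow$ equal calibrated sets $\Rightarrow$ foliation) is an attractive substitute for the strict lower bound, but it only closes once global monotonicity is in hand; as written the argument is incomplete.

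A smaller but real problem occurs in Part (1), in transferring calibration to a wandering ray $\zeta$ inside a Cantor gap. You compare $\zeta$ with a bounding recurrent line $\underline{\eta}$ that does \emph{not} issue from $\zeta(0)$; matching the parametrisations then introduces a phase constant (equal to $-b_{\underline{\eta}}(\zeta(0))$), and the domination inequality only yields $\lim_{t}D_{\zeta}(t)\geq 0$, not $=0$. The paper's Lemma \ref{asym grad} avoids this by comparing two rays issuing from the \emph{same} point: every ray of $\mathscr{R}_{\alpha}$ from $q$ is asymptotic to the co-ray from $q$, which calibrates by Proposition \ref{int curve}, and then the phase shift is exactly zero. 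The paper also orders the steps the other way round --- uniqueness first, from the a.e.\ identification of gradients with the unique ray of $\mathscr{R}_{\alpha}$, and only then $\mathfrak{C}_{\alpha}=\mathscr{R}_{\alpha}$ via Theorem \ref{gls} --- which sidesteps having to prove the set equation for an abstract solution. Your uniqueness argument itself is sound and coincides with Theorem \ref{unique}.
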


\begin{Rem}
 Since timelike maximizers could be reparametrized as geodesics of $(\mathbb{R}^{2},g)$, the main results imply that, as $g$-geodesics, every timelike ray and line with asymptotic direction $\alpha\in(m^{-},m^{+})$ is future geodesically complete (geodesically complete) respectively. We will prove this point in Section 3.
\end{Rem}

As an application of the main results, we could prove the following theorem concerning the differentiability of the unit sphere of the stable time separation $\mathfrak{l}$. For the definition of stable time separation, see Theorem \ref{sts}.

\begin{defn}
Since for every $\alpha\in(m^{-},m^{+})$, $\mathfrak{l}^{-1}(1)\cap\overline{\alpha}$ is a singleton, we define that the unit sphere $\mathfrak{l}^{-1}(1)$ is differentiable at an asymptotic direction $\alpha$ if and only if it is differentiable at the point $\mathfrak{l}^{-1}(1)\cap\overline{\alpha}$ as a curve.
\end{defn}

\begin{The}\label{non-diff sts}
Let $(\mathbb{T}^{2},g)$ be a class A Lorentzian 2-torus and $\mathfrak{l}:\mathfrak{T}\rightarrow\mathbb{R}$ be the stable time separation. Then the unit sphere of $\mathfrak{l}|_{\mathfrak{T}^{\circ}}$ is always differentiable at each irrational asymptotic direction $\alpha$ and is differentiable at a rational asymptotic direction $\alpha$ if and only if there is a foliation of $\mathbb{T}^{2}$ whose leaves all belong to $\pi\mathscr{M}_{\alpha}^{per}$.
\end{The}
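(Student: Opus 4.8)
The plan is to realize each cohomology class $[\omega_\alpha]$ produced by Theorems \ref{main theorem 1} and \ref{main theorem 2} as a supporting functional to the unit sphere of $\mathfrak{l}$ at the point $h_\alpha:=\mathfrak{l}^{-1}(1)\cap\overline{\alpha}$, and then to translate differentiability of the sphere into uniqueness of this supporting functional. Since the time separation obeys the reverse triangle inequality, $\mathfrak{l}$ is concave and positively $1$-homogeneous on $\mathfrak{T}^{\circ}$, so $K:=\{h\in\mathfrak{T}^{\circ}:\mathfrak{l}(h)\geq 1\}$ is convex with boundary $\mathfrak{l}^{-1}(1)$; and in $H_{1}(\mathbb{T}^{2},\mathbb{R})\cong\mathbb{R}^{2}$ a convex curve is differentiable at a boundary point exactly when the supporting line there is unique. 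Hence the whole theorem reduces to the claim that the supporting line to $K$ at $h_\alpha$ is unique if and only if $\alpha$ is irrational, or $\alpha$ is rational and $\pi\mathscr{M}_{\alpha}^{per}$ foliates $\mathbb{T}^{2}$.

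First I would set up the duality. Writing $b_\alpha=\ell_\alpha+\rho_\alpha$, where $\ell_\alpha:=\langle[\omega_\alpha],\cdot\rangle$ is linear and $\rho_\alpha$ is $\mathbb{Z}^{2}$-periodic (this follows from $db_\alpha=(\pi^{-1})^{\ast}\omega_\alpha$, which forces $b_\alpha(q+e)-b_\alpha(q)=\langle[\omega_\alpha],e\rangle$ independently of $q$), the supersolution property of $b_\alpha$ together with the reverse Cauchy--Schwarz inequality for future-timelike vectors gives $b_\alpha(q')-b_\alpha(q)\geq d(q,q')$ whenever $q\leq q'$. Evaluating this along $q\mapsto q+Nh$ for integral causal $h$, dividing by $N$ and letting $N\to\infty$, yields $\langle[\omega_\alpha],h\rangle\geq\mathfrak{l}(h)$, valid on all of $\mathfrak{T}$ by homogeneity and continuity. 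On the other hand, along a calibrated ray $\gamma\in\mathfrak{C}_\alpha$ one has $b_\alpha(\gamma(t))-b_\alpha(\gamma(0))=t=L^{g}(\gamma|_{[0,t]})=d(\gamma(0),\gamma(t))$; since $\rho_\alpha$ is bounded, $t=\ell_\alpha(\gamma(t)-\gamma(0))+O(1)$, while $\gamma(t)-\gamma(0)$ stays at bounded distance from $\overline{\alpha}$ and grows to infinity by Theorem \ref{structure for lines}(1). Dividing by $\|\gamma(t)-\gamma(0)\|$ and passing to the limit identifies $\ell_\alpha(\hat\alpha)$ with the asymptotic ratio $d(\gamma(0),\gamma(t))/\|\gamma(t)-\gamma(0)\|\to\mathfrak{l}(\hat\alpha)$, so $\langle[\omega_\alpha],h_\alpha\rangle=\mathfrak{l}(h_\alpha)=1$. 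Thus the affine line $\{\langle[\omega_\alpha],\cdot\rangle=1\}$ supports $K$ at $h_\alpha$, and two distinct classes normalized to value $1$ at $h_\alpha$ give two distinct supporting lines.

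With this dictionary the two extreme cases are immediate. For irrational $\alpha$, Theorem \ref{main theorem 2}(1) gives a $b_\alpha$ unique up to an additive constant, hence a unique $[\omega_\alpha]$, hence a unique supporting line, so $\mathfrak{l}^{-1}(1)$ is differentiable at $\alpha$. For rational $\alpha$ admitting no periodic foliation, Theorem \ref{main theorem 2}(2) furnishes $b_\alpha^{+},b_\alpha^{-}$ with $[\omega_\alpha^{+}]\neq[\omega_\alpha^{-}]$; the two resulting supporting lines are distinct, producing a corner of $\partial K$ at $h_\alpha$ and thus non-differentiability. The remaining implication---that a periodic foliation forces differentiability---is the crux. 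Here I would first note that $\mathscr{M}_\alpha=\mathscr{M}_\alpha^{per}$, since by Theorem \ref{structure for lines}(3) any $\mathscr{M}_\alpha^{\pm}$-line would lie in a gap between neighboring periodic lines, which a foliation forbids. Consequently the calibrated curves through every point are exactly the foliation leaves; by Theorem \ref{main theorem 1}(3) this makes $b_\alpha$ differentiable \emph{everywhere}, with $\nabla b_\alpha$ prescribed by the continuous leaf tangents, so $b_\alpha$ and therefore $[\omega_\alpha]$ is unique up to a constant. To exclude supporting lines not arising from a viscosity solution I would invoke the $C^{0}$-continuity of Theorem \ref{structure for lines}(4): approximating the foliation by recurrent lines of nearby directions $\beta\to\alpha^{\pm}$ shows $[\omega_\beta]\to[\omega_\alpha]$ from both sides, so the two one-sided tangents of $\partial K$ at $h_\alpha$ coincide.

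The main obstacle I anticipate is exactly this last step: upgrading the single distinguished class to a genuinely \emph{unique} supporting functional. The difficulty is two-fold. One must promote almost-everywhere differentiability of $b_\alpha$ to everywhere-uniqueness of $\nabla b_\alpha$, using that the calibrated leaves foliate so that no competing calibrated ray can emanate from any point; and one must verify that the left- and right-limit functionals $\lim_{\beta\to\alpha^{\pm}}[\omega_\beta]$ actually agree, which rests delicately on the convergence of recurrent lines in Theorem \ref{structure for lines}(4). The matching of these one-sided limits---equivalently, the absence of a jump in the supporting functional across the rational direction $\alpha$---is precisely what separates the foliated case from the gapped case, and is where the argument must work hardest.
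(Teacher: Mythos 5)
Your reduction of the theorem to uniqueness of the supporting line of $K=\{\mathfrak{l}\geq 1\}$ at $h_\alpha$, and your verification that each normalized class $[\omega_\alpha]$ is a supporting functional (i.e.\ lies in the superdifferential $\partial^{+}\mathfrak{l}(h_\alpha)$ of the concave function $\mathfrak{l}$), are both sound, and they do give the non-differentiability half: for rational unfoliated $\alpha$, the inequality $[\omega_\alpha^{+}]\neq[\omega_\alpha^{-}]$ produces two supporting lines and hence a corner. The gap is in the converse half. Uniqueness of the viscosity solution $b_\alpha$ up to a constant only tells you that $\partial^{+}\mathfrak{l}(h_\alpha)$ contains exactly one element \emph{arising from a viscosity solution}; it does not rule out that $\partial^{+}\mathfrak{l}(h_\alpha)$ is a nontrivial segment of which $[\omega_\alpha]$ is merely one point. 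Concretely, the supporting inequality $\langle[\omega_\alpha],\cdot\rangle\geq\mathfrak{l}$ with equality on $\overline{\alpha}$ yields only the one-sided bounds $D_{v}\mathfrak{l}(h_\alpha)\leq\langle[\omega_\alpha],v\rangle$ and $D_{-v}\mathfrak{l}(h_\alpha)\leq-\langle[\omega_\alpha],v\rangle$, which are perfectly consistent with a corner; differentiability requires the reverse inequalities. Your proposed patch --- showing $[\omega_\beta]\to[\omega_\alpha]$ from both sides via the $C^{0}$-convergence of recurrent lines in Theorem \ref{structure for lines}(4) --- is essentially circular: the one-sided limits of any selection from $\partial^{+}\mathfrak{l}(h_\beta)$ as $\beta\to\alpha^{\pm}$ are precisely the two extreme points of $\partial^{+}\mathfrak{l}(h_\alpha)$, so the asserted two-sided convergence is equivalent to the differentiability you are trying to prove, and $C^{0}$-convergence of the lines does not by itself give convergence of the cohomology classes of the associated Busemann functions.

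The paper closes exactly this gap with Lemma \ref{5}, which is strictly stronger than the supporting property: it shows $|b_{\gamma}(x)-D_{v}\mathfrak{l}(h_\alpha)|\leq A$ with $v=x-\gamma(0)$ and $A$ independent of $x$, i.e.\ the Busemann function computes the one-sided directional derivatives of $\mathfrak{l}$ up to a bounded error. Taking $x-\gamma(0)=\pm kv_{0}$ and letting $k\to\infty$ kills the periodic part of $b_{\gamma}$, so the linear part determines \emph{both} $D_{v_{0}}\mathfrak{l}(h_\alpha)$ and $D_{-v_{0}}\mathfrak{l}(h_\alpha)$ exactly: in the irrational case a single linear functional $l_{\alpha}$ serves on all of $\mathbb{R}^{2}$, forcing $D_{v_{0}}\mathfrak{l}(h_\alpha)=-D_{-v_{0}}\mathfrak{l}(h_\alpha)$, while in the rational case the two halves $x>\gamma$ and $x<\gamma$ carry $l_{\alpha}^{-}$ and $l_{\alpha}^{+}$, and equality of the one-sided derivatives becomes $l_{\alpha}^{-}(v_{0})=l_{\alpha}^{+}(v_{0})$, which Theorem \ref{non-diff} characterizes by the foliation condition. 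Combined with Lemma \ref{convex diff} (a concave function differentiable along two independent lines through a point --- one of which is here the radial direction, free by homogeneity --- is differentiable there) and Lemma \ref{dus}, this yields the theorem. If you wish to keep the supporting-functional framework, you must supplement it with such a two-sided estimate; the calibration identity at $h_\alpha$ alone does not suffice.
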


\section{Geodesical completeness of some timelike rays}
In this section, we will show that every timelike ray (line) with an asymptotic direction $\alpha\in(m^{-},m^{+})$ is geodesically complete. Some priori estimates of ``uniform'' timelike vectors are needed.

\begin{defn}\label{tb&lb}
Denote by $[g]$ the conformal class of the Lorentzian metric $g$ sharing the same time orientation, we define the following subbundles of T$\mathbb{T}^{2}$:
\begin{enumerate}
  \item Time$(\mathbb{T}^{2},[g]):=\{$ future directed timelike vectors in $(\mathbb{T}^{2},[g])\}$.

  \item Light$(\mathbb{T}^{2},[g]):=\{$ future directed lightlike vectors in $(\mathbb{T}^{2},[g])\}$.
\end{enumerate}
We shall denote Time$(\mathbb{T}^{2},[g])_{p}$ and Light$(\mathbb{T}^{2},[g])_{p}$ the fibres of Time$(\mathbb{T}^{2},[g])$ and Light$(\mathbb{T}^{2},[g])$ over $p\in \mathbb{T}^{2}$ respectively.

For any small $\epsilon>0$, we define:
\begin{enumerate}\label{epsilon-tb}
  \item Time$^{\epsilon}(\mathbb{T}^{2},[g]):=\{V\in$ Time$(\mathbb{T}^{2},[g])|$ dist$(V,$ Light$(\mathbb{T}^{2},[g]))\geq\epsilon|V|\}$.

  \item Time$^{1,\epsilon}(\mathbb{T}^{2},g):=\{V\in$ Time$^{\epsilon}(\mathbb{T}^{2},[g])|g(V,V)=-1\}$.
\end{enumerate}
Be careful that the bundle Time$^{1,\epsilon}(\mathbb{T}^{2},g)$ depends on $g$, not just the conformal class $[g]$.
\end{defn}

\begin{Rem}
From the definition, Time$^{\epsilon}(\mathbb{T}^{2},[g])$ and Time$^{1,\epsilon}(\mathbb{T}^{2},g)$ are smooth subbundles of Time$(\mathbb{T}^{2},[g])$. In addition, the fibres of Time$^{\epsilon}(\mathbb{T}^{2},[g])$ are convex in each tangent space of $\mathbb{T}^{2}$.
\end{Rem}

\begin{The}\label{cpt}
There exists a constant $\mathrm{K}(g,g_{R})$ such that for any $V\in$Time$^{1,\epsilon}(\mathbb{T}^{2},g)$, we have $|V|\leq\frac{\mathrm{K}(g,g_{R})}{\epsilon}$. In other words, the smooth bundle Time$^{1,\epsilon}(\mathbb{T}^{2},g)$ is compact.
\end{The}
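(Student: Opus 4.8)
The plan is to reduce the assertion to a fibrewise estimate at each point and then conclude by compactness of the base $\mathbb{T}^{2}$. Fix $p\in\mathbb{T}^{2}$ and recall that $g$ furnishes two smooth future-directed $g_{R}$-unit lightlike vector fields $X_{1},X_{2}$. Since $g_{p}$ has Lorentzian signature, its null cone consists of exactly two distinct lines, so $\{X_{1}(p),X_{2}(p)\}$ is a basis of $T_{p}\mathbb{T}^{2}$ and the future timelike cone is precisely $\{aX_{1}(p)+bX_{2}(p):a,b>0\}$ (the sign $ab>0$ is what makes $g<0$, and both $X_{i}$ being future-directed selects the $a,b>0$ component). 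Writing a given $V\in\mathrm{Time}^{1,\epsilon}(\mathbb{T}^{2},g)$ as $V=aX_{1}+bX_{2}$ with $a,b>0$ and using $g(X_{i},X_{i})=0$, I would first record the identity $g(V,V)=2ab\,g_{p}(X_{1},X_{2})$.

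Next set $\sigma(p):=-g_{p}(X_{1}(p),X_{2}(p))$. Two future-directed causal vectors satisfy $g\le 0$ with equality only when proportional, and $X_{1},X_{2}$ are not proportional, so $\sigma(p)>0$ for every $p$; as $\sigma$ is continuous and $\mathbb{T}^{2}$ is compact, $\sigma_{\min}:=\min_{p}\sigma(p)>0$. The normalization $g(V,V)=-1$ then becomes $2ab\,\sigma(p)=1$, i.e. $ab=\tfrac{1}{2\sigma(p)}$.

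The second ingredient is an elementary upper bound on the distance to the light cone: because $aX_{1}$ and $bX_{2}$ are themselves future lightlike vectors (as $a,b>0$), they lie in $\mathrm{Light}(\mathbb{T}^{2},[g])_{p}$, whence
\[
\mathrm{dist}_{p}(V,\mathrm{Light})\le\min\{\,|V-aX_{1}|,\ |V-bX_{2}|\,\}=\min\{b,a\}\le\sqrt{ab}.
\]
Combining this with the defining inequality $\mathrm{dist}_{p}(V,\mathrm{Light})\ge\epsilon|V|$ and with $ab=\tfrac{1}{2\sigma(p)}$ gives $\epsilon|V|\le\sqrt{ab}=(2\sigma(p))^{-1/2}\le(2\sigma_{\min})^{-1/2}$, hence $|V|\le K(g,g_{R})/\epsilon$ with $K(g,g_{R})=(2\sigma_{\min})^{-1/2}$. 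For compactness I would note that $\mathrm{Time}^{1,\epsilon}(\mathbb{T}^{2},g)$ is closed in $T\mathbb{T}^{2}$: the conditions $g(V,V)=-1$ and $\mathrm{dist}_{p}(V,\mathrm{Light})\ge\epsilon|V|$ are closed and together prevent any limit from degenerating to a lightlike or zero vector, so the limit stays future timelike; the fibrewise bound then makes the set bounded over the compact base, hence compact.

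The only genuinely substantive point is the strict positivity of $\sigma$ together with its uniform lower bound $\sigma_{\min}>0$, which is exactly where compactness of $\mathbb{T}^{2}$ and the honest Lorentzian (two distinct null directions) structure are used; the remaining steps are the identity $g(V,V)=2ab\,g_{p}(X_{1},X_{2})$ and the trivial estimate $\mathrm{dist}_{p}(V,\mathrm{Light})\le\min\{a,b\}$. I remark that keeping $\min\{a,b\}$ rather than relaxing to $\sqrt{ab}$, together with $|V|\le 2\max\{a,b\}$, in fact yields the sharper bound $|V|=O(\epsilon^{-1/2})$; but the cruder step already delivers the stated estimate $K(g,g_{R})/\epsilon$.
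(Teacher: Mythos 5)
Your proof is correct and follows essentially the same route as the paper's: decompose $V=aX_{1}+bX_{2}$ in the future null frame, use $g(V,V)=-1$ to pin down $ab=1/(2|g_{p}(X_{1},X_{2})|)$, bound $\mathrm{dist}_{p}(V,\mathrm{Light})$ from above in terms of $a,b$, and then take a maximum of the resulting constant over the compact torus. The only difference is that where the paper's Lemma computes the exact orthogonal distance $|V-\langle X_{i},V\rangle X_{i}|^{2}$ to each null line and multiplies the two resulting inequalities, you use the cruder triangle-inequality bound $\mathrm{dist}_{p}(V,\mathrm{Light})\le |V-aX_{1}|=b$ (and likewise $\le a$), which only costs a harmless factor of $\sqrt{1-\langle X_{1},X_{2}\rangle^{2}}$ in the constant; your closing observation that the same ingredients in fact give $|V|=O(\epsilon^{-1/2})$ is also correct.
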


The proposition follows from an elementary lemma in linear algebra. To state the lemma, let $E^{2}$ be a two dimensional $\mathbb{R}$-vector space and $G(\cdot,\cdot)$ (resp. $\langle\cdot,\cdot\rangle$) a scalar product with signature $1$ (resp. $0$) on $E^{2}$. Denote by $|\cdot|$ (resp. dist$(\cdot,\cdot)$) the norm (resp. metric) induced by $\langle\cdot,\cdot\rangle$. Since $G$ is an indefinite scalar product on $E^{2}$, we could define
$$
\text{Light}(E^{2},G):=\{V\in E^{2}\setminus\{0\}|G(V,V)=0\}.
$$

\begin{Lem}\label{bound}
For every $\epsilon\in(0,1)$ and every
$$
V\in\{G(V,V)=-1|\text{ dist }(V,\text{ Light }(E^{2},G))\geq\epsilon|V|\},
$$
there is a constant $K$ depending only on $G$ and $\langle\cdot,\cdot\rangle$ such that $|V|\leq\frac{K}{\epsilon}$.
\end{Lem}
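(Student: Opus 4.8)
The plan is to reduce the statement to an explicit computation in a single well-chosen basis, in which the light cone becomes a pair of lines and the constraint surface $\{G=-1\}$ becomes a hyperbola whose asymptotes are precisely those lines; the hypothesis then says that $V$ stays uniformly away from the asymptotes, and this is what caps $|V|$. Concretely, I would first normalize the two forms simultaneously. Since $\langle\cdot,\cdot\rangle$ is positive definite, the spectral theorem applied to $G$ relative to $\langle\cdot,\cdot\rangle$ yields a $\langle\cdot,\cdot\rangle$-orthonormal basis $\{e_{1},e_{2}\}$ diagonalizing $G$; because $G$ has index $1$ (one positive and one negative eigenvalue), I may write $G(e_{1},e_{1})=a^{2}>0$ and $G(e_{2},e_{2})=-b^{2}<0$. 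Writing $V=xe_{1}+ye_{2}$ gives $|V|^{2}=x^{2}+y^{2}$, $G(V,V)=a^{2}x^{2}-b^{2}y^{2}$, and $\mathrm{Light}(E^{2},G)=\{ax=by\}\cup\{ax=-by\}$. The constants $a,b$ depend only on $G$ and $\langle\cdot,\cdot\rangle$, so any bound expressed through them is admissible.

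Next I would make the distance to the light cone explicit. The Euclidean distance from $V$ to each null line $ax\mp by=0$ is $|ax\mp by|/\sqrt{a^{2}+b^{2}}$, so
$$
\mathrm{dist}\bigl(V,\mathrm{Light}(E^{2},G)\bigr)=\frac{\min\{|ax-by|,\ |ax+by|\}}{\sqrt{a^{2}+b^{2}}}.
$$
The key algebraic identity is the factorization $(ax-by)(ax+by)=a^{2}x^{2}-b^{2}y^{2}=G(V,V)=-1$. Introducing the null coordinates $s=ax-by$ and $t=ax+by$, the constraint becomes $st=-1$, while $x=(s+t)/2a$ and $y=(t-s)/2b$ give
$$
|V|^{2}=\frac{(s+t)^{2}}{4a^{2}}+\frac{(s-t)^{2}}{4b^{2}}\le\Bigl(\tfrac{1}{4a^{2}}+\tfrac{1}{4b^{2}}\Bigr)\bigl(|s|+|t|\bigr)^{2}.
$$

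Finally I would combine these estimates. Setting $m=\min\{|s|,|t|\}$, the relation $|st|=1$ forces $m\le 1$ and $\max\{|s|,|t|\}=1/m$, whence $|s|+|t|=m+1/m\le 2/m$ and therefore $|V|\le C/m$ for a constant $C$ depending only on $a,b$. On the other hand, the hypothesis $\mathrm{dist}(V,\mathrm{Light})\ge\epsilon|V|$ reads $m\ge\epsilon\sqrt{a^{2}+b^{2}}\,|V|$. Substituting this lower bound for $m$ into $|V|\le C/m$ yields $|V|^{2}\le C/(\epsilon\sqrt{a^{2}+b^{2}})$, i.e. a bound of the form $|V|\le K/\sqrt{\epsilon}$, and in particular $|V|\le K/\epsilon$ for $\epsilon\in(0,1)$, with $K$ depending only on $G$ and $\langle\cdot,\cdot\rangle$.

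I do not expect a serious obstacle, since this is ultimately a linear-algebra fact; the only points requiring care are the simultaneous normalization of the two forms and computing the distance to the light cone as a minimum over the two null lines, correctly paired with the hyperbola parametrization. A minor subtlety is that $\mathrm{Light}(E^{2},G)$ excludes the origin, but the foot of the perpendicular from a timelike $V\in\{G=-1\}$ to either null line is never the origin, so the infimum coincides with the distance to the full lines and the displayed formula remains valid.
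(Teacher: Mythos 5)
Your proof is correct, and it takes a genuinely different (and in one respect sharper) route than the paper. The paper works with two unit \emph{lightlike} vectors $X_{1},X_{2}$, writes $V=\lambda X_{1}+\mu X_{2}$, extracts $\lambda\mu=-1/(2G(X_{1},X_{2}))$ from $G(V,V)=-1$, and bounds $\operatorname{dist}(V,\mathrm{Light})$ by the identity $|V|^{2}-\langle X_{i},V\rangle^{2}=|V-\langle X_{i},V\rangle X_{i}|^{2}$, i.e.\ by the distance to the orthogonal projection onto each null line; multiplying the two resulting inequalities $\epsilon^{2}|V|^{2}\le\lambda^{2}(1-\langle X_{1},X_{2}\rangle^{2})$ and $\epsilon^{2}|V|^{2}\le\mu^{2}(1-\langle X_{1},X_{2}\rangle^{2})$ gives $|V|\le K/\epsilon$. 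You instead diagonalize $G$ in a $\langle\cdot,\cdot\rangle$-orthonormal eigenbasis and pass to null coordinates $s,t$ with $st=-1$; your constants $s,t$ play the role of the paper's $\lambda,\mu$, so the two arguments hinge on the same mechanism (the product of the null components is pinned by $G(V,V)=-1$, while the distance hypothesis forces the smaller one to be at least $\epsilon$ times $|V|$). What your version buys is an explicit, checkable distance formula and, because you use only the \emph{minimum} of $|s|,|t|$ against $|st|=1$, the sharper estimate $|V|\le K/\sqrt{\epsilon}$, of which the stated $K/\epsilon$ is a weakening for $\epsilon\in(0,1)$; one can verify on the standard Minkowski form that $\epsilon^{-1/2}$ is in fact the correct order, so the paper's exponent is not optimal.

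One small correction to your closing remark: it is not true that the foot of the perpendicular from a timelike $V$ with $G(V,V)=-1$ to a null line is never the origin. If $b>a$, the vector $V=c(a,-b)$ with $c^{2}=1/(b^{4}-a^{4})$ satisfies $G(V,V)=c^{2}(a^{4}-b^{4})=-1$ and is $\langle\cdot,\cdot\rangle$-orthogonal to the line $ax=by$, so its foot on that line is exactly the origin. This does not damage your proof: the distance from a point to a set equals the distance to its closure, and the closure of each punctured null line is the full line, so your displayed formula for $\operatorname{dist}(V,\mathrm{Light}(E^{2},G))$ holds regardless. You should simply replace the perpendicular-foot justification by this closure remark.
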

\noindent\textit{Proof of Lemma \ref{bound}}. For $V\in\{G(V,V)=-1|$ dist$(V,$ Light$(E^{2},G))\geq\epsilon|V|\}$, there exist two linearly independent $G$-lightlike vectors $X_{1}$ and $X_{2}$ such that
\begin{itemize}
  \item $|X_{i}|=1,i=1,2$;
  \item $V=\lambda X_{1}+\mu X_{2}, \lambda>0,\mu>0$.
\end{itemize}
By $G(V,V)=-1$, we have
\begin{equation}\label{a}
\lambda\mu=-\frac{1}{2G(X_{1},X_{2})}.
\end{equation}
From dist$(V,$ Light$(E^{2},G))\geq\epsilon|V|$, we get that
\begin{eqnarray*}
&&|V|^{2}-|\langle X_{i},V\rangle|^{2}\\
&=&|V-\langle X_{i},V\rangle X_{i}|^{2}\\
&\geq&\text{dist}^{2}(V,\text{Light}(E^{2},G))\\
&\geq&\epsilon^{2}|V|^{2},i=1,2.
\end{eqnarray*}
Thus hold the inequalities
\begin{equation}\label{b}
(1-\epsilon^{2})|V|^{2}\geq|\langle X_{i},V\rangle|^{2},i=1,2.
\end{equation}
Together with equalities
\begin{equation}\label{c}
\begin{split}
|\langle X_{1},V\rangle|^{2}&=|V|^{2}-\mu^{2}(1-|\langle X_{1},X_{2}\rangle|^{2}),\\
|\langle X_{2},V\rangle|^{2}&=|V|^{2}-\lambda^{2}(1-|\langle X_{1},X_{2}\rangle|^{2}),
\end{split}
\end{equation}
we obtain
\begin{align*}
\epsilon^{2}|V|^{2}&\leq\lambda^{2}(1-|\langle X_{1},X_{2}\rangle|^{2}),\\
\epsilon^{2}|V|^{2}&\leq\mu^{2}(1-|\langle X_{1},X_{2}\rangle|^{2}).
\end{align*}
Multiplying the above two inequalities and using Equality \eqref{a}, one deduces that
$$
|V|\leq\frac{1}{\epsilon}\sqrt{\frac{1-|\langle X_{1},X_{2}\rangle|^{2}}{2|G(X_{1},X_{2})|}}.
$$
By choosing $K=\sqrt{\frac{1-|\langle X_{1},X_{2}\rangle|^{2}}{2|G(X_{1},X_{2})|}}$, we complete the proof of Lemma \ref{bound}. \qed

Now we can prove Theorem \ref{cpt} as follows. For a point $p\in(\mathbb{T}^{2},g)$, take $E^{2},G(\cdot,\cdot),\langle\cdot,\cdot\rangle$ in Lemma \ref{bound} to be $T_{p}\mathbb{T}^{2},g|_{p},g_{R}|_{p}$ respectively. One could easily see that Light$(\mathbb{T}^{2},[g])_{p}$ is part of Light$(E^{2},G)$ and Time$^{1,\epsilon}(\mathbb{T}^{2},g)_{p}$ is also part of
$$
\{V|G(V,V)=-1,\text{ dist }(V,\text{ Light }(E^{2},G))\geq\epsilon|V|\}
$$
w.r.t. Light$(\mathbb{T}^{2},[g])_{p}$.

Let $\{X_{i}(p)\}_{i=1,2}$ be two future-directed smooth lightlike vector fields defined in Section 1. By Lemma \ref{bound} one concludes that for every $\epsilon\in(0,1)$ and every $V\in$Time$^{1,\epsilon}(\mathbb{T}^{2},g)_{p}$, there exists a constant $K(p)>0$ such that $|V|_{p}\leq\frac{K(p)}{\epsilon}$. From the proof of Lemma \ref{bound}, we have that
\begin{equation}
K(p)=\sqrt{\frac{1-|g_{R}(p)(X_{1}(p),X_{2}(p))|^{2}}{2|g(p)(X_{1}(p),X_{2}(p))|}}.
\end{equation}
By the smoothness of certain metrics and vector fields in the above formula, $K(p)$ is also smooth on $\mathbb{T}^{2}$. This shows that $\mathrm{K}(g,g_{R}):=\max_{p\in \mathbb{T}^{2}}K(p)$ always exists and the first assertion of Theorem \ref{cpt} follows from $|V|_{p}\leq\frac{K(p)}{\epsilon}\leq\frac{\mathrm{K}(g,g_{R})}{\epsilon}$.

The second assertion follows directly from the first one since by the above discussion, $\text{Time}^{1,\epsilon}(\mathbb{T}^{2},g)$, as a closed subset of $\{V\in T\mathbb{T}^{2}||V|\leq\frac{\mathrm{K}(g,g_{R})}{\epsilon}\}$, must be a compact subset of $T\mathbb{T}^{2}$.\qed

We shall state a direct corollary of Theorem \ref{cpt} that will be frequently used. By a pregeodesic, we mean a causal $g$-geodesic parametrized by $g_{R}$-arc length.

\begin{Cor}\label{cpl}
Let $\gamma:\mathbb{R}_{+}$ ($\mathbb{R}$) $\rightarrow(\mathbb{T}^{2},g)$ be a future inextendible (inextendible) timelike pregeodesic. If there exists an $\epsilon>0$, such that for every $t\in\mathbb{R}_{+}$ ($\mathbb{R}$), $\dot{\gamma}(t)\in$Time$^{\epsilon}(\mathbb{T}^{2},[g])_{\gamma(t)}$, then $\gamma$ can be parametrized to be a future complete (complete) $g$-geodesic.
\end{Cor}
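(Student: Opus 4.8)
The plan is to compare the given $g_{R}$-arc length parametrization of $\gamma$ with its affine, i.e. proper-time, parametrization as a $g$-geodesic, and to use Theorem \ref{cpt} to control the conversion factor between the two. Set $\lambda(t):=\sqrt{-g(\dot{\gamma}(t),\dot{\gamma}(t))}>0$; since $\gamma$ is parametrized by $g_{R}$-arc length we have $|\dot{\gamma}(t)|_{\gamma(t)}=1$. Reparametrizing by proper time amounts to introducing $s(t)=\int_{0}^{t}\lambda(\tau)\,d\tau$, so that $\sigma(s):=\gamma(t(s))$ satisfies $g(\dot{\sigma},\dot{\sigma})\equiv-1$ and is an affinely parametrized timelike $g$-geodesic. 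The curve $\gamma$ is future complete as a $g$-geodesic precisely when this proper time exhausts $[0,\infty)$, that is, when $S:=\int_{0}^{\infty}\lambda(t)\,dt=\infty$. Thus I would reduce the whole statement to a uniform positive lower bound on $\lambda$.

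Next I would extract that lower bound from Theorem \ref{cpt}. The key observation is that the defining inequality $\mathrm{dist}(V,\mathrm{Light}(\mathbb{T}^{2},[g]))\geq\epsilon|V|$ is invariant under positive scaling of $V$: because $\mathrm{Light}(\mathbb{T}^{2},[g])_{p}$ is a cone, one has $\mathrm{dist}(cV,\mathrm{Light})=c\,\mathrm{dist}(V,\mathrm{Light})$ and $|cV|=c|V|$ for $c>0$, so both sides scale by the same factor. Hence the $g$-unit vector $W(t):=\dot{\gamma}(t)/\lambda(t)$ lies in the same fibre $\mathrm{Time}^{\epsilon}(\mathbb{T}^{2},[g])_{\gamma(t)}$ as $\dot{\gamma}(t)$, and since $g(W,W)=-1$ it belongs to $\mathrm{Time}^{1,\epsilon}(\mathbb{T}^{2},g)$. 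Theorem \ref{cpt} then yields $|W(t)|_{\gamma(t)}\leq\mathrm{K}(g,g_{R})/\epsilon$; but $|W(t)|_{\gamma(t)}=|\dot{\gamma}(t)|_{\gamma(t)}/\lambda(t)=1/\lambda(t)$, so $\lambda(t)\geq\epsilon/\mathrm{K}(g,g_{R})$ for every $t$.

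Finally I would conclude the argument: since $\gamma$ is future inextendible it has infinite $g_{R}$-length, so $t$ ranges over all of $[0,\infty)$, whence $S=\int_{0}^{\infty}\lambda(t)\,dt\geq\int_{0}^{\infty}\frac{\epsilon}{\mathrm{K}(g,g_{R})}\,dt=\infty$, which gives future completeness of the proper-time parametrization $\sigma$; the two-sided ($\mathbb{R}$) case follows by running the identical estimate as $t\to\pm\infty$. I expect the only real content to be the scale-invariance remark together with the correct identification $|W(t)|=1/\lambda(t)$, which is precisely what turns the \emph{upper} bound of Theorem \ref{cpt} into the \emph{lower} bound on the reparametrization speed that makes the proper-time integral diverge; the remaining proper-time change of variables is routine.
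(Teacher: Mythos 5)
Your argument is correct, and it rests on exactly the same input as the paper's proof: reparametrize by $g$-arc length and observe that the resulting unit tangent vectors all lie in $\mathrm{Time}^{1,\epsilon}(\mathbb{T}^{2},g)$ (your scale-invariance remark for the condition $\mathrm{dist}(V,\mathrm{Light})\geq\epsilon|V|$ is the right justification for this, and the paper leaves it implicit). Where you diverge is the final step. The paper only uses the \emph{compactness} of $\mathrm{Time}^{1,\epsilon}(\mathbb{T}^{2},g)$ from Theorem \ref{cpt}: the pair $(\gamma^{g},\dot{\gamma}^{g})$ is an integral curve of the geodesic spray confined to a compact subset of $T\mathbb{T}^{2}$, so the ODE escape lemma forces its maximal domain to be $\mathbb{R}_{+}$ (resp.\ $\mathbb{R}$). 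You instead use the \emph{explicit bound} $|V|\leq\mathrm{K}(g,g_{R})/\epsilon$ to get the pointwise estimate $\lambda(t)=\sqrt{-g(\dot{\gamma},\dot{\gamma})}\geq\epsilon/\mathrm{K}(g,g_{R})$ on the proper-time density, and then conclude by divergence of $\int_{0}^{\infty}\lambda$, the $g_{R}$-domain being all of $\mathbb{R}_{+}$ by hypothesis. Your route is more elementary (no appeal to the escape lemma for flows) and more quantitative, since it shows the proper time grows at least linearly in $g_{R}$-arc length at the explicit rate $\epsilon/\mathrm{K}(g,g_{R})$; the paper's route is shorter but qualitative. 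Both are complete proofs.
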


\begin{proof}
Let $\gamma^{g}:I\rightarrow(\mathbb{T}^{2},g)$ be the reparametrization of $\gamma$ using $g$-arc length. By the condition we obtain that
\begin{equation}
\cup_{t\in I}(\gamma^{g}(t),\dot{\gamma^{g}}(t))\subseteq\text{Time}^{1,\epsilon}(\mathbb{T}^{2},g).
\end{equation}
Using Theorem \ref{cpt}, the smooth bundle $\text{Time}^{1,\epsilon}(\mathbb{T}^{2},g)$ is compact. Since geodesic flow for $(\mathbb{T}^{2},g)$ is a flow generated by a vector field $X_{g}$ defined on $T\mathbb{T}^{2}$, we know that $(\gamma^{g},\dot{\gamma^{g}})$ is an integral curve of $X_{g}$ that stays in a compact set. Thus by the fundamental knowledge of ODEs theory, if $\gamma^{g}$ is future inextendible (inextendible), its domain is $\mathbb{R}_{+}$ ($\mathbb{R}$).
\end{proof}

Next, we relate the above result to the global behavior of some pregeodesics.

\begin{defn}\label{epsilon-cone}
For $\epsilon>0$, set $\mathfrak{T}^{\epsilon}:=\{h\in\mathfrak{T}|$ dist$_{\|\cdot\|}(h,\partial\mathfrak{T})\geq\epsilon\|h\|\}$. We call $\mathfrak{T}^{\epsilon}$ a strict $\epsilon$-cone. It is obvious that $\mathfrak{T}^{\epsilon}\subseteq\mathfrak{T}^{\circ}$.
\end{defn}

\begin{Pro}\label{asym'}
Let $\gamma:\mathbb{R}_{+}\rightarrow(\mathbb{T}^{2},g)$ be a timelike pregeodesic with asymptotic direction $\alpha\in(m^{-},m^{+})$ and $\tilde{\gamma}$ be a lift of $\gamma$ to $(\mathbb{R}^{2},g)$. Then there exist two positive constants $\epsilon(\alpha,g,g_{R}),\mathrm{T}(\gamma,\alpha,g,g_{R})$ such that for any $T_{1},T_{2}\in\mathbb{R}_{+}, T_{2}-T_{1}>\mathrm{T}$, we have
$$
\tilde{\gamma}(T_{2})-\tilde{\gamma}(T_{1})\in\mathfrak{T}^{\epsilon}.
$$
\end{Pro}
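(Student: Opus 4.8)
The plan is to exploit two independent facts and then combine them by elementary convex geometry: the displacement vector $v:=\tilde{\gamma}(T_{2})-\tilde{\gamma}(T_{1})$ stays at bounded $\|\cdot\|$-distance from the half line $\overline{\alpha}$ (this is exactly what it means for $\gamma$ to have asymptotic direction $\alpha$, Definition~\ref{asym}), while $\|v\|$ grows linearly in $T_{2}-T_{1}$ (because $\gamma$ is causal and parametrized by $g_{R}$-arc length). Since $\alpha\in(m^{-},m^{+})$ is timelike, $\overline{\alpha}$ lies strictly inside $\mathfrak{T}$, so a long displacement that hugs $\overline{\alpha}$ cannot approach $\partial\mathfrak{T}$, which is precisely membership in a strict $\epsilon$-cone.

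First I would fix $\epsilon$. By Proposition~\ref{cone'} the boundary $\partial\mathfrak{T}$ consists of the two extreme rays $\overline{m}^{\pm}$, so $\alpha\in(m^{-},m^{+})$ gives $\epsilon_{\alpha}:=\mathrm{dist}_{\|\cdot\|}(\alpha,\partial\mathfrak{T})>0$; this depends only on $\alpha$ and $\mathfrak{T}$, hence on $\alpha,g,g_{R}$, and not on the individual curve $\gamma$. By homogeneity of the stable norm and of the cone, $\mathrm{dist}_{\|\cdot\|}(t\alpha,\partial\mathfrak{T})=\epsilon_{\alpha}\|t\alpha\|$ for every $t>0$, so $\overline{\alpha}\subseteq\mathfrak{T}^{\epsilon_{\alpha}}$ in the sense of Definition~\ref{epsilon-cone}. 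I then set $\epsilon:=\epsilon_{\alpha}/2$, which is independent of $\gamma$ as the statement requires.

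Next I would establish the linear lower bound on $\|v\|$. Since $\gamma$ is parametrized by $g_{R}$-arc length, $L^{g_{R}}(\tilde{\gamma}|_{[T_{1},T_{2}]})=T_{2}-T_{1}$, and since $\gamma$ is causal, Corollary~\ref{causal} yields $d_{R}(\tilde{\gamma}(T_{1}),\tilde{\gamma}(T_{2}))\geq (T_{2}-T_{1})/C(g,g_{R})$. Combining this with the standard comparison between the cover distance and the stable norm, i.e. a constant $C_{0}=C_{0}(g_{R})$ with $|d_{R}(\tilde{p},\tilde{q})-\|\tilde{q}-\tilde{p}\||\leq C_{0}$ for all $\tilde{p},\tilde{q}\in\mathbb{R}^{2}$, I obtain $\|v\|\geq (T_{2}-T_{1})/C(g,g_{R})-C_{0}$, which tends to $+\infty$ as $T_{2}-T_{1}\to\infty$. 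Then I assemble the estimate: let $w\in\overline{\alpha}$ be a nearest point to $v$, so by Definition~\ref{asym} there is a bound $D=D(\gamma)$ with $\|v-w\|\leq D$ and hence $\|w\|\geq\|v\|-D$. Because $\mathfrak{T}$ is convex, for any interior point the open ball of radius equal to its distance to $\partial\mathfrak{T}$ lies in $\mathfrak{T}$; applied to $w\in\mathfrak{T}^{\epsilon_{\alpha}}$ this shows $v\in\mathfrak{T}$ as soon as $\|v-w\|\leq D<\epsilon_{\alpha}\|w\|$. The $1$-Lipschitz property of $\mathrm{dist}_{\|\cdot\|}(\cdot,\partial\mathfrak{T})$ then gives
$$
\mathrm{dist}_{\|\cdot\|}(v,\partial\mathfrak{T})\geq \epsilon_{\alpha}\|w\|-D\geq \epsilon_{\alpha}\|v\|-(\epsilon_{\alpha}+1)D,
$$
which exceeds $\epsilon\|v\|=\frac{1}{2}\epsilon_{\alpha}\|v\|$ once $\|v\|\geq 2(\epsilon_{\alpha}+1)D/\epsilon_{\alpha}$. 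Both threshold conditions on $\|v\|$ hold once $\|v\|$ is large, and by the linear bound this is guaranteed by requiring $T_{2}-T_{1}>\mathrm{T}$ for a suitable $\mathrm{T}(\gamma,\alpha,g,g_{R})$; then $v\in\mathfrak{T}^{\epsilon}$.

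The only genuinely non-bookkeeping ingredient is the comparison between $d_{R}$ on the Abelian cover and the stable norm of the corresponding homology class, which converts the causal/arc-length growth into growth of $\|v\|$; everything else is triangle-inequality and convexity manipulation. The remaining subtlety is organizational rather than deep, namely to keep $\epsilon$ dependent only on $\alpha,g,g_{R}$ while allowing the curve-dependent asymptotic bound $D=D(\gamma)$ to enter solely through the threshold $\mathrm{T}$.
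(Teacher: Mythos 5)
Your proposal is correct and follows essentially the same route as the paper's proof: both use the bounded distance of the displacement from $\overline{\alpha}$ (Definition \ref{asym}), the linear growth of $\|\tilde{\gamma}(T_{2})-\tilde{\gamma}(T_{1})\|$ via Corollary \ref{causal} and the stable-norm comparison of Theorem \ref{std}, and then the homogeneity of $\mathrm{dist}_{\|\cdot\|}(\cdot,\partial\mathfrak{T})$ on $\overline{\alpha}$ together with a triangle-inequality absorption of the curve-dependent constant $D(\gamma)$ into the threshold $\mathrm{T}$. Your explicit check that $v$ actually lies in $\mathfrak{T}$ before invoking membership in $\mathfrak{T}^{\epsilon}$, and your choice $\epsilon=\epsilon_{\alpha}/2$, are if anything slightly more careful than the paper's final line, which states $\epsilon=\theta_{\alpha}$ although its own estimate only yields the factor $\tfrac{1}{2}\theta_{\alpha}$.
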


\begin{proof}
By Definition \ref{asym}, there exist a half line $\overline{\alpha}$ and a number $D(\gamma)>0$ depending only on $\gamma$ such that for any $0\leq T_{1}<T_{2}$, dist$_{\|\cdot\|}(\tilde{\gamma}(T_{2})-\tilde{\gamma}(T_{1}),\overline{\alpha})\leq D(\gamma)$.

Since $\alpha\in(m^{-},m^{+})\subseteq\mathfrak{T}^{\circ}$ and $\partial\mathfrak{T}=\overline{m}^{-}\cup\overline{m}^{+}$,
$$
\theta_{\alpha}:=\{\text{dist}(\alpha,\overline{m}^{-}\cup\overline{m}^{+})\}>0.
$$

Because $\gamma$ is a pregeodesic, $\tilde{\gamma}$ is also parametrized by $g_{R}$-arc length. By Corollary \ref{causal} and Theorem \ref{std}, we could get that
\begin{eqnarray}\label{ieq:1}
&&\|\tilde{\gamma}(T_{2})-\tilde{\gamma}(T_{1})\|\\
&\geq&\text{d}_{R}(\tilde{\gamma}(T_{1}),\tilde{\gamma}(T_{2}))-|\text{std}(g_{R})|\\
&\geq&\frac{T_{2}-T_{1}}{C_{g,g_{R}}}-|\text{std}(g_{R})|.
\end{eqnarray}

Since there is an element $h\in\overline{\alpha}$ such that dist$_{\|\cdot\|}(\tilde{\gamma}(T_{2})-\tilde{\gamma}(T_{1}),h)\leq D(\gamma)$, we have
$$
\|h\|\geq\|\tilde{\gamma}(T_{2})-\tilde{\gamma}(T_{1})\|-D(\gamma).
$$
Thus,
\begin{eqnarray*}
&&\text{dist}_{\|\cdot\|}(\tilde{\gamma}(T_{2})-\tilde{\gamma}(T_{1}),\overline{m}^{\pm})\\
&\geq&\text{dist}_{\|\cdot\|}(h,\overline{m}^{\pm})-D(\gamma)\\
&=&\|h\|\text{dist}_{\|\cdot\|}(\alpha,\overline{m}^{\pm})-D(\gamma)\\
&\geq&\theta_{\alpha}\|\tilde{\gamma}(T_{2})-\tilde{\gamma}(T_{1})\|-(\theta_{\alpha}+1)D(\gamma)\\
&\geq&\frac{1}{2}\theta_{\alpha}\|\tilde{\gamma}(T_{2})-\tilde{\gamma}(T_{1})\|
\end{eqnarray*}
if $T_{2}-T_{1}\geq T:= C_{g,g_{R}}(2\frac{\theta_{\alpha}+1}{\theta_{\alpha}}D(\gamma)+|\text{std}(g_{R})|)$ by Inequality \ref{ieq:1}. We choose $\epsilon=\theta_{\alpha}$ to complete the proof.
\end{proof}

\begin{Rem}
Due to Theorem \ref{structure for lines}, one deduces that, for all timelike lines with asymptotic directions in $(m^{-},m^{+})$, the positive constant $\mathrm{T}$ given by Proposition \ref{asym'} depends only on $\alpha,g$ and $g_{R}$.
\end{Rem}

Since S. Suhr proved in \cite{Su3} that any class A Lorentzian 2-torus is of class $A_{1}$, we could use Proposition \ref{A1-1}, together with Corollary \ref{asym} and Proposition \ref{asym'},  to obtain the following corollary.

\begin{Cor}\label{rl cpl}
If a timelike pregeodesic $\gamma:I\rightarrow(\mathbb{T}^{2},g)$ is a ray (line) with asymptotic direction $\alpha\in(m^{-},m^{+})$, then it could be parametrized by affine parameter as a future complete (complete) $g$-geodesic.
\end{Cor}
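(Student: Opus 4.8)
The plan is to reduce everything to Corollary \ref{cpl}, whose only hypothesis is the existence of a uniform $\epsilon>0$ with $\dot{\gamma}(t)\in\text{Time}^{\epsilon}(\mathbb{T}^{2},[g])_{\gamma(t)}$ for all $t\in I$. Recall that, as discussed in Section 1, a ray parametrized by $g_{R}$-arc length is a future inextendible timelike pregeodesic on $\mathbb{R}_{+}$ and a line is an inextendible one on $\mathbb{R}$, so $\gamma$ already fits the hypotheses of Corollary \ref{cpl} except for the $\epsilon$-cone condition on its velocity. Thus the entire problem is to upgrade the \emph{macroscopic} (homological) control of the asymptotic direction into a \emph{pointwise} (infinitesimal) control of the $g_{R}$-unit velocity: once I know that $\dot{\gamma}(t)$ stays at $g_{R}$-distance at least $\epsilon$ from $\text{Light}(\mathbb{T}^{2},[g])_{\gamma(t)}$, uniformly along the curve, Corollary \ref{cpl} reparametrizes $\gamma$ by $g$-arc length (an affine parameter, since then $g(\dot{\gamma},\dot{\gamma})\equiv-1$) into a future complete geodesic in the ray case and a complete geodesic in the line case.

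The macroscopic input is exactly Proposition \ref{asym'}. Since $\gamma$ is a timelike pregeodesic with asymptotic direction $\alpha\in(m^{-},m^{+})$, that proposition supplies constants $\epsilon=\epsilon(\alpha,g,g_{R})>0$ and $\mathrm{T}>0$ so that every long secant of the lift obeys $\tilde{\gamma}(T_{2})-\tilde{\gamma}(T_{1})\in\mathfrak{T}^{\epsilon}$ whenever $T_{2}-T_{1}>\mathrm{T}$; in the line case one applies this to both ends, using that the asymptotic direction is the same in both directions. Hence the homological displacement of $\gamma$ over any sufficiently long interval lands in a fixed strict $\epsilon$-cone, uniformly along the curve. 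The bridge from this to a pointwise velocity bound is where the maximizing property and the class $A_{1}$ hypothesis enter: by Suhr's theorem \cite{Su3} every class A Lorentzian $2$-torus is of class $A_{1}$, so Proposition \ref{A1-1} is available, and the content I would invoke is that for a timelike maximizer the instantaneous velocity direction is controlled by its long-range homological displacement direction. Concretely, displacements confined to $\mathfrak{T}^{\epsilon}$ force $\dot{\gamma}(t)$ into a corresponding tangent cone $\text{Time}^{\epsilon'}(\mathbb{T}^{2},[g])_{\gamma(t)}$ with $\epsilon'$ depending only on $\epsilon$, hence only on $\alpha,g,g_{R}$. Feeding this uniform $\epsilon'$ into Corollary \ref{cpl} completes the argument.

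The main obstacle is precisely this global-to-local translation. A priori a maximizer could oscillate, with its instantaneous velocity drifting toward the null cone spanned by $X_{1},X_{2}$ (whose asymptotic directions are $m^{\pm}=\partial\mathfrak{T}$) while the averaged homological direction stays safely inside $\mathfrak{T}^{\circ}$; nothing in Proposition \ref{asym'} by itself excludes this, since an excursion near null over a short $g_{R}$-interval need not move the long-range average off $\mathfrak{T}^{\epsilon}$. What rules it out is the rigidity of maximizers on a class $A_{1}$ torus: pointing near a null direction for any appreciable $g_{R}$-time would squander $g$-length and violate the defining equality $L^{g}(\tilde{\gamma}|_{[a,b]})=d(\tilde{\gamma}(a),\tilde{\gamma}(b))$ while the endpoints remain deep in the interior cone. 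Quantifying this obstruction — turning ``$\text{dist}(\alpha,\partial\mathfrak{T})=\theta_{\alpha}>0$'' into a uniform lower bound $\text{dist}(\dot{\gamma}(t),\text{Light})\geq\epsilon'$ for maximizers — is exactly the role of Proposition \ref{A1-1}, and assembling its output with Proposition \ref{asym'} and Corollary \ref{cpl} is the crux of the proof.
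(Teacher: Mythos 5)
Your proposal is correct and follows exactly the paper's (very terse) argument: Proposition \ref{asym'} puts long secants of the lift into a strict $\epsilon$-cone, Proposition \ref{A1-1} (available since class A $2$-tori are of class $A_{1}$ by \cite{Su3}) converts that homological control into the uniform pointwise bound $\dot{\gamma}(t)\in\text{Time}^{\epsilon'}(\mathbb{T}^{2},[g])_{\gamma(t)}$, and Corollary \ref{cpl} then yields future completeness (resp.\ completeness). Your identification of the global-to-local step as the crux, and of Proposition \ref{A1-1} as the tool that performs it, matches the paper's intent precisely.
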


\section{Rays with a fixed timelike asymptotic direction}
This section concerns intersection properties of timelike rays on $(\mathbb{R}^{2},g)$, which have an asymptotic direction in $(m^{-},m^{+})$. All curves we consider in this section lie on $(\mathbb{R}^{2},g)$ and are parametrized by the arc-length w.r.t. $g_{R}$.

We call a timelike maximizer $\gamma:I\rightarrow(\mathbb{R}^{2},g)$ a timelike maximal segment if $I=[a,b]$ is a closed interval. Three obvious facts we shall use (or implicitly use) in this section are:
\begin{itemize}
  \item Two inextendible causal curves with different asymptotic directions must have a intersection.
  \item Two timelike maximal segments cannot intersect twice, except that the intersections occured  at their initial points and end points respectively. Since the Riemannian version of this fact was first used by M. Morse in his celebrated paper \cite{M}, we call this fact Morse's lemma.
  \item Every maximizer can be parametrized as a smooth geodesic. Moreover, if a timelike curve $\gamma:[a,b]\rightarrow(\mathbb{R}^{2},g)$ has a corner in the interior, then there exists a constant $\varepsilon_{0}>0$ depending only on $\gamma$ itself such that any maximal segment $\zeta$ connecting $\gamma(a)$ with $\gamma(b)$ satisfies $L^{g}(\zeta)-L^{g}(\gamma)\geq\varepsilon_{0}>0$. This is called curve lengthening lemma.
\end{itemize}

Now we shall define several types of the intersections of timelike lines and maximal timelike segments.

First, let $\gamma$ be a timelike line that belongs to $\mathscr{M}:=\cup_{\alpha\in(m^{-},m^{+})}\mathscr{M}_{\alpha}$ and $\zeta:[a,b]\rightarrow\mathbb{R}^{2}$ be a maximal timelike pregeodesic segment. If the timelike maximizers $\zeta$ and $\gamma$ intersect at some interior point of $\zeta$, then either $Im(\zeta)\subseteq Im(\gamma)$ or the intersection must be transversal since they could be reparametrized as geodesics w.r.t. $g$. We shall call the intersection in the first (second) case a trivial (non-trivial) intersection. We shall focus on non-trivial intersections. Notice that the above intersections do not include the case that $\zeta$ and $\gamma$ intersect at some endpoint of $\zeta$.

By Morse's lemma, $\zeta$ and $\gamma$ intersect non-trivially at most once. Recall that since $\gamma$ is a timelike line, it divides $\mathbb{R}^{2}$ into two connected components $U^{+}$ and $U^{-}$. If $\zeta$ and $\gamma$ have a non-trivial intersection at an interior point of $\zeta$, then $\zeta(a)$ and $\zeta(b)$ must lie in different components of $U^{+}\cup U^{-}=\mathbb{R}^{2}\setminus Im(\gamma)$. If they do not intersect each other, we have $Im(\zeta)\subseteq U^{+}$ (or $U^{-}$).

The above discussions lead to the following definition (in this definition, the relations $<,>,\leq,\geq$ are defined in Definition \ref{<,>}).

\begin{defn}\label{cross types}
If a timelike maximal segment $\zeta$ and a line $\gamma\in\mathscr{M}$ intersect non-trivially at some $\zeta(t),t\in(a,b)$, then:
\begin{itemize}
  \item Either $\zeta(a)<\gamma,\zeta(b)>\gamma$;
  \item Or $\zeta(a)>\gamma,\zeta(b)<\gamma$.
\end{itemize}
In the first case, we say the intersection of $\zeta$ and $\gamma$ is of type 1; in the second case, we say the intersection of $\zeta$ and $\gamma$ is of type 2.

If $\zeta(t)\leq(\geq)\gamma$ for all $t\in[a,b]$, we say $\zeta\leq(\geq)\gamma$. If $\zeta(t)<(>)\gamma$ for all $t\in[a,b]$, we say $\zeta<(>)\gamma$. These two relations can be easily extended to the cases that $\zeta$ is a ray or a line by defining that $\zeta\leq(<)\gamma$ if $\zeta|_{[a,b]}\leq(<)\gamma$ holds for any closed interval $[a,b]$ in the domain of $\zeta$.
\end{defn}

\begin{Rem}\label{transversal}
If a timelike maximal segment $\zeta:[a,b]\rightarrow(\mathbb{R}^{2},g)$ (here $a<0<b$) and a timelike line $\gamma$ have a non-trivial intersection at $\zeta(0)=\gamma(0)$, then the intersection is of type 1 (type 2) if and only if $\{\dot{\gamma}(0),\dot{\zeta}(0)\}$ is positively (negatively) oriented.
\end{Rem}

\begin{Lem}\label{stable types}
Let $(\mathbb{R}^{2},g)$ be the Abelian cover of a class A Lorentzian 2-torus and $\gamma_{k}$ be a sequence of timelike lines converges to a causal line $\gamma$ in the $C^0$-topology. If a timelike maximal segment $\zeta:[a,b]\rightarrow(\mathbb{R}^{2},g)$ intersects all $\gamma_{k}$ with intersections of type 1 (type 2), and $\zeta$ also intersect $\gamma$ nontrivially, then $\gamma_{k}$ converges to $\gamma$ in the $C^1$-topology and the intersection of $\zeta$ and $\gamma$ is also of type 1(type 2).
\end{Lem}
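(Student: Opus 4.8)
The plan is to reparametrize everything by $g_{R}$-arc length, so that causal pregeodesics become integral curves of a \emph{single} smooth vector field, and then to upgrade the $C^{0}$ convergence to $C^{1}$ convergence by continuous dependence of solutions of an ODE on their initial data. Concretely, the $g$-geodesic spray, after projectivizing and rescaling to $g_{R}$-unit speed, descends to a smooth vector field $X$ on the $g_{R}$-unit tangent bundle of $\mathbb{T}^{2}$; since that base is compact, $X$ is complete, and its lift to $\R^{2}$ has the feature that every inextendible causal pregeodesic (in particular each $\gamma_{k}$ and the limit $\gamma$) is a globally defined integral curve of $X$. Assume the intersections are all of type $1$ (the type $2$ case is symmetric). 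By Morse's lemma each $\gamma_{k}$ meets $\zeta$ at a single interior point $p_{k}=\zeta(t_{k})$ with $t_{k}\in(a,b)$; let $v_{k}\in T_{p_{k}}\R^{2}$ be the future-directed $g_{R}$-unit tangent of $\gamma_{k}$ at $p_{k}$.

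The heart of the argument is the $C^{1}$ convergence. The pairs $(p_{k},v_{k})$ lie in the $g_{R}$-unit tangent bundle over the compact set $\zeta([a,b])$, which is compact, so after passing to a subsequence we may assume $t_{k}\to t_{*}$ and $v_{k}\to V$; as the future causal cone is closed and each $v_{k}$ is future-directed timelike, $V$ is future-directed causal. By continuous dependence of the integral curves of $X$ on their initial data, the integral curve $\sigma$ through $(\zeta(t_{*}),V)$ is the $C^{1}_{loc}$-limit of the $\gamma_{k}$ along this subsequence; comparison with the hypothesis $\gamma_{k}\to\gamma$ in $C^{0}$ forces $\sigma=\gamma$, whence $V=\dot{\gamma}(\zeta(t_{*}))$ and $\gamma_{k}\to\gamma$ in $C^{1}_{loc}$ along the subsequence. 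Because the limit integral curve is pinned to $\gamma$ independently of the subsequence chosen, every subsequence admits a further subsequence converging to $\gamma$ in $C^{1}_{loc}$, and hence the full sequence $\gamma_{k}$ converges to $\gamma$ in the $C^{1}$-topology. I expect this upgrade from $C^{0}$ to $C^{1}$ to be the main obstacle, as it is precisely the step that is false for general curves and is rescued only by the rigidity of the (pre)geodesic flow.

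It remains to locate $t_{*}$ and to read off the type, and for this I claim that $\zeta$ and $\gamma$ meet at exactly one point, namely the prescribed non-trivial intersection $\zeta(t_{0})$. Two distinct integral curves of $X$ sharing a point and a direction coincide, so $\zeta$ and $\gamma$ cannot be tangent anywhere without $Im(\zeta)\subseteq Im(\gamma)$, which is excluded by the existence of a transversal crossing; thus every point of $Im(\zeta)\cap Im(\gamma)$ is a transversal intersection. If $\gamma$ is timelike, two such intersections would contradict Morse's lemma applied to the two timelike maximal segments cut out between them. If $\gamma$ is lightlike, two intersections $x_{1},x_{2}$ would be joined along $\gamma$ by a maximizing null segment, giving $d(x_{1},x_{2})=0$, while the timelike subarc of $\zeta$ between them forces $d(x_{1},x_{2})>0$, a contradiction. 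Hence the intersection is unique, so $\zeta(t_{*})=\zeta(t_{0})$, and since $\zeta$ is injective $t_{k}\to t_{0}$.

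Finally, by Remark \ref{transversal} the intersection of $\zeta$ with $\gamma_{k}$ is of type $1$ exactly when $\{v_{k},\dot{\zeta}(t_{k})\}$ is positively oriented, i.e.\ $\det[v_{k},\dot{\zeta}(t_{k})]>0$. Letting $k\to\infty$ and using $v_{k}\to\dot{\gamma}(\zeta(t_{0}))$ together with $t_{k}\to t_{0}$ yields $\det[\dot{\gamma}(\zeta(t_{0})),\dot{\zeta}(t_{0})]\geq 0$; since the intersection at $\zeta(t_{0})$ is transversal this determinant is non-zero, hence strictly positive, so the intersection of $\zeta$ and $\gamma$ is again of type $1$. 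Reversing all orientations gives the type $2$ statement.
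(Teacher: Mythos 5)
Your proof is correct and follows essentially the same route as the paper's: both reparametrize the pregeodesics as integral curves of a smooth flow on the tangent bundle (Suhr's Proposition 4.12), use continuous dependence on initial data to upgrade the $C^{0}$ convergence to $C^{1}$, and read off the type from the sign of the determinant via Remark \ref{transversal}. Your version merely spells out two points the paper leaves terse — the compactness/subsequence argument pinning the limiting tangent vector, and the uniqueness of the intersection of $\zeta$ with the (possibly null) limit line $\gamma$ — which is a welcome but not essentially different elaboration.
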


\begin{proof}
For the timelike segment  $\zeta:[a,b]\rightarrow(\mathbb{R}^{2},g)$, we assume that $a<0<b$, $\zeta$ and $\gamma$ intersect at $\zeta(0)=\gamma(0)$ without loss of generality. By Morse's lemma, the intersection point of $\zeta$ and $\gamma_{k}$ ($\gamma$) is unique. So if we denote their intersection point by $\zeta(a_{k})=\gamma_{k}(b_{k})$, then by the uniqueness of intersection, $a_{k},b_{k}\rightarrow0$. As S. Suhr proved in \cite[Proposition 4.12]{Su2}, all pregeodesics satisfy a smooth second order ODE system that defines a flow on the tangent bundle. Thus the $C^{0}$-convergence of $\gamma_{k}$ to $\gamma$ and the fact that $a_{k}\rightarrow0$ imply $\dot{\gamma_{k}}(a_{k})\rightarrow\dot{\gamma}(0)$ w.r.t. the usual topology on T$\mathbb{R}^{2}$. Since $\gamma_{k}$ and $\gamma$ are all parametrized by $g_{R}$-arc length, $\gamma_{k}$ converges to $\gamma$ in the $C^{1}$-topology. Obviously we also have $\dot{\zeta}(b_{k})\rightarrow\dot{\zeta}(0)$.

By the transversality of the intersection of $\zeta$ and $\gamma$, $\dot{\zeta}(0)$ and $\dot{\gamma}(0)$ are linearly independent. Thus we obtain
$$
\lim_{k\rightarrow\infty}\det({\dot{\gamma_{k}}(a_{k}),\dot{\zeta}(b_{k})})=\det({\dot{\gamma}(0),\dot{\zeta}(0)})\neq0.
$$
Then the lemma follows from Remark \ref{transversal}.
\end{proof}

We are now ready to formulate the main lemma of this section.
\begin{Lem}\label{main lemma}
Let $\zeta:[a,b]\rightarrow(\mathbb{R}^{2},g)$ be a subsegment of a timelike ray with asymptotic direction in $(m^{-},m^{+})$. Then there exist an asymptotic direction $\alpha\in(m^{-},m^{+})$ and two neighboring elements $\underline{\zeta},\overline{\zeta}$ in $\mathscr{M}_{\alpha}^{rec}$ (if $\alpha$ is irrational) resp. in $\mathscr{M}_{\alpha}^{per}$ (if $\alpha$ is rational) such that $\underline{\zeta}\leq\zeta\leq\overline{\zeta}$.
\end{Lem}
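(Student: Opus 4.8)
The plan is to take $\alpha=\beta$, where $\beta\in(m^{-},m^{+})$ is the asymptotic direction of the timelike ray $r$ of which $\zeta=r|_{[a,b]}$ is a subsegment, and to trap $\zeta$ between the closest recurrent (resp. periodic) timelike lines of this \emph{same} direction lying above and below it. Write $\mathscr{M}_{\beta}^{rec}$ for the recurrent lines when $\beta$ is irrational and $\mathscr{M}_{\beta}^{per}$ when $\beta$ is rational. By Theorem \ref{structure for lines} this family is nonempty and is totally ordered by the relation $<$ of Definition \ref{<,>}; moreover it is closed under $C^{0}$-limits, since a limit of recurrent (resp. periodic) $\beta$-lines is again a recurrent (resp. periodic) $\beta$-line, using continuity of the asymptotic direction (Theorem \ref{structure for lines}(4)) and the fact that the defining invariance passes to the limit.

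The key step, which I expect to be the main obstacle, is to show that no $\gamma\in\mathscr{M}_{\beta}^{rec}$ meets $\zeta$ non-trivially: either $Im(\zeta)\subseteq Im(\gamma)$, or $\zeta$ lies strictly on one side of $\gamma$ (so $\zeta<\gamma$ or $\zeta>\gamma$ in the sense of Definition \ref{cross types}). Here $\zeta$ is a subsegment of the timelike maximizer $r$ and $\gamma$ is a timelike maximizer, both with asymptotic direction $\beta$, so the increments of both stay within a uniformly bounded $d_{R}$-distance of a common half line $\overline{\beta}$ (Theorem \ref{structure for lines}(1) for $\gamma$, Definition \ref{asym} for $r$). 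If $\zeta$ and $\gamma$ crossed transversally, Morse's lemma would make this the unique non-trivial intersection of $r$ with $\gamma$, so the two forward maximizing rays emanating from the crossing point would leave it transversally yet remain forward-asymptotic (bounded $d_{R}$-distance apart, both with direction $\beta$). A Morse-type exchange argument then applies: cutting one ray and reconnecting it to the other across the asymptotically pinching region produces, via the curve lengthening lemma, a timelike competitor violating maximality. This contradiction gives the claim; carrying out the exchange quantitatively (controlling the small corner gain against the vanishing transverse gap, using Corollary \ref{causal} and the bounded-distance estimates) is where the real work lies.

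Granting this, the remainder is order-theoretic. First I would produce recurrent $\beta$-lines above and below $\zeta$: starting from any $\gamma_{0}\in\mathscr{M}_{\beta}^{rec}$ and a lattice vector $v$ with nonzero component transverse to $\beta$, the translates $T_{nv}\gamma_{0}$ (again in $\mathscr{M}_{\beta}^{rec}$) lie at $d_{R}$-distance comparable to $n$ from the compact set $Im(\zeta)$, hence for large $n$ they cannot cross $\zeta$ and lie entirely on one side. Thus $\mathcal{A}:=\{\gamma\in\mathscr{M}_{\beta}^{rec}\mid \zeta\le\gamma\}$ and $\mathcal{B}:=\{\gamma\in\mathscr{M}_{\beta}^{rec}\mid \zeta\ge\gamma\}$ are both nonempty, and by the key step $\mathcal{A}\cup\mathcal{B}=\mathscr{M}_{\beta}^{rec}$. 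Using the total order and closedness, set $\overline{\zeta}:=\inf\mathcal{A}$ and $\underline{\zeta}:=\sup\mathcal{B}$; since $\mathscr{M}_{\beta}^{rec}$ and the conditions $\zeta\le\gamma$, $\zeta\ge\gamma$ all pass to $C^{0}$-limits, the extrema are attained in $\mathscr{M}_{\beta}^{rec}$, giving $\underline{\zeta}\le\zeta\le\overline{\zeta}$.

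Finally I would verify that $\underline{\zeta}$ and $\overline{\zeta}$ are neighboring in the sense of Definition \ref{nb}. Any $\gamma\in\mathscr{M}_{\beta}^{rec}$ with $\underline{\zeta}<\gamma<\overline{\zeta}$ satisfies, by the key step, $\zeta\le\gamma$ or $\zeta\ge\gamma$; the former puts $\gamma\in\mathcal{A}$ strictly below $\inf\mathcal{A}$ and the latter puts $\gamma\in\mathcal{B}$ strictly above $\sup\mathcal{B}$, both impossible. Hence no recurrent (resp. periodic) $\beta$-line lies strictly between $\underline{\zeta}$ and $\overline{\zeta}$, so they are neighboring, and $\alpha=\beta$, $\underline{\zeta}$, $\overline{\zeta}$ are as required. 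In the degenerate situation where the recurrent $\beta$-lines foliate $\mathbb{R}^{2}$, the key step forces $\zeta$ to lie on a single leaf and one obtains $\underline{\zeta}=\overline{\zeta}$ equal to that leaf, the limiting instance of the stated sandwich.
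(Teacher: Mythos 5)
Your overall strategy differs from the paper's: you fix $\alpha$ to be the asymptotic direction $\beta$ of the ambient ray $r$ and try to rule out, directly, any transversal crossing of $\zeta$ with a recurrent (resp.\ periodic) line of that \emph{same} direction. The paper does not do this; following Bangert, it classifies the transversal crossings of $\zeta$ with recurrent lines of \emph{all} directions into type 1 and type 2, sets $\alpha:=\sup B$ where $B$ is the set of irrational directions admitting a type-1 crossing, and derives a contradiction from the stability of the crossing type under $C^{0}$-limits (Lemma \ref{stable types}) together with the order structure of $\mathscr{M}_{\alpha}$. In particular the direction $\alpha$ produced by the paper for a finite segment need not equal $\beta$; that $\beta$ itself works for the full ray is the content of the later Theorem \ref{nb lines for a ray}, which is deduced \emph{from} this lemma by a limiting argument. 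So you are in effect trying to prove the stronger statement in one step.

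The genuine gap is in your ``key step.'' You argue that after the (unique, by Morse's lemma) transversal crossing, the forward subrays of $r$ and of $\gamma$ ``remain forward-asymptotic (bounded $d_{R}$-distance apart, both with direction $\beta$),'' and you then invoke a cut-and-reconnect argument ``across the asymptotically pinching region.'' But having the same asymptotic direction only yields a \emph{bounded} transverse gap (both curves stay within a fixed $d_{R}$-distance of translates of $\overline{\beta}$); it does not make the gap vanish, and in general it does not (two distinct periodic lines of the same rational direction, or a ray asymptotic to one boundary of a gap versus the other boundary, stay a definite distance apart forever). The exchange argument — as carried out in the paper's Lemma \ref{no intersection} — needs the gap to tend to $0$: the reconnection cost is controlled by $C\cdot d_{R}(\zeta(T_{\epsilon}),p_{\epsilon})$ via the Lipschitz property of $d$ (Proposition \ref{A1-2}), and this must eventually drop below the fixed corner gain $\varepsilon_{0}$. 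With a merely bounded gap the reconnection cost has a positive lower bound and no contradiction arises. Note that the hypothesis of Lemma \ref{no intersection} is precisely ``asymptotic in the future direction'' in the sense of Definition \ref{asy}, and the fact that a ray of direction $\beta$ is asymptotic to a recurrent line of direction $\beta$ is itself a consequence of Theorems \ref{structure for lines}, \ref{nb lines for a ray} and \ref{irrational}/\ref{rational} — i.e.\ of the very lemma you are proving — so it cannot be assumed here. The order-theoretic second half of your argument (nonemptiness of $\mathcal{A}$, $\mathcal{B}$ via far translates, attainment of $\inf$/$\sup$ by closedness, and the neighboring property) is sound, but it rests entirely on the unproven key step.
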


This is just an adaption of the proof of \cite[Theorem 3.2]{Ba 2}. For the completeness, we still represent it here.

\begin{proof}
If there is a $\beta\in(m^{-},m^{+})$ such that $\zeta$ does not intersect any timelike line in $\mathscr{M}_{\beta}^{rec}$ (if $\beta$ is irrational) or in $\mathscr{M}_{\beta}^{per}$ (if $\beta$ is rational) at an interior point of $\zeta$, then the theorem is true for $\zeta$. Otherwise, for every $\beta\in(m^{-},m^{+})$, there exists $\gamma\in\mathscr{M}_{\beta}$ such that $\zeta$ intersects $\gamma$ at some interior point. Assume this intersection is of type 1, then the following subset of asymptotic directions is non-empty:

$B=\{\beta\in(m^{-},m^{+})$  is irrational $|$ There exists $ \gamma\in\mathscr{M}_{\beta}^{rec}$ such that the intersection of $\zeta$ and $\gamma$ is of type 1 $\}$.

Since $\zeta$ is a subsegment of some timelike ray with an asymptotic direction in $(m^{-},m^{+})$, then $\alpha:=\sup B$ clearly exists and belongs to $(m^{-},m^{+})$. We argue by a contradiction that $\alpha$ satisfies the lemma's requirement.

If $\alpha$ is irrational, assume $\zeta$ intersects transversally some $\gamma\in\mathscr{M}_{\alpha}^{rec}$. Since $\gamma$ is the $C^{0}$-limit of a sequence $\gamma_{k}\in\mathscr{M}_{\alpha_{k}}^{rec}$ with $\alpha_{k}>\alpha$, then by Remark \ref{transversal}, $\zeta$ intersects almost every  $\gamma_{k}$ at some interior point and all intersections are nontrivial. By the definition of $\alpha$, the intersection of $\zeta$ and $\gamma_{k}$ is of type 2 for every sufficiently large $k$. By Lemma \ref{stable types}, we obtain that the intersection of $\zeta$ and $\gamma$ is also of type 2. By Theorem \ref{structure for lines}, we deduce that
\begin{equation}\label{star}
\text{If }\gamma_{1}\in\mathscr{M}_{\alpha},\zeta(a)\leq\gamma_{1}, \text{then }\zeta(b)<\gamma_{1}.
\end{equation}
Then $\alpha$ does not belong to $B$. On the other hand, there is a sequence $\beta_{k}\in B$ with $\lim\beta_{k}=\alpha$ and by Theorem \ref{structure for lines}, a sequence $\eta_{k}\in\mathscr{M}_{\beta_{k}}^{rec}$ such that the intersections of $\zeta$ and $\eta_{k}$ are of type 1. We translate the parameter of  $\eta_{k}$ such that $\eta_{k}(0)$ is the intersection point of $\zeta$ and $\eta_{k}$. Since $Im(\zeta)$ is compact and $\eta_{k}$ is parametrized by $g_{R}$-length, Ascoli-Arzela Theorem shows that $\{\eta_{k}\}$ is compact w.r.t. the  $C^{0}$ topology. Thus there exists a subsequence of $\eta_{k}$ converges to some $\eta\in\mathscr{M}_{\alpha}$. The $C^{0}$-convergence of $\eta_{k}$ implies that such $\eta$ satisfies $\zeta(a)\leq\eta$ and $\zeta(b)\geq\eta$ (if it is not so, one could deduce that the intersection of $\zeta$ and $\eta_{k}$ is of type 2, which is absurd by our assumption), which contradicts \eqref{star}. So we have proved the lemma when $\alpha$ is irrational.

If $\alpha$ is rational, again we assume that $\zeta$ intersects some $\gamma\in\mathscr{M}_{\alpha}^{per}$ nontrivially. As above, we get that the intersection of $\zeta$ and $\gamma$ is of type 2, so
\begin{equation}\label{ieq:0}
\zeta(a)>\gamma,\zeta(b)<\gamma.
\end{equation}
Since $\alpha$ is rational, it does not belong to $B$. Also by the same argument as above, we could find an element $\eta\in\mathscr{M}_{\alpha}$ such that
\begin{equation}\label{ieq:10}
\zeta(a)\leq\eta,\zeta(b)\geq\eta.
\end{equation}
If $\eta$ and $\gamma$ do not intersect each other, then either $\eta>\gamma$ or $\eta<\gamma$. By Inequality \ref{ieq:10}, both cases contradict Inequality \ref{ieq:0}. So we get that $\gamma\in\mathscr{M}_{\alpha}^{per}$ and $\eta\in\mathscr{M}_{\alpha}$ intersect, this contradicts Theorem \ref{structure for lines}. This completes the proof of this lemma.
\end{proof}

\begin{The}\label{nb lines for a ray}
For every $\zeta\in\mathscr{R}_{\alpha} \text{ and }\alpha\in(m^{-},m^{+})$, there exist two neighboring lines $\underline{\zeta},\overline{\zeta}$ in $\mathscr{M}_{\alpha}^{per}\text{ if }\alpha\in\mathbb{Q}$ (in $\mathscr{M}_{\alpha}^{rec}\text{ if }\alpha\in\mathbb{R}\setminus\mathbb{Q}$) such that $\underline{\zeta}\leq\zeta\leq\overline{\zeta}$.
\end{The}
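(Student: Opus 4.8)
The plan is to upgrade Lemma \ref{main lemma}, which is stated only for finite subsegments $\zeta|_{[a,b]}$ of a timelike ray, into a single pair of neighboring lines $\underline{\zeta}\le\zeta\le\overline{\zeta}$ valid along the entire ray $\zeta\in\mathscr{R}_\alpha$. First I would fix $\zeta:\mathbb{R}_+\to(\mathbb{R}^2,g)$ and apply Lemma \ref{main lemma} to an exhaustion of its domain by compact subsegments $\zeta|_{[0,n]}$. For each $n$ this yields a timelike asymptotic direction $\alpha_n\in(m^-,m^+)$ together with neighboring recurrent (resp. periodic) lines $\underline{\zeta}_n,\overline{\zeta}_n$ in $\mathscr{M}_{\alpha_n}^{rec}$ (resp. $\mathscr{M}_{\alpha_n}^{per}$) sandwiching the segment, i.e. $\underline{\zeta}_n\le\zeta|_{[0,n]}\le\overline{\zeta}_n$. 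The goal is to pass to a limit in $n$ and show the limiting pair actually sandwiches all of $\zeta$ and has the correct asymptotic direction $\alpha$.

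Next I would argue that the directions $\alpha_n$ must converge to $\alpha$, the asymptotic direction of $\zeta$ itself. The heuristic is monotonicity: if $\alpha_n$ stayed bounded away from $\alpha$, the sandwiching line $\underline{\zeta}_n$ or $\overline{\zeta}_n$ with a different asymptotic direction would eventually cross $\zeta$ transversally (by the first bulleted fact in Section 4, two inextendible causal curves with different asymptotic directions must intersect), contradicting $\underline{\zeta}_n\le\zeta|_{[0,n]}\le\overline{\zeta}_n$ once $n$ is large enough that the forced intersection falls inside $[0,n]$. This pins down $\alpha_n\to\alpha$, and then part (4) of Theorem \ref{structure for lines} (continuity of the asymptotic direction in the $C^0$-topology) together with the approximation statement in that same part lets me control the limit lines. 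Using Proposition \ref{causal} / Corollary \ref{causal}, the lines $\underline{\zeta}_n,\overline{\zeta}_n$ stay in a bounded neighborhood of $Im(\zeta)$ and are parametrized by $g_R$-arc length, so Ascoli--Arzela extracts $C^0$-convergent subsequences $\underline{\zeta}_n\to\underline{\zeta}$, $\overline{\zeta}_n\to\overline{\zeta}$, both lying in $\mathscr{M}_\alpha^{rec}$ (resp. $\mathscr{M}_\alpha^{per}$), and the sandwiching inequalities pass to the limit to give $\underline{\zeta}\le\zeta\le\overline{\zeta}$ on all of $\mathbb{R}_+$.

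It remains to verify that $\underline{\zeta}$ and $\overline{\zeta}$ are genuinely \emph{neighboring}, not merely that they sandwich $\zeta$. Here I would invoke the structure of $\mathscr{M}_\alpha^{rec}$ (resp. $\mathscr{M}_\alpha^{per}$) from parts (2) and (3) of Theorem \ref{structure for lines}: the recurrent (periodic) lines are totally ordered by $<$, and I want to choose $\underline{\zeta}$ as the supremum of the recurrent lines $\le\zeta$ and $\overline{\zeta}$ as the infimum of those $\ge\zeta$. The point is that no recurrent (periodic) line can lie strictly between $\underline{\zeta}$ and $\overline{\zeta}$: any such line would either cross $\zeta$ — impossible since $\zeta$ is sandwiched — or lie entirely on one side, contradicting the extremality of $\underline{\zeta}$ or $\overline{\zeta}$. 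Openness of the strip and the Cantor-set/gap correspondence from part (2) guarantee these extrema exist and are again recurrent (periodic), so the strip they bound contains no other recurrent line, which is exactly Definition \ref{nb}.

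The main obstacle, I expect, is the passage from the finite-$n$ directions $\alpha_n$ to a single $\alpha$ and the simultaneous control of the limit lines: one must rule out $\alpha_n$ oscillating or drifting to an endpoint $m^\pm$, and one must ensure the $C^0$-limits of recurrent (periodic) lines remain recurrent (periodic) rather than degenerating. Both are handled by Theorem \ref{structure for lines}(4) — its continuity and approximation clauses are precisely designed for this — but weaving the monotonicity argument (forcing $\alpha_n\to\alpha$ via the ``different directions must intersect'' fact) together with the compactness extraction is the delicate part, since the naive limit of neighboring pairs need only sandwich, and promoting ``sandwiching'' to ``neighboring'' requires the extremal selection described above.
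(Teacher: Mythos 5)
Your opening move is exactly the paper's: apply Lemma \ref{main lemma} to the exhaustion $\zeta|_{[0,n]}$ and reduce everything to showing $\alpha_n\to\alpha$ plus a limiting step. For the convergence $\alpha_n\to\alpha$, your crossing heuristic is morally right but incomplete as stated: the claim that ``the forced intersection falls inside $[0,n]$ once $n$ is large'' needs a bound on the crossing time that is \emph{uniform in $n$}, since the lines $\underline{\zeta}_n,\overline{\zeta}_n$ change with $n$. That uniformity comes from two facts you do not invoke: the uniform deviation bound $D(g,g_R)$ of recurrent lines from their asymptotic half-lines (Theorem \ref{structure for lines}(1)) and the fact that $\zeta(0)$ stays within $\mathrm{diam}(\mathbb{T}^{2},g_{R})$ of $\overline{\zeta}_n$ because neighboring lines bound a gap of bounded width. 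The paper sidesteps the crossing argument entirely and just estimates: $\zeta(j)-\zeta(0)$ lies within a uniform distance of $\overline{\alpha_j}$ (via the sandwiching lines and the stable norm) and within $D(\zeta)$ of $\overline{\alpha}$ (since $\zeta\in\mathscr{R}_\alpha$), and $\|\zeta(j)-\zeta(0)\|\to\infty$, so dividing forces $\alpha_j\to\alpha$. Your version can be repaired, but you would essentially be re-deriving these estimates.

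The genuine gap is in your final step. Your extremal selection ($\underline{\zeta}=\sup$ of recurrent lines $\leq\zeta$, $\overline{\zeta}=\inf$ of those $\geq\zeta$) produces a neighboring pair \emph{only if no recurrent line of direction $\alpha$ crosses $\zeta$ nontrivially}: a recurrent line strictly between your two extrema is precisely one that is neither $\leq\zeta$ nor $\geq\zeta$, i.e.\ one that $\zeta$ crosses. You dismiss this case with ``impossible since $\zeta$ is sandwiched,'' but being sandwiched by the outer limit pair does not prevent a recurrent line strictly inside that strip from crossing $\zeta$ — that is the very statement to be proved, so the argument is circular at this point. (Your alternative, taking $C^{0}$-limits of the pairs $\underline{\zeta}_n,\overline{\zeta}_n$, also leaves unaddressed why those limits are again \emph{recurrent}; Theorem \ref{structure for lines}(4) gives approximation of a recurrent line by recurrent lines, not closedness of the union $\bigcup_n\mathscr{M}_{\alpha_n}^{rec}$ under limits.) The paper closes this hole by contradiction: if some $\gamma\in\mathscr{M}_{\alpha}^{rec}$ crosses $\zeta_N$ nontrivially, approximate $\gamma$ in the $C^{0}$-topology by $\gamma_j\in\mathscr{M}_{\alpha_j}^{rec}$ (the approximation clause of Theorem \ref{structure for lines}(4)); by stability of transversal intersections (Lemma \ref{stable types} / Remark \ref{transversal}), $\zeta_N$ crosses $\gamma_j$ nontrivially for large $j$, which is impossible because $\zeta_j\supseteq\zeta_N$ lies in the gap between two neighboring lines of $\mathscr{M}_{\alpha_j}^{rec}$ and hence cannot cross any line of $\mathscr{M}_{\alpha_j}^{rec}$. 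You need this (or an equivalent) argument; without it the promotion from ``sandwiching'' to ``neighboring'' does not go through.
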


\begin{proof}
Applying Lemma \ref{main lemma} to $\zeta_{j}:=\zeta|_{[0,j]}$, we know that there exists a sequence $\{\alpha_{j}\}$ of asymptotic directions such that there are two neighboring elements, $\overline{\zeta}_{j}$ and $\underline{\zeta}_{j}$, in $\mathscr{M}_{\alpha_{j}}^{rec}$ (or $\mathscr{M}_{\alpha_{j}}^{per}$) satisfying $\underline{\zeta}_{j}\leq\zeta_{j}\leq\overline{\zeta}_{j}$.

\textbf{Claim}: $\alpha_{j}\rightarrow\alpha$ in $SH_{1}(\mathbb{T}^{2},\mathbb{R})$ w.r.t. the topology defined in Definition \ref{asym} as $j\rightarrow\infty$.

If the claim has been proved, we can easily complete the proof by the following argument. Suppose the theorem is not true, there exist an element $\gamma\in\mathscr{M}_{\alpha}^{rec}$ and a large $N$ such that $\zeta_{N}$ intersects $\gamma$ nontrivially. Since $\alpha_{j}\rightarrow\alpha$, there exists a sequence of lines $\gamma_{j}\in\mathscr{M}_{\alpha_{j}}^{rec}$ (or $\mathscr{M}_{\alpha_{j}}^{per}$) converges to $\gamma$ uniformly on compact intervals. Then we have $\zeta_{N}$ also intersects $\gamma_{j}$ nontrivially for sufficiently large $j$, this contradicts Lemma \ref{main lemma}.\qed

\noindent\textit{Proof of the Claim}: Since $\overline{\zeta}_{j},\underline{\zeta}_{j}$ are neighboring elements in $\mathscr{M}_{\alpha_{j}}$ and $\zeta_{j}$ is contained in the strip bounded by $\overline{\zeta}_{j}$ and $\underline{\zeta}_{j}$, there are $p_{j},q_{j}\in\overline{\zeta}_{j}$ such that
\begin{equation}
\begin{split}
d_{R}(\zeta(0),p_{j})\leq\text{diam}(\mathbb{T}^{2},g_{R}),\\
d_{R}(\zeta(j),q_{j})\leq\text{diam}(\mathbb{T}^{2},g_{R}).
\end{split}
\end{equation}
By Theorem \ref{structure for lines}, there is a constant $D(g,g_{R})$ such that dist$_{\|\cdot\|}(q_{j}-p_{j},\overline{\alpha_{j}})\leq D$ for any positive integer $j$. Together with Theorem \ref{std}, there is a constant $D^{\prime}(g,g_{R})$ and for each $j$, a vector $h_{j}\in\overline{\alpha_{j}}$ such that
\begin{equation}\label{ieq:2}
\text{dist}_{\|\cdot\|}(\zeta(j)-\zeta(0),h_{j})\leq D^{\prime}.
\end{equation}

Since $\zeta\in\mathscr{R}_{\alpha}$, there is a constant $D(\gamma)$ and for each $j$, a vector $\bar{h}_{j}\in\overline{\alpha}$ such that
\begin{equation}\label{ieq:4}
\text{dist}_{\|\cdot\|}(\zeta(j)-\zeta(0),\bar{h}_{j})\leq D(\gamma).
\end{equation}
As $\zeta$ is a timelike ray, $\|\zeta(j)-\zeta(0)\|\rightarrow\infty$ as $j\rightarrow\infty$. Dividing two sides of \eqref{ieq:2} and \eqref{ieq:4} by $\|\zeta(j)-\zeta(0)\|$, we got $\alpha_{j}\rightarrow\alpha$ as $j\rightarrow\infty$.
\end{proof}

We state a useful corollary of Theorem \ref{nb lines for a ray}.
\begin{Cor}\label{ray osc}
There exists a constant $B(g,g_{R})$ such that for any $\alpha\in(m^{-},m^{+})$ and any timelike ray $\zeta\in\mathscr{R}_{\alpha}$,
\begin{equation}
\text{dist}_{\|\cdot\|}(\zeta(t)-\zeta(s),\overline{\alpha})\leq B(g,g_{R})
\end{equation}
for all $s\leq t$ contained in the domain of $\zeta$.
\end{Cor}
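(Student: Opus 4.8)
The plan is to derive the uniform bound $B(g,g_R)$ in Corollary \ref{ray osc} by comparing any timelike ray $\zeta\in\mathscr{R}_\alpha$ against the two neighboring recurrent (or periodic) lines that sandwich it, whose oscillation about $\overline{\alpha}$ is already controlled by a \emph{uniform} constant coming from Theorem \ref{structure for lines}. The point is that Theorem \ref{nb lines for a ray} supplies, for each $\zeta$, two neighboring lines $\underline{\zeta},\overline{\zeta}\in\mathscr{M}_\alpha^{rec}$ (or $\mathscr{M}_\alpha^{per}$) with $\underline{\zeta}\le\zeta\le\overline{\zeta}$, and by part (1) of Theorem \ref{structure for lines} each of these lines $\mu$ satisfies $\operatorname{dist}_{\|\cdot\|}(\mu(t')-\mu(s'),\overline{\alpha})\le D(g,g_R)$ with $D$ depending only on $g,g_R$ (not on $\alpha$ or the particular line). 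So the ray is trapped in a strip whose two boundary curves each stay within bounded $\|\cdot\|$-distance of $\overline{\alpha}$.

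First I would fix $s\le t$ in the domain of $\zeta$ and use the sandwiching $\underline{\zeta}\le\zeta\le\overline{\zeta}$ to locate, for each of the two endpoints $\zeta(s)$ and $\zeta(t)$, nearby points on one of the bounding lines. Concretely, because $\zeta$ lies in the strip bounded by $\underline{\zeta}$ and $\overline{\zeta}$, and because that strip has $g_R$-width bounded by $\operatorname{diam}(\mathbb{T}^2,g_R)$ (this is the same mechanism used in the proof of the Claim inside Theorem \ref{nb lines for a ray}, where points $p_j,q_j$ on $\overline{\zeta}_j$ were found within distance $\operatorname{diam}(\mathbb{T}^2,g_R)$ of $\zeta(0),\zeta(j)$), I can choose points $p,q$ on $\overline{\zeta}$ with $d_R(\zeta(s),p)\le\operatorname{diam}(\mathbb{T}^2,g_R)$ and $d_R(\zeta(t),q)\le\operatorname{diam}(\mathbb{T}^2,g_R)$. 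Then I would pass to the stable norm via Theorem \ref{std}, which converts these $g_R$-distances on $\mathbb{R}^2$ into $\|\cdot\|$-bounds on the corresponding homology differences up to an additive constant $|\operatorname{std}(g_R)|$.

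The estimate then assembles by the triangle inequality for $\operatorname{dist}_{\|\cdot\|}(\cdot,\overline{\alpha})$. I would write
\begin{align*}
\operatorname{dist}_{\|\cdot\|}(\zeta(t)-\zeta(s),\overline{\alpha})
&\le \|(\zeta(t)-\zeta(s))-(q-p)\|
   +\operatorname{dist}_{\|\cdot\|}(q-p,\overline{\alpha})\\
&\le \big(\|\zeta(t)-q\|+\|\zeta(s)-p\|\big)
   +\operatorname{dist}_{\|\cdot\|}(q-p,\overline{\alpha}).
\end{align*}
The first two terms are bounded by $2\operatorname{diam}(\mathbb{T}^2,g_R)+2|\operatorname{std}(g_R)|$ via Theorem \ref{std}, and the last term is bounded by $D(g,g_R)$ since $p,q$ lie on the line $\overline{\zeta}$, which obeys the distance bound from Theorem \ref{structure for lines}(1). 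Setting $B(g,g_R):=2\operatorname{diam}(\mathbb{T}^2,g_R)+2|\operatorname{std}(g_R)|+D(g,g_R)$ yields a constant depending only on $g$ and $g_R$, uniform in $\alpha$ and in $\zeta$.

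The main obstacle I anticipate is the uniformity of the constant $D(g,g_R)$ across \emph{all} timelike asymptotic directions $\alpha$ and all neighboring lines simultaneously: one must be certain that the bounded-distance constant in Theorem \ref{structure for lines}(1) genuinely depends only on $(g,g_R)$ and not on $\alpha$, so that the final $B$ does not degenerate as $\alpha\to m^\pm$. A secondary care-point is verifying that $q-p$ can legitimately be treated as a homology difference lying at bounded distance from $\overline{\alpha}$ — this is where I must invoke that $\overline{\zeta}\in\mathscr{M}_\alpha$ and apply part (1) of Theorem \ref{structure for lines} to the line $\overline{\zeta}$ itself between the parameter values corresponding to $p$ and $q$, rather than to $\zeta$. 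Once the roles of the ray and its bounding lines are kept straight, the argument is a clean triangle-inequality chaining of already-established uniform bounds.
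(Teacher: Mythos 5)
Your proposal is correct and follows essentially the same route as the paper's own proof: sandwich $\zeta$ between the neighboring lines from Theorem \ref{nb lines for a ray}, pick points $p,q$ on $\overline{\zeta}$ within $\operatorname{diam}(\mathbb{T}^{2},g_{R})$ of $\zeta(s),\zeta(t)$, and combine Theorem \ref{std} with the uniform bound $\text{dist}_{\|\cdot\|}(q-p,\overline{\alpha})\leq D(g,g_{R})$ from Theorem \ref{structure for lines}(1) via the triangle inequality. You have merely written out explicitly the chaining that the paper leaves as "one easily deduces the conclusion."
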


\begin{proof}
One could find two points $p,q$ on $\overline{\zeta}$ (or $\underline{\zeta}$) such that
\begin{equation}
\begin{split}
d_{R}(\zeta(s),p)\leq\text{diam}(\mathbb{T}^{2},g_{R}),\\
d_{R}(\zeta(t),q)\leq\text{diam}(\mathbb{T}^{2},g_{R}).
\end{split}
\end{equation}
By Theorem 8.1, together with the fact
\begin{equation}
\text{dist}_{\|\cdot\|}(q-p,\overline{\alpha})\leq D(g,g_{R}),
\end{equation}
one easily deduces the conclusion.
\end{proof}

\begin{Lem}\label{dom}
For $\alpha\in(m^{-},m^{+})$, let $\gamma:\mathbb{R}\rightarrow(\mathbb{R}^{2},g)$ be a timelike line in $\mathscr{M}_{\alpha}$. Then
\begin{equation}
I^{+}(\gamma)=\cup_{k\in\mathbb{N}}I^{+}(\gamma(-k))=\mathbb{R}^{2}.
\end{equation}
\end{Lem}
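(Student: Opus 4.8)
The goal is to show that the chronological future $I^{+}(\gamma)$ of a timelike line $\gamma\in\mathscr{M}_{\alpha}$ equals all of $\mathbb{R}^{2}$, and that this future is exhausted by the chronological futures of the sequence of points $\gamma(-k)$. The plan is to prove the chain of equalities by first establishing the obvious inclusions, then doing the real work on the rightmost equality $\cup_{k}I^{+}(\gamma(-k))=\mathbb{R}^{2}$.

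First I would record the easy inclusions. Since $\gamma(-k)\in\mathrm{Im}(\gamma)$ for each $k$, we certainly have $\cup_{k}I^{+}(\gamma(-k))\subseteq I^{+}(\gamma)$, and trivially $I^{+}(\gamma)\subseteq\mathbb{R}^{2}$. So everything reduces to showing $\mathbb{R}^{2}\subseteq\cup_{k}I^{+}(\gamma(-k))$, i.e. that an \emph{arbitrary} point $p\in\mathbb{R}^{2}$ lies in the chronological future of some $\gamma(-k)$. The natural strategy is to fix $p$ and find an integer $k$ large enough that there is a future-directed timelike curve from $\gamma(-k)$ to $p$.

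The key geometric idea I would exploit is the asymptotic behavior of $\gamma$ governed by the timelike direction $\alpha\in(m^{-},m^{+})$. Because $\alpha$ is strictly interior to the causal cone, points far back along the line, $\gamma(-k)$, recede in a definite timelike direction, so their future light cones should eventually sweep across any fixed $p$. Concretely, I would pick an auxiliary reference: using Proposition \ref{cone} (the stable time cone controls $J^{+}(q)-q$ up to a bounded error $D(g,g_{R})$) together with Corollary \ref{ray osc} (which confines $\gamma(t)-\gamma(s)$ to bounded $\|\cdot\|$-distance from $\overline{\alpha}$), I can estimate the homology displacement $p-\gamma(-k)$ and show it lands, for large $k$, well inside the interior of the stable time cone $\mathfrak{T}^{\circ}$ — in fact inside some strict $\epsilon$-cone $\mathfrak{T}^{\epsilon}$ as in Definition \ref{epsilon-cone}. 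Since $\gamma(-k)$ drifts in the $-\alpha$ direction as $k\to\infty$ while $p$ is fixed, the vector $p-\gamma(-k)$ asymptotically aligns with $+\alpha$, which is timelike; an error term bounded by $D(g,g_{R})$ becomes negligible relative to the growing $\|p-\gamma(-k)\|$. Once the displacement is interior to the cone by a definite margin, the characterization of $\mathfrak{T}$ as approximating $J^{+}(q)-q$ lets me conclude $p\in J^{+}(\gamma(-k))$, and a short perturbation upgrades this to the \emph{chronological} future $I^{+}(\gamma(-k))$ (strict timelikeness, which I can arrange since we are strictly inside the cone).

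The main obstacle I anticipate is passing rigorously from a homological/coarse statement — the displacement $p-\gamma(-k)$ lies in the interior of the stable cone — to the genuinely \emph{local} causal conclusion that $p\in I^{+}(\gamma(-k))$, since the cone $\mathfrak{T}$ only approximates causal futures up to the bounded error $D(g,g_{R})$ and only as a statement about homology classes, not actual reachability in $(\mathbb{R}^{2},g)$. Bridging this gap is where the global hyperbolicity of the class A Abelian cover and the viciousness of $(\mathbb{T}^{2},g)$ must enter: viciousness gives timelike connectivity within each fundamental domain, and I would combine a short timelike ``vicious'' segment realizing the bounded error with a long causal/timelike segment realizing the dominant homology class $p-\gamma(-k)\in\mathfrak{T}^{\epsilon}$, concatenating them (and smoothing the corner, using the interior-cone margin to keep the result timelike) into a single future-directed timelike curve from $\gamma(-k)$ to $p$. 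Verifying that this concatenation stays timelike and reaches $p$ exactly, rather than merely a nearby point, is the delicate step; the strict $\epsilon$-cone margin from Proposition \ref{asym'} is precisely what provides the slack needed to absorb the bounded correction.
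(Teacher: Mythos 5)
Your reduction to showing $\mathbb{R}^{2}\subseteq\cup_{k}I^{+}(\gamma(-k))$ is correct, and the asymptotic picture (for large $k$ the displacement $p-\gamma(-k)$ aligns with $\alpha$ and sits deep inside a strict cone $\mathfrak{T}^{\epsilon}$) is sound. But the proof has a genuine gap at exactly the step you flag: nothing quoted in the paper lets you pass from ``$p-\gamma(-k)\in\mathfrak{T}^{\epsilon}$ with $\|p-\gamma(-k)\|$ large'' to ``$p\in J^{+}(\gamma(-k))$''. Proposition \ref{cone} only gives the forward inclusion, namely that $J^{+}(q)-q$ lies within bounded distance of $\mathfrak{T}$; the converse (causal reachability of the interior of the stable cone) is a separate, nontrivial theorem of Suhr that this paper never states or invokes. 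Your sketch of how to bridge the gap does not work as written: viciousness is a property of the compact quotient $(\mathbb{T}^{2},g)$, and on the globally hyperbolic cover $(\mathbb{R}^{2},g)$ two points of the same fundamental domain are in general \emph{not} chronologically related, so there is no ``short timelike vicious segment realizing the bounded error''; and the ``long causal segment realizing the dominant homology class'' that ends exactly at $p$ is precisely the object whose existence is being proven, so the argument becomes circular there. Concatenating closed timelike curves with integral homology classes in $\mathfrak{T}$ only reaches a discrete orbit of points, and upgrading ``reaches a point near $p$'' to ``reaches $p$'' is the whole content of the lemma.

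The paper's proof sidesteps all of this with an elementary two-dimensional topological argument. Through any $p$ one runs an integral curve $\eta_{p}$ of one of the lightlike vector fields $X_{1},X_{2}$; its asymptotic direction is $m^{1}$ or $m^{2}$, hence different from $\alpha\in(m^{-},m^{+})$, so $\eta_{p}$ must cross $\gamma$ at some $\gamma(s)$, and by choosing the foliation and orientation appropriately one arranges $p\in J^{+}(\gamma(s))$. Then $\gamma([s]-1)\ll\gamma(s)\leq p$ yields $p\in I^{+}(\gamma([s]-1))$ by the push-up property of the causal relations. If you want to keep your cone-based approach, you must either quote Suhr's reachability theorem explicitly or reprove it, which is considerably more work than the lemma itself.
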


\begin{Rem}\label{dom'}
By the same method, we could prove that under the same conditions as above, $I^{-}(\gamma)=\cup_{k\in\mathbb{N}}I^{-}(\gamma(k))=\mathbb{R}^{2}$.
\end{Rem}

\begin{proof}
The first equality is obvious since $\gamma$ is future directed and timelike. More precisely, we have $I^{+}(\gamma(s))\subseteq I^{+}(\gamma(t))$ if $s\geq t$. Thus $I^{+}(\gamma(s))\subseteq I^{+}(\gamma([s]))$ holds, where $[s]$ denotes the integer part of $s$.

Fix arbitrarily an point $p\in\mathbb{R}^{2}$. To prove the second equality, we shall use the smooth lightlike vector fields $X^{\pm}$ defined in Section 1. Choose one of their integral curves $\eta_{p}$ through $p$ such that $\eta_{p}$ and $\gamma$ cross at $\gamma(s)$ and $p\in J^{+}(\gamma(s))$. By the definition of asymptotic directions, such an $\eta_{p}$ does exist. Since $\gamma(s)\in I^{+}(\gamma([s]-1))$, we obtain that $p\in I^{+}(\gamma([s]-1))$. This completes the proof.
\end{proof}

\begin{Lem}\label{no intersection}
For $\alpha\in(m^{-},m^{+})$, let $\zeta:\mathbb{R}_{+}\rightarrow(\mathbb{R}^{2},g)$ be a timelike ray in $\mathscr{R}_{\alpha}$ that is not a subray of some timelike line in $\mathscr{M}_{\alpha}$. If $\zeta$ is asymptotic to $\gamma\in\mathscr{M}_{\alpha}$ in the future direction (defined in Definition \ref{asy}), then $\zeta$ and $\gamma$ does not intersect.
\end{Lem}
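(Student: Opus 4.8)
We have a timelike ray $\zeta \in \mathscr{R}_\alpha$ (so $\zeta:\mathbb{R}_+\to(\mathbb{R}^2,g)$, a timelike maximizer with asymptotic direction $\alpha\in(m^-,m^+)$) which is asymptotic in the future direction to a timelike line $\gamma\in\mathscr{M}_\alpha$, and $\zeta$ is not a subray of a line in $\mathscr{M}_\alpha$. The claim is that $\mathrm{Im}(\zeta)$ and $\mathrm{Im}(\gamma)$ are disjoint. Intuitively, if $\zeta$ met $\gamma$, then from the meeting point onward $\zeta$ and $\gamma$ would be two timelike maximizers sharing a point while staying asymptotically together; maximality should force them to coincide from that point, which — via recurrence/extension arguments — would make $\zeta$ a subray of $\gamma$, contradicting the hypothesis.

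**Planning the proof.** I will argue by contradiction: suppose $\zeta$ and $\gamma$ intersect at a point $\zeta(t_0)=\gamma(s_0)$. The first observation is that the intersection cannot be transversal (non-trivial). Indeed, a transversal crossing of the two timelike maximizers means (by Definition \ref{cross types} applied to subsegments of $\zeta$ and the line $\gamma$) that $\zeta$ passes from one side of $\gamma$ to the other, say from $U^-$ to $U^+$. But $\zeta$ being asymptotic to $\gamma$ in the future forces $d_R(\zeta(t),\mathrm{Im}(\gamma))\to 0$, which is incompatible with $\zeta$ escaping into the open interior of one component and staying near $\gamma$ unless it oscillates back — and Morse's lemma forbids a second crossing. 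I would make this precise: after a transversal crossing at $t_0$, the subsegment $\zeta|_{[t_0,\infty)}$ lies strictly on one side (in $U^+$, say) because it can meet $\gamma$ nontrivially at most once (Morse's lemma). Having $\zeta|_{[t_0,\infty)}$ in $U^+$ while asymptotic to $\gamma$ is then handled by showing the asymptotic condition combined with the timelike-maximizer property forces $\zeta|_{[t_0,\infty)}$ to actually lie on $\mathrm{Im}(\gamma)$.

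**The coincidence step.** The heart of the argument is: once $\zeta$ and $\gamma$ meet at $\zeta(t_0)=\gamma(s_0)$ and $\zeta$ is asymptotic to $\gamma$ in the future, maximality forces $\zeta(t_0+r)=\gamma(s_0+r)$ for all $r\ge 0$, so $\zeta|_{[t_0,\infty)}$ is a subray of $\gamma$. I would establish this by a limiting/maximality comparison. For any $T>t_0$ choose the point $\gamma(s_T)$ on $\gamma$ realizing (up to the vanishing asymptotic gap) the nearest-point to $\zeta(T)$; since both $\zeta|_{[t_0,T]}$ and the segment $\gamma|_{[s_0,s_T]}$ connect points that are $g_R$-close at the endpoints and are both timelike maximizers between nearby events, one compares the Lorentzian lengths $L^g(\zeta|_{[t_0,T]})=d(\zeta(t_0),\zeta(T))$ and $L^g(\gamma|_{[s_0,s_T]})=d(\gamma(s_0),\gamma(s_T))$, using the reverse triangle inequality for $d(\cdot,\cdot)$ and the continuity of $d$. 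The vanishing asymptotic distance lets the endpoint discrepancies go to zero, which together with the curve-lengthening lemma (the third bulleted fact of Section 4: a genuine corner or deviation costs a definite positive length) forces $\zeta$ and $\gamma$ to agree. Hence $\zeta|_{[t_0,\infty)}\subseteq\mathrm{Im}(\gamma)$.

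**Closing the contradiction.** Finally I must upgrade ``$\zeta|_{[t_0,\infty)}$ is a subray of $\gamma$'' to ``$\zeta$ is a subray of $\gamma$,'' contradicting the hypothesis. Since $\gamma$ is a geodesic and $\zeta$ coincides with it on $[t_0,\infty)$, the two geodesics $\zeta$ and $\gamma$ share an open arc, so by uniqueness of geodesics (they satisfy the same second-order ODE, as recorded in the proof of Lemma \ref{stable types}) $\zeta$ extends backward along $\gamma$ as well; thus the maximizer $\zeta$ is a subray of the line $\gamma\in\mathscr{M}_\alpha$, contradicting the standing assumption that it is not. Therefore no intersection can occur. \emph{The main obstacle} I anticipate is the coincidence step: turning the qualitative ``asymptotic in the future'' hypothesis into the rigid conclusion that the two maximizers literally coincide. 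The delicate point is controlling the Lorentzian lengths of the comparison segments as the endpoints converge, since the time separation $d$ is only lower semicontinuous in general; I expect to lean on global hyperbolicity of $(\mathbb{R}^2,g)$ (which gives continuity of $d$ and existence of maximizers between causally related points) together with the curve-lengthening lemma to extract the needed rigidity.
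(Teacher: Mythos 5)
Your overall strategy (locate the intersection, exploit the corner there via the curve-lengthening lemma, use the asymptotic condition and Lipschitz continuity of $d$ to close the estimate) is the right one and matches the paper's in spirit, but the central ``coincidence step'' as you have set it up does not work. You propose to compare $L^{g}(\zeta|_{[t_{0},T]})=d(\zeta(t_{0}),\zeta(T))$ with $L^{g}(\gamma|_{[s_{0},s_{T}]})=d(\gamma(s_{0}),\gamma(s_{T}))$, where both curves emanate from the common point $\zeta(t_{0})=\gamma(s_{0})$ and end at $g_{R}$-close points. By the Lipschitz continuity of $d$ (Proposition \ref{A1-2}) these two numbers are automatically within $o(1)$ of each other; this is true for \emph{any} two maximizers from a common point to nearby targets and carries no information forcing the curves to coincide. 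Crucially, neither comparison curve has a corner, so the curve-lengthening lemma --- which is the only tool in your list that can distinguish ``distinct maximizers'' from ``the same maximizer'' --- never gets to act. Two distinct timelike maximizers can perfectly well emanate from one point, so your comparison cannot rule that out.

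The missing idea is to anchor the comparison at a point of $\gamma$ strictly \emph{before} the intersection. The paper takes (after normalizing $\zeta(0)=\gamma(0)$) the broken curve $\gamma|_{[-1,0]}\ast\zeta|_{[0,T]}$, which has a genuine corner at the intersection point precisely because $\zeta$ is not a subray of $\gamma$ (otherwise geodesic uniqueness would make $\zeta$ a subray, contrary to hypothesis). The curve-lengthening lemma then gives a uniform deficit $\varepsilon_{0}>0$ between this broken curve and the maximal segment from $\gamma(-1)$ to $\zeta(T)$. On the other hand, the asymptotic condition supplies points $p_{\epsilon}\in\gamma$ with $d_{R}(\zeta(T_{\epsilon}),p_{\epsilon})<\epsilon$, and the maximality of $\gamma$ through the anchor ($d(\gamma(-1),\gamma(0))+d(\gamma(0),p_{\epsilon})=d(\gamma(-1),p_{\epsilon})$) together with the Lipschitz continuity of $d$ shows the broken curve's length is within $(C+1)\epsilon$ of the maximal value $d(\gamma(-1),p_{\epsilon})$ --- contradicting the fixed deficit $\varepsilon_{0}$. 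Without the backward anchor your estimate is vacuous. A minor further point: your preliminary claim that a transversal crossing is ``incompatible'' with the asymptotic condition because $\zeta$ would have to oscillate back is false as stated --- a curve can stay strictly in $U^{+}$ while $d_{R}(\zeta(t),\mathrm{Im}(\gamma))\to 0$ --- but that discussion is not load-bearing, since the corrected corner argument handles every intersection at once.
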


\begin{proof}
Without loss of generality, suppose $\zeta(0)=\gamma(0)$. Since $\zeta$ is not a subray of any timelike line in $\mathscr{M}_{\alpha}$, then for any $T>0$, the conjunction curve $\gamma|_{[-1,0]}\ast\zeta|_{[0,T]}$ has a corner at $\zeta(0)$. By the curve lengthening lemma, for any fixed $T_{0}>0$, there is an $\varepsilon_{0}>0$ such that for any $T\geq T_{0}>0$ and any timelike maximal segment $\eta$ connecting $\gamma(-1)$ with $\zeta(T)$,
\begin{equation}\label{ieq:3}
L^{g}(\eta)\geq L^{g}(\gamma|_{[-1,0]}\ast\zeta|_{[0,T]})+\varepsilon_{0}.
\end{equation}

By the maximality of $\zeta$ and Proposition \ref{A1-2} (since a class A Lorentzian 2-torus is of class $A_{1}$, see \cite{Su3}), for any $\epsilon>0$, we could find $p_{\epsilon}$ on $\gamma$ and $T_{\epsilon}>T_{0}$ such that
\begin{equation}\label{ieq:5}
d(\zeta(0),\zeta(T_{\epsilon}))=L^{g}(\zeta|_{[0,T_{\epsilon}]})>d(\zeta(0),p_{\epsilon})-\epsilon,
\end{equation}
and
\begin{equation}\label{ieq:7}
d_{R}(\zeta(T_{\epsilon}),p_{\epsilon})<\epsilon.
\end{equation}
Thus we obtain by Inequality \ref{ieq:5} that
\begin{eqnarray*}
&&L^{g}(\gamma|_{[-1,0]}\ast\zeta|_{[0,T_{\epsilon}]})\\
&>&d(\gamma(-1),\gamma(0))+d(\zeta(0),p_{\epsilon})-\epsilon\\
&=&d(\gamma(-1),p_{\epsilon})-\epsilon.
\end{eqnarray*}
By Proposition \ref{asym'}, Proposition \ref{A1-2} and Inequality \ref{ieq:3}, there is a constant $C>0$ such that for the timelike maximal segment $\eta_{\epsilon}$ connecting $\gamma(-1)$ with $\zeta(T_{\epsilon})$,
\begin{eqnarray*}
&&d(\gamma(-1),p_{\epsilon})\\
&>&d(\gamma(-1),\zeta(T_{\epsilon}))-C\epsilon\\
&=&L^{g}(\eta_{\epsilon})-C\epsilon\\
&\geq&L^{g}(\gamma|_{[-1,0]}\ast\zeta|_{[0,T_{\epsilon}]})+\varepsilon_{0}-C\epsilon\\
&>&d(\gamma(-1),p_{\epsilon})+\varepsilon_{0}-(C+1)\epsilon,
\end{eqnarray*}
where $C$ (depending only on $g$ and $g_{R}$) is the Lipschitz constant of $d(\cdot,\cdot)$ w.r.t. the metric $d_{R}$. By choosing $\epsilon$ such that $(C+1)\epsilon<\varepsilon_{0}$, we obtain a contradiction.
\end{proof}

\begin{Rem}\label{non-crossing}
One can see from the proof that if two rays with the  same asymptotic direction are asymptotic in future direction, then they either has a common initial point or does not intersect.
\end{Rem}

We could state the following two theorems describing the structure of lifted rays in $\mathscr{R}_{\alpha}$ for $\alpha\in(m^{-},m^{+})$.

\begin{The}\label{irrational}
Let $\alpha\in(m^{-},m^{+})$ be an irrational timelike asymptotic direction, $\zeta\in\mathscr{R}_{\alpha}$  be a timelike ray which is not a subray of some timelike line in $\mathscr{M}_{\alpha}$, then there exist two neighboring timelike lines $\underline{\zeta},\overline{\zeta}$ in $\mathscr{M}_{\alpha}^{rec}$ such that $\underline{\zeta}<\zeta<\overline{\zeta}$. On the other hand, for any irrational $\alpha\in(m_{-},m_{+})$ and any $q\in\mathbb{R}^{2}$, there exists at least one timelike ray $\zeta\in\mathscr{R}_{\alpha}$ emanating from $q$.
\end{The}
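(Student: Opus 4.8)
The plan is to treat the two assertions separately: first upgrade the weak ordering already available to a strict one, then construct a ray through $q$ by a limit-curve argument.

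For the ordering assertion, Theorem \ref{nb lines for a ray} already supplies neighboring recurrent lines $\underline{\zeta},\overline{\zeta}\in\mathscr{M}_{\alpha}^{rec}$ with $\underline{\zeta}\leq\zeta\leq\overline{\zeta}$, so the whole point is to rule out contact. First I would note that $\underline{\zeta}\neq\overline{\zeta}$: were they equal, the squeeze $\underline{\zeta}\leq\zeta\leq\underline{\zeta}$ would force $Im(\zeta)\subseteq Im(\underline{\zeta})$, making $\zeta$ a subray of a line in $\mathscr{M}_{\alpha}$ against the hypothesis. Hence $\underline{\zeta}$ and $\overline{\zeta}$ bound a genuine gap of the Cantor set of Theorem \ref{structure for lines}(2) and are therefore asymptotic to each other in both directions; in particular the $g_{R}$-width of the strip between them tends to $0$ at $+\infty$. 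Since $\zeta$ is a timelike ray with asymptotic direction $\alpha$ lying in this strip, $\zeta(t)$ recedes to infinity along the strip as $t\to\infty$, so $\zeta$ is asymptotic in the future direction to each of $\underline{\zeta}$ and $\overline{\zeta}$. The strict inequalities then drop out of Lemma \ref{no intersection}: because $\zeta$ is not a subray of any line in $\mathscr{M}_{\alpha}$ yet is future-asymptotic to $\overline{\zeta}\in\mathscr{M}_{\alpha}$, the ray $\zeta$ cannot meet $\overline{\zeta}$; being connected and satisfying $\zeta\leq\overline{\zeta}$, it must lie entirely in the open component below $\overline{\zeta}$, i.e. $\zeta<\overline{\zeta}$. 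The identical argument applied to $\underline{\zeta}$ gives $\underline{\zeta}<\zeta$, completing the first assertion.

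For the existence assertion I would fix a timelike line $\gamma\in\mathscr{M}_{\alpha}$ (available from Theorem \ref{structure for lines}(1) since $\alpha\in(m^{-},m^{+})$). By Remark \ref{dom'} we have $q\in I^{-}(\gamma)$, hence $\gamma(n)\in I^{+}(q)$ for all large $n$, and global hyperbolicity of $(\mathbb{R}^{2},g)$ (Avez-Seifert) yields a timelike maximizer $\zeta_{n}$ from $q$ to $\gamma(n)$, parametrized by $g_{R}$-arc length with $\zeta_{n}(0)=q$. Corollary \ref{causal} gives uniform Lipschitz control, so the Ascoli-Arzela/limit-curve theorem extracts a $C^{0}_{loc}$-convergent subsequence whose limit $\zeta:[0,\infty)\to\mathbb{R}^{2}$ is a future-inextendible causal maximizer with $\zeta(0)=q$, i.e. a causal ray. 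To identify its direction I would trap each $\zeta_{n}$ between lines of direction $\alpha$ and use that $\gamma(n)-\gamma(0)$ stays within bounded $\|\cdot\|$-distance of $\overline{\alpha}$ (Theorem \ref{structure for lines}(1)), as in the proof of Corollary \ref{ray osc}, to bound $\mathrm{dist}_{\|\cdot\|}(\zeta_{n}(t)-q,\overline{\alpha})$ uniformly in $n$; passing to the limit shows $\zeta$ has asymptotic direction $\alpha$. Finally, Proposition \ref{asym'} forces the displacement over long intervals into a strict cone $\mathfrak{T}^{\epsilon}$, keeping $\dot{\zeta}$ uniformly timelike, so that $\zeta\in\mathscr{R}_{\alpha}$.

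The step I expect to be most delicate is the asymptotic control in both halves of the proof: in the ordering assertion, making precise that a ray confined to the narrowing strip is genuinely future-asymptotic to its bounding lines, so that Lemma \ref{no intersection} may be invoked; and in the existence assertion, pinning the asymptotic direction of the limit to $\alpha$ while ruling out degeneration to a lightlike ray. It is precisely the interiority $\alpha\in(m^{-},m^{+})$, through the $\mathfrak{T}^{\epsilon}$-estimate of Proposition \ref{asym'}, that prevents such degeneration and keeps the limit curve timelike.
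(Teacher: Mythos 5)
Your proposal is correct and follows essentially the same route as the paper: the strict ordering comes from Theorem \ref{nb lines for a ray} together with the fact that neighboring recurrent lines are mutually asymptotic, so the sandwiched ray is future-asymptotic to both and Lemma \ref{no intersection} (the paper invokes its Remark \ref{non-crossing}) forbids contact; the existence part is the same limit-curve construction from maximal segments to far points on a line of direction $\alpha$, with Morse's lemma trapping the segments in a strip of direction $\alpha$. The only cosmetic difference is that the paper aims the segments at the lower neighboring line of the gap containing $q$ rather than at an arbitrary $\gamma\in\mathscr{M}_{\alpha}$, which makes the trapping step slightly more immediate.
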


\begin{proof}
The first assertion is easily proved as following. Since $\zeta:\mathbb{R}_{+}\rightarrow(\mathbb{R}^{2},g)$ is a timelike ray in $\mathscr{R}_{\alpha}$, by Theorem \ref{nb lines for a ray}, we know that there exist two neighboring timelike lines $\underline{\zeta},\overline{\zeta}$ in $\mathscr{M}_{\alpha}^{rec}$ such that $\underline{\zeta}\leq\zeta\leq\overline{\zeta}$. By Theorem \ref{structure for lines}, $\overline{\zeta}$ is asymptotic to $\underline{\zeta}$, then we deduce that $\zeta$ is also asymptotic to $\underline{\zeta}$ by Corollary \ref{causal} and the fact that $\underline{\zeta}\leq\zeta\leq\overline{\zeta}$. Thus the remaining follows directly from Remark \ref{non-crossing}.

To show the second assertion, let $\underline{\zeta},\overline{\zeta}$ in $\mathscr{M}_{\alpha}^{rec}$ be two neighboring timelike lines and $p$ contained in the interior of the strip bounded by $\underline{\zeta}$ and $\overline{\zeta}$, we only need to show that there is a ray $\zeta:\mathbb{R}_{+}\rightarrow(\mathbb{R}^{2},g)$ in $\mathscr{R}_{\alpha}$ such that $\zeta(0)=p$. By Lemma \ref{dom} and Remark \ref{dom'}, there exists a positive integer $k_{0}$ such that $\underline{\zeta}(k)\subseteq I^{+}(p)$ for all $k\geq k_{0}$. Let $x_{k}=\underline{\zeta}(k)$, we have $d(p,x_{k})<\infty$ and $\lim_{k\rightarrow\infty}d(p,x_{k})=\infty$ by the global hyperbolicity of $(\mathbb{R}^{2},g)$ and the future ($g$-geodesically) completeness of $\underline{\zeta}$. Since $(\mathbb{R}^{2},g)$ is globally hyperbolic, we parametrize the timelike maximal segment connecting $p$ with $x_{k}$ by $g_{R}$ arc-length and denote it by $\zeta_{k}$. Using \cite[Lemma 2.4]{G-H}, we know that the sequence $\{\zeta_{k}\}$ has accumulated  points w.r.t. the  $C^{0}$-topology and every limit curve $\zeta$ is a timelike ray starting at $p$. By Morse's lemma, $\underline{\zeta}\leq\zeta\leq\overline{\zeta}$, thus $\zeta\in\mathscr{R}_{\alpha}$.
\end{proof}

\begin{defn}
For a rational asymptotic direction $\alpha\in(m^{-},m^{+})$, define
$$
\mathscr{R}_{\alpha}^{per}:=\{\gamma\in\mathscr{R}_{\alpha}| \gamma\text{ is a subray of some } x\in\mathscr{M}_{\alpha}^{per}\},
$$
$$
\mathscr{R}_{\alpha}^{+}:=\{\gamma\in\mathscr{R}_{\alpha}| \text{ There exists } x\in\mathscr{M}_{\alpha}^{per} \text{ such that } \gamma<x \text{ and } \gamma \text{ is asymptotic to } x\},
$$
$$
\mathscr{R}_{\alpha}^{-}:=\{\gamma\in\mathscr{R}_{\alpha}| \text{ There exists } x\in\mathscr{M}_{\alpha}^{per} \text{ such that } \gamma>x \text{ and } \gamma \text{ is asymptotic to } x\}.
$$
\end{defn}

\begin{The}\label{rational}
For any rational asymptotic direction $\alpha\in(m^{-},m^{+})$, we have  $\mathscr{R}_{\alpha}=\mathscr{R}_{\alpha}^{per}\cup\mathscr{R}_{\alpha}^{+}\cup\mathscr{R}_{\alpha}^{-}$, i.e. if $\zeta\in\mathscr{R}_{\alpha}\setminus\mathscr{R}_{\alpha}^{per}$, then there exist two neighboring timelike lines $\underline{\zeta},\overline{\zeta}$ in $\mathscr{M}_{\alpha}^{per}$ such that $\underline{\zeta}<\zeta<\overline{\zeta}$ and $\zeta$ is either asymptotic to $\underline{\zeta}$ in the future direction or asymptotic to $\overline{\zeta}$ in the future direction. On the other hand, if $\underline{\zeta}<\overline{\zeta}$ are neighboring timelike lines in $\mathscr{M}_{\alpha}^{per}$ and $p$ is contained in the strip bounded by $\underline{\zeta}$ and $\overline{\zeta}$, then there exist at least two timelike rays in $\mathscr{R}_{\alpha}$, say $\zeta^{\pm}:\mathbb{R}_{+}\rightarrow(\mathbb{R}^{2},g)$, such that $\zeta^{+}$ ($\zeta^{-}$) is asymptotic to $\overline{\zeta}$ ($\underline{\zeta}$) in the future direction and $\zeta^{\pm}(0)=p$.
\end{The}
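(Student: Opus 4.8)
The plan is to prove the two assertions separately, in each case running a limiting argument parallel to the irrational case of Theorem \ref{irrational} but exploiting the heteroclinic structure of $\mathscr{M}_{\alpha}$ recorded in Theorem \ref{structure for lines}(3). For the decomposition, fix $\zeta\in\mathscr{R}_{\alpha}\setminus\mathscr{R}_{\alpha}^{per}$. First I would apply Theorem \ref{nb lines for a ray} to obtain neighboring periodic lines $\underline{\zeta},\overline{\zeta}\in\mathscr{M}_{\alpha}^{per}$ with $\underline{\zeta}\leq\zeta\leq\overline{\zeta}$. To upgrade to strict inequalities, I would argue that since $\zeta$ is not a subray of either boundary line it cannot be tangent to them: $\zeta$ and $\overline{\zeta}$ (resp. $\underline{\zeta}$) are both timelike maximizers, so if they met at an interior point sharing a direction they would coincide as $g$-geodesics, forcing $\zeta\in\mathscr{R}_{\alpha}^{per}$; and a transversal interior intersection is ruled out by $\zeta\leq\overline{\zeta}$ together with Morse's lemma. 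The only admissible contact is at the initial point $\zeta(0)$, which causes no harm after relabeling the neighboring pair, so $\underline{\zeta}<\zeta<\overline{\zeta}$.

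The heart of the matter is to show that $\zeta$ is future-asymptotic to exactly one of the two boundaries. Here I would compare $\zeta$ with its translate $T\zeta:=T_{(q,p)}\zeta=\zeta+(q,p)$, which (since $\underline\zeta,\overline\zeta$ are $T$-invariant) is again a timelike ray in the same strip with asymptotic direction $\alpha$. As $\zeta$ and $T\zeta$ are timelike maximizers, Morse's lemma permits at most one transversal intersection; I would exclude this lone crossing using that both curves remain within bounded distance of $\overline{\alpha}$ (Corollary \ref{ray osc}) while the shift $(q,p)\in\overline{\alpha}$ is parallel to the asymptotic direction, so a single transversal crossing would force the transversal coordinates to separate on one side and then re-approach inside the bounded-width strip, producing a forbidden second intersection. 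Hence the translates $\{T^{n}\zeta\}_{n\in\mathbb{Z}}$ are pairwise non-crossing and thus totally ordered by $<$; being monotone and trapped in the strip they converge in the $C^{0}$-topology as $n\to\pm\infty$ to $T$-invariant limit lines, which are periodic and therefore, lying in the closed strip between neighboring periodic lines, must be $\overline{\zeta}$ and $\underline{\zeta}$. Translating this monotone convergence of the spatial translates back along $\zeta$ shows that its forward end accumulates on a single boundary line, and Lemma \ref{no intersection} together with Remark \ref{non-crossing} promote accumulation to genuine future-asymptoticity, placing $\zeta$ in $\mathscr{R}_{\alpha}^{+}$ or $\mathscr{R}_{\alpha}^{-}$ according to the sign of the ordering.

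For the existence statement I would mimic the construction in the second half of Theorem \ref{irrational}. Given neighboring periodic lines $\underline{\zeta}<\overline{\zeta}$ and $p$ in the open strip, set $x_{k}=\overline{\zeta}(k)$; by Remark \ref{dom'} we have $p\in I^{-}(\overline{\zeta})$, so $x_{k}\in I^{+}(p)$ for all large $k$, and global hyperbolicity yields timelike maximal segments $\zeta_{k}$ from $p$ to $x_{k}$, parametrized by $g_{R}$-arc length. Using \cite{G-H} together with the uniform control of Corollary \ref{ray osc} and the $\epsilon$-timelike estimates of Section 3, I would extract a $C^{0}$-limit $\zeta^{+}$ that is a timelike ray from $p$ with asymptotic direction $\alpha$; Morse's lemma against $\underline{\zeta}$ and $\overline{\zeta}$ keeps it in the strip, and since its endpoints run out to infinity along $\overline{\zeta}$ it is future-asymptotic to $\overline{\zeta}$, i.e. $\zeta^{+}\in\mathscr{R}_{\alpha}^{+}$. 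The symmetric construction with $x_{k}=\underline{\zeta}(k)$ produces $\zeta^{-}\in\mathscr{R}_{\alpha}^{-}$ emanating from $p$; as the two limits have different forward asymptotes they are distinct, which gives the required pair.

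I expect the genuine obstacle to be the dichotomy in the first assertion, namely excluding a transversal crossing of $\zeta$ with its translate $T\zeta$ (equivalently, establishing the total ordering and monotonicity of the translates) and then upgrading mere accumulation on a boundary to honest future-asymptoticity. Morse's lemma supplies ``at most one crossing'' cheaply, but converting the bounded-strip trapping heuristic into a clean contradiction, and ruling out oscillation of the forward end between the two periodic boundaries so that the asymptote is a single line, is where the real care is needed.
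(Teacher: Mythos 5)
Your overall strategy for the dichotomy (compare $\zeta$ with its translates $T_{(q,p)}^{n}\zeta$, extract a periodic limit line, and upgrade accumulation to asymptoticity) runs parallel to the paper's, but the step you yourself flag as the crux is not closed, and the justification you offer for it is not valid. You claim that a single transversal crossing of $\zeta$ with $T_{(q,p)}\zeta$ would force the two curves to ``re-approach inside the bounded-width strip, producing a forbidden second intersection.'' It would not: both curves live within bounded distance of the same line parallel to $\overline{\alpha}$ and inside the $T_{(q,p)}$-invariant strip, but after one transversal crossing they can simply remain on opposite sides forever; nothing about boundedness forces a second meeting, so Morse's lemma gives no contradiction. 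Consequently the global total ordering of $\{T^{n}\zeta\}$, on which your monotone-convergence argument rests, is unproved. The paper does not attempt to exclude the single crossing; instead it foliates by translated Cauchy hypersurfaces $\eta_{k}=T_{k(q,p)}\eta$, records the intersection coordinates $b_{k}$ of $\zeta$ with $\eta_{k}$, and uses ``at most one crossing of $\zeta$ with $T_{(-q,-p)}\zeta$'' only to get \emph{eventual} monotonicity $b_{k+1}>b_{k}$ for $k\geq N$; combined with a convergent subsequence $T_{-k_{n}(q,p)}\zeta\to\overline{\zeta}$ this yields $b_{k}\to b$ along the full sequence and hence asymptoticity. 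Your argument could likely be repaired along these lines (ordering of the tails suffices), but as written the key step is a gap.

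The existence half has a second, independent gap. After constructing $\zeta^{+}$ as a limit of maximal segments from $p$ to $\overline{\zeta}(i+k)$, you assert it is future-asymptotic to $\overline{\zeta}$ ``since its endpoints run out to infinity along $\overline{\zeta}$.'' The limit is taken uniformly on compact sets while the endpoints escape to infinity, so the limit ray retains no direct memory of where the endpoints sat; a priori it could be asymptotic to $\underline{\zeta}$ instead. The paper rules this out by invoking Theorem \ref{structure for lines}(3) to produce a heteroclinic line $\eta\in\mathscr{M}_{\alpha}^{+}$ with $\underline{\zeta}<\eta<\overline{\zeta}$ and $\zeta^{+}(0)>\eta$: if $\zeta^{+}$ were asymptotic to $\underline{\zeta}$ then $\zeta^{+}(T)<\eta$ for some $T$, hence $\zeta_{k}(T)<\eta$ for large $k$, while $\zeta_{k}(0)>\eta$ and the endpoint $\overline{\zeta}(i+k)>\eta$, forcing $\zeta_{k}$ to cross the timelike line $\eta$ twice in its interior, contradicting Morse's lemma. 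Some argument of this kind is genuinely needed and is absent from your proposal.
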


\begin{proof}
Let $(q,p)\in\overline{\alpha}$ be an irreducible integral homology class and $\zeta:\mathbb{R}_{+}\rightarrow(\mathbb{R}^{2},g)$ be a timelike ray in $\mathscr{R}_{\alpha}\setminus\mathscr{R}_{\alpha}^{per}$. By the class A condition on  $(\mathbb{T}^{2},g)$,  there is a  closed transversal 1-form. So we can choose an integral curve of the kernel distribution  of the closed transversal 1-form such that its lift to $\mathbb{R}^2$ passes through $\zeta(0)$. We denote the lift curve by $\eta$. Clearly $\eta$ is a smooth spacelike curve. More precisely it is  a Cauchy hypersurface for $(\mathbb{R}^{2},g)$. We denote $\eta_{k}=T_{k(q,p)}\eta$ ($k\in\mathbb{N}$), then $\zeta$ intersects every $\eta_{k}$ at a unique point $P_{k}=\zeta(a_{k})=\eta_{k}(b_{k})$ and $a_{k}\rightarrow\infty$ as $k\rightarrow\infty$.

There exists a sequence of integers $k_{n}\rightarrow\infty$ such that $T_{-k_{n}(q,p)}\zeta$ converges to a timelike line $\overline{\zeta}\in\mathscr{M}_{\alpha}$ in the $C^0$-topology. By Theorem \ref{structure for lines}, $\overline{\zeta}$ is asymptotic to some periodic line in the future direction, so we could assume $\overline{\zeta}\in\mathscr{M}_{\alpha}^{per}$. By Lemma \ref{no intersection}, $\zeta$ and $\overline{\zeta}$ do not intersect and $\zeta(0)\notin Im(\overline{\zeta})$. For every $k$, $\overline{\zeta}$ intersects $\eta_{k}$ at $Q_{k}=\zeta(kT)=\eta_{k}(b)$, here $T$ and $b$ are constants since $\overline{\zeta}$ is periodic. Without loss of generality, we  assume that $\overline{\zeta}>\zeta$ and $b_{k}<b$.

By the facts that $T_{-k_{n}(q,p)}\zeta$ converges to $\overline{\zeta}\in\mathscr{M}_{\alpha}$  in the $C^0$-topology and that $a_{k}\rightarrow\infty$, we obtain that
\begin{equation}\label{omega asymptotic}
\lim_{n\rightarrow\infty}|b_{k_{n}}-b|=0.
\end{equation}
Since $\zeta$ and $T_{(-q,-p)}\zeta$ are both timelike rays with the same asymptotic direction $\alpha$, they can intersect at most once. Thus by Equality \ref{omega asymptotic}, there exists $N$ such that $b_{k+1}>b_{k}$ for all $k\geq N$. This fact and  Equality \ref{omega asymptotic} lead to
\begin{equation}
\lim_{k\rightarrow\infty}b_{k}=b.
\end{equation}
So for every $j\geq0$, $T_{-k(q,p)}P_{k+j}\rightarrow Q_{j}$ as $k\rightarrow\infty$. By the maximality of $\zeta$ and $\overline{\zeta}$, $T_{-k(q,p)}\zeta|_{[a_{k},a_{k+1}]}$ converges to $\overline{\zeta}|_{[0,T]}$, this proves that $\zeta$ is asymptotic to $\overline{\zeta}$ in the future direction.

Now let $\underline{\zeta}:=\max\{\gamma\in\mathscr{M}^{per}_{\alpha}|\zeta>\gamma\}$, this is well defined since $\mathscr{M}^{per}_{\alpha}$ is a closed subset of $\mathbb{R}^{2}$. Then we obtain that $\zeta>\underline{\zeta}$ and $\underline{\zeta},\overline{\zeta}$ are neighbors in $\mathscr{M}_{\alpha}^{per}$.

Finally, we show the existence of $\zeta\in\mathscr{R}_{\alpha}^{+}$ with $\zeta(0)=p$ when $p$ does not belong to any periodic line in $\mathscr{M}^{per}_{\alpha}$ (the other case is completely similar). Since now there is a neighboring pair of periodic lines $\underline{\zeta},\overline{\zeta}$ such that $p$ is contained in the interior of the strip bounded by $\underline{\zeta}$ and $\overline{\zeta}$. We choose a point $\overline{\zeta}(i)\in I^{+}(p)$ (the existence is guaranteed by Lemma \ref{dom}) and define $\zeta_{k}$ ($k\in\mathbb{N}$) to be the maximal segment connecting $\zeta(0)$ with $\overline{\zeta}(i+k)$. Let $\zeta$ be a limit curve of $\zeta_{k}$ w.r.t. the $C^{0}$-topology, then $\zeta$ is a timelike ray with asymptotic direction $\alpha$ by the fact $\underline{\zeta}\leq\zeta\leq\overline{\zeta}$. To show that $\zeta$ is asymptotic to $\overline{\zeta}$ we note that there is a timelike line $\eta\in\mathscr{M}_{\alpha}^{+}$ such that $\underline{\zeta}<\eta<\overline{\zeta}$ and $\eta<\zeta(0)$. If $\zeta$ is not asymptotic to $\overline{\zeta}$ in the future direction, then it will be asymptotic to $\underline{\zeta}$. So $\zeta(T)<\eta$ for some $T>0$. Since $\zeta_{k}$ converges to $\zeta$ in the $C^{0}$-topology, we find that $\zeta_{k}(T)<\eta$ for all sufficiently large $k$. Since $\zeta_{k}(0)=\zeta(0)>\eta$ and $\overline{\zeta}(i+k)>\eta$, $\zeta_{k}$ and $\eta$ must intersect each other at least twice at their interior points. This contradicts  Morse's lemma since both $\eta$ and $\zeta_{k}$ are timelike maximizers.
\end{proof}

\section{Lorentzian Busemann function on $(\mathbb{R}^{2},g)$}
In this section, we would like to explore some concepts and properties relating to the  Lorentzian Busemann functions for rays. We also introduce the definition of Lorentzian Busemann function associated to lines. It turns out that the study of the latter objects completely cover the study of the former ones and is obviously more convenient. We are glad to mention that \cite{G-H} is a good reference on the topic of regularity of Lorentzian Busemann function in general timelike geodesically complete spacetimes. By the results in Section 3, from now on, every timelike ray (line) with asymptotic direction $\alpha\in(m^{-},m^{+})$ is parametrized by $g$-arc length (if there is no additional assumption).

First, we give the definition of Lorentzian Busemann function for rays.
\begin{defn}\label{def Bu}
Let $(M,g)$ be a non-compact spacetime and $\gamma:\mathbb{R}_{+}\rightarrow M$ be a future complete future-directed timelike ray (parametrized as a geodesic with unit speed). The Lorentzian Busemann function $b_{\gamma}(x): M\rightarrow\mathbb{R}\cup\{\pm\infty\}$ associated to $\gamma$ is defined by
\begin{equation}
b_{\gamma}(x):=\lim_{s\rightarrow\infty}[s-d(x,\gamma(s))].
\end{equation}
By the reverse triangle inequality, the limit in the definition always exists, but it could be infinity!
\end{defn}

We list some of elementary properties of $b_{\gamma}$ without proof, since \cite{G-H} contains all the details we need.
\begin{Pro}\label{Pro Bu}
Let $b_{\gamma}(x):M\rightarrow\mathbb{R}\cup\{\pm\infty\}$ be the Lorentzian Busemann function associated to the ray $\gamma$, then:
\begin{enumerate}
  \item $b_{\gamma}(x)=+\infty$, for $x\in M\setminus I^{-}(\gamma)$;
        $b_{\gamma}(x)<+\infty$, for $x\in I^{-}(\gamma)$.
  \item $b_{\gamma}(x)$ is upper semi-continuous on $I^{-}(\gamma)$.
  \item If $x\in I^{+}(\gamma(0))\cap I^{-}(\gamma)$, $b_{\gamma}(x)$ is finite valued since $b_{\gamma}(x)\geq d(\gamma(0),x)\geq0$.
  \item $b_{\gamma}(x)$ is nondecreasing on causal curves and $b_{\gamma}(y)-b_{\gamma}(x)\geq d(x,y)$ for all $x, y\in I^{+}(\gamma(0))\cap I^{-}(\gamma)$ and $x\leq y$.
\end{enumerate}
\end{Pro}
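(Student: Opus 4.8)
The plan is to derive all four assertions from two structural facts available to us: the reverse (wrong-way) triangle inequality for the Lorentzian time separation $d$, namely $d(x,z)\ge d(x,y)+d(y,z)$ whenever $x\le y\le z$, and the fact that a ray $\gamma$, being a unit-speed timelike maximizer in the sense of Definition \ref{maximizer, ray, line}, satisfies $d(\gamma(a),\gamma(b))=b-a$ for all $a\le b$. Since $(\mathbb{R}^{2},g)$ is globally hyperbolic, $d$ is finite-valued and continuous, and one has the basic dichotomy $d(x,y)>0\Leftrightarrow y\in I^{+}(x)$, which I would use repeatedly. Throughout I fix $x$ and abbreviate $f_{s}(x):=s-d(x,\gamma(s))$, so that $b_{\gamma}(x)=\lim_{s\to\infty}f_{s}(x)$ in the notation of Definition \ref{def Bu}.

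For item (1) I would first suppose $x\notin I^{-}(\gamma)$; then $\gamma(s)\notin I^{+}(x)$ for every $s$, hence $d(x,\gamma(s))=0$ and $f_{s}(x)=s\to+\infty$. Conversely, if $x\in I^{-}(\gamma)$ I pick $s_{0}$ with $\gamma(s_{0})\in I^{+}(x)$; for $s_{0}\le s_{1}\le s_{2}$ the reverse triangle inequality along $x\le\gamma(s_{1})\le\gamma(s_{2})$ together with $d(\gamma(s_{1}),\gamma(s_{2}))=s_{2}-s_{1}$ gives $f_{s_{2}}(x)\le f_{s_{1}}(x)$, so $s\mapsto f_{s}(x)$ is non-increasing for $s\ge s_{0}$ and $b_{\gamma}(x)\le f_{s_{0}}(x)<+\infty$. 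This both settles (1) and exhibits $b_{\gamma}(x)$ as a monotone decreasing limit, a fact I reuse below. For item (3), if additionally $x\in I^{+}(\gamma(0))$, I apply the reverse triangle inequality along $\gamma(0)\le x\le\gamma(s)$ and use $d(\gamma(0),\gamma(s))=s$ to obtain $d(x,\gamma(s))\le s-d(\gamma(0),x)$, i.e. $f_{s}(x)\ge d(\gamma(0),x)\ge0$; letting $s\to\infty$ yields $b_{\gamma}(x)\ge d(\gamma(0),x)\ge0$, which combined with (1) gives finiteness.

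Item (4) is again immediate from the reverse triangle inequality. For $x\le y$ with both points in $I^{+}(\gamma(0))\cap I^{-}(\gamma)$, I take $s$ large enough that $\gamma(s)\in I^{+}(y)\subseteq I^{+}(x)$; the chain $x\le y\le\gamma(s)$ gives $d(x,\gamma(s))\ge d(x,y)+d(y,\gamma(s))$, whence $f_{s}(y)-f_{s}(x)=d(x,\gamma(s))-d(y,\gamma(s))\ge d(x,y)$. As both values are finite by (3), passing to the limit produces $b_{\gamma}(y)-b_{\gamma}(x)\ge d(x,y)\ge0$, which delivers both the monotonicity of $b_{\gamma}$ along causal curves and the quantitative estimate.

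The step I expect to be the most delicate is the upper semicontinuity in item (2), precisely because the monotonicity established in (1) is valid only past a threshold $s_{0}(x)$ depending on the base point, so $b_{\gamma}$ is not literally a global infimum of continuous functions. My plan is to localize: fix $x_{0}\in I^{-}(\gamma)$ and $\varepsilon>0$, and use the monotone decreasing limit to find $S\ge s_{0}(x_{0})$ with $f_{S}(x_{0})<b_{\gamma}(x_{0})+\varepsilon$. Since $I^{-}(\gamma(S))$ is open and contains $x_{0}$, every $y$ in a small neighborhood satisfies $y\in I^{-}(\gamma(S))$, hence $s_{0}(y)\le S$ and therefore $b_{\gamma}(y)\le f_{S}(y)$ by the monotonicity of (1); continuity of $f_{S}$ (global hyperbolicity) then forces $f_{S}(y)<b_{\gamma}(x_{0})+\varepsilon$ after shrinking the neighborhood, giving $\limsup_{y\to x_{0}}b_{\gamma}(y)\le b_{\gamma}(x_{0})+\varepsilon$ and, as $\varepsilon\to0$, upper semicontinuity. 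I would emphasize that one cannot expect lower semicontinuity here, so this is the sharpest statement at this level of generality, consistent with \cite{G-H}.
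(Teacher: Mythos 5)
Your proof is correct. Note, however, that the paper does not prove Proposition \ref{Pro Bu} at all: it states the properties ``without proof'' and defers entirely to the reference \cite{G-H}, so there is no in-paper argument to compare against. Your route is the standard one and is essentially what that reference does: the reverse triangle inequality along $x\leq\gamma(s_{1})\leq\gamma(s_{2})$ combined with $d(\gamma(s_{1}),\gamma(s_{2}))=s_{2}-s_{1}$ makes $s\mapsto s-d(x,\gamma(s))$ non-increasing past the point-dependent threshold $s_{0}(x)$, which simultaneously yields (1), (3), (4) and reduces (2) to a local infimum of upper semicontinuous functions. Your localization for item (2) is exactly the right fix for the fact that the threshold depends on the base point, and it is worth observing that it is slightly stronger than you claim: you invoke continuity of $f_{S}=S-d(\cdot,\gamma(S))$ via global hyperbolicity, but the argument only needs $\limsup_{y\to x_{0}}f_{S}(y)\leq f_{S}(x_{0})$, i.e.\ lower semicontinuity of $d$, which holds in an arbitrary spacetime. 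This matters because Definition \ref{def Bu} and Proposition \ref{Pro Bu} are stated for a general non-compact spacetime $(M,g)$, where $d$ need be neither finite nor continuous; with that one adjustment (and reading ``$<+\infty$'' in item (1) as permitting the value $-\infty$ when $d(x,\gamma(s_{0}))=+\infty$), your proof covers the statement in the generality in which it is asserted, not only in the globally hyperbolic setting $(\mathbb{R}^{2},g)$ where the paper applies it.
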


We shall restrict the function $b_{\gamma}$ on $I^{+}(\gamma(0))\cap I^{-}(\gamma)$, since $b_{\gamma}$ is finite valued on this region by Proposition \ref{Pro Bu}. In our setting, $(M,g)=(\mathbb{R}^{2},g)$ is the Abelian cover of a class A 2-torus.

First, we concern the Lipschitz property of Busemann function associated to timelike rays with asymptotic directions in $(m^{-},m^{+})$.
\begin{The}\label{lip Bu}
Let $K$ be a compact subset of $(m^{-},m^{+})$. Then there exists a uniform number $L(K)>0$, such that for every $\alpha\in K$ and every timelike ray $\gamma$ with asymptotic direction $\alpha$, the Lorentzian Busemann function $b_{\gamma}$ is $L(K)$-Lipschitz (w.r.t. the metric $g_{R}$) on its domain.
\end{The}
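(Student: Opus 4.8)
The plan is to establish the estimate locally: to produce uniform constants $\rho=\rho(K)>0$ and $L=L(K)>0$ so that $|b_\gamma(x)-b_\gamma(y)|\le L\,d_R(x,y)$ whenever $x,y$ lie in the domain $I^{+}(\gamma(0))\cap I^{-}(\gamma)$ of $b_\gamma$ with $d_R(x,y)\le\rho$; the passage to a global bound on this open, connected domain, by chaining along short $g_R$-segments, I regard as routine. By the symmetry between $x$ and $y$ it suffices to bound $b_\gamma(x)-b_\gamma(y)$ from above. Recalling Definition \ref{def Bu}, I write
$$
b_\gamma(x)-b_\gamma(y)=\lim_{s\to\infty}\big[d(y,\gamma(s))-d(x,\gamma(s))\big],
$$
so the goal is to exhibit, for all large $s$, a future causal curve from $x$ to $\gamma(s)$ whose $g$-length is at least $d(y,\gamma(s))-L\,d_R(x,y)$; maximality of $d(x,\gamma(s))$ then yields $d(y,\gamma(s))-d(x,\gamma(s))\le L\,d_R(x,y)$, and passing to the limit finishes this half.

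For the construction, fix a $g$-arclength parametrized maximal timelike geodesic $\mu_s$ from $y$ to $\gamma(s)$, which exists by the global hyperbolicity of $(\mathbb{R}^{2},g)$. Along a subsequence $s_k\to\infty$ the segments $\mu_{s_k}$ converge in $C^{1}$ to a co-ray $\sigma_y$ from $y$, a timelike ray asymptotic to $\gamma$ and hence with asymptotic direction $\alpha\in K$. By Proposition \ref{asym'} (whose $\epsilon$ may be taken as $\epsilon(K):=\inf_{\alpha\in K}\mathrm{dist}(\alpha,\partial\mathfrak{T})>0$) together with Corollary \ref{rl cpl}, $\sigma_y$ is uniformly timelike, i.e.\ its velocity lies in $\mathrm{Time}^{1,\epsilon(K)}(\mathbb{T}^{2},g)$. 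Since in the local regime $\delta:=C(K)\,d_R(x,y)\le C(K)\rho$ is bounded, the $C^{1}$ convergence shows that for $k$ large the portion $\mu_{s_k}|_{[0,\delta]}$ stays in $\mathrm{Time}^{1,\epsilon(K)/2}(\mathbb{T}^{2},g)$. Set $p_k:=\mu_{s_k}(\delta)$, well defined once $d(y,\gamma(s_k))>\delta$. Because the geodesic leaves $y$ in a direction bounded away from the light cone by $\epsilon(K)$, its past set $I^{-}(p_k)$ contains a $d_R$-ball about $y$ of radius comparable to $\delta$; choosing $C(K)$ large enough (in terms of $\epsilon(K)$ and the uniform comparison between $g$ and $g_R$) guarantees $x\ll p_k$. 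Joining $x$ to $p_k$ by a causal curve $\nu_k$ and then following $\mu_{s_k}|_{[\delta,\,\cdot\,]}$ to $\gamma(s_k)$ produces a future causal curve from $x$ to $\gamma(s_k)$ of $g$-length
$$
L^{g}(\nu_k)+\big(d(y,\gamma(s_k))-\delta\big)\ \ge\ d(y,\gamma(s_k))-\delta .
$$
Hence $d(y,\gamma(s_k))-d(x,\gamma(s_k))\le\delta=C(K)\,d_R(x,y)$, and letting $k\to\infty$ gives $b_\gamma(x)-b_\gamma(y)\le C(K)\,d_R(x,y)$, as required.

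All constants entering $L(K)=C(K)$ can be traced to $\epsilon(K)=\inf_{\alpha\in K}\mathrm{dist}(\alpha,\partial\mathfrak{T})$, to the bound $|V|_{g_R}\le \mathrm{K}(g,g_R)/\epsilon$ for $V\in\mathrm{Time}^{1,\epsilon}(\mathbb{T}^{2},g)$ from Theorem \ref{cpt}, and to the comparison $L^{g_R}(\nu_k)\le C(g,g_R)\,d_R(x,p_k)$ from Corollary \ref{causal}; since $K$ is compact, all of these are finite and independent of $\alpha\in K$, of the ray $\gamma$, and of the points involved. The main obstacle I anticipate is precisely the uniform timelikeness step: one must show that the maximizers $\mu_{s}$ are, near $y$, timelike with an opening bounded below by a constant depending only on $K$ — uniformly in $\gamma$, in the base point $y$, and in $s$ — and that the inclusion $B_{d_R}(y,c\,\delta)\subseteq I^{-}(p_k)$ holds with a scale-independent $c$. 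The first point is delivered by the convergence $\mu_{s_k}\to\sigma_y$ and Proposition \ref{asym'}; the second rests on the periodicity of $(\mathbb{R}^{2},g)$ and Corollary \ref{causal}, which render the chronological cone structure uniformly comparable to a flat model at every scale. Once these uniformities are secured, the competitor-curve estimate above is entirely elementary.
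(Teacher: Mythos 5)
Your route is genuinely different from the paper's. The paper's proof is very short: it reduces the theorem to the Claim that $\gamma(T)-p\in\mathfrak{T}^{\epsilon(K)}$ for all $p$ in a ball around $x$ and all large $T$ (a uniform strengthening of Proposition \ref{asym'}), and then invokes Proposition \ref{A1-2} --- Suhr's theorem that on a class $A_{1}$ spacetime the time separation $d(\cdot,\cdot)$ is $L(\epsilon)$-Lipschitz on pairs whose difference lies in $\mathfrak{T}^{\epsilon}$ away from the origin --- to conclude $|d(y,\gamma(s))-d(x,\gamma(s))|\leq L(K)\,d_{R}(x,y)$ directly. You instead re-derive that Lipschitz estimate from scratch by the competitor-curve construction: enter the maximal segment from $y$ to $\gamma(s)$ at parameter $\delta=C(K)\,d_{R}(x,y)$ and use $x\ll\mu_{s}(\delta)$. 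This buys self-containedness (you need only the cone estimates of Section 3, not the imported Proposition \ref{A1-2}), at the price of having to establish by hand the uniform openness of the chronological relation, which is precisely the content the paper outsources.

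Two steps need more care before the argument closes. First, the inclusion $B_{d_{R}}(y,c\delta)\subseteq I^{-}(\mu_{s}(\delta))$ with a scale-independent $c$ is the crux, and Corollary \ref{causal} does not deliver it: that corollary bounds $g_{R}$-lengths of causal curves and says nothing about how wide $I^{-}$ opens. For $\delta$ below a uniform scale it does follow from a normal-neighborhood argument (periodicity and compactness of $\mathbb{T}^{2}$ give a uniform radius on which $g$ is uniformly comparable to the Minkowski model, and in the flat model convexity of the cone yields $B(\delta V,\epsilon c_{0}\delta)\subseteq I^{-}(\delta V)$), but this should be spelled out; likewise, the uniform timelikeness of $\dot{\mu}_{s}$ near $y$ is most directly obtained from Proposition \ref{A1-1} applied to the maximizers $\mu_{s}$ themselves (whose endpoints are $\mathfrak{T}^{\epsilon}$-separated and far apart for large $s$), rather than from Proposition \ref{asym'} and Corollary \ref{rl cpl}, which control homological displacements but not velocity vectors. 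Second, the ``routine'' globalization by chaining requires the domain $I^{+}(\gamma(0))\cap I^{-}(\gamma)$ to be $d_{R}$-quasi-convex with a uniform constant; this is plausible here because its boundary consists of causal curves, which are $d_{R}$-quasi-geodesics by Corollary \ref{causal}, but it is not automatic for an arbitrary open connected set and deserves a sentence. With these repairs your argument is sound and in effect reproves the local case of the cited Proposition \ref{A1-2}.
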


\begin{proof}
By the definition of Busemann function, we have
\begin{equation}
b_{\gamma}(x)-b_{\gamma}(y)=\lim_{s\rightarrow\infty}[d(y,\gamma(s))-d(x,\gamma(s))].
\end{equation}
Since every timelike ray with asymptotic direction $\alpha\in(m^{-},m^{+})$ is geodesically complete and then unbounded (defined in Section 1), the conclusion follows from Proposition \ref{A1-2} if the following claim holds.

\textbf{Claim}\label{epsilon estimate}:
\textit{For a fixed point $x\in I^{+}(\gamma(0))$, every $\alpha\in K$ and every timelike pregeodesic $\gamma$ which represents an element of $\mathscr{R}_{\alpha}$, there exist two numbers $\epsilon(K)>0,R(x,\gamma,K)>0$ such that for all $p\in B_{1}(x),T\geq R$,
$$
\gamma(T)-p\in\mathfrak{T}^{\epsilon}.
$$
}
\noindent\textit{Proof of the Claim:}
This claim is just a stronger version of Proposition \ref{asym'} and could be proved by the same way. Only some additional estimates are needed.

First note that
$$
\theta_{K}:=min_{\alpha\in K}\{\text{dist}_{\|\cdot\|}(\alpha,\overline{m}^{\pm})\}>0.
$$

Next, there exists a number $B(g,g_{R})>0$ such that for every $\alpha\in (m^{-},m^{+})$ and every timelike pregeodesic $\gamma$ which could be reparametrized as an element of $\mathscr{R}_{\alpha}$,
\begin{equation}
\text{dist}_{\|.\|}(\gamma(T)-\gamma(0),\overline{\alpha})\leq B(g,g_{R}).
\end{equation}

Finally, by the inequalities
\begin{equation}
\|\gamma(T)-\gamma(0)\|\geq\frac{T}{C(g,g_{R})}-|std(g_{R})|
\end{equation}
(where $C(g,g_{R})$ is the constant arising in Corollary \ref{causal}) and
\begin{eqnarray*}
&&\text{dist}_{\|\cdot\|}(\gamma(T)-p,m^{\pm})\\
&\geq&\text{dist}_{\|\cdot\|}(\gamma(T)-\gamma(0),m^{\pm})-\text{dist}_{\|\cdot\|}(p,\gamma(0)),
\end{eqnarray*}
we obtain that
\begin{eqnarray*}
&&\text{dist}_{\|\cdot\|}(\gamma(T)-p,m^{\pm})\\
&\geq&\theta_{K}\|\gamma(T)-\gamma(0)\|-(\theta_{K}+1)B(g,g_{R})-\text{dist}_{\|\cdot\|}(p,\gamma(0))\\
&\geq&\frac{1}{2}\theta_{K}\|\gamma(T)-\gamma(0)\|
\end{eqnarray*}
if
\begin{equation}
T\geq\frac{2C(g,g_{R})}{\theta_{K}}((\theta_{K}+1)B(g,g_{R})+\text{dist}_{\|\cdot\|}(p,\gamma(0)))+C(g,g_{R})|std(g_{R})|.
\end{equation}
Since $p\in B_{1}(x)$, we know $\text{dist}_{\|\cdot\|}(p,\gamma(0)))\leq A(x,\gamma,g,g_{R})<\infty$.

By choosing $\epsilon=\theta_{K}$ and
$$
R=\frac{2C(g,g_{R})}{\theta_{K}}((\theta_{K}+1)B(g,g_{R})+A(x,\gamma,g,g_{R}))+C(g,g_{R})|std(g_{R})|,
$$
we complete the proof of the claim.\qed

So far, we also complete the proof of Theorem \ref{lip Bu}.
\end{proof}

The following proposition is a direct consequence of the definition of Lorentzian Busemann function (so we omit the proof) and it holds for any non-compact spacetime $(M,g)$.
\begin{Pro}\label{subray bu}
Let $\gamma:\mathbb{R}_{+}\rightarrow(M,g)$ be a timelike ray. For $a>0$, we define a new ray $\gamma_{a}:\mathbb{R}_{+}\rightarrow M$ as $\gamma_{a}(t):=\gamma(t+a)$. Then
$$
b_{\gamma}(x)=b_{\gamma_{a}}(x)+a
$$
on their common domain.
\end{Pro}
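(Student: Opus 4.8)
The plan is to reduce the identity to a single change of variables in the limit defining the Lorentzian Busemann function. Recall from Definition \ref{def Bu} that, substituting $\gamma_{a}(s)=\gamma(s+a)$ directly into the definition,
\[
b_{\gamma_{a}}(x)=\lim_{s\rightarrow\infty}\bigl[s-d(x,\gamma_{a}(s))\bigr]=\lim_{s\rightarrow\infty}\bigl[s-d(x,\gamma(s+a))\bigr].
\]
First I would introduce the new variable $u=s+a$. Since $a>0$ is fixed, the condition $s\rightarrow\infty$ is equivalent to $u\rightarrow\infty$, and the bracketed expression becomes $(u-a)-d(x,\gamma(u))$, so that
\[
b_{\gamma_{a}}(x)=\lim_{u\rightarrow\infty}\bigl[(u-a)-d(x,\gamma(u))\bigr]=\lim_{u\rightarrow\infty}\bigl[u-d(x,\gamma(u))\bigr]-a=b_{\gamma}(x)-a.
\]
Rearranging yields $b_{\gamma}(x)=b_{\gamma_{a}}(x)+a$, which is exactly the asserted identity.

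The only point deserving a word of care is the legitimacy of pulling the constant $-a$ out of the limit, and this is precisely what the remark following Definition \ref{def Bu} supplies: the reverse triangle inequality forces $\lim_{s\rightarrow\infty}[s-d(x,\gamma(s))]$ to exist in $\mathbb{R}\cup\{+\infty\}$. Hence the manipulation above is valid term by term, and it remains valid even at points where the common value equals $+\infty$, since adding the finite constant $a$ does not alter that value.

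Finally I would check that the two functions are being compared on a genuine common domain. Because $Im(\gamma_{a})=\gamma([a,\infty))\subseteq Im(\gamma)$, one has $I^{-}(\gamma_{a})\subseteq I^{-}(\gamma)$ and $I^{+}(\gamma_{a}(0))=I^{+}(\gamma(a))\subseteq I^{+}(\gamma(0))$, so that on $I^{+}(\gamma_{a}(0))\cap I^{-}(\gamma_{a})$ both $b_{\gamma}$ and $b_{\gamma_{a}}$ are finite-valued by Proposition \ref{Pro Bu}, and the identity holds between finite quantities. I do not expect any real obstacle in this argument: the statement is in essence a reparametrization bookkeeping fact about the defining limit, which is why the authors are content to omit the proof.
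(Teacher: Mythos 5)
Your proof is correct and is exactly the computation the paper has in mind: the authors explicitly omit the proof of Proposition \ref{subray bu} as ``a direct consequence of the definition of Lorentzian Busemann function,'' and your change of variables $u=s+a$ in the defining limit, together with the remark that the limit exists by the reverse triangle inequality, is that direct consequence. The check that the common domain $I^{+}(\gamma_{a}(0))\cap I^{-}(\gamma_{a})$ is where both functions are finite is a sensible extra precaution and consistent with Proposition \ref{Pro Bu}.
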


The following two propositions reveal the original motivation for defining the Busemann function i.e. to classify which family of rays are parallel by their corresponding Busemann functions. These propositions are proved in a very general case in Riemannian geometry, however, due to the complicated causal structures and loss of regularity of Lorentzian distance function arising from the non-positive definiteness of the Lorentzian metric, they could only be proved in very special cases like ours under the setting of Lorentzian geometry.

\begin{Pro}\label{asym bf}
Let $\zeta$ and $\eta$ be two timelike rays with the same asymptotic direction $\alpha\in(m^{-},m^{+})$. If $\zeta$ and $\eta$ are asymptotic in the future direction, then
\begin{equation}
b_{\zeta}(p)=b_{\eta}(p)-b_{\eta}(\zeta(0))
\end{equation}
on their common domain.
\end{Pro}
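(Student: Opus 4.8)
The plan is to reduce the claimed identity to the vanishing of a single limit and then to evaluate that limit by transporting the Lorentzian distances measured against $\eta$ onto the ray $\zeta$. Write $q=\zeta(0)$ and restrict to a point $p$ in the common domain, so that $q\ll p$ and $p\ll\zeta(s),\ \eta(s)$ for all large $s$, and all three Busemann values $b_\zeta(p)$, $b_\eta(p)$, $b_\eta(q)$ are finite by Proposition~\ref{Pro Bu}. Since each of the three defining limits exists,
\[
b_\zeta(p)+b_\eta(q)-b_\eta(p)=\lim_{s\to\infty}\Phi(s),\qquad \Phi(s):=s-d(p,\zeta(s))-d(q,\eta(s))+d(p,\eta(s)),
\]
so it suffices to prove $\Phi(s)\to 0$.

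The key step is to replace $\eta(s)$ by a nearby point of $\zeta$. For each large $s$ I would choose $\tau_s\ge 0$ with $d_R(\eta(s),\zeta(\tau_s))=d_R(\eta(s),Im(\zeta))$ (attained, as the image of the complete ray $\zeta$ is closed); by Definition~\ref{asy} the forward asymptoticity gives $d_R(\eta(s),\zeta(\tau_s))\to 0$. Invoking the global Lipschitz continuity of the Lorentzian distance with respect to $d_R$ (the constant of Corollary~\ref{causal}, exactly as used in the proof of Lemma~\ref{no intersection}), I may substitute
\[
d(q,\eta(s))=d(q,\zeta(\tau_s))+o(1)=\tau_s+o(1),\qquad d(p,\eta(s))=d(p,\zeta(\tau_s))+o(1),
\]
where $d(q,\zeta(\tau_s))=\tau_s$ because $\zeta$ is a unit-speed timelike maximizer issuing from $q$; moreover $\tau_s\to\infty$ since $d(q,\eta(s))=s-b_\eta(q)+o(1)\to\infty$. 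Setting $g(u):=d(p,\zeta(u))-u$, these substitutions collapse $\Phi$ to
\[
\Phi(s)=\bigl(d(p,\zeta(\tau_s))-\tau_s\bigr)-\bigl(d(p,\zeta(s))-s\bigr)+o(1)=g(\tau_s)-g(s)+o(1).
\]

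It then remains to control $g$, and the pleasant point is that no comparison between $\tau_s$ and $s$ is needed. The reverse triangle inequality applied to $p\ll\zeta(u_1)\ll\zeta(u_2)$ gives $d(p,\zeta(u_2))\ge d(p,\zeta(u_1))+(u_2-u_1)$, so $g$ is nondecreasing for large $u$; applied to $q\ll p\ll\zeta(u)$ together with $d(q,\zeta(u))=u$ it gives $d(p,\zeta(u))\le u-d(q,p)$, so $g\le -d(q,p)$ is bounded above. Hence $g$ converges (necessarily to $-b_\zeta(p)$), and since $\tau_s\to\infty$ and $s\to\infty$ we obtain $g(\tau_s)-g(s)\to 0$. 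Therefore $\Phi(s)\to 0$, which is precisely the asserted identity.

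I expect the main obstacle to be the transfer step: the Lorentzian distance is only lower semicontinuous in general, so the substitution of $\zeta(\tau_s)$ for $\eta(s)$ must be justified by the genuine global Lipschitz estimate for $d$ that is available here by global hyperbolicity and Corollary~\ref{causal}. Granting that, the argument is essentially forced, and the monotone–convergence property of $g$ conveniently removes any need to track the sign of $\tau_s-s$. I also note that only the one-sided asymptoticity $d_R(\eta(t),Im(\zeta))\to 0$ is actually used.
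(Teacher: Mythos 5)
Your reduction is sound and the argument essentially goes through, by a route that is a genuine variant of the paper's. The paper proves the statement by showing that the cross difference $(b_{\zeta}(p)+b_{\eta}(q))-(b_{\eta}(p)+b_{\zeta}(q))$ vanishes for an arbitrary pair $p,q$, comparing $d(\cdot,\zeta(t))$ with $d(\cdot,\eta(\bar t))$ along a diagonal sequence $t_{k},\bar t_{k}$ with $d_{R}(\zeta(t_{k}),\eta(\bar t_{k}))\to 0$, and then pinning down the constant; you instead fix $q=\zeta(0)$, project $\eta(s)$ onto $Im(\zeta)$ at parameter $\tau_{s}$, exploit the exact identity $d(\zeta(0),\zeta(\tau_{s}))=\tau_{s}$, and reduce everything to the convergence of the single monotone, bounded function $g(u)=d(p,\zeta(u))-u$ along two sequences tending to infinity. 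This buys you something: it sidesteps the paper's somewhat delicate passage from a $\liminf$ over two independent parameters to a genuine limit, replacing it by the monotone convergence of $g$, which is nothing but the existence of $b_{\zeta}(p)$. The underlying ingredients (asymptoticity in $d_{R}$, Lipschitz continuity of $d$, reverse triangle inequality) are the same.

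The one step you must repair is the justification of the transfer $d(\cdot,\eta(s))=d(\cdot,\zeta(\tau_{s}))+o(1)$. You attribute it to a ``global Lipschitz estimate for $d$'' coming from global hyperbolicity and Corollary \ref{causal}. Corollary \ref{causal} bounds the $g_{R}$-length of causal curves and says nothing about the regularity of $d$; moreover $d$ is \emph{not} globally Lipschitz, even on globally hyperbolic spacetimes (it degenerates near the light cone). The correct tool is Proposition \ref{A1-2}, which gives Lipschitz continuity only on $\{(x,y)\,:\,y-x\in\mathfrak{T}^{\epsilon}\setminus B_{K(\epsilon)}(0)\}$, so you must check that $\eta(s)-p$, $\zeta(\tau_{s})-p$, $\eta(s)-q$, $\zeta(\tau_{s})-q$ eventually lie in such a set; this is exactly the Claim in the proof of Theorem \ref{lip Bu}, applied to both rays $\zeta,\eta\in\mathscr{R}_{\alpha}$, and it is what the paper's own proof invokes. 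With that citation corrected the argument closes; note also that $\tau_{s}\to\infty$ should be deduced directly from $d_{R}(q,\eta(s))\to\infty$ together with $d_{R}(\eta(s),\zeta(\tau_{s}))\to 0$, rather than through the Lipschitz substitution, since the Lipschitz region excludes a ball about the origin and so already presupposes $\tau_{s}$ large.
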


\begin{proof}
By the condition that $\lim_{t\rightarrow\infty}d_{R}(\zeta(t),\eta(\mathbb{R}_{+}))\rightarrow0$, we obtain
\begin{equation}\label{1}
\liminf_{t,\bar{t}\rightarrow\infty}d_{R}(\zeta(t),\eta(\bar{t}))=0.
\end{equation}

By the definition of Lorentzian Busemann functions, for any $p,q$ in the domain of $b_{\zeta}$,
\begin{eqnarray*}\label{2}
&&|(b_{\zeta}(p)+b_{\eta}(q))-(b_{\eta}(p)+b_{\zeta}(q))|\\
&\leq&\lim_{t,\bar{t}\rightarrow\infty} [|d(p,\zeta(t))-d(p,\eta(\bar{t}))|+|d(q,\zeta(t))-d(q,\eta(\bar{t}))|].
\end{eqnarray*}

Assume $\zeta\in\mathscr{R}_{\alpha}$. By the argument similar to the Claim in the proof of Theorem \ref{lip Bu}, we have that for such $p$ and $q$, there exists $T>0$ such that
$x-p\in\mathfrak{T}^{\epsilon}$ and $x-q\in\mathfrak{T}^{\epsilon}$ for all $x\in B_{1}(\zeta(t))$ and all $t\geq T$. Here $\epsilon$ is a constant depending only on $\alpha$. By Equation \ref{1}, we could choose $t_{k},\bar{t}_{k}\rightarrow\infty$ such that
\begin{equation}
\lim_{k\rightarrow\infty}d_{R}(\zeta(t_{k}),\eta(\bar{t}_{k}))=0.
\end{equation}
So Proposition \ref{A1-2} implies that
\begin{equation}
\liminf_{t,\bar{t}\rightarrow\infty}[|d(p,\zeta(t))-d(p,\eta(\bar{t}))|+|d(q,\zeta(t))-d(q,\eta(\bar{t}))|]\leq0.
\end{equation}
By monotonicity of $|d(p, \zeta(t))-d(p,\eta(\bar{t})|+|d(q, \zeta(t))-d(q,\eta(\bar{t})|$ w.r.t. $t$ and $\bar{t}$, the $\liminf$ in above formula is indeed a limit and we get that
$$
b_{\zeta}(p)-b_{\eta}(p)\equiv \text{const}.
$$
on their common domain. The remaining part follows by substituting $\eta(0)$ into two sides of the equality.
\end{proof}

We need the concept of co-rays (asymptotes) associated to a ray to show the second proposition and the further regularity of $b_{\zeta}$.

\begin{defn}\label{def co-ray}
Let $\zeta:\mathbb{R}_{+}\rightarrow(\mathbb{R}^{2},g)$ be a timelike ray and $p\in I^{+}(\zeta(0))\cap I^{-}(\zeta)$. For any two sequences $p_{n}\rightarrow p$ and $x_{n}=\zeta(r_{n})$ ($r_{n}\rightarrow\infty$), we have $p_{n}\in I^{-}(x_{n})$ for sufficiently large $n$, $d(p_{n},x_{n})\rightarrow\infty$ (reverse triangle inequality shows that $d(p_{n},x_{n})<\infty$ for large $n$). If $\zeta_{n}:[0,a_{n}]\rightarrow M$ is a maximizing segment connecting $p_{n}$ with $x_{n}$ (the existence of $\zeta_{n}$ is guaranteed by the assumption that $(\mathbb{R}^{2},g)$ is globally hyperbolic) and $\eta:\mathbb{R}_{+}\rightarrow M$ is a limit curve of $\{\zeta_{n}\}$ by \cite[Lemma 2.4]{G-H}, then $\eta$ is called a co-ray associated to the ray $\zeta$ at $p$. If we choose $\zeta_{n}(0)=p$ for all $n$, then the limit curve $\eta$ is called an asymptote, which is a special type of co-ray.
\end{defn}

An immediate application of this definition is the following corollary on the existence of co-ray at every point in the domain of the Lorentzian Busemann function.

\begin{Cor}\label{ex co-ray}
Given a ray $\zeta: {\mathbb{R}}_+ \rightarrow ({\mathbb{R}}^2,g)$ with asymptotic direction $\alpha \in (m^-,m^+)$,  then for every $p\in I^{+}(\zeta(0))$, there exists a future-directed timelike asymptote (co-ray) to the ray $\zeta$, say $\zeta_{p}:\mathbb{R}_{+}\rightarrow(\mathbb{R}^{2},g)$, with $\zeta(0)=p$ and $\zeta_{p}\in\mathscr{R}_{\alpha}$.
\end{Cor}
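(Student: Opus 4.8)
The plan is to realize $\zeta_p$ as an \emph{asymptote} in the sense of Definition \ref{def co-ray}, built from maximal timelike segments running from $p$ to points marching off to infinity along $\zeta$, and then to identify the asymptotic direction of the resulting limit ray as $\alpha$. Note first that the statement should read $\zeta_p(0)=p$, since $p\in I^{+}(\zeta(0))$.

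First I would verify the hypothesis needed to even invoke Definition \ref{def co-ray}, namely that $p\in I^{-}(\zeta)$; concretely, that $\zeta(r)\in I^{+}(p)$ for all large $r$. This mirrors the proof of Lemma \ref{dom}: through $p$ pass the two future-directed null integral curves of the lightlike fields $X^{\pm}$, which are null pregeodesics with asymptotic directions $m^{\pm}$ and which bound $J^{+}(p)$ in the $2$-dimensional cover $(\mathbb{R}^{2},g)$. Since $\zeta\in\mathscr{R}_{\alpha}$ with $\alpha\in(m^{-},m^{+})$, Corollary \ref{ray osc} keeps $\zeta(r)-p$ within bounded $\|\cdot\|$-distance of the half-line $\overline{\alpha}$ while $\|\zeta(r)-p\|\to\infty$; as $\overline{\alpha}$ lies strictly between $\overline{m}^{-}$ and $\overline{m}^{+}$, for large $r$ the point $\zeta(r)$ lands strictly inside the region bounded by the two null curves, i.e. in $I^{+}(p)$.

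Next, with $x_{n}:=\zeta(r_{n})$ and $r_{n}\to\infty$, global hyperbolicity of $(\mathbb{R}^{2},g)$ furnishes maximal timelike segments $\zeta_{n}$ from $p$ to $x_{n}$, and \cite[Lemma 2.4]{G-H} extracts a limit curve $\zeta_{p}$ that is a future-inextendible causal maximizer with $\zeta_{p}(0)=p$ (all $\zeta_{n}$ start at $p$, so this is an asymptote). It remains to show $\zeta_{p}$ is timelike and lies in $\mathscr{R}_{\alpha}$. For this I would track homology displacements: by the uniform bounded-distance property of maximal segments (a consequence of the class $A_{1}$ structure, cf. \cite{Su3}, Theorem \ref{std} and Corollary \ref{ray osc}), each $\zeta_{n}(t)-p$ stays within a fixed constant $B$ of the half-line $\overline{x_{n}-p}$. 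Because $\zeta(r_{n})-\zeta(0)$ has direction tending to $\alpha$ while $\zeta(0)-p$ is fixed, the directions of $x_{n}-p$ converge to $\alpha$, so $\overline{x_{n}-p}\to\overline{\alpha}$ uniformly on bounded sets; letting $n\to\infty$ for each fixed $t$ (where $\|\zeta_{n}(t)-p\|\le C(g,g_{R})\,t$ by Corollary \ref{causal}) yields $\mathrm{dist}_{\|\cdot\|}(\zeta_{p}(t)-p,\overline{\alpha})\le B$ for all $t$. Hence the causal maximizer $\zeta_{p}$ has asymptotic direction $\alpha\in(m^{-},m^{+})$; since a null maximizer necessarily has asymptotic direction $m^{+}$ or $m^{-}$, $\zeta_{p}$ must be timelike, and therefore $\zeta_{p}\in\mathscr{R}_{\alpha}$.

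I expect the main obstacle to be this last step: controlling the limit so that $\zeta_{p}$ neither degenerates to a null geodesic nor acquires an asymptotic direction different from $\alpha$. The crux is the uniform (independent of $n$) bounded-distance estimate for maximal segments, together with the rigidity that asymptotic directions in the open interval $(m^{-},m^{+})$ are timelike. Once the displacement estimate localizes $\zeta_{p}$ near $\overline{\alpha}$, membership in $\mathscr{R}_{\alpha}$, and in particular future $g$-geodesic completeness via Corollary \ref{rl cpl}, follows.
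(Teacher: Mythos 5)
The paper offers no proof of this corollary at all (it is introduced as ``an immediate application'' of Definition \ref{def co-ray}), so there is nothing to compare line by line; your proposal is really supplying the missing argument. Two of your steps are genuinely needed and correctly handled: the verification that $p\in I^{-}(\zeta)$ (without which Definition \ref{def co-ray} does not even apply --- your argument via the two null integral curves of $X^{\pm}$ through $p$ is exactly the mechanism used in the paper's proof of Lemma \ref{dom}), and the extraction of a future-inextendible causal maximizer $\zeta_p$ from the segments $\zeta_n$ via \cite[Lemma 2.4]{G-H}. The observation that a null maximizer must be an integral curve of $X_1$ or $X_2$ and hence has asymptotic direction $m^{\pm}$ is also correct and is the right way to convert ``asymptotic direction in $(m^-,m^+)$'' into ``timelike.''

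The gap is precisely where you predicted it: the assertion that each $\zeta_n(t)-p$ stays within a fixed constant $B$ of $\overline{x_n-p}$, uniformly in $n$ and $t$. Neither Theorem \ref{std} (stable norm of the auxiliary Riemannian metric) nor Corollary \ref{ray osc} (which applies only to timelike \emph{rays already known to lie in} $\mathscr{R}_{\alpha}$) says anything about timelike maximal \emph{segments}, and mere causality does not give it: $J^{+}(p)\cap J^{-}(x_n)$ is, up to bounded error, a parallelogram spanned by $\overline{m}^{-},\overline{m}^{+}$ whose width transverse to $\alpha$ grows linearly in $\|x_n-p\|$, so the segments could a priori make excursions of unbounded transverse size. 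This displacement estimate is the Lorentzian analogue of Hedlund's estimate for minimal segments on the Riemannian $2$-torus; it is true here, but within this paper's toolkit it has to be earned, e.g.\ by trapping the $\zeta_n$ between recurrent/periodic lines via Morse's lemma (this is how the paper argues in the analogous limit constructions inside the proofs of Theorems \ref{irrational} and \ref{rational}, where the target points are chosen \emph{on} a known line and the segments are confined to the strip between $\underline{\zeta}$ and $\overline{\zeta}$; for arbitrary $p\in I^{+}(\zeta(0))$ one must additionally control the initial excursion before $\zeta_n$ enters that strip). A cleaner route to the timelikeness half of the conclusion, bypassing the displacement bound entirely, is Proposition \ref{A1-1}: since $x_n-p\in\mathfrak{T}^{\delta}\setminus B_{K}(0)$ for all large $n$ (this is exactly the Claim in the proof of Theorem \ref{lip Bu}), the tangents $\dot{\zeta}_n(t)$ lie in $\mathrm{Time}^{\epsilon}$ for all $t$, and this passes to the $C^{1}$-limit (as in Lemma \ref{stable types}), making $\zeta_p$ uniformly timelike; one is then still left with identifying the asymptotic direction as $\alpha$, for which the Morse-lemma trapping (or a correctly sourced segment displacement bound from \cite{Su4}) is needed.
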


Roughly speaking, co-rays play a role as the integral curves of the gradient field of the respected Lorentzian Busemann function as the next proposition shows.

\begin{Pro}\label{int curve}
Let $(\mathbb{R}^{2},g)$ be the Abelian cover of a class A Lorentzian 2-torus and $\zeta$ be a timelike ray in $\mathscr{R}_{\alpha}$. Then for any co-ray $\eta:\mathbb{R}_{+}\rightarrow\mathbb{R}^{2}$ associated to $\zeta$ and $0\leq a\leq b$, holds the following equality:
\begin{equation}\label{eq:co-ray}
b_{\zeta}(\eta(b))-b_{\zeta}(\eta(a))=L^{g}(\eta|_{[a,b]}).
\end{equation}
\end{Pro}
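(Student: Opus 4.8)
The plan is to write $b_{\zeta}$ as a pointwise limit of finite-horizon functions and to exploit that these are exactly affine of unit slope along the approximating maximizers that define the co-ray. Recall from Definition \ref{def co-ray} that $\eta$ arises as a $C^{0}$-limit of maximizing segments $\zeta_{n}:[0,a_{n}]\to\mathbb{R}^{2}$, parametrized by $g$-arc length, joining $p_{n}\to\eta(0)$ to $x_{n}:=\zeta(r_{n})$ with $r_{n}\to\infty$. Set $b^{n}(x):=r_{n}-d(x,x_{n})$. Since the limit $\lim_{s\to\infty}[s-d(x,\zeta(s))]=b_{\zeta}(x)$ exists for every $x$ in the domain (Definition \ref{def Bu}), evaluating along the subsequence $s=r_{n}$ gives $b^{n}(x)\to b_{\zeta}(x)$ pointwise; in particular this holds at the two points $\eta(a)$ and $\eta(b)$, which lie in $I^{+}(\zeta(0))\cap I^{-}(\zeta)$ and hence satisfy $\eta(a),\eta(b)\ll x_{n}$ for all large $n$.

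First I would compute $b^{n}$ along $\zeta_{n}$ itself. As $\zeta_{n}$ is a maximizer ending at $x_{n}=\zeta_{n}(a_{n})$, maximality of every subsegment gives $d(\zeta_{n}(t),x_{n})=a_{n}-t$, so that $b^{n}(\zeta_{n}(t))=r_{n}-a_{n}+t$ is affine with unit slope. Consequently, for all $n$ with $a_{n}>b$ (which holds eventually since $d(p_{n},x_{n})=a_{n}\to\infty$),
\begin{equation*}
b^{n}(\zeta_{n}(b))-b^{n}(\zeta_{n}(a))=b-a.
\end{equation*}

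The second and decisive step is to transfer this identity from $\zeta_{n}$ to $\eta$. The essential ingredient is a Lipschitz estimate for the Lorentzian distance: by Proposition \ref{A1-2} the maps $x\mapsto d(x,x_{n})$ are $L$-Lipschitz with respect to $d_{R}$ on a fixed neighborhood of $\mathrm{Im}(\eta|_{[a,b]})$, with $L$ independent of $n$ (this is exactly the uniformity already used in the Claim inside the proof of Theorem \ref{lip Bu}). Combining this with the defining convergence $\zeta_{n}(t)\to\eta(t)$ yields
\begin{equation*}
|b^{n}(\eta(t))-b^{n}(\zeta_{n}(t))|=|d(\eta(t),x_{n})-d(\zeta_{n}(t),x_{n})|\le L\,d_{R}(\eta(t),\zeta_{n}(t))\longrightarrow0
\end{equation*}
for $t=a,b$. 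Hence $b^{n}(\eta(b))-b^{n}(\eta(a))=(b-a)+o(1)$, and letting $n\to\infty$ together with the pointwise convergence $b^{n}(\eta(a))\to b_{\zeta}(\eta(a))$ and $b^{n}(\eta(b))\to b_{\zeta}(\eta(b))$ gives $b_{\zeta}(\eta(b))-b_{\zeta}(\eta(a))=b-a$. Finally, since $\eta$ is a timelike maximizer parametrized by $g$-arc length, $L^{g}(\eta|_{[a,b]})=d(\eta(a),\eta(b))=b-a$, which closes the proof. One inequality, namely $b_{\zeta}(\eta(b))-b_{\zeta}(\eta(a))\ge L^{g}(\eta|_{[a,b]})$, is in any case immediate from Proposition \ref{Pro Bu}(4); the content lies in the reverse bound, which the argument above delivers with equality.

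The step I expect to be the main obstacle is the uniform-in-$n$ Lipschitz bound for $x\mapsto d(x,x_{n})$: one must be sure the Lipschitz constant furnished by Proposition \ref{A1-2} does not degenerate as the target point $x_{n}$ recedes to infinity, and that all points involved (the $\eta(t)$ and the moving $\zeta_{n}(t)$) remain in the region $I^{+}(\zeta(0))\cap I^{-}(\zeta)$ where $b_{\zeta}$ is finite and the distance is well controlled. Once this uniformity is secured, the interchange of the $n$-limit with the evaluation along the moving base points $\zeta_{n}(t)$ is routine.
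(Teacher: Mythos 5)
Your argument is correct and follows essentially the same route as the paper's proof: both pass to the approximating maximal segments $\zeta_{n}$, use that $d(\zeta_{n}(t),x_{n})=a_{n}-t$ by maximality of the unit-$g$-speed segments, and transfer this to $\eta$ via the uniform Lipschitz bound of Proposition \ref{A1-2} (the Claim in the proof of Theorem \ref{lip Bu}). The only cosmetic difference is at the end, where the paper closes with upper semicontinuity of $L^{g}$ together with Proposition \ref{Pro Bu}, while you identify $L^{g}(\eta|_{[a,b]})=b-a$ directly from the unit-speed maximizer property of the co-ray; both versions rest on the same implicit fact that $\zeta_{n}(t)\to\eta(t)$ in the $g$-arc-length parametrization.
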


\begin{proof}
Let $\{\zeta_{n}\}$ be the maximal segments which converges to $\eta$, as in the definition of co-ray. By Proposition \ref{A1-2} and the definition of $b_{\zeta}$, we have that:
\begin{eqnarray*}
&&b_{\zeta}(\eta(b))-b_{\zeta}(\eta(a))\\
&=&\lim_{n\rightarrow\infty}[d(\eta(a),\zeta(r_{n}))-d(\eta(b),\zeta(r_{n}))]\\
&=&\lim_{n\rightarrow\infty}[d(\zeta_{n}(a),\zeta(r_{n}))-d(\zeta_{n}(b),\zeta(r_{n}))]\\
&=&\lim_{n\rightarrow\infty}[L^{g}(\zeta_{n}|_{[a,a_{n}]})-L^{g}(\zeta_{n}|_{[b,a_{n}]})]\\
&=&\lim_{n\rightarrow\infty}L^{g}(\zeta_{n}|_{[a,b]})\\
&\leq&L^{g}(\eta|_{[a,b]}).
\end{eqnarray*}
Here, $[0,a_{n}]$ denotes the domain of $\zeta_{n}$ as in the definition of co-ray, the second equality follows from the definition of Lorentzian Busemann function, the third equality follows from the Claim in the proof of Theorem \ref{lip Bu} and Proposition \ref{A1-2}, the last inequality follows from Proposition \ref{usc}.
For another direction of the Equality \ref{eq:co-ray}, we have
\begin{eqnarray*}
&&b_{\zeta}(\eta(b))-b_{\zeta}(\eta(a))\\
&\geq&d(\eta(a),\eta(b))\\
&=&L^{g}(\eta|_{[a,b]}),
\end{eqnarray*}
here, the first equality follows from Proposition \ref{Pro Bu}, the last equality follows from the fact that $\eta$ is also a ray.
\end{proof}

Applying the above proposition, we obtain the following corollary.
\begin{Cor}\label{limit int}
Let $\{\zeta_{n}\}$ be a sequence of  co-rays associated to  a timelike ray $\zeta$. If $\zeta_{n}$ converges to a limit curve $\eta$ which is contained in the domain of $b_{\zeta}$, and the Busemann function $b_{\zeta}$ is continuous on the domain, then $\eta$ also satisfies the Equation \ref{eq:co-ray}.
\end{Cor}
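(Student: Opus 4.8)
The plan is to transfer Equation \eqref{eq:co-ray}, which Proposition \ref{int curve} already grants for each co-ray $\zeta_{n}$, to the limit curve $\eta$ by a semicontinuity argument that mirrors the proof of Proposition \ref{int curve} itself. Fix $0\leq a\leq b$. By Proposition \ref{int curve}, for every $n$ we have
\begin{equation*}
b_{\zeta}(\zeta_{n}(b))-b_{\zeta}(\zeta_{n}(a))=L^{g}(\zeta_{n}|_{[a,b]}).
\end{equation*}
The strategy is to let $n\to\infty$ on both sides: the left-hand side converges by continuity of $b_{\zeta}$, and this forces the existence of $\lim_{n}L^{g}(\zeta_{n}|_{[a,b]})$, which I then sandwich between $b_{\zeta}(\eta(b))-b_{\zeta}(\eta(a))$ and $L^{g}(\eta|_{[a,b]})$ from the two sides.

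First I would handle the left-hand side. Since $\zeta_{n}\to\eta$ in the $C^{0}$-topology, in particular $\zeta_{n}(a)\to\eta(a)$ and $\zeta_{n}(b)\to\eta(b)$; as these endpoints and $\eta$ lie in the domain of $b_{\zeta}$, the assumed continuity of $b_{\zeta}$ gives $b_{\zeta}(\zeta_{n}(a))\to b_{\zeta}(\eta(a))$ and $b_{\zeta}(\zeta_{n}(b))\to b_{\zeta}(\eta(b))$. Consequently the limit $\lim_{n}L^{g}(\zeta_{n}|_{[a,b]})$ exists and equals $b_{\zeta}(\eta(b))-b_{\zeta}(\eta(a))$. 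Applying the upper semicontinuity of the Lorentzian length functional (Proposition \ref{usc}) to the convergence $\zeta_{n}|_{[a,b]}\to\eta|_{[a,b]}$ then yields
\begin{equation*}
b_{\zeta}(\eta(b))-b_{\zeta}(\eta(a))=\lim_{n\to\infty}L^{g}(\zeta_{n}|_{[a,b]})\leq L^{g}(\eta|_{[a,b]}).
\end{equation*}

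For the reverse inequality I would use that $\eta$, being a limit of the timelike (hence causal) co-rays $\zeta_{n}$, is itself a future-directed causal curve, so that $\eta(a)\leq\eta(b)$. Proposition \ref{Pro Bu}(4) then gives $b_{\zeta}(\eta(b))-b_{\zeta}(\eta(a))\geq d(\eta(a),\eta(b))$, and since $d(\eta(a),\eta(b))$ is the supremum of the $g$-lengths of causal curves joining the two points, it dominates $L^{g}(\eta|_{[a,b]})$. Combining the two inequalities collapses everything to the desired equality \eqref{eq:co-ray}. The only delicate point, and the place I would be most careful, is the bookkeeping around the length functional: its mere upper semicontinuity blocks a naive ``$L^{g}(\zeta_{n})\to L^{g}(\eta)$'' argument, so the lower bound must come from the Busemann monotonicity of Proposition \ref{Pro Bu}(4) rather than from length convergence, exactly as in Proposition \ref{int curve}.
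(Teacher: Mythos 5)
Your proof is correct and is exactly the argument the paper intends: the corollary is stated as a direct application of Proposition \ref{int curve}, and your two-sided sandwich (upper semicontinuity of $L^{g}$ via Proposition \ref{usc} for one inequality, the monotonicity $b_{\zeta}(y)-b_{\zeta}(x)\geq d(x,y)$ from Proposition \ref{Pro Bu} for the other) mirrors the proof of that proposition step for step. No gaps.
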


\begin{The}\label{sca Bu}
Let $(\mathbb{R}^{2},g)$ be the Abelian cover of a class A Lorentzian 2-torus and $\gamma$ be any timelike ray in $\cup_{\alpha\in K}\mathscr{R}_{\alpha}$, where $K$ is a compact subset of $(m^{-},m^{+})$. Then there exists a positive number $C(K)$ depending only on $K$ such that the Lorentzian Busemann function $b_{\gamma}$ satisfies
\begin{equation}
D^{2}b_{\gamma}(x)\leq C(K)I
\end{equation}
in the sense of upper support function. So $b_{\gamma}$ is locally semi-concave on $I^{+}(\gamma(0))$.
\end{The}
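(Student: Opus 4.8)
The plan is to produce, at each point $x\in I^{+}(\gamma(0))$, a $C^{2}$ upper support function for $b_{\gamma}$ whose Hessian is bounded above by $C(K)I$; by Definition \ref{semiconcave} this is exactly the asserted local semiconcavity. The support function will be manufactured from a co-ray at $x$ together with the monotonicity of $b_{\gamma}$ along causal curves.

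First I would fix $x$ and invoke Corollary \ref{ex co-ray} to obtain a co-ray $\eta\in\mathscr{R}_{\alpha}$ with $\eta(0)=x$. For a large parameter $\tau>0$ set $q:=\eta(\tau)\in I^{+}(x)$. By Proposition \ref{int curve}, $b_{\gamma}(q)=b_{\gamma}(x)+\tau$, while for every $y$ with $y\leq q$ near $x$ Proposition \ref{Pro Bu}(4) gives $b_{\gamma}(q)-b_{\gamma}(y)\geq d(y,q)$, hence
\begin{equation}
b_{\gamma}(y)\leq b_{\gamma}(x)+\tau-d(y,q).
\end{equation}
Since $\eta$ is maximizing we have $d(x,q)=\tau$, so equality holds at $y=x$. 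Because $\eta|_{[0,\tau]}$ is a timelike maximizer it carries no conjugate points, so $x$ is not conjugate to $q$ along $\eta$, and the Lorentzian length $r_{q}(y)$ of the geodesic from $y$ to $q$ lying near $\eta$ is a smooth function of $y$ in a neighborhood of $x$, defined through $\exp_{q}^{-1}$. As this geodesic is one causal curve from $y$ to $q$ we have $r_{q}(y)\leq d(y,q)$ with equality at $x$; therefore
\begin{equation}
\psi(y):=b_{\gamma}(x)+\tau-r_{q}(y)
\end{equation}
is a genuine $C^{2}$ upper support function for $b_{\gamma}$ at $x$, with $D^{2}\psi(x)=-D^{2}r_{q}(x)$.

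It then remains to bound $D^{2}r_{q}(x)\geq -C(K)I$, uniformly over $x$, over the co-rays, and over $\tau$. I would read $D^{2}r_{q}$ along the spacelike level sets of $r_{q}$ as the shape operator $S$ of these hypersurfaces (the purely timelike direction contributes nothing, as $r_{q}$ is affine along $\eta$) and propagate $S$ by the Riccati equation of the form $S'+S^{2}+\mathcal{R}=0$ along $\eta$, where $\mathcal{R}$ is the tidal term $R(\cdot,\dot\eta)\dot\eta$. Two inputs control this: the curvature term is uniformly bounded because $\mathbb{T}^{2}$ is compact and, by the Claim in the proof of Theorem \ref{lip Bu} together with Theorem \ref{cpt}, the unit tangents $\dot\eta$ remain in a fixed compact subset of $\mathrm{Time}^{1,\epsilon(K)}(\mathbb{T}^{2},g)$ for all co-rays with direction in $K$; and the maximality of $\eta$ forbids conjugate points, which rules out the focusing blow-up of $S$. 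A Lorentzian Riccati/index-form comparison then yields a lower bound for $S$, equivalently $D^{2}r_{q}\geq -C(K)I$, that depends only on the uniform curvature bound and on $\epsilon(K)$, and in particular is independent of $\tau$ and of the individual co-ray. Taking $C(K)$ to be this constant completes the proof.

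The main obstacle is precisely this last uniform Hessian estimate, because $q=\eta(\tau)$ recedes to infinity: one is forced to control the second fundamental form of the $r_{q}$-level sets over arbitrarily long maximizing segments, and the bound must be shown to stabilize rather than degenerate as $\tau\to\infty$. Here the absence of conjugate points along maximizers is essential, and it is the uniform timelike-ness furnished by Theorem \ref{cpt} that keeps $\mathcal{R}$ and the comparison solution bounded along the entire segment; without the compactness of $\mathrm{Time}^{1,\epsilon}(\mathbb{T}^{2},g)$ the Riccati comparison could fail to be uniform. A secondary technical point is the smoothness of $r_{q}$ at $x$ when $x$ is a cut point of $q$: this is handled by defining $r_{q}$ through the single geodesic branch $\eta$ rather than through the full time separation $d(\cdot,q)$ (which may only be semiconvex there), so that $\psi$ remains an honest $C^{2}$ upper barrier.
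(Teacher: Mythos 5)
Your overall strategy coincides with the paper's: build a smooth upper support function for $b_{\gamma}$ at each point out of a co-ray, bound its Hessian from above by a comparison argument using the uniform curvature bound coming from periodicity, and conclude semiconcavity (the paper routes this last step through Lemma \ref{sca}). The genuine gap is in where you place the vertex of the support function. You take $q=\eta(\tau)$ with $\tau$ large and then need $D^{2}r_{q}(x)\geq -C(K)I$ \emph{uniformly in $\tau$}, evaluated at the far endpoint $x=\eta(0)$ of a maximizing segment of length $\tau$. The Riccati/index comparison you invoke does not deliver this: starting from the vertex (where the shape operator is $+\infty$), a curvature upper bound gives a lower bound on $S(s)$ only up to the conjugate radius of the comparison model, after which the model solution blows down to $-\infty$ and the comparison is vacuous; and the absence of conjugate points on $[0,\tau]$ yields a quantitative lower bound on $S$ at an interior parameter $s$ only if the solution is known to persist for a definite parameter length \emph{beyond} $s$ — which is exactly what is unavailable at the endpoint $s=\tau$, since the co-ray is not known to extend as a maximizer to the past of $x$. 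So the step you yourself flag as ``the main obstacle'' is asserted rather than proved, and the proposed mechanism would fail as stated.

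The obstacle is self-inflicted and the fix is the paper's Lemma \ref{upper spt}: take the vertex at $\zeta_{p}(1)$, i.e.\ at \emph{unit} parameter along the co-ray, so that $b_{\gamma}(p)+d(p,\zeta_{p}(1))-d(x,\zeta_{p}(1))$ is the upper support function and the Hessian comparison runs over a maximizing segment of fixed length $1$ with no conjugate points; the resulting constant then depends only on the uniform bound on the Gauss curvature of the periodic metric (via \cite{A-H}, \cite{A-G-H 1}) and in particular only on $g,g_{R}$, which is even stronger than the dependence on $K$ that the theorem asks for. (Your uniform-in-$\tau$ claim is in fact true a posteriori, but only because the reverse triangle inequality makes the family $\psi_{\tau}$ nonincreasing in $\tau$ while all touching $b_{\gamma}$ at $x$, so $D^{2}\psi_{\tau}(x)\leq D^{2}\psi_{1}(x)$ — which just reduces everything to the $\tau=1$ case.) Your treatment of the smoothness issue at cut points via the single geodesic branch $\exp_{q}^{-1}$ is a legitimate alternative to the paper's argument that $\zeta_{p}|_{[0,1]}$ is free of cut points, and the rest of your setup (monotonicity of $b_{\gamma}$ from Proposition \ref{Pro Bu}, the co-ray identity from Proposition \ref{int curve}) is correct.
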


To prove this theorem, we need several technical lemmas which are now well-known to geometers thanks to the efforts of J. Eschenburg, L. Andersson and G. Galloway \cite{A-G-H 1}, \cite{A-G-H 2}.

\begin{Lem}[{\cite[Lemma 2.15]{A-G-H 2}}]\label{sca}
Let $U\subseteq\mathbb{R}^{n}$ be a convex domain and $u:U\rightarrow\mathbb{R}$ be a continuous function. Assume for some constant $c$ and all $p\in U$ that $u$ has a smooth upper support function $u_{p}$ at $p$, i.e. $u_{p}(x)\geq u(x)$ for all $x$ near $p$ with equality holding when $x=p$, such that $D^{2}u_{p}\leq cI$ near $p$. Then $u-\frac{c}{2}\|x\|_{E}^{2}$ is concave in $U$, thus $u$ is semi-concave and twice differentiable almost everywhere in $U$. In this lemma, $\|\cdot\|_{E}$ denotes the Euclidean norm on $\mathbb{R}^{n}$.
\end{Lem}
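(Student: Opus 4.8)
The plan is to reduce everything to the single assertion that $v := u - \frac{c}{2}\|x\|_E^2$ is concave on $U$; the remaining conclusions then follow from standard facts. First I would transfer the hypothesis to $v$: for each $p\in U$ the function $v_p := u_p - \frac{c}{2}\|x\|_E^2$ is smooth, satisfies $v_p\ge v$ near $p$ with equality at $p$, and has Hessian $D^2 v_p = D^2 u_p - cI \le 0$ near $p$, so $v_p$ is genuinely concave on a neighborhood of $p$. Thus the whole problem reduces to the purely analytic claim that \emph{a continuous function on a convex domain which admits, at every point, a smooth concave upper support function is concave.}

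Next I would reduce this claim to dimension one, using that a continuous function on a convex set is concave if and only if its restriction to every segment is concave. Fix $a,b\in U$, let $\sigma(t)=a+t(b-a)$ for $t\in[0,1]$, and set $\phi(t)=v(\sigma(t))$. For $p=\sigma(t_0)$ the restriction $\phi_{t_0}(t):=v_p(\sigma(t))$ is smooth near $t_0$, dominates $\phi$ there with equality at $t_0$, and satisfies $\phi_{t_0}''(t)=(b-a)^{\top}D^2 v_p(\sigma(t))(b-a)\le 0$; that is, $\phi$ itself has a smooth concave upper support at every point. It therefore suffices to show that such a one-dimensional $\phi$ lies above its chord.

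The heart of the matter --- and the step I expect to be the main obstacle --- is a strong-maximum-principle (propagation) argument for this one-dimensional statement, since possessing a concave upper support does not make $\phi$ locally concave and so the sign information must be propagated globally. Let $L$ be the affine chord interpolating $\phi(0)$ and $\phi(1)$ and put $h:=L-\phi$, so $h(0)=h(1)=0$. If $\phi$ were not concave then $M:=\max_{[0,1]}h>0$, attained at some interior $t_0$. At any interior maximum point $t_0$ of $h$, the concave upper support gives $L-\phi_{t_0}\le h\le M$ near $t_0$ with equality at $t_0$; since $L-\phi_{t_0}$ is smooth and convex (its second derivative is $-\phi_{t_0}''\ge 0$) yet attains a local maximum at $t_0$, it must be constant equal to $M$ near $t_0$, forcing $h\equiv M$ on a neighborhood of $t_0$. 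Hence $\{h=M\}\cap(0,1)$ is open; it is also closed in $(0,1)$ and nonempty, so by connectedness it equals all of $(0,1)$, contradicting $h(0)=0$ by continuity. This contradiction shows $h\le 0$, i.e. $v(\sigma(t))\ge (1-t)v(a)+t\,v(b)$ for all $t\in[0,1]$; since $a,b\in U$ were arbitrary, $v$ is concave.

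Finally I would collect the standard consequences. Since $v$ is concave, $u=v+\frac{c}{2}\|x\|_E^2$ is by definition semi-concave with comparison quadratic of modulus $c$, which also yields the local semi-concavity needed in the applications once $g_R$ is compared to the Euclidean metric on small balls. For almost-everywhere twice differentiability I would invoke Alexandrov's theorem, asserting that a concave function on an open convex subset of $\mathbb{R}^n$ is twice differentiable Lebesgue-almost everywhere; applying it to $v$ and adding back the smooth quadratic $\frac{c}{2}\|x\|_E^2$ shows that $u$ is twice differentiable almost everywhere in $U$, completing the proof.
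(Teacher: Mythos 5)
Your proof is correct, but note that the paper itself offers no argument for this lemma at all: it is quoted verbatim from Andersson--Galloway--Howard \cite[Lemma 2.15]{A-G-H 2} and used as a black box, so there is no in-paper proof to compare against --- what you have done is supply the missing proof of the cited result. Your route is essentially the classical one behind the AGH lemma: subtract the quadratic so the hypothesis becomes ``smooth concave upper support at every point,'' reduce to one dimension along segments, and rule out an interior maximum of $h=L-\phi$ above the chord. The one place where your argument departs from the most common write-up is the final contradiction: the standard trick is to perturb first (replace $c$ by $c+\epsilon$, so the supports become \emph{strictly} concave, whence $L-\phi_{t_0}$ is strictly convex and simply cannot have an interior local maximum, and then let $\epsilon\to 0$), whereas you keep $c$ fixed and instead exploit that a (non-strictly) convex function with an interior maximum is locally constant, so that $\{h=M\}$ is open and closed in $(0,1)$ and connectedness finishes the job. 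Both are sound; your connectedness variant avoids the limiting step in $\epsilon$ at the cost of the slightly more delicate local-constancy observation, and you handle that observation correctly (the three-point convexity inequality forces $g\equiv M$ on the neighborhood where both convexity of the support and the bound $g\le h\le M$ hold). Your reductions are also clean: the equivalence of concavity with concavity on segments, the computation $\phi_{t_0}''(t)=(b-a)^{\top}D^{2}v_{p}(\sigma(t))(b-a)\le 0$, and the appeal to Alexandrov's theorem for almost-everywhere second differentiability (stable under adding the smooth quadratic back) are all standard and correctly invoked. The only cosmetic caveat is that the supports $u_p$ dominate $u$ and satisfy $D^{2}u_{p}\le cI$ possibly on different neighborhoods of $p$, so one should work on their intersection --- which your argument implicitly does.
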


\begin{Lem}\label{upper spt}
Let $\gamma:\mathbb{R}_{+}\rightarrow(\mathbb{R}^{2},g)$ be a timelike ray in $\mathscr{R}_{\alpha}$ and $b_{\gamma}$ be  the Lorentzian Busemann function associated to $\gamma$. If $\zeta_{p}:\mathbb{R}_{+}\rightarrow\mathbb{R}^{2}$ is a timelike ray emanating from  $p\in I^{+}(\gamma(0))$ that satisfies Equation \ref{eq:co-ray}, then
\begin{equation}
b_{p,\zeta_{p}}(x):=b_{\gamma}(p)+d(p,\zeta_{p}(1))-d(x,\zeta_{p}(1))
\end{equation}
is an upper support function for $b_{\gamma}$ at $p$. Besides, there exists a small neighborhood of $p$ on which $b_{p,\zeta_{p}}$ is smooth and $\nabla b_{p,\zeta_{p}}(p)=-\dot{\zeta}(0)$.
\end{Lem}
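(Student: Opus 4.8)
The plan is to check the two defining conditions of an upper support function at $p$, namely $b_{p,\zeta_{p}}(p)=b_{\gamma}(p)$ and $b_{p,\zeta_{p}}\geq b_{\gamma}$ near $p$, and then to extract the smoothness and the gradient from the regularity of the Lorentzian distance to the fixed point $q:=\zeta_{p}(1)$. Equality at $p$ is immediate, since $b_{p,\zeta_{p}}(p)=b_{\gamma}(p)+d(p,q)-d(p,q)=b_{\gamma}(p)$.

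For the inequality I first record two normalizations forced by the hypotheses. As $\zeta_{p}$ is a maximizer parametrized by $g$-arc length, $d(p,q)=L^{g}(\zeta_{p}|_{[0,1]})=1$; and as $\zeta_{p}$ satisfies Equation~\eqref{eq:co-ray} with $b_{\gamma}$ in the role of $b_{\zeta}$, we get $b_{\gamma}(q)-b_{\gamma}(p)=L^{g}(\zeta_{p}|_{[0,1]})=1$. Hence $b_{\gamma}(q)=b_{\gamma}(p)+d(p,q)$, so it is enough to prove $b_{\gamma}(x)\leq b_{\gamma}(q)-d(x,q)$ near $p$. Since $b_{\gamma}(q)$ is finite, Proposition~\ref{Pro Bu} gives $q\in I^{-}(\gamma)$, so $q\leq\gamma(s)$ for all large $s$; and since $p\ll q$, every $x$ in a small enough neighborhood of $p$ satisfies $x\leq q$. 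For such $x$ and $s$ the reverse triangle inequality along $x\leq q\leq\gamma(s)$ gives $d(x,\gamma(s))\geq d(x,q)+d(q,\gamma(s))$, whence $s-d(x,\gamma(s))\leq -d(x,q)+\bigl(s-d(q,\gamma(s))\bigr)$; letting $s\to\infty$ and using the definition of $b_{\gamma}$ yields $b_{\gamma}(x)\leq b_{\gamma}(q)-d(x,q)=b_{p,\zeta_{p}}(x)$, as desired.

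For smoothness and the gradient, the only $x$-dependence of $b_{p,\zeta_{p}}$ is through $-d(\cdot,q)$. The segment $\zeta_{p}|_{[0,1]}$ is a maximizing timelike geodesic from $p$ to $q$; because $\zeta_{p}$ is a ray it keeps maximizing past parameter $1$, so there is no point conjugate to $p$ along it up to $q$, i.e.\ $\exp_{p}$ is a local diffeomorphism near $\dot{\zeta_{p}}(0)$. By the standard regularity of the Lorentzian distance function in globally hyperbolic spacetimes (cf.\ \cite{G-H}, \cite{A-G-H 2}), $d(\cdot,q)$ is therefore smooth on a neighborhood of $p$, and $b_{p,\zeta_{p}}$ with it. On this region $d(\cdot,q)$ solves the eikonal equation $g(\nabla d(\cdot,q),\nabla d(\cdot,q))=-1$, and a first-variation computation (equivalently, a direct check in the Minkowski model) identifies its gradient at $p$ with the future-directed unit tangent of the maximizer, $\nabla d(\cdot,q)(p)=\dot{\zeta_{p}}(0)$, the sign being pinned down by the fact that $d(\cdot,q)$ decreases toward $q$. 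Consequently $\nabla b_{p,\zeta_{p}}(p)=-\nabla d(\cdot,q)(p)=-\dot{\zeta_{p}}(0)$.

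The delicate point is the smoothness step: one must guarantee that for $x$ near $p$ there is a unique maximizer to $q$ free of conjugate points, so that $d(\cdot,q)$ is genuinely $C^{\infty}$ (not merely Lipschitz) near $p$. This is exactly where the maximality of the ray $\zeta_{p}$ (no conjugate points on $[0,1]$) and the global hyperbolicity of $(\mathbb{R}^{2},g)$ (existence of maximizers together with the local diffeomorphism property of $\exp_{p}$) are both essential; everything else is elementary.
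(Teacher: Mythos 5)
Your proof is correct and follows essentially the same route as the paper: equality at $p$ plus the identity $b_{\gamma}(\zeta_{p}(1))=b_{\gamma}(p)+d(p,\zeta_{p}(1))$ from Equation \ref{eq:co-ray}, the inequality $b_{\gamma}(x)\leq b_{\gamma}(\zeta_{p}(1))-d(x,\zeta_{p}(1))$ on the neighborhood $I^{-}(\zeta_{p}(1))$ of $p$, and smoothness of $d(\cdot,\zeta_{p}(1))$ near $p$ because the ray keeps maximizing past $\zeta_{p}(1)$ so there is no cut point. The only cosmetic difference is that you re-derive the inequality $b_{\gamma}(y)-b_{\gamma}(x)\geq d(x,y)$ from the definition of the Busemann function via the reverse triangle inequality, whereas the paper simply cites item (4) of Proposition \ref{Pro Bu}.
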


\noindent\textit{Proof of Lemma \ref{upper spt}.}
Since $\zeta_{p}$ is a timelike co-ray to $\gamma$ emanating from $p$, then $I^{-}(\zeta_{p}(1))$ is a neighborhood of $p$. So we get from Proposition \ref{Pro Bu} and Equation \ref{eq:co-ray} that for $x\in I^{-}(\zeta_{p}(1))$,
\begin{eqnarray*}
&&b_{\gamma}(p)+d(p,\zeta_{p}(1))-b_{\gamma}(x)\\
&=&b_{\gamma}(\zeta_{p}(0))+L^{g}(\zeta_{p}|_{[0,1]})-b_{\gamma}(x)\\
&=&b_{\gamma}(\zeta_{p}(1))-b_{\gamma}(x)\\
&\geq&d(x,\zeta_{p}(1))
\end{eqnarray*}
with equality at $x=p$. Thus $b_{p,\zeta_{p}}(x)$ is an upper support function of $b_{\gamma}$ at $p$.
The differentiability of $b_{p,\zeta_{p}}(x)$ is obviously equivalent to the differentiability of $d(x,\zeta_{p}(1))$. As $\zeta_{p}$ maximizes the distance between any pair of its points, $\zeta_{p}|_{[0,1]}$ is free of cut points. Thus $d(x,\zeta_{p}(1))$ is smooth near $p$ (\cite[Proposition 9.29]{B-E-E}). Basic knowledge from differential geometry shows that
$$
\nabla b_{p,\zeta_{p}}(p)=-\nabla d(x,\zeta_{p}(1))=-\dot{\zeta_{p}}(0).
$$
\qed

\noindent\textit{Proof of Theorem \ref{sca Bu}.}
By Lemma \ref{sca}, we only need to prove that there exists a constant $C(K)>0$ depending only on $K$ such that $D^{2}b_{p,\zeta_{p}}(x)\leq C(K)I$ near $p$. This is equivalent to prove that $D^{2}d(x,\zeta_{p}(1))\geq-C(K)I$ near $p$. It is a standard argument using comparison theorem to give an estimation of the Hessian (defined in terms of the Levi-Civita connection w.r.t. $g$) of $d(x,\zeta_{p}(1))$ in terms of the lower and upper bound of timelike sectional curvature of planes containing $\dot{\zeta_{p}}(t)$ and the length of maximal segment connecting $p$ with $\zeta_{p}(1)$ (a standard reference is \cite{A-H}). Here, we only need to consider the bounds of sectional curvature since $\zeta_{p}|_{[0,1]}$ is maximal and the length is always $1$.

Note that the dimension of the spacetime we consider is of two, so the sectional curvature reduces to the Gauss curvature. Since the metric $g$ is a periodic lift of $(\mathbb{T}^{2},g)$, the Gauss curvature is uniformly bounded. According to this, we get the estimate for the Hessian of $d(x,\zeta_{p}(1))$ by using the method in \cite[Proposition 3.1]{A-G-H 1}. By the estimates on Hessian of $d(x,\zeta_{p}(1))$ and Lemma \ref{sca}, Theorem \ref{sca Bu} holds.\qed

Now we could give the definition of Lorentzian Busemann function for a line which will be used in the proof of our main results. For any timelike line $\gamma:\mathbb{R}\rightarrow(\mathbb{R}^{2},g)$ in $\mathscr{M}_{\alpha}$, there is a sequence of rays $\gamma_{k}:\mathbb{R}_{+}\rightarrow(\mathbb{R}^{2},g),k\in\mathbb{N}$ such that $\gamma_{k}(0)=\gamma(-k)$. We denote the Lorentzian Busemann function associated to $\gamma_{k}$ by $b_{k}:I^{+}(\gamma(-k))\cap I^{-}(\gamma)\rightarrow\mathbb{R}$. By Remark \ref{dom'}, the domain of $b_{k}$ is clearly $I^{+}(\gamma(-k))$, thus the domain of $b_{k}$ is strictly contained in the domain of $b_{k+1}$. By Proposition \ref{subray bu}, $b_{k}$ and $b_{k+1}$ differ by a constant on their common domain.

Let $C^{0}(M,\mathbb{R})$ be the set of all continuous functions on $M$. One could define an equivalence relation $\sim$ among functions in $C^{0}(M,\mathbb{R})$ as the following:
$$
f\sim g\text{ if and only if }f-g\equiv const. \text{ on }M.
$$
Denote $C^{0}(M,\mathbb{R})/\mathbb{R}$ to be the equivalence classes under $\sim$, it is also a vector space over $\mathbb{R}$. For $F\in C^{0}(M,\mathbb{R})/\mathbb{R}$, we call a function $f\in C^{0}(M,\mathbb{R})$ a representative of $F$ if and only if $f\in F$.

\begin{defn}\label{line Bu}
For every timelike line $\gamma\in\mathscr{M}_{\alpha}$ with $\alpha\in(m^{-},m^{+})$, there is a unique element $b_{\gamma}$ in $C^{0}(\mathbb{R}^{2},\mathbb{R})/\mathbb{R}$ such that for any representative $f$ of $b_{\gamma}$ and any $k\in\mathbb{N}$,
\begin{equation}\label{line Bu'}
f(x)-b_{k}(x)\equiv const._{k},x\in I^{+}(\gamma(-k)),
\end{equation}
where $const._{k}$ denotes a constant depending only on $k$. We call $b_{\gamma}$ or its representatives the Lorentzian Busemann function associated to the line $\gamma$.
\end{defn}

\begin{Rem}
By  Proposition \ref{dom} and Proposition \ref{A1-2}, $b_{\gamma}$ exists and its representatives are Lipschitz  on $\mathbb{R}^{2}$. Thus the representatives are differentiable almost everywhere. In the remaining context, we do not distinguish $b_{\gamma}$ and its representatives. When a line $\gamma\in\mathscr{M}_{\alpha}$ is given, $b_{\gamma}$ represents either the unique function $f$ in $C^{0}(M,\mathbb{R})$ satisfies the Equation \ref{line Bu'} with value $0$ at the point $\gamma(0)$ or the element in $C^{0}(\mathbb{R}^{2},\mathbb{R})/\mathbb{R}$ that $f$ belongs to. We shall switch these two meanings in several cases if there is no confusion.
\end{Rem}

To complete this section, we note that we could get the relationship between Lorentzian Busemann function for rays in $\mathscr{R}_{\alpha}$ and Lorentzian Busemann function for lines in $\mathscr{M}_{\alpha}$ defined in Definition \ref{line Bu} by Theorem \ref{irrational}, Theorem \ref{rational} and Proposition \ref{asym bf}.

\begin{The}\label{re bu-rl}
Let $\zeta:\mathbb{R}_{+}\rightarrow(\mathbb{R}^{2},g)$ be a timelike ray in $\mathscr{R}_{\alpha}$ which is not a subray of any line in $\mathscr{M}_{\alpha}$. If $\alpha$ is irrational, and $\zeta$ is asymptotic to both $\underline{\zeta}$ and $\overline{\zeta}$ in the future direction, then both
$$
b_{\underline{\zeta}}=b_{\zeta}+b_{\underline{\zeta}}(\zeta(0))
$$
and
$$
b_{\overline{\zeta}}=b_{\zeta}+b_{\overline{\zeta}}(\zeta(0))
$$
hold on $I^{+}(\zeta(0))$.

If $\alpha$ is rational, $\zeta$ is asymptotic to either $\underline{\zeta}$ or $\overline{\zeta}$ in the future direction, then
\begin{equation}
b_{\gamma}=b_{\zeta}+b_{\gamma}(\zeta(0))
\end{equation}
holds on $I^{+}(\zeta(0))$ if $\gamma\in\{\underline{\zeta},\overline{\zeta}\}$ and $\zeta$ is asymptotic to $\gamma$ in the future direction.
\end{The}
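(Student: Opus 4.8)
The plan is to reduce both parts to a single claim: if a timelike ray $\zeta\in\mathscr{R}_{\alpha}$ is asymptotic in the future direction to a timelike line $\gamma\in\mathscr{M}_{\alpha}$, then $b_{\gamma}=b_{\zeta}+b_{\gamma}(\zeta(0))$ on $I^{+}(\zeta(0))$. Note first that this target identity is insensitive to the normalization of $b_{\gamma}$: replacing $b_{\gamma}$ by $b_{\gamma}+c$ changes both sides by $c$, so we may work with any representative in $C^{0}(\mathbb{R}^{2},\mathbb{R})/\mathbb{R}$. Granting the claim, the irrational case follows by applying it to both $\underline{\zeta}$ and $\overline{\zeta}$: by Theorem \ref{irrational} one has $\underline{\zeta}<\zeta<\overline{\zeta}$ with $\underline{\zeta},\overline{\zeta}$ neighboring in $\mathscr{M}_{\alpha}^{rec}$, and (consistently with Theorem \ref{structure for lines}) $\zeta$ is asymptotic to each of them in the future direction; the rational case follows by applying it to the single $\gamma\in\{\underline{\zeta},\overline{\zeta}\}$ to which $\zeta$ is asymptotic, as provided by Theorem \ref{rational}.

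The device for proving the claim is to approximate the line $\gamma$ by its forward subrays. For $k\in\mathbb{N}$ set $\gamma_{k}(t):=\gamma(t-k)$, $t\geq0$; each $\gamma_{k}$ is a timelike ray in $\mathscr{R}_{\alpha}$ (a subray of $\gamma$) with ray Busemann function $b_{k}$ as in Definition \ref{line Bu}. First I would verify that $\zeta$ is asymptotic to every $\gamma_{k}$ in the future direction: since $\gamma_{k}(\mathbb{R}_{+})=\gamma([-k,\infty))$ contains the future tail of $\gamma$, and for large $t$ the nearest point of $\gamma$ to $\zeta(t)$ drifts into that tail, both distances in Definition \ref{asy} tend to $0$. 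Proposition \ref{asym bf} then yields
\begin{equation*}
b_{\zeta}(p)=b_{k}(p)-b_{k}(\zeta(0))
\end{equation*}
on the common domain of $b_{\zeta}$ and $b_{k}$, namely $I^{+}(\zeta(0))\cap I^{+}(\gamma(-k))$; here I use that $b_{\zeta}$ is finite on all of $I^{+}(\zeta(0))$ by Corollary \ref{ex co-ray} and that the domain of $b_{k}$ is $I^{+}(\gamma(-k))$.

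To pass from $b_{k}$ to $b_{\gamma}$ I would invoke Definition \ref{line Bu}: on $I^{+}(\gamma(-k))$ one has $b_{\gamma}=b_{k}+\mathrm{const}_{k}$. Substituting $b_{k}(p)=b_{\gamma}(p)-\mathrm{const}_{k}$ and $b_{k}(\zeta(0))=b_{\gamma}(\zeta(0))-\mathrm{const}_{k}$ into the displayed identity, the term $\mathrm{const}_{k}$ cancels and gives $b_{\zeta}(p)=b_{\gamma}(p)-b_{\gamma}(\zeta(0))$, which is the claim. To reach every $p\in I^{+}(\zeta(0))$, I would use Lemma \ref{dom}: the sets $I^{+}(\gamma(-k))$ increase with $k$ and exhaust $\mathbb{R}^{2}$, so for fixed $p$ one may choose $k$ so large that both $p$ and $\zeta(0)$ lie in $I^{+}(\gamma(-k))$; the identity then holds at $p$, and since $p$ is arbitrary it holds on all of $I^{+}(\zeta(0))$.

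I expect the main obstacle to be precisely this domain bookkeeping together with the asymptoticity of $\zeta$ to the shifted rays $\gamma_{k}$. One cannot simply use the forward ray $\gamma|_{\mathbb{R}_{+}}=\gamma_{0}$, since $\zeta(0)$ need not lie in $I^{+}(\gamma(0))$ and then $b_{0}(\zeta(0))$ is undefined; it is exactly the freedom to push the base point $\gamma(-k)$ arbitrarily far into the past, combined with the exhaustion property of Lemma \ref{dom}, that legitimizes the cancellation of $\mathrm{const}_{k}$ over the whole of $I^{+}(\zeta(0))$. Once this is arranged, the remainder is a direct assembly of Proposition \ref{asym bf}, Definition \ref{line Bu}, and Theorems \ref{irrational} and \ref{rational}.
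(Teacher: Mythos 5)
Your proof is correct and takes essentially the route the paper intends: the paper states this theorem without an explicit proof, remarking only that it follows from Theorem \ref{irrational}, Theorem \ref{rational} and Proposition \ref{asym bf}, and your argument — applying Proposition \ref{asym bf} to $\zeta$ and the truncated rays $\gamma_{k}$, cancelling the constants $\mathrm{const}_{k}$ from Definition \ref{line Bu}, and exhausting $\mathbb{R}^{2}$ by the sets $I^{+}(\gamma(-k))$ via Lemma \ref{dom} and Remark \ref{dom'} — is precisely the filling-in of that citation. The one technical point you rightly single out (that the nearest point of $\gamma$ to $\zeta(t)$ drifts into the forward tail, so that $\zeta$ is asymptotic to each $\gamma_{k}$ and not merely to the full line) is handled adequately by your appeal to the common asymptotic direction.
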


\begin{Rem}\label{rem line bu}
After all, by Theorem \ref{re bu-rl}, we could reduce the study of Lorentzian Busemann functions for rays to the study of Lorentzian Busemann function for lines in $\mathscr{M}_{\alpha}$. Lorentzian Busemann function for lines in $\mathscr{M}_{\alpha}$ are nice objects for investigation since they are globally defined and obviously share same regularities as Lorentzian Busemann functions for rays that have been proved in Theorem \ref{lip Bu} and Theorem \ref{sca Bu}.
\end{Rem}

\section{Proof of the main results}
In this section, we shall study some further properties of Lorentzian Busemann functions for timelike lines with asymptotic directions in $(m^{-},m^{+})$ and then prove Theorem \ref{main theorem 1} and Theorem \ref{main theorem 2}.

Let us illustrate the relations between our main results (i.e. Theorem \ref{main theorem 1}, Theorem \ref{main theorem 2}) and the theorems in this section. Based on Theorem \ref{gls} and Proposition \ref{wcf}, the construction of $\omega_{\alpha}$ and $b_{\alpha}$ mentioned in Theorem \ref{main theorem 1} is given by Definition \ref{irrational global vis} if $\alpha$ is irrational and is given by Theorem \ref{rational global vis} if $\alpha$ is rational. Based on the construction of $b_{\alpha}$, the first item of Theorem \ref{main theorem 1} is proved by Theorem \ref{vis},  Proposition \ref{wcf}, Remark \ref{wcf'} and the second one is proved by Remark \ref{rem line bu}, the third item of Theorem \ref{main theorem 1} is proved by Theorem \ref{weakkam}, Corollary \ref{weakkam1} and Theorem \ref{weakkam2}. The last item of Theorem \ref{main theorem 1} follows from Theorem \ref{gls} and the construction of $b_{\alpha}$. For Theorem \ref{main theorem 2}, the first item is proved by Theorems \ref{gls} and \ref{unique} directly and the second one is proved by Proposition \ref{wcf} and Theorem \ref{non-diff}.

By Theorem \ref{irrational}, Theorem \ref{rational} and Proposition \ref{asym bf}, we only need to consider two kinds of Lorentzian Busemann functions $b_{\gamma}$, that are associated to  $\gamma\in\mathscr{M}_{\alpha}^{rec}$ when $\alpha$ is irrational and $\gamma\in\mathscr{M}_{\alpha}^{per}$ when $\alpha$ is rational. Like before, we denote the Deck transformations by $T_{(i,j)}$.

The following definitions are useful for us.
\begin{defn}\label{limiting gradient}
Let $u:M\rightarrow\mathbb{R}$ be a locally Lipschitz function defined on the Lorentzian manifold $(M,g)$, then a vector $V\in T_{q}M$ is called a limiting gradient if there exists a sequence $\{q_{k}\}\subset M\setminus\{q\}$ with $\lim_{k\rightarrow\infty}q_{k}=q$ such that $u$ is differentiable at $q_{k}$ for each $k\in\mathbb{N}$, and  $\lim_{k\rightarrow\infty}\nabla u(q_{k})=V$. Here, the first limit is taken in the sense of the manifold topology on $M$;  the second limit is taken in the sense of any fixed chart that contains $q$. Since the first limit is taken, we know that when $k$ is sufficiently large, $q_{k}$ goes into that chart. The second limit does not depend on the choice of chart.
\end{defn}
We denote $\nabla^{\ast}u(q)$ to be the set of all limiting gradients of $u$ at $q$.  For a set $A$  in a vector space, the convex hull of $A$, $coA$, is the smallest convex set containing $A$.

By the knowledge of  convex analysis, we know the following relationship between these two sets.

\begin{Lem}\label{sca limit gradient}
If $u$ is a locally semiconcave function on manifold $M$, then it is locally Lipschitz (under any reasonable metric), and $\nabla^{+}u(q)$ is non-empty for any $q\in M$. In this case, $\nabla^{+}u(q)=co\nabla^{\ast}u(q)\subset T_{q}M$.
\end{Lem}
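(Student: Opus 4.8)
The plan is to reduce the whole statement to the classical structure theorem for the superdifferential of a concave function on Euclidean space and then transport it back to $T_qM$ through the metric. I would fix $q$ and a chart $U\cong\mathbb{R}^{n}$ (here $n=2$) around $q$ in which, by Definition \ref{semiconcave} and Lemma \ref{sca}, the function $w:=u-\tfrac{K}{2}\|x\|_{E}^{2}$ is concave on a convex neighborhood. The Lorentzian metric $g$ induces a fiberwise linear isomorphism $\sharp_{p}\colon T^{\ast}_{p}M\to T_{p}M$ with $\nabla u(p)=\sharp_{p}(du(p))$, smooth in $p$. Since $\sharp$ is linear on each fiber it commutes with taking convex hulls, and being continuous in the base point it carries the limiting differentials of $u$ to $\nabla^{\ast}u(q)$ and the $C^{1}$ supergradient covectors to $\nabla^{+}u(q)$. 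Thus $\nabla^{+}u(q)=\sharp_{q}(D^{+}u(q))$ and $\nabla^{\ast}u(q)=\sharp_{q}(D^{\ast}u(q))$, and it suffices to prove $D^{+}u(q)=co\,D^{\ast}u(q)$ for the differentials of the concave representative $w$ (shifted by the smooth term).

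Local Lipschitzness is then immediate: a finite concave function on an open convex subset of $\mathbb{R}^{n}$ is locally Lipschitz, and adding the smooth term $\tfrac{K}{2}\|x\|_{E}^{2}$ preserves this; local Lipschitzness is independent of the choice of auxiliary metric since any two Riemannian metrics are bi-Lipschitz on compact sets. By Rademacher's theorem $u$ is differentiable almost everywhere, so $\nabla^{\ast}u(q)$ is well defined. Nonemptiness and convexity of $\nabla^{+}u(q)$ come from the concave picture: the superdifferential $\partial w(q)=\{\xi:w(x)\leq w(q)+\langle\xi,x-q\rangle\}$ is nonempty by the supporting hyperplane theorem, compact by the local Lipschitz bound, and convex, and these properties are inherited by $\nabla^{+}u(q)=\sharp_{q}(\partial w(q)+Kq)$.

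For the identity I would prove the two inclusions separately. The inclusion $co\,\nabla^{\ast}u(q)\subseteq\nabla^{+}u(q)$ is routine: at each differentiability point $y$ the quadratic upper bound from semiconcavity shows $\nabla u(y)\in\nabla^{+}u(y)$, and the defining inequality for supergradients is stable under limits $y_{k}\to q$ with $\nabla u(y_{k})\to V$ (upper semicontinuity of $q\mapsto\nabla^{+}u(q)$), giving $\nabla^{\ast}u(q)\subseteq\nabla^{+}u(q)$; convexity of $\nabla^{+}u(q)$ then absorbs the convex hull. The reverse inclusion $\nabla^{+}u(q)\subseteq co\,\nabla^{\ast}u(q)$ is the substantive step. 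Since $\nabla^{+}u(q)$ is compact and convex it is the convex hull of its extreme points, and because $\nabla^{\ast}u(q)$ is closed and bounded (hence its convex hull is already closed), it is enough to show that every exposed point lies in $\nabla^{\ast}u(q)$. Transporting back to $w$, let $\xi$ be exposed by a direction $v$, i.e. the unique maximizer of $\langle\cdot,v\rangle$ over $\partial w(q)$. Restricting $w$ to the line $t\mapsto q+tv$, the one-sided derivative of this concave restriction equals $\langle\xi,v\rangle$ as $t\uparrow 0$; one then produces full-differentiability points $y_{k}\to q$ with $\langle\nabla w(y_{k}),v\rangle\to\langle\xi,v\rangle$, and by the Lipschitz bound a subsequence of $\nabla w(y_{k})$ converges to some $\eta\in\partial w(q)$ with $\langle\eta,v\rangle=\langle\xi,v\rangle$; uniqueness of the maximizer forces $\eta=\xi$, so $\xi\in D^{\ast}w(q)$.

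The main obstacle is precisely this last construction. Upper semicontinuity only controls $\langle\nabla w,v\rangle$ from above, and differentiability of the one-dimensional restriction $t\mapsto w(q+tv)$ does not by itself guarantee differentiability of $w$ in every direction at those points. Overcoming this requires perturbing slightly off the line $q+tv$ to land on points of full Rademacher differentiability while keeping $\langle\nabla w,v\rangle$ close to $\langle\xi,v\rangle$; this is exactly the reachable-gradient argument for semiconcave functions (see \cite[Definition 3.1.6, Proposition 3.1.7]{C-S}), and it is where the genuine convex-analytic work lies. Once $D^{+}u(q)=co\,D^{\ast}u(q)$ is established, applying $\sharp_{q}$ yields $\nabla^{+}u(q)=co\,\nabla^{\ast}u(q)$, completing the proof.
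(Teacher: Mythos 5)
The paper offers no proof of this lemma at all --- it is dispatched with the single sentence ``For a proof of this lemma, see \cite[Theorem 3.3.6]{C-S}'' --- and your proposal is a correct reconstruction of that standard argument: pass to a chart where $u$ minus a smooth quadratic is concave, transport sub- and superdifferentials through the musical isomorphism of $g$ (fiberwise linear, hence compatible with convex hulls, and continuous in the base point, hence compatible with limiting gradients), obtain $co\,\nabla^{\ast}u(q)\subseteq\nabla^{+}u(q)$ from upper semicontinuity of the superdifferential together with its convexity, and obtain the reverse inclusion via Straszewicz's theorem once every exposed point is shown to be a reachable gradient. The one step you defer --- perturbing off the exposing line to land on points of full Rademacher differentiability whose gradients converge to the exposed covector --- is exactly the content of the cited Cannarsa--Sinestrari result, so nothing essential is missing and your outline is, if anything, considerably more detailed than what the paper provides.
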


For a proof of this lemma, see \cite[Theorem 3.3.6]{C-S}, where limiting gradient is called reachable gradient.

\begin{defn}\label{gradient line}
Let $\gamma\in\mathscr{M}_{\alpha}$ with $\alpha\in(m^{-},m^{+})$ be a timelike line and $b_{\gamma}:\mathbb{R}^{2}\rightarrow\mathbb{R}$ be the Lorentzian Busemann function associated to $\gamma$. If a timelike ray $\zeta:\mathbb{R}_{+}\rightarrow(\mathbb{R}^{2},g)$ satisfies
\begin{equation}\label{eq:weakkam line}
b_{\gamma}(\zeta(t))-b_{\gamma}(\zeta(0))=t,
\end{equation}
we say $\zeta$ is a gradient line for $b_{\gamma}$. We denote the set of all gradient lines for $b_{\gamma}$ by $\mathfrak{C}_{\gamma}$.
\end{defn}

\begin{Rem}\label{grd}
We remark that from Proposition \ref{Pro Bu}, one easily deduces that any timelike curve $\zeta:\mathbb{R}_{+}\rightarrow(\mathbb{R}^{2},g)$ satisfying Equation \ref{eq:weakkam line} is a timelike ray.
\end{Rem}

As an application of Definition \ref{gradient line}, we have the following lemma.
\begin{Lem}\label{3}
Let $\gamma\in\mathscr{M}_{\alpha},\alpha\in(m^{-},m^{+})$. If $\zeta,\eta$ are two timelike rays in $\mathfrak{C}_{\gamma}$ and $\eta(0)=\zeta(a)$ for some $a>0$, then $\eta(t)=\zeta(t+a)$ for $t\in\mathbb{R}_{+}$.
\end{Lem}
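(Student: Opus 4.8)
The plan is to glue $\zeta$ and $\eta$ into a single curve, show that this glued curve is again a gradient line for $b_{\gamma}$, and then use the fact that maximizers have no corners to force the two pieces to join smoothly; uniqueness of geodesics finishes the argument. Concretely, I would define $\sigma:\mathbb{R}_{+}\rightarrow\mathbb{R}^{2}$ by $\sigma(s)=\zeta(s)$ for $0\leq s\leq a$ and $\sigma(s)=\eta(s-a)$ for $s\geq a$. Since $\zeta,\eta$ are unit-speed timelike $g$-geodesics and $\eta(0)=\zeta(a)$, the curve $\sigma$ is continuous, future-directed and piecewise timelike, parametrized by $g$-arc length (possibly with a corner at $s=a$). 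The first step is to verify that $\sigma$ satisfies the gradient-line equation $b_{\gamma}(\sigma(s))-b_{\gamma}(\sigma(0))=s$. For $s\leq a$ this is exactly the hypothesis $\zeta\in\mathfrak{C}_{\gamma}$. For $s\geq a$ one combines $b_{\gamma}(\eta(s-a))-b_{\gamma}(\eta(0))=s-a$ (from $\eta\in\mathfrak{C}_{\gamma}$) with $b_{\gamma}(\eta(0))=b_{\gamma}(\zeta(a))$ and $b_{\gamma}(\zeta(a))-b_{\gamma}(\zeta(0))=a$ to obtain $b_{\gamma}(\sigma(s))-b_{\gamma}(\sigma(0))=(s-a)+a=s$.

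Next I would invoke Remark \ref{grd}: any timelike curve satisfying the gradient-line equation \eqref{eq:weakkam line} is a timelike ray, hence by Definition \ref{maximizer, ray, line} a timelike maximizer. I would then rule out a corner at $s=a$. Restricting to any interval $[0,T]$ with $T>a$, the maximizer property gives $L^{g}(\sigma|_{[0,T]})=d(\sigma(0),\sigma(T))$, so $\sigma|_{[0,T]}$ realizes the Lorentzian distance between its endpoints; by the curve lengthening lemma a curve with an interior corner is strictly shorter than the maximal segment joining its endpoints, so $\sigma|_{[0,T]}$ can have no interior corner. In particular $\sigma$ is smooth at $s=a$, which yields the velocity matching $\dot{\zeta}(a)=\dot{\sigma}(a)=\dot{\eta}(0)$.

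Finally, $\eta(\cdot)$ and $\zeta(\cdot+a)$ are both unit-speed $g$-geodesics issuing from the common point $\zeta(a)=\eta(0)$ with the common initial velocity $\dot{\eta}(0)=\dot{\zeta}(a)$; by uniqueness of solutions of the geodesic ODE they agree wherever both are defined, and since every such timelike ray is future geodesically complete by Corollary \ref{rl cpl}, both are defined on all of $\mathbb{R}_{+}$. Hence $\eta(t)=\zeta(t+a)$ for every $t\in\mathbb{R}_{+}$. The only step that is not purely formal is the passage from the gradient-line equation to the maximizer and no-corner property: everything hinges on Remark \ref{grd} together with the curve lengthening lemma, so the point requiring care is simply that $\sigma$, obtained by concatenation, is a legitimate future-directed timelike curve to which Remark \ref{grd} applies, which it is.
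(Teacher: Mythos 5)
Your proof is correct and uses the same essential ingredients as the paper's: the curve-lengthening/corner lemma together with the Busemann inequality $b_{\gamma}(y)-b_{\gamma}(x)\geq d(x,y)$ from Proposition \ref{Pro Bu}. The paper runs the argument contrapositively (assuming a corner and deriving $b_{\gamma}(y)-b_{\gamma}(x)=d(x,p)+d(p,y)<d(x,y)$), whereas you package the same computation through Remark \ref{grd} to conclude directly that the concatenation is a maximizer and hence corner-free; this is only a difference in presentation.
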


\begin{proof}
Set $p=\eta(0)=\zeta(a),x=\zeta(0),y=\eta(a)$. If the conclusion does not hold, then $\dot{\eta}(0)\neq\dot{\zeta}(a)$ since $\eta$ and $\zeta$ are all $g$-geodesics. So the conjunction curve $\zeta|_{[0,a]}\ast\eta|_{[0,a]}$ has a corner at $p$, and \begin{eqnarray*}\label{ieq:11}
&&d(x,p)+d(p,y)\\
&=&d(\zeta(0),\zeta(a))+d(\eta(0),\eta(a))\\
&=&L^{g}(\zeta|_{[0,a]}\ast\eta|_{[0,a]})\\
&<&d(\zeta(0),\eta(a))\\
&=&d(x,y).
\end{eqnarray*}
On the other hand, since $\eta$ and $\zeta$ are in $\mathfrak{C}_{\gamma}$, we get
\begin{eqnarray*}
&&b_{\gamma}(p)-b_{\gamma}(x)\\
&=&b_{\gamma}(\zeta(a))-b_{\gamma}(\zeta(0))\\
&=&d(\zeta(0),\zeta(a))\\
&=&d(x,p).
\end{eqnarray*}
Similarly, $b_{\gamma}(y)-b_{\gamma}(p)=d(p,y)$.

So $b_{\gamma}(y)-b_{\gamma}(x)=d(x,p)+d(p,y)<d(x,y)$. This contradicts the last item in Proposition \ref{Pro Bu}.
\end{proof}

The first theorem asserts that every Lorentzian Busemann function for a line with asympototic direction in $(m^-,m^+)$ is a global viscosity solution of the eikonal equation $(*)$ on $(\mathbb{R}^{2},g)$.
\begin{The}\label{vis}
If $\alpha\in(m^{-},m^{+})$ and $\gamma\in\mathscr{M}_{\alpha}$, then $b_{\gamma}:\mathbb{R}^{2}\rightarrow\mathbb{R}$ is a viscosity solution to the eikonal equation $(*)$.
\end{The}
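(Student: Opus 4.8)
The plan is to show separately that $b_{\gamma}$ is a viscosity subsolution and a viscosity supersolution of $(*)$, exploiting that $b_{\gamma}$ is Lipschitz (the remark following Definition \ref{line Bu}) and locally semiconcave on all of $\mathbb{R}^{2}$ (Theorem \ref{sca Bu} together with Remark \ref{rem line bu}, using that $\mathbb{R}^{2}=\cup_{k}I^{+}(\gamma(-k))$ by Lemma \ref{dom}). The whole argument reduces to one pointwise identity: at every $q$ at which $b_{\gamma}$ is differentiable one has $g(\nabla b_{\gamma}(q),\nabla b_{\gamma}(q))=-1$, with $-\nabla b_{\gamma}(q)$ future-directed timelike. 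Once this is in hand, semiconcavity does the rest, because for a semiconcave function the superdifferential $\nabla^{+}b_{\gamma}(q)$ is always nonempty and equals $co\,\nabla^{\ast}b_{\gamma}(q)$ (Lemma \ref{sca limit gradient}), whereas the subdifferential $\nabla^{-}b_{\gamma}(q)$ is nonempty only at points of differentiability, where it is the singleton $\{\nabla b_{\gamma}(q)\}$.

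To establish the pointwise identity, fix $q$ at which $b_{\gamma}$ is differentiable. For large $k$ we have $q\in I^{+}(\gamma(-k))$, so by Corollary \ref{ex co-ray} there is a timelike co-ray $\zeta_{q}\in\mathscr{R}_{\alpha}$ emanating from $q$, and by Proposition \ref{int curve} it satisfies Equation \ref{eq:co-ray}. Lemma \ref{upper spt} then produces a smooth upper support function $b_{q,\zeta_{q}}$ for $b_{\gamma}$ at $q$ with $\nabla b_{q,\zeta_{q}}(q)=-\dot{\zeta_{q}}(0)$. Since $b_{\gamma}$ is differentiable at $q$ and $b_{q,\zeta_{q}}$ touches it from above with equality at $q$, the two gradients coincide, whence $\nabla b_{\gamma}(q)=-\dot{\zeta_{q}}(0)$; as $\zeta_{q}$ is a unit-speed timelike geodesic this gives $g(\nabla b_{\gamma}(q),\nabla b_{\gamma}(q))=g(\dot{\zeta_{q}}(0),\dot{\zeta_{q}}(0))=-1$, and $-\nabla b_{\gamma}(q)=\dot{\zeta_{q}}(0)$ is future-directed timelike. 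The supersolution property is now immediate: at a non-differentiability point $\nabla^{-}b_{\gamma}(q)=\varnothing$ and the inequality is vacuous, while at a differentiability point $\nabla^{-}b_{\gamma}(q)=\{\nabla b_{\gamma}(q)\}$ and $g(\nabla b_{\gamma}(q),\nabla b_{\gamma}(q))=-1\geq-1$.

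For the subsolution property I would argue by convexity. Every limiting gradient $V\in\nabla^{\ast}b_{\gamma}(q)$ is a limit $V=\lim_{k}\nabla b_{\gamma}(q_{k})$ with $q_{k}\to q$ and $b_{\gamma}$ differentiable at each $q_{k}$; by the identity above, $-\nabla b_{\gamma}(q_{k})=\dot{\zeta_{q_{k}}}(0)$ is a future-directed unit timelike vector. Because the co-rays $\zeta_{q_{k}}$ all carry the fixed asymptotic direction $\alpha\in(m^{-},m^{+})$, the estimate in the Claim of Theorem \ref{lip Bu} keeps their velocities inside a single compact bundle $\text{Time}^{1,\epsilon}(\mathbb{T}^{2},g)$ (Theorem \ref{cpt}); hence $-V$ is again future-directed timelike and continuity of $g$ gives $g(V,V)=-1$. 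Finally, any convex combination $W=\sum_{i}t_{i}V_{i}$ of future-directed unit timelike vectors again satisfies $g(W,W)\leq-1$: since the reverse Cauchy–Schwarz inequality gives $g(V_{i},V_{j})\leq-1$ for all $i,j$ (with equality when $i=j$), one gets $g(W,W)=\sum_{i,j}t_{i}t_{j}g(V_{i},V_{j})\leq-(\sum_{i}t_{i})^{2}=-1$. Therefore every $V\in\nabla^{+}b_{\gamma}(q)=co\,\nabla^{\ast}b_{\gamma}(q)$ has $g(V,V)\leq-1$, which is exactly the subsolution inequality of Definition \ref{vis}. I expect the genuine obstacle to be the pointwise identity of the second paragraph—matching the gradient of $b_{\gamma}$ with the initial velocity of an honest unit-speed timelike co-ray of asymptotic direction $\alpha$—while the convexity step, though it is the conceptual key that lets a single equality control the whole superdifferential, is then purely linear-algebraic.
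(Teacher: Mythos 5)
Your proposal is correct, and its overall skeleton (supersolution from the emptiness of $\nabla^{-}b_{\gamma}$ at non-differentiability points, subsolution from semiconcavity via $\nabla^{+}b_{\gamma}(q)=co\,\nabla^{\ast}b_{\gamma}(q)$) matches the paper's. Where you genuinely diverge is in the central pointwise identity $g(\nabla b_{\gamma}(q),\nabla b_{\gamma}(q))=-1$ at points of differentiability. The paper derives it without ever invoking co-rays: from item (4) of Proposition \ref{Pro Bu} it gets the inequality $g(\nabla b_{\gamma}(q),W)\geq\sqrt{-g(W,W)}$ for every future-directed causal $W\in T_{q}\mathbb{R}^{2}$, and then cites a linear-algebra lemma from \cite{C-J} to conclude that $\nabla b_{\gamma}(q)$ is past-directed unit timelike. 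You instead take the co-ray $\zeta_{q}$ from Corollary \ref{ex co-ray}, feed it through Proposition \ref{int curve} and Lemma \ref{upper spt} to get a smooth upper support function, and match gradients at the differentiability point to obtain $\nabla b_{\gamma}(q)=-\dot{\zeta_{q}}(0)$. Both are legitimate; the paper's route is more economical at this stage (it needs only the monotonicity/reverse-triangle property of $b_{\gamma}$ and a pointwise algebraic fact), while yours buys the extra dynamical information that the gradient is realized by the initial velocity of a genuine co-ray in $\mathscr{R}_{\alpha}$ --- which is essentially the content the paper defers to Theorem \ref{weakkam} and Corollary \ref{weakkam1}. A second, smaller difference: in the subsolution step the paper simply asserts that the convex hull of the past-directed unit timelike vectors is the set of past-directed $V$ with $g(V,V)\leq-1$, whereas you prove the needed inclusion explicitly via the reverse Cauchy--Schwarz inequality $g(V_{i},V_{j})\leq-1$; your version is more self-contained. (Minor remark: the compactness appeal to Theorem \ref{cpt} in your limiting-gradient step is not really needed, since $g(V,V)=-1$ already follows from continuity of $g$ and the two sheets of the unit hyperboloid are closed and disjoint, so the limit stays on the past-directed sheet.)
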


\begin{proof}
By Theorem \ref{lip Bu} and Remark \ref{rem line bu}, $b_{\gamma}$ is Lipschitz on $\mathbb{R}^{2}$ w.r.t. $d_{R}$, thus is differentiable almost everywhere. By Theorem \ref{sca Bu} and Remark \ref{rem line bu}, $b_{\gamma}$ is locally semi-concave with linear modulus on $\mathbb{R}^{2}$. So for any $q$, the supergradients of $b_{\gamma}$ at $q$ form a nonempty, compact and convex set $\nabla^{+}b_{\gamma}(q)$ in $T_{q}\mathbb{R}^{2}$. Let $\nabla^{-}b_{\gamma}(q)$ denote the set of subgradients of $b_{\gamma}$ at $q$. If both $\nabla^{-}b_{\gamma}(q)$ and $\nabla^{+}b_{\gamma}(q)$ are non-empty, then $b_{\gamma}$ is differentiable at $q$ and $\nabla^{-}b_{\gamma}(q)=\nabla^{+}b_{\gamma}(q)=\{\nabla b_{\gamma}(q)\}$ in this case.

At a differentiable point $q$ of $b_{\gamma}$, we choose an arbitrary smooth future directed causal curve $\eta:[0,\delta)\rightarrow\mathbb{R}^{2}$ with $\eta(0)=q$. By Proposition \ref{Pro Bu}, we obtain
\begin{eqnarray*}
&&g(\nabla b_{\gamma}(q),\dot{\eta}(0))\\
&=&db_{\gamma}(q)(\dot{\eta}(0))\\
&=&\lim_{t\rightarrow0^{+}}\frac{b_{\gamma}(\eta(t))-b_{\gamma}(q)}{t}\\
&\geq&\limsup_{t\rightarrow0^{+}}\frac{d(\eta(0),\eta(t))}{t}\\
&\geq&\lim_{t\rightarrow0^{+}}\frac{1}{t}\int_{0}^{t}\sqrt{-g(\dot{\eta}(s),\dot{\eta}(s))}ds\\
&=&\sqrt{-g(\dot{\eta}(0),\dot{\eta}(0))}.
\end{eqnarray*}
So we get that for any differentiable point $q$ of $b_{\gamma}$ and any future directed causal vector $\dot{\eta}(0)\in T_{q}\mathbb{R}^{2}$,
\begin{equation}\label{ieq:6}
g(\nabla b_{\gamma}(q),\dot{\eta}(0))\geq\sqrt{-g(\dot{\eta}(0),\dot{\eta}(0))}.
\end{equation}
By \cite[Lemma 2.3, 2.4]{C-J} and Inequality \ref{ieq:6}, $\nabla b_{\gamma}(q)$ is a past directed timelike vector and
\begin{equation}\label{eq:Bu}
g(\nabla b_{\gamma}(q),\nabla b_{\gamma}(q))=-1
\end{equation}
at every differentiable point $q$ of $b_{\gamma}$. This proves that $b_{\gamma}$ is a viscosity supersolution of the eikonal equation $(*)$, since $\nabla^{-}b_{\gamma}(q)$ is empty at any nondifferentiable point $q$ of $b_{\gamma}$.

Since $b_{\gamma}$ is locally semi-concave, Definition \ref{limiting gradient} and Lemma \ref{sca limit gradient} imply that $\nabla^{\ast}b_{\gamma}(q)$ is never empty and $\nabla^{+}b_{\gamma}(q)=co\nabla^{\ast}b_{\gamma}(q)$. By the smoothness of $g$ and Equation \ref{eq:Bu}, we obtain
$$
\nabla^{\ast}b_{\gamma}(q)\subseteq\{V\in T_{q}\mathbb{R}^{2}|V\text{ is past directed,  } g(V,V)=-1\}.
$$
This proves $b_{\gamma}$ is a viscosity subsolution of the eikonal equation $(*)$, since
\begin{eqnarray*}
&&\nabla^{+}b_{\gamma}(q)\\
&=&co\nabla^{\ast}b_{\gamma}(q)\\
&\subseteq&co\{V\in T_{q}\mathbb{R}^{2}|V\text{ is past directed,  } g(V,V)=-1\}\\
&=&\{V\in T_{q}\mathbb{R}^{2}|V\text{ is past directed,  } g(V,V)\leq-1\}.
\end{eqnarray*}
\end{proof}

The second theorem shows that timelike rays in $\mathfrak{C}_{\gamma}$ which emanate from $q$ have a one to one correspondence with the vectors in $\nabla^{\ast}b_{\gamma}(q)$.

\begin{The}\label{weakkam}
If $\alpha\in(m^{-},m^{+})$ and $\gamma\in\mathscr{M}_{\alpha}$, then for every $q\in\mathbb{R}^{2}$ and every $V\in\nabla^{\ast}b_{\gamma}(q)$, there is a unique timelike ray in $\mathscr{R}_{\alpha}$, namely $\gamma_{q,V}:\mathbb{R}_{+}\rightarrow(\mathbb{R}^{2},g)$, satisfies Equation \ref{eq:weakkam line} and $\dot{\gamma_{q}}(0)=-V$.
\end{The}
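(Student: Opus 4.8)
The plan is to realize $\gamma_{q,V}$ as a limit of the (unique) co-rays emanating from nearby differentiable points of $b_\gamma$, so that the gradient line equation and the asymptotic direction are inherited in the limit while the initial velocity is forced to equal $-V$ by continuous dependence of the geodesic flow on initial data. First note that, by the proof of Theorem \ref{vis}, every $V\in\nabla^{\ast}b_{\gamma}(q)$ is a past-directed $g$-unit vector, so $-V$ is a future-directed timelike unit vector and is an admissible geodesic initial velocity. By Definition \ref{limiting gradient} I would choose $q_{k}\to q$ with $b_{\gamma}$ differentiable at each $q_{k}$ and $\nabla b_{\gamma}(q_{k})\to V$. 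Using Lemma \ref{dom}, fix $m$ so large that $q\in I^{+}(\gamma(-m))$; since this set is open, $q_{k}\in I^{+}(\gamma(-m))$ for all large $k$, and there $b_{\gamma}$ agrees up to an additive constant with the ray Busemann function of the subray $\gamma_{m}(t)=\gamma(-m+t)$ (Remark \ref{rem line bu}). Thus the co-ray machinery of Section 5, stated for rays, applies to $b_{\gamma}$ near the $q_{k}$.

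At each differentiable point $q_{k}$, Corollary \ref{ex co-ray} produces a co-ray $\zeta_{k}\in\mathscr{R}_{\alpha}$ with $\zeta_{k}(0)=q_{k}$, and Lemma \ref{upper spt} gives a smooth upper support function for $b_{\gamma}$ at $q_{k}$ whose gradient there equals $-\dot{\zeta}_{k}(0)$. Since $b_{\gamma}$ is differentiable at $q_{k}$, and a smooth function lying above it with equality at $q_{k}$ must share its gradient at that point, I conclude $\dot{\zeta}_{k}(0)=-\nabla b_{\gamma}(q_{k})$; in particular the co-ray at $q_{k}$ is unique. Because $\zeta_{k}$ is a co-ray to $\gamma_{m}$ parametrized by $g$-arc length, Proposition \ref{int curve} gives, via Equation \ref{eq:co-ray}, $b_{\gamma}(\zeta_{k}(t))-b_{\gamma}(\zeta_{k}(0))=L^{g}(\zeta_{k}|_{[0,t]})=t$, so each $\zeta_{k}$ already satisfies the gradient line equation \ref{eq:weakkam line}.

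Next I would pass to the limit. The $\zeta_{k}$ are $g$-unit-speed timelike geodesics whose initial data $(q_{k},-\nabla b_{\gamma}(q_{k}))$ converge to $(q,-V)$ in $T\mathbb{R}^{2}$. Because pregeodesics solve a smooth second order ODE (as used in Lemma \ref{stable types}), continuous dependence on initial data identifies the limit, on every compact subinterval, with the $g$-geodesic $\gamma_{q,V}$ determined by $\gamma_{q,V}(0)=q$ and $\dot{\gamma}_{q,V}(0)=-V$; together with the limit-curve lemma this limit is defined on all of $\mathbb{R}_{+}$, completeness being reconfirmed a posteriori by Corollary \ref{rl cpl}. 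Since $b_{\gamma}$ is continuous, Corollary \ref{limit int} transfers Equation \ref{eq:co-ray} to $\gamma_{q,V}$, which being unit speed yields Equation \ref{eq:weakkam line}; hence by Remark \ref{grd}, $\gamma_{q,V}$ is a timelike ray. Finally the uniform estimate of Corollary \ref{ray osc}, $\mathrm{dist}_{\|\cdot\|}(\zeta_{k}(t)-\zeta_{k}(s),\overline{\alpha})\le B(g,g_{R})$, is preserved under the pointwise convergence $\zeta_{k}(t)\to\gamma_{q,V}(t)$, so $\gamma_{q,V}$ has asymptotic direction $\alpha$ and therefore $\gamma_{q,V}\in\mathscr{R}_{\alpha}$.

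Uniqueness will then be immediate: any timelike ray in $\mathscr{R}_{\alpha}$ satisfying Equation \ref{eq:weakkam line} with prescribed $\dot{\gamma}(0)=-V$ reparametrizes as a $g$-geodesic and so must be the unique solution of the geodesic equation with initial data $(q,-V)$, namely $\gamma_{q,V}$. The hard part will be the limit step: one must simultaneously ensure that the limit of the co-rays is genuinely the geodesic with initial velocity $-V$ (not merely some accumulation curve), that it is defined on all of $\mathbb{R}_{+}$, and that it keeps asymptotic direction $\alpha$. This is precisely where smooth dependence on initial conditions, the completeness furnished by Corollary \ref{rl cpl}, and the uniform oscillation bound of Corollary \ref{ray osc} must be combined; the other delicate point is the identity $\dot{\zeta}_{k}(0)=-\nabla b_{\gamma}(q_{k})$ extracted from differentiability at $q_{k}$, since it is exactly what forces the limiting velocity to be $-V$ rather than some other supporting direction.
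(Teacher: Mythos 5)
Your proposal is correct and follows essentially the same route as the paper's proof: pick differentiable points $q_{k}\to q$ with $\nabla b_{\gamma}(q_{k})\to V$, use Lemma \ref{upper spt} to identify the unique co-ray at $q_{k}$ with initial velocity $-\nabla b_{\gamma}(q_{k})$, pass to a $C^{1}$-limit, and invoke Corollary \ref{limit int} for Equation \ref{eq:weakkam line} and Corollary \ref{ray osc} to retain the asymptotic direction, with uniqueness from the geodesic initial value problem. The only cosmetic difference is that you identify the limit via continuous dependence of the geodesic flow on initial data where the paper uses an equi-Lipschitz/Ascoli--Arzel\`a argument; both are sound.
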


\begin{proof}
The uniqueness of such a timelike ray is guaranteed by the uniqueness of the solution that satisfies the second order geodesic equation (w.r.t. the Lorentzian metric $g$) with the initial condition $\gamma_{q,V}(0)=q;\dot{\gamma}_{q,V}(0)=-V$. So we only consider the existence of such a ray.

By Corollary \ref{ex co-ray}, there is a co-ray to $\gamma$ emanating from $q$ and this co-ray is in $\mathscr{R}_{\alpha}$. By Proposition \ref{int curve}, every such co-ray satisfies Equation \ref{eq:weakkam line}.

If $b_{\gamma}$ is differentiable at $q$, then $\nabla^{+}b_{\gamma}(q)=\nabla^{\ast}b_{\gamma}(q)$ is a singleton. In this case, if we denote the co-ray emanating from $q$ to $\gamma$ by $\gamma_{q}$, there is a $C^{1}$ upper support function of $b_{\gamma}$, namely $b_{q,\gamma_{q}}$, in a neighborhood of $q$ by Lemma \ref{upper spt}. By Definition \ref{generalized gradients}, one concludes that $\nabla b_{q,\gamma_{q}}(q)=-\dot{\gamma}_{q}(0)=\nabla b_{\gamma}(q)$, which is the unique vector in $\nabla^{+}b_{\gamma}(q)=\nabla^{\ast}b_{\gamma}(q)$.

By what we have proved, $\nabla^{\ast}b_{\gamma}(q)$ is nonempty everywhere. Assume $V\in\nabla^{\ast}b_{\gamma}(q)$, then there is a sequence of differentiable points of $b_{\gamma}$, say $\{q_{k}\}$, converges to $q$ and satisfies
\begin{equation}
\lim_{k\rightarrow\infty}\nabla b_{\gamma}(q_{k})=V.
\end{equation}
Thus there is correspondly a sequence of curves $\{\gamma_{q_{k}}\}\subseteq\mathscr{R}_{\alpha}$ that satisfies Equation \ref{eq:weakkam line} and $\dot{\gamma}_{q_{k}}(0)=-\nabla b_{\gamma}(q_{k})$. Since $\gamma\in\mathscr{M}_{\alpha}$, there is a Lipschitz constant $L(\alpha)>0$ for $b_{\gamma}$. So the $g_{R}$-norms of $\dot{\gamma}_{q_{k}}(t)$ have a uniform upper bound $L(\alpha)$ for any $t\in\mathbb{R}_{+}$ and $k\in\mathbb{N}$. Using this fact, we obtain that, as a family of $C^{1}$ maps from $\mathbb{R}_{+}$ to T$\mathbb{R}^{2}$, $(\gamma_{q_{k}}(t),\dot{\gamma}_{q_{k}}(t))$ is equi-Lipschitz in $k$ w.r.t. the induced metric on T$\mathbb{R}^{2}$ by $g_{R}$ (since every $\gamma_{q_{k}}$ satisfies the second order geodesic equation w.r.t. $g$). Thus by Ascoli-Arzela theorem and a diagonal trick, there is a $g$-geodesic $\gamma_{q,V}$ emanating from $q$ such that some subsequence of $\gamma_{q_{k}}$ converges to it in the $C^{1}$-topology. By Corollary \ref{limit int} and the definition of $\gamma_{q,V}$, we obtain that $\gamma_{q,V}$ satisfies Equation \ref{eq:weakkam line} and $\gamma_{q,V}(0)=q$, $\dot{\gamma}_{q,V}(0)=-V\in\nabla^{\ast}b_{\gamma}(q)$. Finally, since $\gamma_{q_{k}}\in\mathscr{R}_{\alpha}$, by Corollary \ref{ray osc}, $\gamma_{q,V}$ is also in $\mathscr{R}_{\alpha}$.
\end{proof}

We have the following corollary of Theorem \ref{weakkam}.
\begin{Cor}\label{weakkam1}
If $\alpha\in(m^{-},m^{+})$ and $\gamma\in\mathscr{M}_{\alpha}$, then $b_{\gamma}$ is differentiable at $q\in \mathbb{R}^{2}$ if and only if there exists a unique timelike ray $\gamma_{q}$ (emanating from $q$) in $\mathscr{R}_{\alpha}\cap\mathfrak{C}_{\gamma}$ and satisfies $\dot{\gamma_{q}}(0)=-\nabla b_{\gamma}(q)$.
\end{Cor}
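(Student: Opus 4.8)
The plan is to translate the differentiability of $b_{\gamma}$ at $q$ into a statement about the size of the limiting gradient set $\nabla^{\ast}b_{\gamma}(q)$, and then to use Theorem \ref{weakkam} to convert that into a counting statement about rays in $\mathscr{R}_{\alpha}\cap\mathfrak{C}_{\gamma}$ emanating from $q$. By Theorem \ref{sca Bu} and Remark \ref{rem line bu}, $b_{\gamma}$ is locally semi-concave, so Lemma \ref{sca limit gradient} guarantees that $\nabla^{\ast}b_{\gamma}(q)$ and $\nabla^{+}b_{\gamma}(q)$ are non-empty with $\nabla^{+}b_{\gamma}(q)=co\,\nabla^{\ast}b_{\gamma}(q)$. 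First I would record the elementary fact that a locally semi-concave function is differentiable at $q$ exactly when $\nabla^{+}b_{\gamma}(q)$ is a singleton, and that by the convex-hull identity this happens exactly when $\nabla^{\ast}b_{\gamma}(q)$ is a singleton (a non-empty set whose convex hull is a point must itself be that point).

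For the forward implication, suppose $b_{\gamma}$ is differentiable at $q$. Then $\nabla^{\ast}b_{\gamma}(q)=\{\nabla b_{\gamma}(q)\}$, and applying Theorem \ref{weakkam} with $V=\nabla b_{\gamma}(q)$ produces a ray $\gamma_{q}\in\mathscr{R}_{\alpha}$ satisfying Equation \ref{eq:weakkam line} — hence $\gamma_{q}\in\mathfrak{C}_{\gamma}$ by Definition \ref{gradient line} — with $\dot{\gamma}_{q}(0)=-\nabla b_{\gamma}(q)$. To upgrade this to the required uniqueness, I would take an arbitrary ray $\eta\in\mathscr{R}_{\alpha}\cap\mathfrak{C}_{\gamma}$ with $\eta(0)=q$; being unit-speed and in $\mathfrak{C}_{\gamma}$, it satisfies Equation \ref{eq:co-ray}, so Lemma \ref{upper spt} yields a smooth upper support function for $b_{\gamma}$ at $q$ whose gradient is $-\dot{\eta}(0)$, forcing $-\dot{\eta}(0)\in\nabla^{+}b_{\gamma}(q)=\{\nabla b_{\gamma}(q)\}$. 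Thus $\dot{\eta}(0)=\dot{\gamma}_{q}(0)$, and uniqueness of $g$-geodesics gives $\eta=\gamma_{q}$.

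For the converse, assume there is a unique ray in $\mathscr{R}_{\alpha}\cap\mathfrak{C}_{\gamma}$ emanating from $q$. By Theorem \ref{weakkam}, the assignment $V\mapsto\gamma_{q,V}$ maps $\nabla^{\ast}b_{\gamma}(q)$ into the set of such rays, and it is injective because distinct $V$ give distinct initial velocities $\dot{\gamma}_{q,V}(0)=-V$ and hence distinct geodesics. Since the target is a singleton by hypothesis, $\nabla^{\ast}b_{\gamma}(q)$ is a singleton as well, whence $b_{\gamma}$ is differentiable at $q$ by the criterion above; the single ray then necessarily has $\dot{\gamma}_{q}(0)=-V=-\nabla b_{\gamma}(q)$.

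The only genuinely delicate point is the bookkeeping in the forward direction: one must ensure that \emph{every} ray in $\mathscr{R}_{\alpha}\cap\mathfrak{C}_{\gamma}$ from $q$ — not merely the one produced by Theorem \ref{weakkam} — contributes its initial velocity to $\nabla^{+}b_{\gamma}(q)$, which is exactly what the upper-support-function construction of Lemma \ref{upper spt} provides. Everything else is a direct combination of semi-concavity (Lemma \ref{sca limit gradient}) with the ray–limiting-gradient correspondence already established in Theorem \ref{weakkam}.
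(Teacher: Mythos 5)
Your proof is correct, and it is essentially the intended argument: the paper states this result as an immediate corollary of Theorem \ref{weakkam} without writing out a proof, and your combination of the semiconcavity criterion (differentiability $\Leftrightarrow$ $\nabla^{+}b_{\gamma}(q)=co\,\nabla^{\ast}b_{\gamma}(q)$ is a singleton) with the correspondence $V\mapsto\gamma_{q,V}$ is exactly the deduction being invoked. The one detail you rightly flag as delicate --- that \emph{every} ray in $\mathscr{R}_{\alpha}\cap\mathfrak{C}_{\gamma}$ from $q$ contributes a supergradient --- you settle via the upper support function of Lemma \ref{upper spt}, whereas the paper, where it needs this fact (in the proof of Theorem \ref{gls}), derives $-\dot{\zeta}(0)\in\nabla^{\ast}b_{\gamma}(q)$ from Theorem \ref{weakkam2}; both routes are valid.
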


\begin{Rem}\label{4}
From the proof of Theorem \ref{weakkam}, one easily deduce that every $\gamma_{q,V}$ is a co-ray associated to $\gamma$. Because of the assumption that the dimension of the spacetime is of two, the asymptote emanating from a fixed point $q$ is unique. Thus if $b_{\gamma}$ is differentiable at $q$, the only co-ray emanating from $q$ coincides with the asymptote; if $b_{\gamma}$ is not differentiable at $q$, then there is only one $V\in\nabla^{*}b_{\gamma}(q)$ such that $\gamma_{q,V}$ is the asymptote emanating from $q$.
\end{Rem}

\begin{The}\label{weakkam2}
Assume that $\alpha\in(m^{-},m^{+})$ and $\gamma\in\mathscr{M}_{\alpha}$. For any $\gamma_{q}\in\mathfrak{C}_{\gamma}$ emanating from $q$, $b_{\gamma}$ is differentiable at $\gamma_{q}(t)$ for any $t>0$ and $\dot{\gamma_{q}}(t)=-\nabla b_{\gamma}(\gamma_{q}(t))$.
\end{The}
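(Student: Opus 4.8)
The plan is to pin $b_\gamma$ at the interior point $p:=\gamma_q(t)$ between a smooth lower support function coming from the past and the superdifferential guaranteed by semiconcavity, and then to read off the gradient from the lower support. Recall first that by Theorem \ref{sca Bu} together with Remark \ref{rem line bu} the function $b_\gamma$ is locally semi-concave, so by Lemma \ref{sca limit gradient} its superdifferential $\nabla^{+}b_\gamma(p)$ is nonempty (indeed compact and convex). As already observed in the proof of Theorem \ref{vis}, if in addition the subdifferential $\nabla^{-}b_\gamma(p)$ is nonempty, then the two sets coincide and $b_\gamma$ is differentiable at $p$ with $\nabla^{-}b_\gamma(p)=\nabla^{+}b_\gamma(p)=\{\nabla b_\gamma(p)\}$. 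Hence the whole task reduces to exhibiting a single $C^{1}$ lower support function at $p$.

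To build it, fix any $t'$ with $0\le t'<t$ and set
\[
\psi(x):=b_\gamma(\gamma_q(t'))+d(\gamma_q(t'),x).
\]
Since $\gamma_q\in\mathfrak{C}_\gamma$ is a timelike ray (Remark \ref{grd}), hence a maximizer parametrized by $g$-arc length, we have $d(\gamma_q(t'),\gamma_q(t))=t-t'$, while the gradient-line identity (Equation \ref{eq:weakkam line}) gives $b_\gamma(\gamma_q(t))-b_\gamma(\gamma_q(t'))=t-t'$; thus $\psi(p)=b_\gamma(p)$. On the other hand, the last item of Proposition \ref{Pro Bu} yields $b_\gamma(x)\ge b_\gamma(\gamma_q(t'))+d(\gamma_q(t'),x)=\psi(x)$ for every $x\in I^{+}(\gamma_q(t'))$, and a whole neighbourhood of $p$ lies in $I^{+}(\gamma_q(t'))$ because $p\in I^{+}(\gamma_q(t'))$ and $I^{+}$ is open. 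Therefore $\psi\le b_\gamma$ near $p$ with equality at $p$, i.e. $\psi$ is a lower support function for $b_\gamma$ at $p$ in the sense of Definition \ref{generalized gradients}.

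It remains to verify that $\psi$ is $C^{1}$ near $p$ and to compute $\nabla\psi(p)$. Smoothness of $\psi$ is equivalent to smoothness of $x\mapsto d(\gamma_q(t'),x)$ near $p$: because $\gamma_q$ is a maximizer on all of $[t',\infty)$, the point $p=\gamma_q(t)$ lies strictly before the future cut point of $\gamma_q(t')$ along $\gamma_q$, so $d(\gamma_q(t'),\cdot)$ is smooth in a neighbourhood of $p$ by \cite[Proposition 9.29]{B-E-E}. For the gradient I run the computation already used in Lemma \ref{upper spt}: along $\gamma_q$ one has $d(\gamma_q(t'),\gamma_q(s))=s-t'$, so $g\bigl(\nabla d(\gamma_q(t'),\cdot)(p),\dot\gamma_q(t)\bigr)=1$; since this Lorentzian gradient is tangent to the unique maximizer and $g(\dot\gamma_q(t),\dot\gamma_q(t))=-1$, it equals $-\dot\gamma_q(t)$, whence $\nabla\psi(p)=-\dot\gamma_q(t)$.

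Putting the pieces together, $-\dot\gamma_q(t)=\nabla\psi(p)\in\nabla^{-}b_\gamma(p)$, so the subdifferential is nonempty; combined with the nonempty superdifferential from semiconcavity this forces $b_\gamma$ to be differentiable at $p=\gamma_q(t)$, and then $\nabla b_\gamma(\gamma_q(t))=\nabla\psi(p)=-\dot\gamma_q(t)$, which is precisely the claim. The one step requiring genuine care, and the main obstacle, is the smoothness of $d(\gamma_q(t'),\cdot)$ at $p$, i.e. the assertion that interior points of the maximizing ray are not cut points of the chosen past endpoint. This is where the \emph{global} maximality of $\gamma_q$ beyond the parameter $t$ is essential: it is the Lorentzian counterpart of the classical fact that the interior of a minimizing (here maximizing) geodesic contains no cut points, and it is exactly what lets us reach $p$ from $\gamma_q(t')$ by a unique cut-point-free maximizer.
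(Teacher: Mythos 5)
Your proof is correct, but it follows a genuinely different route from the paper's. The paper argues by contradiction using the machinery it has just built: if $b_{\gamma}$ were non-differentiable at $\gamma_{q}(a)$, then by semiconcavity $\nabla^{\ast}b_{\gamma}(\gamma_{q}(a))$ would contain two distinct limiting gradients, Theorem \ref{weakkam} would produce a second gradient line $\eta\in\mathfrak{C}_{\gamma}$ emanating from $\gamma_{q}(a)$ with $\dot{\eta}(0)\neq\dot{\gamma_{q}}(a)$, and Lemma \ref{3} (the no-branching lemma, itself proved by the corner/curve-lengthening argument) forbids exactly this; the gradient identity then comes from Corollary \ref{weakkam1}. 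You instead give the direct, ``classical weak KAM'' argument: the smooth lower support function $\psi(x)=b_{\gamma}(\gamma_{q}(t'))+d(\gamma_{q}(t'),x)$ built from a strictly earlier point of the calibrated ray puts a vector into $\nabla^{-}b_{\gamma}(p)$, and a nonempty subdifferential together with the nonempty superdifferential coming from Theorem \ref{sca Bu} forces differentiability and identifies the gradient at once. Your construction is the exact mirror of the paper's own Lemma \ref{upper spt} (an upper support from the future becomes a lower support from the past), and every ingredient you invoke — the inequality $b_{\gamma}(y)-b_{\gamma}(x)\geq d(x,y)$ from Proposition \ref{Pro Bu}, the calibration identity, the absence of cut points on the interior of a maximizing ray and hence smoothness of $d(\gamma_{q}(t'),\cdot)$ near $p$ via \cite[Proposition 9.29]{B-E-E}, and the Gauss-lemma normalization giving $\nabla\psi(p)=-\dot{\gamma_{q}}(t)$ — is either already used in the paper or standard. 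What your approach buys is independence from Theorem \ref{weakkam} and Lemma \ref{3}, and a proof that delivers the formula $\dot{\gamma_{q}}(t)=-\nabla b_{\gamma}(\gamma_{q}(t))$ in the same stroke as differentiability; what the paper's approach buys is economy, since it recycles lemmas it needs anyway for Theorem \ref{gls}. The one step you rightly flag as delicate — that interior points of a maximizing ray precede the cut point of any earlier point, so the distance function is smooth there — is exactly the hypothesis the paper also relies on in Lemma \ref{upper spt}, so no new gap is introduced.
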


\begin{proof}
Suppose $b_{\gamma}$ is not differentiable at $\gamma_{q}(a)$ for some $a>0$, then $\nabla^{\ast}b_{\gamma}(\gamma_{q}(a))$ contains at least two elements. By Theorem \ref{weakkam}, there is a timelike ray $\eta$ in $\mathfrak{C}_{\gamma}$ that emanates from $\gamma_{q}(a)$ and $\dot{\eta}(0)\neq\dot{\gamma_{q}}(a)$. Replacing $\zeta$ by $\gamma_{q}$ in Lemma \ref{3}, we obtain a contradiction.
\end{proof}

The following theorem describes the set of gradient lines $\mathfrak{C}_{\gamma}$. We need a lemma which is useful in the proof of our theorem.

\begin{Lem}\label{asym grad}
Let $\zeta$ and $\eta$ be two timelike rays in $\mathscr{R}_{\alpha}$, both of them emanate from $q$. If $\zeta$ and $\eta$ are asymptotic in the future direction and $\zeta\in\mathfrak{C}_{\gamma}$, then $\eta\in\mathfrak{C}_{\gamma}$.
\end{Lem}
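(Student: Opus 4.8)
The plan is to unwind the definition of $\mathfrak{C}_{\gamma}$ (Definition \ref{gradient line}): since $\zeta(0)=\eta(0)=q$ and $\eta$ is already a timelike ray, I only need to show the single scalar identity $b_{\gamma}(\eta(t))-b_{\gamma}(q)=t$ for every $t\ge 0$. One inequality is free: applying the monotonicity statement in Proposition \ref{Pro Bu}(4) to the causal pair $q=\eta(0)\le\eta(t)$ gives
\begin{equation*}
b_{\gamma}(\eta(t))-b_{\gamma}(q)\ge d(\eta(0),\eta(t))=t,
\end{equation*}
where the last equality uses that $\eta$ is parametrized by $g$-arc length. So the whole content is the reverse inequality $b_{\gamma}(\eta(t))\le b_{\gamma}(q)+t$. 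Before attacking it I would record the key preliminary fact that the \emph{ray} Busemann functions of $\zeta$ and $\eta$ coincide: by Proposition \ref{asym bf} applied to the asymptotic pair $\zeta,\eta$ we have $b_{\zeta}(p)=b_{\eta}(p)-b_{\eta}(\zeta(0))$, and since $\eta$ is a ray one computes $b_{\eta}(\eta(0))=\lim_{u\to\infty}[u-d(\eta(0),\eta(u))]=\lim_{u\to\infty}[u-u]=0$, so $b_{\eta}(q)=0$ and hence $b_{\zeta}\equiv b_{\eta}$ on their common domain.

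The estimate I would then run is a ``shadowing'' argument. Fix $t$. By asymptoticity (Definition \ref{asy}) we have $d_{R}(\eta(s),\zeta(\mathbb{R}_{+}))\to 0$ as $s\to\infty$, so for large $s$ I may pick $\tau_{s}$ with $\varepsilon_{s}:=d_{R}(\eta(s),\zeta(\tau_{s}))\to 0$. Using that $b_{\gamma}$ is $L$-Lipschitz with respect to $d_{R}$ (the remark following Definition \ref{line Bu}) together with $\zeta\in\mathfrak{C}_{\gamma}$,
\begin{equation*}
b_{\gamma}(\eta(s))\le b_{\gamma}(\zeta(\tau_{s}))+L\varepsilon_{s}=b_{\gamma}(q)+\tau_{s}+L\varepsilon_{s}.
\end{equation*}
On the other hand Proposition \ref{Pro Bu}(4) applied along $\eta$ gives, for $s>t$, the inequality $b_{\gamma}(\eta(t))\le b_{\gamma}(\eta(s))-(s-t)$. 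Combining the two yields
\begin{equation*}
b_{\gamma}(\eta(t))\le b_{\gamma}(q)+t+(\tau_{s}-s)+L\varepsilon_{s},
\end{equation*}
so the whole proof reduces to showing $\limsup_{s\to\infty}(\tau_{s}-s)\le 0$. This is where I would invoke the preliminary fact: $\tau_{s}=b_{\zeta}(\zeta(\tau_{s}))$, and since $b_{\zeta}$ is $L'$-Lipschitz (Theorem \ref{lip Bu}) and $b_{\zeta}(\eta(s))=b_{\eta}(\eta(s))=s$, one gets $\tau_{s}\le b_{\zeta}(\eta(s))+L'\varepsilon_{s}=s+L'\varepsilon_{s}$. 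Letting $s\to\infty$ kills all error terms and gives $b_{\gamma}(\eta(t))\le b_{\gamma}(q)+t$, completing the identity and hence $\eta\in\mathfrak{C}_{\gamma}$ (a timelike curve satisfying the gradient-line equation is automatically a ray by Remark \ref{grd}).

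The main obstacle is the one technical point buried in the final step: to write $b_{\zeta}(\eta(s))=b_{\eta}(\eta(s))=s$ I must know that $\eta(s)$ lies in the (common) domain $I^{-}(\zeta)$ of $b_{\zeta}$, and that $b_{\zeta}=b_{\eta}$ actually applies there. I would justify this by the uniform timelike-cone control from Section 3: by Proposition \ref{asym'} the velocity $\dot{\zeta}$ stays in a compact subbundle $\mathrm{Time}^{1,\epsilon}(\mathbb{T}^{2},g)$, and periodicity of the metric then produces a uniform $\delta>0$ with $B_{d_{R}}(\zeta(\tau),\delta)\subseteq I^{-}(\zeta(\tau+1))$ for all $\tau\ge 0$; once $\varepsilon_{s}<\delta$ this forces $\eta(s)\in I^{-}(\zeta)$. (Remark \ref{non-crossing}, that $\zeta$ and $\eta$ meet only at $q$, is a convenient sanity check here but is not strictly needed.) Apart from this uniformity lemma, everything is a bookkeeping exercise with the reverse triangle inequality and the two Lipschitz bounds.
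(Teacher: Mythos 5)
Your proof is correct, and its skeleton is the same as the paper's: the inequality $b_{\gamma}(\eta(t))-b_{\gamma}(q)\geq t$ is free from Proposition \ref{Pro Bu}, and the reverse inequality is obtained by shadowing $\eta(s)$ with a nearby point $\zeta(\tau_{s})$, using the Lipschitz continuity of $b_{\gamma}$ together with $b_{\gamma}(\zeta(\tau_{s}))-b_{\gamma}(q)=\tau_{s}$, so that everything reduces to controlling $\tau_{s}-s$. Where you diverge is precisely in that last reduction. The paper disposes of it in one line: since $\zeta$ and $\eta$ are maximizers from $q$ parametrized by $g$-arc length, $\tau_{s}=d(q,\zeta(\tau_{s}))$ and $s=d(q,\eta(s))$, so $|\tau_{s}-s|\leq L\,d_{R}(\zeta(\tau_{s}),\eta(s))\to 0$ directly from the Lipschitz continuity of the time separation (Proposition \ref{A1-2}). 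You instead route through Proposition \ref{asym bf} to identify the ray Busemann functions $b_{\zeta}\equiv b_{\eta}$ and then apply Theorem \ref{lip Bu}; this works, but it is strictly heavier and forces you to take on the extra obligation of showing $\eta(s)\in I^{-}(\zeta)$, which you discharge with an additional uniform-cone compactness lemma ($B_{d_{R}}(\zeta(\tau),\delta)\subseteq I^{-}(\zeta(\tau+1))$) that the paper never needs. (Minor point: the uniform timelike-cone control on $\dot{\zeta}$ comes from Proposition \ref{A1-1} applied via Proposition \ref{asym'}, not from Proposition \ref{asym'} alone.) The paper's version buys simplicity by staying entirely with the globally defined line Busemann function $b_{\gamma}$ and the two-point distance function; yours buys nothing extra here, so if you revise, replace the $b_{\zeta}=b_{\eta}$ detour with the direct estimate $|\tau_{s}-s|=|d(q,\zeta(\tau_{s}))-d(q,\eta(s))|\leq L\varepsilon_{s}$.
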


\begin{proof}
Set $f(x):=b_{\gamma}(x)-b_{\gamma}(q)-t$. Then $\eta$ is in $\mathfrak{C}_{\gamma}$ if and only if $f(\eta(t))\equiv0$. By the last item in Proposition \ref{Pro Bu}, $f(\eta(t))$ is non-negative and non-decreasing in $t$. The fact that $\zeta$ is in $\mathfrak{C}_{\gamma}$ implies that  $f(\zeta(t))\equiv0$ on $\mathbb{R}_{+}$. Since $\zeta$ and $\eta$ are asymptotic in the future direction,  we could choose $a_{k},a^{\prime}_{k}\rightarrow\infty$ such that
\begin{equation}\label{eq:limit}
\begin{split}
d_{R}(\zeta(a_{k}),\eta(a^{\prime}_{k}))\rightarrow0,\\
|a_{k}-a^{\prime}_{k}|\rightarrow0.
\end{split}
\end{equation}
Here, the second limit follows from the first one by using Proposition \ref{A1-2} and the fact that $\zeta,\eta$ are in $\mathscr{R}_{\alpha}$.

By Theorem \ref{lip Bu}, we denote the Lipschitz constant of $f$ by $L(\alpha)$. Then we use Equation \ref{eq:limit} to obtain
\begin{eqnarray*}
&&f(\eta(a^{\prime}_{k}))\\
&=&|f(\eta(a^{\prime}_{k}))-f(\zeta(a_{k}))|\\
&\leq&|a^{\prime}_{k}-a_{k}|+|b_{\gamma}(\eta(a^{\prime}_{k}))-b_{\gamma}(\zeta(a_{k}))|\\
&\leq&|a^{\prime}_{k}-a_{k}|+L(\alpha)d_{R}(\eta(a^{\prime}_{k}),\zeta(a_{k}))\\
&\rightarrow&0.
\end{eqnarray*}
By the non-decreasing property of $f(\eta(t))$, we obtain $f(\zeta(t))\equiv0$, so $\zeta\in\mathfrak{C}_{\gamma}$.
\end{proof}

\begin{The}\label{gls}
If $\alpha\in(m^{-},m^{+})$ is irrational, then $\mathfrak{C}_{\gamma}=\mathscr{R}_{\alpha}$. If $\alpha\in(m^{-},m^{+})$ is rational and $\gamma\in\mathscr{M}_{\alpha}^{per}$, then
$\mathfrak{C}_{\gamma}=\mathscr{R}_{\alpha}^{per}\cup\{\zeta\in\mathscr{R}_{\alpha}^{+}|\zeta\leq\gamma\}\cup\{\zeta\in\mathscr{R}_{\alpha}^{-}|\zeta\geq\gamma\}$.
\end{The}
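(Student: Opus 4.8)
The plan is to prove both inclusions, using throughout that $\mathfrak{C}_\gamma\subseteq\mathscr{R}_\alpha$: if $\zeta\in\mathfrak{C}_\gamma$ then $\zeta$ is a timelike ray by Remark \ref{grd}, and by Theorem \ref{weakkam2} together with passage to the limit $t\to 0^{+}$ one gets $\dot\zeta(0)=-V$ with $V\in\nabla^{\ast}b_\gamma(\zeta(0))$, so Theorem \ref{weakkam} identifies $\zeta$ with the ray $\gamma_{\zeta(0),V}\in\mathscr{R}_\alpha$. Combined with the decomposition $\mathscr{R}_\alpha=\mathscr{R}_\alpha^{per}\cup\mathscr{R}_\alpha^{+}\cup\mathscr{R}_\alpha^{-}$ of Theorem \ref{rational} in the rational case and with the structure of $\mathscr{R}_\alpha$ from Theorem \ref{irrational} in the irrational case, the task reduces to deciding, ray by ray, which elements of $\mathscr{R}_\alpha$ are calibrated by $b_\gamma$. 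The two workhorses are the existence of a calibrated co-ray from every point (Corollary \ref{ex co-ray} with Theorem \ref{weakkam}, producing $\gamma_{q,V}\in\mathfrak{C}_\gamma\cap\mathscr{R}_\alpha$ for each $q$) and the transfer lemma \ref{asym grad}, which propagates membership in $\mathfrak{C}_\gamma$ between two rays issuing from the same point that are asymptotic in the future.

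For irrational $\alpha$ the crux is the claim that any two rays of $\mathscr{R}_\alpha$ issuing from a common point $q$ are asymptotic in the future direction. Indeed, by Theorem \ref{irrational} such a ray is either a subray of a recurrent line or lies strictly between a pair of neighboring recurrent lines to which it is asymptotic; since neighboring recurrent lines are themselves asymptotic in both directions by Theorem \ref{structure for lines}(2), and since recurrent lines accumulate at $q$, each ray from $q$ is forward-asymptotic to the recurrent line(s) meeting $q$, and any two are therefore forward-asymptotic to each other. Granting the claim, I would fix a calibrated co-ray $\gamma_{q,V}\in\mathfrak{C}_\gamma$ from $q$ via Theorem \ref{weakkam}; every $\zeta\in\mathscr{R}_\alpha$ with $\zeta(0)=q$ is asymptotic to it, so Lemma \ref{asym grad} gives $\zeta\in\mathfrak{C}_\gamma$. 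Letting $q$ vary yields $\mathscr{R}_\alpha\subseteq\mathfrak{C}_\gamma$, hence $\mathfrak{C}_\gamma=\mathscr{R}_\alpha$.

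For rational $\alpha$ and $\gamma\in\mathscr{M}_\alpha^{per}$ I would proceed in three steps. First, $\mathscr{R}_\alpha^{per}\subseteq\mathfrak{C}_\gamma$: writing $(q,p)$ for the primitive class in $\overline\alpha$, the identity $b_\gamma\circ T_{(q,p)}=b_\gamma+\tau$, with $\tau$ the $g$-length of one period of $\gamma$, follows from $T_{(q,p)}\gamma=\gamma$ and the definition of $b_\gamma$; comparing $\gamma$ with any other periodic line $x$ through this relation forces all periodic lines to share the common period $\tau$, whence the nonnegative slack $b_\gamma(x(s))-b_\gamma(x(t))-(s-t)$ sums to zero over a period and so vanishes identically, i.e. $x\in\mathfrak{C}_\gamma$. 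Second, I would prove the \emph{flow-toward-$\gamma$} property: a co-ray $\eta\in\mathfrak{C}_\gamma$ lying in a gap bounded by neighboring periodic lines $p_1<p_2$ is forward-asymptotic to whichever of $p_1,p_2$ lies on the side of $\gamma$. This is where the co-ray construction enters: $\eta$ is a limit of maximal segments aimed at $\gamma(s)$ with $s\to\infty$, so these segments must repeatedly cross the boundary facing $\gamma$; Morse's lemma forces each such crossing to be unique with parameter tending to infinity, so the limit $\eta$ stays trapped in $[p_1,p_2]$ (Theorem \ref{rational}) and accumulates on the $\gamma$-side boundary. Third, combining these: every $\eta\in\mathfrak{C}_\gamma$ lying below (resp. above) $\gamma$ is forward-asymptotic to its upper (resp. lower) neighbor, hence lies in $\{\zeta\in\mathscr{R}_\alpha^{+}:\zeta\le\gamma\}$ (resp. $\{\zeta\in\mathscr{R}_\alpha^{-}:\zeta\ge\gamma\}$); conversely, given such a $\zeta$, the co-ray $\gamma_{\zeta(0),V}\in\mathfrak{C}_\gamma$ from $\zeta(0)$ shares its asymptotic neighbor by the flow-toward-$\gamma$ property, so $\zeta$ and $\gamma_{\zeta(0),V}$ are asymptotic and Lemma \ref{asym grad} gives $\zeta\in\mathfrak{C}_\gamma$. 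Together with $\mathscr{R}_\alpha^{per}\subseteq\mathfrak{C}_\gamma$ and the decomposition of Theorem \ref{rational}, this is exactly the asserted description.

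The main obstacle is the direction-selection in the rational case, namely showing that a co-ray of $b_\gamma$ is asymptotic to the periodic boundary on the $\gamma$-side of its gap rather than the opposite one. A naive comparison of Busemann values only pins the initial point of the co-ray to a level set of $b_\gamma$ and does not by itself exclude the wrong neighbor, so the argument must be routed through the co-ray's defining maximal segments and the uniqueness of their intersections with periodic lines, i.e. through Morse's lemma and the curve-lengthening lemma. In the irrational case the analogous delicate point is the squeezing step behind the key claim, which relies essentially on the mutual asymptoticity of neighboring recurrent lines supplied by Theorem \ref{structure for lines}(2).
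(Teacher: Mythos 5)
Your proposal is correct and follows essentially the same route as the paper's proof: the inclusion $\mathfrak{C}_{\gamma}\subseteq\mathscr{R}_{\alpha}$ via Theorems \ref{weakkam} and \ref{weakkam2}, the reverse inclusion via the existence of calibrated co-rays from every point combined with Lemma \ref{asym grad}, and the direction-selection in the rational case via the maximal-segment construction of co-rays, Morse's lemma and the escape to infinity of the crossing points with the gap's boundary. The only local deviations are that you obtain $\mathscr{R}_{\alpha}^{per}\subseteq\mathfrak{C}_{\gamma}$ from the periodicity identity $b_{\gamma}\circ T_{(q,p)}=b_{\gamma}+\tau$ rather than from the uniqueness of the co-ray at points of periodic lines, and that you exclude the wrong-direction rays through the flow-toward-$\gamma$ property of all co-rays rather than through the paper's non-crossing Lemma \ref{3} applied to a line of $\mathscr{M}_{\alpha}^{\mp}$ inside the gap; both substitutes are sound.
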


\begin{proof}
First, we prove $\mathfrak{C}_{\gamma}\subseteq\mathscr{R}_{\alpha}$. Choose any $\zeta\in\mathfrak{C}_{\gamma}$ emanating from $q$. By Theorem \ref{weakkam2}, $\zeta$ is differentiable at $\zeta(t)$ and $-\dot{\zeta}(t)\in\nabla b_{\gamma}(\zeta(t))$ for any $t>0$. So $-\dot{\zeta}(0)\in\nabla^{\ast}b_{\gamma}(q)$, $\zeta$ must coincide with the timelike ray $\gamma_{q,-\dot{\zeta}(0)}$ that we obtained by Theorem \ref{weakkam}. Thus $\zeta\in\mathscr{R}_{\alpha}$.

If $\alpha$ is irrational, we know that from every point $q\in\mathbb{R}^{2}$, there is at least one co-ray to $\gamma$, say $\gamma_{q}$, emanating from $q$ and $\gamma_{q}\in\mathfrak{C}_{\gamma}$ by Proposition \ref{int curve}. By Theorem \ref{irrational}, any $\zeta\in\mathscr{R}_{\alpha}$ emanating from $q$ either coincides with $\gamma_{q}$ or is asymptotic to it in the future direction. So by Lemma \ref{asym grad}, $\zeta\in\mathfrak{C}_{\gamma}$. This proves $\mathfrak{C}_{\gamma}=\mathscr{R}_{\alpha}$ when $\alpha$ is irrational.

Assume $\alpha$ is rational and $\gamma\in\mathscr{M}_{\alpha}^{per}$. Since $\mathscr{M}^{per}_{\alpha}$ is closed (as a subset of the plane) and totally ordered, then if $q$ is not in the image of any periodic line in $\mathscr{M}_{\alpha}^{per}$, it falls in a gap bounded by two neighboring periodic lines $\xi>\eta$ in $\mathscr{M}_{\alpha}^{per}$. Now there are two cases we shall consider, namely $q>\gamma$ and $q<\gamma$. Since the proof of these two cases are completely similar, we assume $q>\gamma$, thus $q>\eta\geq\gamma$.

Let us recall the construction of asymptote from Definition \ref{def co-ray}. One begins with a point $q\in\mathbb{R}^{2}$ and a sequence $r_{n}\rightarrow\infty (n\in\mathbb{N})$, then it follows from Remark \ref{dom'} that for sufficiently large number $r_{n}$, we have $q\ll\gamma(r_{n})$. Connecting $p$ with $\gamma(r_{n})$ by a timelike maximal segment $\zeta_{n}$ (now parametrize them by $g$-arc length), one could easily see $\{\zeta_{n}\}_{n\in\mathbb{N}}$, by passing to a subsequence if necessary, will converge to a timelike ray $\zeta_{0}\in\mathscr{R}_{\alpha}$ emanating from $p$ w.r.t. the $C^{0}$ topology on $C^{0}(\mathbb{R}_{+},\mathbb{R}^{2})$.

If $\eta=\gamma$, then from the proof of Theorem \ref{rational}, we see that the asymptote $\zeta_{0}$ defined above is in $\mathscr{R}_{\alpha}^{-}$. Since every asymptote is in $\mathfrak{C}_{\gamma}$, by Lemma \ref{asym grad}, every timelike ray $\zeta\in\mathscr{R}_{\alpha}^{-}$ emanating from $q$ must be in $\mathfrak{C}_{\gamma}$. On the other hand, by Lemma \ref{3}, any two rays in $\mathfrak{C}_{\gamma}$ cannot intersect each other at any interior point, so any ray in $\mathscr{R}_{\alpha}^{+}$ cannot be in $\mathfrak{C}_{\gamma}$ since it will intersect some line in $\mathscr{M}_{\alpha}^{-}$. If $\eta>\gamma$, Jordan curve theorem and Morse's lemma in Section 4 imply that every $\zeta_{n}$ intersects $\eta$ at a unique point $p_{n}$. If the set $\{p_{n}\}$ has a limit point $p$, then one apply the convergence process to the corresponding subsequence of $\{\zeta_{n}\}$ and obtain a timelike ray $\zeta_{0}\in\mathscr{R}_{\alpha}$ which intersect $\eta$ at $p$. This contradicts Theorem \ref{rational} and thus $p_{n}$ goes to infinity. Denote the timelike maximal segment connecting $q$ with $p_{n}$ by $\xi_{n}$, then $\xi_{n}$ and $\zeta_{n}$ has the same limit curve $\zeta_{0}$. So $\zeta_{0}\in\mathscr{R}_{\alpha}^{-}$ as in the last paragragh. The remaining proof is the same as the case $\eta=\gamma$.

The above discussions prove that if $q>\gamma$ falls in a gap bounded by two neighboring periodic lines, then any timelike ray $\zeta$ emanating from $q$ is in $\mathfrak{C}_{\gamma}$ if and only if $\zeta\in\mathscr{R}_{\alpha}^{-}$. Similarly, if $q<\gamma$ falls in a gap bounded by two neighboring periodic lines, then any timelike ray $\zeta$ emanating from $q$ is in $\mathfrak{C}_{\gamma}$ if and only if $\zeta\in\mathscr{R}_{\alpha}^{+}$.

If $q$ belongs to some periodic line $\gamma_{0}$, then the subray of $\gamma_{0}$ emanating from $q$ is the only co-ray to $\gamma$ emanating from $q$. By Remark \ref{4}, every timelike ray in $\mathfrak{C}_{\gamma}$ coincides with a co-ray to $\gamma$ emanating from the same point, so the subray of $\gamma_{0}$ emanating from $q$ is the only timelike ray in $\mathfrak{C}_{\gamma}$ that emanates from $q$.
\end{proof}

\begin{Pro}\label{wcf}
We list two well known but important facts:
\begin{itemize}
  \item If two Lipschitz functions defined on a common open set $\Omega$ have the same gradients (induced by a Riemannian or Lorentzian metric) on their common differentiable points, then they must differ by a constant.
  \item For a Lipschitz function $f:\mathbb{R}^{2}\rightarrow\mathbb{R}$, if for any $(i,j)\in\mathbb{Z}^{2}, f\circ T_{(i,j)}-f\equiv const.$, then $f$ could be written as a sum of a linear function and a lift of a Lipschitz function defined on $\mathbb{R}^{2}/\mathbb{Z}^{2}$.
\end{itemize}
\end{Pro}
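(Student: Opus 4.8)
The plan is to treat the two assertions separately, since they are logically independent.

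\emph{First assertion.} I would begin by reducing the statement about metric gradients to one about ordinary differentials. Both a Riemannian and a Lorentzian metric $g$ are non-degenerate, so at each point $q$ the map $V\mapsto g(V,\cdot)$ is a linear isomorphism $T_{q}\Omega\to T_{q}^{*}\Omega$, and the gradient $\nabla u$ is by definition the vector representing the differential $du$ under this isomorphism. Hence if $u$ and $v$ have equal gradients at a common point of differentiability $q$, then $g(\nabla u(q)-\nabla v(q),\cdot)=0$, and non-degeneracy forces $du(q)=dv(q)$. Setting $w:=u-v$, which is again Lipschitz, I would invoke Rademacher's theorem: $u$ and $v$ are each differentiable off a set of measure zero, so the set where both are differentiable has full measure, and on it $dw=0$. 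The remaining ingredient is the classical fact that a Lipschitz (hence $W^{1,\infty}_{\mathrm{loc}}$) function whose gradient vanishes almost everywhere on a connected open set is constant. I would justify this via absolute continuity of $w$ along almost every line segment together with the fundamental theorem of calculus, which yields $w(p)=w(q)$ whenever $p,q$ are joined by such a segment, and then propagate constancy along chains of segments using connectedness (if $\Omega$ is disconnected one simply argues on each component).

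\emph{Second assertion.} Here I would extract from the hypothesis the translation cocycle. For each $(i,j)\in\mathbb{Z}^{2}$ let $c(i,j)$ denote the constant value of $f\circ T_{(i,j)}-f$. First I would check that $c$ is a group homomorphism $\mathbb{Z}^{2}\to\mathbb{R}$: writing $f(p+(i,j)+(k,l))-f(p)$ as a telescoping sum through the intermediate point $p+(k,l)$ gives $c(i+k,j+l)=c(i,j)+c(k,l)$. A homomorphism out of $\mathbb{Z}^{2}$ is determined by its values on the two generators, so it extends to the unique linear function $\ell(x,y):=c(1,0)\,x+c(0,1)\,y$ on $\mathbb{R}^{2}$, which agrees with $c$ on the integer lattice. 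Then I would set $h:=f-\ell$; since $\ell$ is linear and $f$ is Lipschitz, $h$ is Lipschitz, and for every $(i,j)\in\mathbb{Z}^{2}$ one computes $h\circ T_{(i,j)}-h=c(i,j)-\ell(i,j)=0$, so $h$ is $\mathbb{Z}^{2}$-periodic. Consequently $h$ descends to a well-defined function $\bar h$ on $\mathbb{T}^{2}=\mathbb{R}^{2}/\mathbb{Z}^{2}$, which is Lipschitz because $\pi$ is a local isometry and $\mathbb{T}^{2}$ is compact, and $f=\ell+\bar h\circ\pi$ is exactly the asserted decomposition.

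Both assertions are essentially routine, so there is no single serious obstacle; the only genuinely non-elementary ingredient is the classical ``vanishing gradient almost everywhere implies constant'' statement underlying the first part. The subtle point there is making the full-measure hypothesis on common differentiable points interact correctly with connectedness, and this is precisely what the absolute-continuity-on-lines argument handles. Everything in the second assertion is bookkeeping with the cocycle $c$, the only thing to be careful about being that $c$ must be verified to be additive before extending it to the linear map $\ell$.
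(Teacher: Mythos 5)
Your proof is correct. Note that the paper itself offers no argument for this proposition at all --- it simply records the two statements as ``well known but important facts'' --- so there is no authorial proof to compare against; what you have written supplies exactly the standard justifications. For the first assertion, the two essential ingredients are both present and handled properly: the reduction from metric gradients to differentials via non-degeneracy of $g$, and the fact that a Lipschitz function with almost-everywhere vanishing differential on a connected open set is constant, which your absolute-continuity-on-lines argument establishes (your remark about arguing componentwise when $\Omega$ is disconnected is the right caveat, though in every application in the paper the relevant domain is connected). For the second assertion, the one point that genuinely needs checking --- that the cocycle $c(i,j)$ is additive, so that it is the restriction of a linear function --- is verified by your telescoping computation, and the rest (periodicity of $f-\ell$ and descent to a Lipschitz function on $\mathbb{T}^{2}$) follows as you say.
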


\begin{Rem}\label{wcf'}
The differential of a function $f$ mentioned in the second item of Proposition \ref{wcf} exists almost everywhere w.r.t. the Lebesgue measure on $\mathbb{R}^{2}$ and descends to a weakly closed 1-form on $\mathbb{T}^{2}$.
\end{Rem}

If $\alpha$ is irrational, by Theorem \ref{gls} and Proposition \ref{wcf}, for any two timelike lines $\gamma,\xi$ belong to $\mathscr{M}_{\alpha}$, $b_{\gamma}$ and $b_{\xi}$ could be seen as the same element of $C^{0}(\mathbb{R}^{2},\mathbb{R})/\mathbb{R}$. This fact stimulates the following definition.

\begin{defn}\label{irrational global vis}
Let $(\mathbb{R}^{2},g)$ be the Abelian cover of a class A Lorentzian 2-torus, and $\alpha\in(m^{-},m^{+})$ be an irrational asymptotic direction. Then there exists a unique element $b_{\alpha}\in C^{0}(\mathbb{R}^{2},\mathbb{R})/\mathbb{R}$ such that for every $\gamma\in\mathscr{M}_{\alpha}$, the equality $b_{\gamma}=b_{\alpha}$
holds as elements of $C^{0}(\mathbb{R}^{2},\mathbb{R})/\mathbb{R}$. We call this $b_{\alpha}$ or any of its representative the global Lorentzian Busemann function for the irrational asymptotic direction $\alpha$.
\end{defn}

It is obvious that if $\alpha$ is irrational, $\mathscr{R}_{\alpha}$ is $\mathbb{Z}^{2}$-invariant, so by Theorems \ref{rem line bu}, \ref{vis}, \ref{gls} and Corollaries \ref{weakkam1}, \ref{weakkam2}, we conclude that for every irrational $\alpha$, any representative of $b_{\alpha}$ satisfies all the requirements in Theorem \ref{main theorem 1} and the first item of Theorem \ref{main theorem 2}.

\begin{The}\label{unique}
Let $(\mathbb{R}^{2},g)$ be the Abelian cover of a class A Lorentzian 2-torus, $\alpha\in(m^{-},m^{+})$ an irrational asymptotic direction. If $u:\mathbb{R}^{2}\rightarrow\mathbb{R}$ satisfies all conditions listed in Theorem \ref{main theorem 1}, then $u$ is a representative of $b_{\alpha}$.
\end{The}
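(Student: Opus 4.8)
The plan is to fix once and for all a timelike line $\gamma\in\mathscr{M}_\alpha$, so that by Definition \ref{irrational global vis} the Busemann function $b_\gamma$ is a representative of $b_\alpha$, and then to show that $u$ and $b_\gamma$ differ by a constant. Both functions are Lipschitz (for $u$ by hypothesis, for $b_\gamma$ by Remark \ref{rem line bu}) and locally semi-concave, hence differentiable almost everywhere, so their common set of differentiability points has full Lebesgue measure. By the first item of Proposition \ref{wcf} it then suffices to prove that $\nabla u(p)=\nabla b_\gamma(p)$ at every point $p$ where both are differentiable: once this holds, $u-b_\gamma$ is constant, $u$ is a representative of $b_\alpha$, and the theorem follows.

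The heart of the argument is this pointwise comparison of gradients, and it is here that irrationality enters decisively. Fix a common differentiability point $p$. Applying the third item of Theorem \ref{main theorem 1} to $u$ produces a timelike ray $\gamma_p\in\mathscr{R}_\alpha$ emanating from $p$ with $\dot\gamma_p(0)=-\nabla u(p)$. Because $\alpha$ is irrational, Theorem \ref{gls} gives $\mathfrak{C}_\gamma=\mathscr{R}_\alpha$, so $\gamma_p$ is in fact a gradient line for $b_\gamma$, i.e. $\gamma_p\in\mathscr{R}_\alpha\cap\mathfrak{C}_\gamma$. Since $b_\gamma$ is differentiable at $p$, Corollary \ref{weakkam1} asserts that there is a \emph{unique} ray in $\mathscr{R}_\alpha\cap\mathfrak{C}_\gamma$ issuing from $p$, and that its initial velocity equals $-\nabla b_\gamma(p)$. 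As $\gamma_p$ is such a ray, it must coincide with this unique ray, whence $\dot\gamma_p(0)=-\nabla b_\gamma(p)$. Comparing the two expressions for $\dot\gamma_p(0)$ yields $\nabla u(p)=\nabla b_\gamma(p)$, as required.

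I expect the identification $\gamma_p\in\mathfrak{C}_\gamma$ to be the conceptual crux of the proof: it is precisely the equality $\mathfrak{C}_\gamma=\mathscr{R}_\alpha$ of Theorem \ref{gls} that allows a ray manufactured purely from $u$ to be recognized as a gradient line of $b_\gamma$, and this equality is false for rational $\alpha$, where $\mathfrak{C}_\gamma$ is a proper subset of $\mathscr{R}_\alpha$ (cf. Theorem \ref{rational} and the second item of Theorem \ref{main theorem 2}). Thus the irrationality hypothesis is not a mere technical convenience but the structural reason the viscosity solution is determined up to an additive constant. The remaining steps — Lipschitz regularity, almost-everywhere differentiability, and the passage from equality of gradients on a full-measure set to equality of the functions up to a constant — are routine given Proposition \ref{wcf} together with the results of Sections 4 and 5.
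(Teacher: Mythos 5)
Your proposal is correct and follows essentially the same route as the paper: both arguments establish $\nabla u=\nabla b_\gamma$ on the full-measure set of common differentiability points by identifying, via Theorem \ref{gls} ($\mathfrak{C}_\gamma=\mathscr{R}_\alpha$ for irrational $\alpha$) and the uniqueness in item (3) of Theorem \ref{main theorem 1} (equivalently Corollary \ref{weakkam1}), the single ray of $\mathscr{R}_\alpha$ issuing from such a point whose initial velocity encodes both gradients, and then conclude with the first item of Proposition \ref{wcf}. The only cosmetic difference is that the paper phrases this by first characterizing the non-differentiability set of the representative $b_\alpha$ as the set of points admitting more than one ray in $\mathscr{R}_\alpha$, whereas you work directly at common differentiability points; the substance is identical.
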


\begin{proof}
Choose any representative $u_{0}$ of $b_{\alpha}$. By Theorem \ref{gls}, the set of non-differentiable points of $u_{0}$ is exactly
$$
N_{\alpha}:=\{p\in\mathbb{R}^{2}|\text{ there exist more than one }\zeta\in\mathscr{R}_{\alpha}\text{ that emanate from $p$ }\}.
$$
Since $u_{0}$ is Lipschitz, $N_{\alpha}$ is a set of measure $0$. Denote by $N_{u}$ the non-differentiable points of $u$, by the second item of Theorem \ref{main theorem 1}, $N_{u}$ is also a set of measure $0$. By the definition of $N_{\alpha}$, there is a unique timelike ray $\zeta\in\mathscr{R}_{\alpha}$ emanating from any point of $\mathbb{R}^{2}\setminus N_{\alpha}$. Thus the third item of Theorem \ref{main theorem 1} implies $\nabla u=\nabla u_{0}$ on $\mathbb{R}^{2}\setminus N_{\alpha}\cup N_{u}$, which is a set of full measure. By Proposition \ref{wcf}, $u\equiv u_{0}+const.$ on $\mathbb{R}^{2}$.
\end{proof}

The case that $\alpha$ is rational is a bit more complicated but by no means difficult. 


\begin{The}\label{rational global vis}
For any  rational $\alpha\in(m^{-},m^{+})$, there is a continuous function on $\mathbb{R}^{2}$, denoted by $b_{\alpha}^{+}$ ($b_{\alpha}^{-}$), such that it satisfies all the requirements in Theorem \ref{main theorem 1}. Besides this, the set $\mathfrak{C}_{\alpha}$ associated to $b_{\alpha}^{+}$ ($b_{\alpha}^{-}$) which is defined in Theorem \ref{main theorem 1} exactly equals to $\mathscr{R}_{\alpha}^{per}\cup\mathscr{R}_{\alpha}^{+}$ ($\mathscr{R}_{\alpha}^{per}\cup\mathscr{R}_{\alpha}^{-}$).
\end{The}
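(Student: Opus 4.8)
The plan is to obtain $b_\alpha^+$ (and symmetrically $b_\alpha^-$) as a normalized limit of the \emph{line} Busemann functions $b_{\gamma_n}$, where $\gamma_n\in\mathscr{M}_\alpha^{per}$ is a sequence of periodic lines escaping to $+\infty$ transversally to $\overline\alpha$ (for $b_\alpha^-$, escaping to $-\infty$). Since the single direction $\alpha$ is a compact subset of $(m^-,m^+)$, Theorem \ref{lip Bu} and Theorem \ref{sca Bu} supply a uniform Lipschitz constant and a uniform semiconcavity constant $C$ for all the $b_{\gamma_n}$; after subtracting the value $b_{\gamma_n}(x_0)$ at a fixed base point $x_0$, Arzel\`a--Ascoli yields a subsequence converging uniformly on compact sets to a function $b_\alpha^+:\mathbb{R}^2\to\mathbb{R}$ which is again Lipschitz and locally semiconcave with the same constant $C$. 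The motivation is exactly Theorem \ref{gls}: for a fixed periodic line $\gamma_n$ one has $\mathfrak{C}_{\gamma_n}=\mathscr{R}_\alpha^{per}\cup\{\zeta\in\mathscr{R}_\alpha^+\mid\zeta\le\gamma_n\}\cup\{\zeta\in\mathscr{R}_\alpha^-\mid\zeta\ge\gamma_n\}$, and as $\gamma_n\to+\infty$ the first correction set exhausts all of $\mathscr{R}_\alpha^+$ while the second empties, which is precisely the set announced in the statement.

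Next I would transfer the three analytic properties to the limit. The Lipschitz bound and local semiconcavity pass to uniform limits automatically. For the eikonal equation I would re-run the proof of Theorem \ref{vis} verbatim: each $b_{\gamma_n}$ satisfies the monotonicity estimate $b_{\gamma_n}(y)-b_{\gamma_n}(x)\ge d(x,y)$ of Proposition \ref{Pro Bu}, and this inequality survives the uniform limit, so $b_\alpha^+$ is again a viscosity supersolution via Inequality \ref{ieq:6}, while local semiconcavity makes it a subsolution. For the $\mathbb{Z}^2$-quasiperiodicity the key identity is that $b_\gamma\circ T_{(i,j)}=b_{T_{-(i,j)}\gamma}$ up to a constant (because $T_{(i,j)}$ is a $g$-isometry) and that the deck group permutes $\mathscr{M}_\alpha^{per}$ while preserving the escape-to-$+\infty$ property of $(\gamma_n)$; hence $b_\alpha^+\circ T_{(i,j)}-b_\alpha^+$ is constant, and Proposition \ref{wcf} together with Remark \ref{wcf'} produces the weakly closed form $\omega_\alpha^+$ with $db_\alpha^+=(\pi^{-1})^*\omega_\alpha^+$. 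The differentiability description (items (3), (4) of Theorem \ref{main theorem 1}) then follows from the weak-KAM statements below, exactly as in the irrational case.

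The heart of the proof is the computation of $\mathfrak{C}_\alpha$. The inclusion $\mathscr{R}_\alpha^{per}\cup\mathscr{R}_\alpha^+\subseteq\mathfrak{C}_{b_\alpha^+}$ is the easy direction: a fixed $\zeta\in\mathscr{R}_\alpha^+$ lies in the bounded strip of its gap, so $\zeta\le\gamma_n$ for all large $n$, whence $\zeta\in\mathfrak{C}_{\gamma_n}$ by Theorem \ref{gls}; passing the equality $b_{\gamma_n}(\zeta(t))-b_{\gamma_n}(\zeta(0))=t$ to the uniform limit gives $\zeta\in\mathfrak{C}_{b_\alpha^+}$, and $\mathscr{R}_\alpha^{per}\subseteq\mathfrak{C}_{\gamma_n}$ for every $n$ handles the periodic rays. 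For the reverse inclusion I would first re-establish, for $b_\alpha^+$, the two structural facts proved for genuine line Busemann functions: (i) Lemma \ref{3} holds for $b_\alpha^+$ because its proof only uses the monotonicity of Proposition \ref{Pro Bu}, which $b_\alpha^+$ inherits; and (ii) at a differentiability point $q$ the gradient ray is unique with $\dot\zeta(0)=-\nabla b_\alpha^+(q)$, which follows from the eikonal equation and the equality case of the reverse Cauchy--Schwarz inequality for timelike vectors. Together with the analog of Theorem \ref{weakkam2} (gradient lines meet only differentiability points for $t>0$, proved from (i) as before) these give the contradiction: if some $\zeta\in\mathscr{R}_\alpha^-$ were a gradient line, then for $t>0$ the point $p=\zeta(t)$ would be a differentiability point lying in the open gap, and from $p$ one would have two distinct gradient rays, namely the forward continuation of $\zeta$ (asymptotic to $\underline\zeta$) and the $+$-ray out of $p$ (asymptotic to $\overline\zeta$ and already known to lie in $\mathfrak{C}_{b_\alpha^+}$, using Theorem \ref{rational}), contradicting uniqueness. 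Hence $\mathfrak{C}_{b_\alpha^+}=\mathscr{R}_\alpha^{per}\cup\mathscr{R}_\alpha^+$, and symmetrically for $b_\alpha^-$.

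I expect the main obstacle to be exactly this transfer of the weak-KAM differentiability machinery (Theorems \ref{weakkam} and \ref{weakkam2}) from the Busemann function of a single line to the limit $b_\alpha^+$, which is no longer literally such a Busemann function. The cleanest route I see is to realize every limiting gradient $V\in\nabla^{\ast}b_\alpha^+(q)$ as a limit $\lim_k(-\dot{\zeta}^{+}_{q_k}(0))$ of initial velocities of the $+$-rays through nearby differentiability points $q_k\to q$ (these exist and are gradient rays by the easy inclusion), and then run the Ascoli--Arzel\`a and Corollary \ref{limit int} argument of Theorem \ref{weakkam} to produce the associated gradient ray; this simultaneously shows that \emph{every} gradient ray of $b_\alpha^+$ is a limit of $+$-rays, reconfirming that no $-$-ray can occur. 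Finally, the $\mathfrak{C}_\alpha$-characterization pins $b_\alpha^+$ down up to an additive constant independently of the chosen subsequence, by the full-measure gradient-matching argument of Theorem \ref{unique}, so the construction is unambiguous.
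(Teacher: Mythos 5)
Your construction is essentially the paper's: both build $b_{\alpha}^{+}$ by pushing a periodic line off to infinity with deck translations $T_{kh}$ (where $T_h\gamma>\gamma$, $h\notin\mathfrak{T}\cup-\mathfrak{T}$) and taking a normalized limit of the associated line Busemann functions, and your computation of $\mathfrak{C}_{\alpha}$ rests on the same use of Theorem \ref{gls}. The one place where you diverge — and where you make the argument considerably harder than it needs to be — is in how the limit is taken. You use a soft Arzel\`a--Ascoli subsequential limit and then propose to re-derive the entire weak-KAM machinery (the analogues of Lemma \ref{3}, Theorem \ref{weakkam}, Corollary \ref{weakkam1}, Theorem \ref{weakkam2}) for the limit function, which you correctly identify as the main obstacle. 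The paper sidesteps all of this with one observation you did not exploit: by Theorem \ref{gls}, on the region $\Xi_{N}:=\{q<T_{Nh}\gamma\}$ all the functions $b_{T_{kh}\gamma}$ with $k\geq N$ have the same gradient-line structure, hence by the first item of Proposition \ref{wcf} they agree up to a constant there; after the common normalization $b_{k}(p)=0$ they are \emph{identical} on $\Xi_{N}$ for $k\geq N$. So the sequence is eventually constant on each $\Xi_{N}$, the limit literally equals the honest line Busemann function $b_{N}$ on $\Xi_{N}$, and since any ray in $\mathscr{R}_{\alpha}$ starting in $\Xi_{N}$ never leaves $\Xi_{N}$, every item of Theorem \ref{main theorem 1} and the identity $\mathfrak{C}_{\alpha}=\mathscr{R}_{\alpha}^{per}\cup\mathscr{R}_{\alpha}^{+}$ are inherited verbatim from Theorems \ref{vis}, \ref{weakkam}--\ref{weakkam2} and \ref{gls} with no new limiting arguments. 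Your route can be completed (the branching contradiction via Lemma \ref{3} for excluding $\mathscr{R}_{\alpha}^{-}$-rays, and the realization of limiting gradients by $+$-rays, are both sound), but it buys nothing over the gluing argument and carries the risk that each transferred lemma must be checked to use only properties that survive uniform limits; I would recommend inserting the eventual-constancy observation, after which your proof collapses into the paper's.
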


\begin{proof}
For any timelike line $\gamma\in\mathscr{R}_{\alpha}^{per}$, choose a vector $h\in H_{1}(\mathbb{T}^{2},\mathbb{Z})\setminus(\mathfrak{T}\cup-\mathfrak{T})$ such that $T_{h}\gamma>\gamma$. Then we define a series of open domains $\Xi_{k}:=\{q\in\mathbb{R}^{2}|q<T_{kh}\gamma\}$. It is easy to see that $\Xi_{k}\subseteq\Xi_{k+1}$ and $\bigcup_{k=1}^{\infty}\Xi_{k}=\mathbb{R}^{2}$.

Let $b_{0}$ be a representative of $b_{\gamma}$ such that $b_{0}(p)=0$. Then it follows from Theorem \ref{gls} that $b_{0}$ and any representative of $b_{T_{h}\gamma}$ only differ by  a constant on $\Xi_{0}$. So we could choose a representative $b_{1}$ for $b_{T_{h}\gamma}$ such that $b_{1}(p)=0$. One succeed to define $b_{k},k\in\mathbb{N}$ as the representative for $b_{T_{kh}\gamma}$ and satisfies $b_{k}(p)=0$. Since the family of functions $\{b_{k}\}_{k\in\mathbb{N}}$ are uniformly Lipschitz w.r.t. $g_{R}$ and $b_{k}(p)=0$, it admits a limit, say a function $b_{\alpha}^{+}$, w.r.t. the  compact-open topology on $C^{0}(\mathbb{R}^{2},\mathbb{R})$.

Furthermore, for any positive integer $N$, it is obvious from the construction of $b_{k}$ that $b_{k},k\geq N$ are identical on $\Xi_{N}$, so $b_{\alpha}^{+}=b_{N}$ on $\Xi_{N}$. Thus $b_{\alpha}^{+}$ and $b_{N}$ have the same gradients on $\Xi_{N}$ and $b^+_{\alpha}$ is a Lipschitz viscosity solution to the eikonal equation $(*)$.  By the definition of $\Xi_{N}$, any timelike ray in $\mathscr{R}_{\alpha}$ that emanates from some point in $\Xi_{N}$ will stay in $\Xi_{N}$ forever. These observations show that $b_{\alpha}^{+}$ and $b_{N}$ have the same differentiability and the same gradient lines on $\Xi_{N}$. Since $\bigcup_{N=1}^{\infty}\Xi_{N}=\mathbb{R}^{2}$, it follows that $b_{\alpha}^{+}$ is defined on $\mathbb{R}^{2}$ and satisfies all items of Theorem \ref{main theorem 1}. The set of gradient lines for $b_{N}$ on $\Xi_{N}$ equals to
$$
\{\zeta\in\mathscr{R}_{\alpha}^{+}\cup\mathscr{R}_{\alpha}^{per}|\zeta<T_{Nh}\gamma\}.
$$
So we conclude that for $b_{\alpha}^{+}$, the set
$$
\mathfrak{C}_{\alpha}=\mathscr{R}_{\alpha}^{+}\cup\mathscr{R}_{\alpha}^{per}\subseteq\mathscr{R}_{\alpha},
$$
and that $b_{\alpha}^{+}$ differs by a constant if we choose another $\gamma$.

One could proceed the actions $T_{-kh},k\in\mathbb{N}$ on $\gamma$. By the same procedure as above, we could obtain another function $b_{\alpha}^{-}$, of which the set of gradient lines is
$$
\mathfrak{C}_{\alpha}=\mathscr{R}_{\alpha}^{-}\cup\mathscr{R}_{\alpha}^{per}\subseteq\mathscr{R}_{\alpha}.
$$

At last, if $\alpha$ is rational, the sets $\mathscr{R}_{\alpha}^{per}\cup\mathscr{R}_{\alpha}^{+}$ and $\mathscr{R}_{\alpha}^{per}\cup\mathscr{R}_{\alpha}^{-}$ are both $\mathbb{Z}^{2}$-invariant, thus by Proposition \ref{wcf} and Remark \ref{wcf'}, the differentials of $b_{\alpha}^{+},b_{\alpha}^{-}$ are both weakly closed 1-forms on $\mathbb{T}^{2}$.
\end{proof}

\begin{Lem}\label{6}
If $\alpha\in(m^{-},m^{+})$ is a rational asymptotic direction, $\gamma_{1}<\gamma_{2}<\gamma_{3}$ are periodic lines in $\mathscr{M}_{\alpha}^{per}$, then
\begin{equation}
\begin{split}
b_{\gamma_{1}}(\gamma_{3}(0))=b_{\gamma_{1}}(\gamma_{2}(0))+b_{\gamma_{2}}(\gamma_{3}(0)),\\
b_{\gamma_{3}}(\gamma_{1}(0))=b_{\gamma_{2}}(\gamma_{1}(0))+b_{\gamma_{3}}(\gamma_{2}(0)).
\end{split}
\end{equation}
Recall that $b_{\gamma_i}$ have been  normalized such that $b_{\gamma_i}(\gamma_i(0))=0$.
\end{Lem}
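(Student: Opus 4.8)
The plan is to reduce both identities to the statement that two of the three Busemann functions differ by a constant on a half-plane, and then to read off that constant by evaluating at the base points. Throughout I use that $\gamma_1,\gamma_2,\gamma_3$ are periodic, so by Lemma \ref{dom} and Remark \ref{dom'} each $b_{\gamma_i}$ is defined and Lipschitz on all of $\mathbb{R}^{2}$, and I use the normalization $b_{\gamma_i}(\gamma_i(0))=0$.

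For the first identity I would work on the open set $U:=U^{+}(\gamma_2)=\{q\in\mathbb{R}^{2}\mid q>\gamma_2\}$, one of the two components of $\mathbb{R}^{2}\setminus Im(\gamma_2)$. The heart of the argument is to show that $\nabla b_{\gamma_1}(q)=\nabla b_{\gamma_2}(q)$ at every $q\in U$ where both functions are differentiable. Fix such a $q$. By Corollary \ref{weakkam1}, differentiability of $b_{\gamma_2}$ at $q$ yields a unique ray $\zeta\in\mathscr{R}_{\alpha}\cap\mathfrak{C}_{\gamma_2}$ emanating from $q$ with $\nabla b_{\gamma_2}(q)=-\dot{\zeta}(0)$. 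Because $q>\gamma_2$, the explicit description of $\mathfrak{C}_{\gamma_2}$ in Theorem \ref{gls} forces $\zeta\in\mathscr{R}_{\alpha}^{-}$ with $\zeta\geq\gamma_2$. Here the ordering $\gamma_1<\gamma_2$ enters decisively: from $\zeta\geq\gamma_2$ together with $Im(\gamma_2)\cup U^{+}(\gamma_2)\subseteq U^{+}(\gamma_1)$ we get $\zeta\geq\gamma_1$, so $\zeta$ also lies in $\{\eta\in\mathscr{R}_{\alpha}^{-}\mid\eta\geq\gamma_1\}\subseteq\mathfrak{C}_{\gamma_1}$. Since $b_{\gamma_1}$ is differentiable at $q$, Corollary \ref{weakkam1} provides exactly one ray in $\mathscr{R}_{\alpha}\cap\mathfrak{C}_{\gamma_1}$ from $q$; as $\zeta$ is such a ray, it must be that one, whence $\nabla b_{\gamma_1}(q)=-\dot{\zeta}(0)=\nabla b_{\gamma_2}(q)$.

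With the gradients agreeing on the common differentiability set of $U$, the first item of Proposition \ref{wcf} gives a constant $c$ with $b_{\gamma_1}=b_{\gamma_2}+c$ on $U$, and by continuity of the Lipschitz functions $b_{\gamma_1},b_{\gamma_2}$ this persists on $\overline{U}$. Evaluating at $\gamma_2(0)\in Im(\gamma_2)\subseteq\overline{U}$ gives $c=b_{\gamma_1}(\gamma_2(0))$, while $\gamma_3>\gamma_2$ places $\gamma_3(0)\in U$, and evaluating there yields $b_{\gamma_1}(\gamma_3(0))=b_{\gamma_2}(\gamma_3(0))+b_{\gamma_1}(\gamma_2(0))$, the first identity. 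The second identity follows by the mirror-image argument on the lower component $V:=U^{-}(\gamma_2)=\{q<\gamma_2\}$, now comparing $b_{\gamma_2}$ and $b_{\gamma_3}$: for $q<\gamma_2$ the unique $\mathfrak{C}_{\gamma_2}$-ray from $q$ lies in $\{\eta\in\mathscr{R}_{\alpha}^{+}\mid\eta\leq\gamma_2\}$, and since $\gamma_2<\gamma_3$ the relation $\eta\leq\gamma_2$ upgrades to $\eta\leq\gamma_3$, placing the ray in $\mathfrak{C}_{\gamma_3}$; uniqueness then forces $\nabla b_{\gamma_3}=\nabla b_{\gamma_2}$ on the common differentiability set of $V$, so $b_{\gamma_3}=b_{\gamma_2}+c'$ on $\overline{V}$ with $c'=b_{\gamma_3}(\gamma_2(0))$, and evaluating at $\gamma_1(0)\in V$ gives $b_{\gamma_3}(\gamma_1(0))=b_{\gamma_2}(\gamma_1(0))+b_{\gamma_3}(\gamma_2(0))$.

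I expect the main obstacle to be exactly the gradient-matching step, namely verifying that the unique gradient ray for the middle line $\gamma_2$ is simultaneously a gradient ray for the relevant outer line. This rests on the monotonicity $\zeta\geq\gamma_2\Rightarrow\zeta\geq\gamma_1$ (respectively $\eta\leq\gamma_2\Rightarrow\eta\leq\gamma_3$) feeding into the sharp description of $\mathfrak{C}_{\gamma}$ in Theorem \ref{gls} and the uniqueness-at-differentiable-points statement of Corollary \ref{weakkam1}. Everything afterward, i.e.\ the constant-difference principle of Proposition \ref{wcf} and the two evaluations, is routine.
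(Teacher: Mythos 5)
Your proof is correct and follows essentially the same route as the paper: the paper likewise deduces from Theorem \ref{gls} that $db_{\gamma_{1}}=db_{\gamma_{2}}$ on the closed strip bounded by $\gamma_{2}$ and $\gamma_{3}$, applies the first item of Proposition \ref{wcf} to get a constant difference, and evaluates at $\gamma_{2}(0)$ and $\gamma_{3}(0)$ (and symmetrically for the second identity). The only cosmetic differences are that you work on the half-plane $\{q>\gamma_{2}\}$ rather than the strip and spell out the gradient-matching step in detail, where you should also allow the $\mathfrak{C}_{\gamma_{2}}$-ray at $q$ to lie in $\mathscr{R}_{\alpha}^{per}$ (when $q$ sits on a periodic line), which is harmless since $\mathscr{R}_{\alpha}^{per}\subseteq\mathfrak{C}_{\gamma_{1}}$ as well.
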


\begin{proof}
Denote by $\Pi_{ij}$ the closed strip  bounded by $\gamma_{i}$ and $\gamma_{j}$ (not necessary neighboring), where $i<j$ and $i,j=1,2,3$.
We shall only prove
\begin{equation}
b_{\gamma_{1}}(\gamma_{3}(0))=b_{\gamma_{1}}(\gamma_{2}(0))+b_{\gamma_{2}}(\gamma_{3}(0)),
\end{equation}
since the other equality can be proved in the same way.

By Theorem \ref{gls}, $db_{\gamma_{1}}=db_{\gamma_{2}}$ on $\Pi_{23}$, thus by the first item of Proposition \ref{wcf},
$$
b_{\gamma_{1}}(x)-b_{\gamma_{2}}(x)\equiv const.,\hspace{0.3cm}x\in\Pi_{23}.
$$
So we obtain
\begin{eqnarray*}
&&b_{\gamma_{1}}(\gamma_{3}(0))\\
&=&b_{\gamma_{1}}(\gamma_{2}(0))+(b_{\gamma_{1}}(\gamma_{3}(0))-b_{\gamma_{1}}(\gamma_{2}(0)))\\
&=&b_{\gamma_{1}}(\gamma_{2}(0))+(b_{\gamma_{2}}(\gamma_{3}(0))-b_{\gamma_{2}}(\gamma_{2}(0)))\\
&=&b_{\gamma_{1}}(\gamma_{2}(0))+b_{\gamma_{2}}(\gamma_{3}(0)).
\end{eqnarray*}
\end{proof}

\begin{The}\label{per fol}
Let $\underline{\gamma}<\overline{\gamma}$ be two distinct periodic timelike lines with the same rational asymptotic direction $\alpha$ and $b_{\underline{\gamma}}$ ($b_{\overline{\gamma}}$) be the Busemann function associated to $\underline{\gamma}$ ($\overline{\gamma}$) such that $b_{\underline{\gamma}}(\underline{\gamma}(0))=0$ ($b_{\overline{\gamma}}(\overline{\gamma}(0))=0$). Then the sum $b_{\underline{\gamma}}(\overline{\gamma}(0))+b_{\overline{\gamma}}(\underline{\gamma}(0))\geq0$, the equality holds if and only if the strip bounded by $\underline{\gamma}$ and $\overline{\gamma}$ admits a foliation with leaves in $\mathscr{M}^{per}_{\alpha}$.
\end{The}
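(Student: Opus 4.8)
The plan is to analyze the single function $\phi:=b_{\overline{\gamma}}-b_{\underline{\gamma}}$ on the closed strip $\Pi$ bounded by $\underline{\gamma}$ and $\overline{\gamma}$. First I would record three structural facts. Since $\underline{\gamma},\overline{\gamma}$ are periodic with the common primitive class $h$, their normalized Busemann functions satisfy $b_{\underline{\gamma}}(T_{h}x)=b_{\underline{\gamma}}(x)+\mathfrak{l}(h)$ and likewise for $\overline{\gamma}$, so $\phi$ is $T_{h}$-invariant and descends to a continuous function on the compact quotient of $\Pi$. Because every periodic line lies in $\mathfrak{C}_{\underline{\gamma}}\cap\mathfrak{C}_{\overline{\gamma}}$ by Theorem \ref{gls}, along any periodic line both $b_{\underline{\gamma}}$ and $b_{\overline{\gamma}}$ grow with unit rate, so $\phi$ is \emph{constant} on each periodic line; writing $c_{-}=\phi|_{\underline{\gamma}}=b_{\overline{\gamma}}(\underline{\gamma}(0))$ and $c_{+}=\phi|_{\overline{\gamma}}=-b_{\underline{\gamma}}(\overline{\gamma}(0))$, the target sum is exactly $S:=c_{-}-c_{+}$. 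Finally, along any $b_{\underline{\gamma}}$-gradient ray $\zeta$ (a unit-speed maximizer) the fourth item of Proposition \ref{Pro Bu} gives $b_{\overline{\gamma}}(\zeta(\tau_{2}))-b_{\overline{\gamma}}(\zeta(\tau_{1}))\ge\tau_{2}-\tau_{1}$ while $b_{\underline{\gamma}}(\zeta(\tau_{2}))-b_{\underline{\gamma}}(\zeta(\tau_{1}))=\tau_{2}-\tau_{1}$, so $\phi$ is non-decreasing along $b_{\underline{\gamma}}$-gradient rays and, symmetrically, non-increasing along $b_{\overline{\gamma}}$-gradient rays.

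For the inequality $S\ge0$ I would argue directly with the reverse triangle inequality. Using the normalization one checks that the line Busemann function is $b_{\underline{\gamma}}(x)=\lim_{u\to\infty}[u-d(x,\underline{\gamma}(u))]$, valid for every $x$ since $I^{-}(\underline{\gamma})=\mathbb{R}^{2}$ by Remark \ref{dom'}, with the analogous formula for $\overline{\gamma}$. Fix $s$ large so that $\underline{\gamma}(0)\ll\overline{\gamma}(s)$, then $t$ large so that $\overline{\gamma}(s)\ll\underline{\gamma}(t)$ (both possible because $I^{\pm}=\mathbb{R}^{2}$). The reverse triangle inequality gives $t=d(\underline{\gamma}(0),\underline{\gamma}(t))\ge d(\underline{\gamma}(0),\overline{\gamma}(s))+d(\overline{\gamma}(s),\underline{\gamma}(t))$. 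Since $\overline{\gamma}\in\mathfrak{C}_{\underline{\gamma}}$ we have $b_{\underline{\gamma}}(\overline{\gamma}(s))=s+b_{\underline{\gamma}}(\overline{\gamma}(0))$, hence $d(\overline{\gamma}(s),\underline{\gamma}(t))=t-s-b_{\underline{\gamma}}(\overline{\gamma}(0))+o_{t}(1)$ as $t\to\infty$; substituting, letting $t\to\infty$ and then $s\to\infty$ (where $s-d(\underline{\gamma}(0),\overline{\gamma}(s))\to b_{\overline{\gamma}}(\underline{\gamma}(0))$) collapses the inequality to $b_{\overline{\gamma}}(\underline{\gamma}(0))\ge -b_{\underline{\gamma}}(\overline{\gamma}(0))$, that is $S\ge0$.

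For the equality statement I would first extract from Lemma \ref{6} the cocycle identity $S(\gamma,\gamma'')=S(\gamma,\gamma')+S(\gamma',\gamma'')$ for ordered periodic lines $\gamma<\gamma'<\gamma''$; together with $S\ge0$ this makes $S$ additive and monotone along the periodic lines of $\Pi$. If $\Pi$ is foliated by leaves in $\mathscr{M}_{\alpha}^{per}$, then through each point passes a unique ray of $\mathscr{R}_{\alpha}$, namely its leaf, so by Corollary \ref{weakkam1} both $b_{\underline{\gamma}}$ and $b_{\overline{\gamma}}$ are differentiable with $\nabla b_{\underline{\gamma}}=\nabla b_{\overline{\gamma}}=-\dot{\gamma'}$ along each leaf $\gamma'$; the first item of Proposition \ref{wcf} then forces $\phi\equiv\mathrm{const}$, whence $S=c_{-}-c_{+}=0$. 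Conversely, suppose $S=0$ but $\Pi$ is not foliated, so there are neighboring periodic lines $\gamma_{1}<\gamma_{2}$ bounding a non-trivial gap; additivity and $S\ge0$ give $S(\gamma_{1},\gamma_{2})=0$. Inside this gap the $b_{\gamma_{1}}$-gradient rays are asymptotic to $\gamma_{1}$ and the $b_{\gamma_{2}}$-gradient rays to $\gamma_{2}$ by Theorem \ref{rational}, and, passing to the compact $T_{h}$-quotient so that the relevant $\phi$ attains its boundary values in the limit, the monotonicity from the first paragraph squeezes $\phi$ to the common boundary value, i.e. $\phi\equiv\mathrm{const}$ on the gap. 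Then $b_{\gamma_{1}}$ and $b_{\gamma_{2}}$ differ by a constant there, so by Corollary \ref{weakkam1} they have identical gradient rays; but a timelike ray is asymptotic to at most one periodic line in the future, while these two families are asymptotic to $\gamma_{1}$ and to $\gamma_{2}$ respectively, a contradiction. Hence $\Pi$ is foliated.

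The main obstacle I anticipate is the \emph{strict} positivity $S(\gamma_{1},\gamma_{2})>0$ across a genuine gap: the reverse-triangle computation and the cocycle relation are essentially bookkeeping, but the strictness is exactly what separates the foliated case from the non-foliated one. Carrying it out rigorously requires combining the asymptotic-to-$\gamma_{i}$ description of the gradient rays from Theorem \ref{rational} with the compactness provided by $T_{h}$-invariance, so as to guarantee that $\phi$ genuinely reaches its boundary values along those rays; the uniform Lipschitz control of Theorem \ref{lip Bu} is what makes this limit legitimate.
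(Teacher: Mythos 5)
Your proof is correct, but the decisive step is handled by a genuinely different mechanism than the paper's. The paper also reduces to a single gap between neighboring periodic lines via the cocycle identity of Lemma \ref{6}, but there it proves \emph{strict} positivity directly and quantitatively: it writes $A(\underline{\gamma},\overline{\gamma})$ as a limit of the combination $d(\underline{\gamma}(-kT),\underline{\gamma}(kT))+d(\overline{\gamma}(-kT),\overline{\gamma}(kT))-d(\overline{\gamma}(-kT),\underline{\gamma}(kT))-d(\underline{\gamma}(-kT),\overline{\gamma}(kT))$, then uses the two heteroclinic lines $\gamma^{\pm}\in\mathscr{M}_{\alpha}^{\pm}$ guaranteed inside the gap by Theorem \ref{structure for lines}, their unique transversal crossing, and the curve lengthening lemma to extract a uniform surplus $2\epsilon_{0}>0$ — a Mather/Bangert-style crossing estimate. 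You instead argue by contradiction from $S(\gamma_{1},\gamma_{2})=0$: the function $\phi=b_{\gamma_{2}}-b_{\gamma_{1}}$ is monotone along the gradient rays of each Busemann function, attains the boundary values $\phi|_{\gamma_{1}}$, $\phi|_{\gamma_{2}}$ in the limit (legitimized by the uniform Lipschitz bound of Theorem \ref{lip Bu} and the asymptotic behaviour from Theorems \ref{rational} and \ref{gls}), hence is squeezed to a constant on the gap; this forces $\mathfrak{C}_{\gamma_{1}}$ and $\mathfrak{C}_{\gamma_{2}}$ to coincide there, which is impossible since their rays are asymptotic to different periodic lines. Your route buys a softer, more conceptual argument that avoids the crossing estimate entirely (and your reverse-triangle-inequality derivation of $S\geq0$ is a clean addition the paper does not state separately); what it gives up is the explicit per-gap lower bound $2\epsilon_{0}$, which is harmless here since only positivity is needed for the equivalence. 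Both the forward implication (foliation forces $db_{\gamma_{1}}=db_{\gamma_{2}}$, hence $\phi\equiv\mathrm{const}$ by Proposition \ref{wcf}) and the additivity bookkeeping match the paper's use of Theorem \ref{gls} and Lemma \ref{6}.
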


\begin{Rem}
By Proposition \ref{subray bu} and Theorem \ref{gls}, the sum $b_{\underline{\gamma}}(\overline{\gamma}(0))+b_{\overline{\gamma}}(\underline{\gamma}(0))$ does not change if we choose another pair of initial points of $\underline{\gamma}$ and $\overline{\gamma}$ respectively.
\end{Rem}

\begin{proof}
Denote the minimal period of periodic timelike lines (with asymptotic direction $\alpha$) by $T$. Assume $(q,p)\in\overline{\alpha}$ and $p,q$ are relatively prime, $T$ is the Lorentzian length of maximal closed timelike curves on $(\mathbb{T}^{2},g)$ with homology class $(q,p)$. Since all maximal closed timelike curves with homology class $(q,p)$ have the same Lorentzian length, $T$ is independent of the choice of $\underline{\gamma},\overline{\gamma}$.

First, we assume that $\underline{\gamma}$ and $\overline{\gamma}$ are neighboring. By the definition of Busemann function,
\begin{equation}
\begin{split}
b_{\underline{\gamma}}(\overline{\gamma}(0))=\lim_{k\rightarrow\infty}[2kT-d(\overline{\gamma}(0),\underline{\gamma}(2kT)]\\
=\lim_{k\rightarrow\infty}[d(\underline{\gamma}(-kT),\underline{\gamma}(kT))-d(\overline{\gamma}(-kT),\underline{\gamma}(kT)].
\end{split}
\end{equation}
Interchanging $\underline{\gamma}$ with $\overline{\gamma}$ we get
\begin{equation}
b_{\overline{\gamma}}(\underline{\gamma}(0))=\lim_{k\rightarrow\infty}[d(\overline{\gamma}(-kT),\overline{\gamma}(kT))-d(\underline{\gamma}(-kT),\overline{\gamma}(kT)].
\end{equation}
Adding up the above two equations, we get
\begin{eqnarray*}
&&b_{\underline{\gamma}}(\overline{\gamma}(0))+b_{\overline{\gamma}}(\underline{\gamma}(0))\\
&=&\lim_{k\rightarrow\infty}[d(\underline{\gamma}(-kT),\underline{\gamma}(kT))+d(\overline{\gamma}(-kT),\overline{\gamma}(kT))\\
&-&d(\overline{\gamma}(-kT),\underline{\gamma}(kT)-d(\underline{\gamma}(-kT),\overline{\gamma}(kT)].
\end{eqnarray*}

On the other hand, by Theorem \ref{structure for lines}, there exist two timelike lines $\gamma^{-}\in\mathscr{M}_{\alpha}^{-}$ and $\gamma^{+}\in\mathscr{M}_{\alpha}^{+}$ such that $\underline{\gamma}<\gamma^{\pm}<\overline{\gamma}$. Thus for any sufficiently large $k$, there exist two points $\gamma^{-}(a_{k}),\gamma^{-}(b_{k})$ ($\gamma^{+}(c_{k}),\gamma^{+}(d_{k})$) on $\gamma^{-}$ ($\gamma^{+}$) such that:
\begin{equation}\label{eq:1}
\begin{split}
d_{R}(\overline{\gamma}(-kT),\gamma^{-}(a_{k}))<\frac{1}{4k};d_{R}(\underline{\gamma}(kT),\gamma^{-}(b_{k}))<\frac{1}{4k},\\
d_{R}(\underline{\gamma}(-kT),\gamma^{+}(c_{k}))<\frac{1}{4k};d_{R}(\overline{\gamma}(kT),\gamma^{+}(d_{k}))<\frac{1}{4k}.\\
\end{split}
\end{equation}
Here $a_{k},c_{k}\rightarrow-\infty,b_{k},d_{k}\rightarrow\infty$ as $k\rightarrow\infty$.

Let $L(\alpha)$ denote the Lipschitz constant of $d(\cdot,\cdot)$ w.r.t. $d_{R}(\cdot,\cdot)$. By Equation \ref{eq:1} and the fact that $\gamma^{-}$ intersects $\gamma^{+}$ transversally at a unique point $q_{0}=\gamma^{-}(s_{0})=\gamma^{+}(t_{0})$, we obtain that there exists an $\epsilon_{0}>0$ depending only on $\underline{\gamma}, \overline{\gamma}$ themselves such that for any sufficiently large $k$,
\begin{eqnarray*}
&&d(\underline{\gamma}(-kT),\underline{\gamma}(kT))+d(\overline{\gamma}(-kT),\overline{\gamma}(kT))+\frac{2L(\alpha)}{k}\\
&\geq&d(\gamma^{-}(a_{k}),\gamma^{+}(d_{k}))+d(\gamma^{+}(c_{k}),\gamma^{-}(b_{k}))+\frac{L(\alpha)}{k}\\
&\geq&L^{g}(\gamma^{+}|_{[a_{k},t_{0}]}\ast\gamma^{-}|_{[s_{0},d_{k}]})+L^{g}(\gamma^{+}|_{[c_{k},t_{0}]}\ast\gamma^{-}|_{[s_{0},b_{k}]})+2\epsilon_{0}+\frac{L(\alpha)}{k}\\
&=&L^{g}(\gamma^{+}|_{[a_{k},b_{k}]})+L^{g}(\gamma^{-}|_{[c_{k},d_{k}]})+2\epsilon_{0}+\frac{L(\alpha)}{k}\\
&\geq&d(\underline{\gamma}(-kT),\overline{\gamma}(kT))+d(\overline{\gamma}(-kT),\underline{\gamma}(kT))+2\epsilon_{0}.
\end{eqnarray*}
Denote $A(\underline{\gamma}, \overline{\gamma}):=b_{\underline{\gamma}}(\overline{\gamma}(0))+b_{\overline{\gamma}}(\underline{\gamma}(0))$ when $\underline{\gamma}, \overline{\gamma}$ are neighboring. The above inequality shows that
\begin{equation}\label{eq:2}
A(\underline{\gamma}, \overline{\gamma})\geq2\epsilon_{0}>0.
\end{equation}
By Lemma \ref{6}, we obtain
\begin{equation}\label{eq:3}
b_{\underline{\gamma}}(\overline{\gamma}(0))+b_{\overline{\gamma}}(\underline{\gamma}(0))=\sum_{i}A(\gamma_{i},\gamma_{i+1}),
\end{equation}
where the sum is taken over every pair of neighboring timelike lines $\gamma_{i}<\gamma_{i+1}$ in $\mathscr{M}_{\alpha}^{per}$ between $\underline{\gamma}$ and $\overline{\gamma}$.

By Equations \ref{eq:2} and \ref{eq:3}, the sum $b_{\underline{\gamma}}(\overline{\gamma}(0))+b_{\overline{\gamma}}(\underline{\gamma}(0))$ is zero if and only if there does not exist any pair of neighboring timelike periodic lines, this can only occur when the strip bounded by $\underline{\gamma}$ and $\overline{\gamma}$ admits a foliation by timelike periodic lines with asymptotic direction $\alpha$.
\end{proof}

\begin{The}\label{non-diff}
Let $\omega_{\alpha}^{+}$ and $\omega_{\alpha}^{-}$ denote the weakly closed 1-forms on $\mathbb{T}^{2}$ determined by the differentials of $b_{\alpha}^{+}$ and $b_{\alpha}^{-}$ respectively, then $[\omega_{\alpha}^{+}]=[\omega_{\alpha}^{-}]$ if and only if $\mathbb{R}^{2}$ admits a foliation whose leaves belong to $\mathscr{M}^{per}_{\alpha}$.
\end{The}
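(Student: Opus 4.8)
The plan is to reduce the equality of cohomology classes to a pairing computation against a convenient basis of $H_{1}(\mathbb{T}^{2},\mathbb{R})$ and then to recognize the resulting quantity as the nonnegative sum studied in Theorem \ref{per fol}. By Proposition \ref{wcf} and Remark \ref{wcf'}, the class $[\omega_{\alpha}^{\pm}]$ is completely determined by the homomorphism $H_{1}(\mathbb{T}^{2},\mathbb{Z})\ni h\mapsto b_{\alpha}^{\pm}(x+h)-b_{\alpha}^{\pm}(x)$ (independent of $x$). I would fix the irreducible integer class $(q,p)\in\overline{\alpha}$ and the integer class $h\notin\mathfrak{T}\cup-\mathfrak{T}$ used to build $b_{\alpha}^{\pm}$ in Theorem \ref{rational global vis}, normalized so that $T_{h}\gamma>\gamma$ for a fixed $\gamma\in\mathscr{M}_{\alpha}^{per}$. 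Since $(q,p)\in\mathfrak{T}^{\circ}$ while $h\notin\mathfrak{T}\cup-\mathfrak{T}$, the pair $\{(q,p),h\}$ is an $\mathbb{R}$-basis of $H_{1}(\mathbb{T}^{2},\mathbb{R})$, so $[\omega_{\alpha}^{+}]=[\omega_{\alpha}^{-}]$ iff the two classes agree on both $(q,p)$ and $h$.

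First I would dispose of the $(q,p)$-direction, where the two classes always coincide. Taking $x=\gamma(0)$, the point $x+(q,p)=\gamma(T)$ (with $T$ the $g$-length period), and on a large strip $\Xi_{N}$ the function $b_{\alpha}^{+}$ agrees up to a constant with $b_{\mu}$ for $\mu=T_{Nh}\gamma>\gamma$, while $b_{\alpha}^{-}$ agrees with $b_{\nu}$ for $\nu=T_{-Nh}\gamma<\gamma$. Because the subrays of $\gamma$ lie in $\mathscr{R}_{\alpha}^{per}\subseteq\mathfrak{C}_{\mu}\cap\mathfrak{C}_{\nu}$, the gradient-line identity \eqref{eq:weakkam line} gives $b_{\mu}(\gamma(T))-b_{\mu}(\gamma(0))=T=b_{\nu}(\gamma(T))-b_{\nu}(\gamma(0))$, so $[\omega_{\alpha}^{+}]((q,p))=T=[\omega_{\alpha}^{-}]((q,p))$. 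Hence the whole question reduces to the $h$-pairing. For this I would again replace $b_{\alpha}^{+}$ by $b_{\mu}$ and $b_{\alpha}^{-}$ by $b_{\nu}$ on a strip $\Xi_{N}$ large enough to contain $\gamma(0)$ and $\gamma'(0)$, where $\gamma'=T_{h}\gamma$, so that $\gamma<\gamma'<\mu$ and $\nu<\gamma<\gamma'$. The differences $b_{\alpha}^{\pm}(\gamma'(0))-b_{\alpha}^{\pm}(\gamma(0))$ are independent of the chosen representative, and applying the additivity relations of Lemma \ref{6} to the triples $\gamma<\gamma'<\mu$ and $\nu<\gamma<\gamma'$ collapses the $\mu$- and $\nu$-dependence and yields
\begin{equation*}
[\omega_{\alpha}^{+}](h)=-b_{\gamma'}(\gamma(0)),\qquad [\omega_{\alpha}^{-}](h)=b_{\gamma}(\gamma'(0)).
\end{equation*}

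Subtracting gives $[\omega_{\alpha}^{+}](h)-[\omega_{\alpha}^{-}](h)=-\bigl(b_{\gamma}(\gamma'(0))+b_{\gamma'}(\gamma(0))\bigr)=-A(\gamma,T_{h}\gamma)$, precisely the quantity of Theorem \ref{per fol} for the ordered pair $\gamma<T_{h}\gamma$. By that theorem $A(\gamma,T_{h}\gamma)\geq0$ with equality iff the strip bounded by $\gamma$ and $T_{h}\gamma$ admits a foliation by leaves in $\mathscr{M}_{\alpha}^{per}$. Combining with the previous paragraph, $[\omega_{\alpha}^{+}]=[\omega_{\alpha}^{-}]$ iff $A(\gamma,T_{h}\gamma)=0$ iff that strip is foliated by periodic lines. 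Finally I would promote the strip foliation to all of $\mathbb{R}^{2}$: since $T_{kh}$ is an isometric deck transformation preserving the asymptotic direction $\alpha$ and hence $\mathscr{M}_{\alpha}^{per}$, it carries the foliated strip onto the strip between $T_{kh}\gamma$ and $T_{(k+1)h}\gamma$, and these strips exhaust $\mathbb{R}^{2}$ because $\bigcup_{k}\Xi_{k}=\mathbb{R}^{2}$; the shared boundary leaves $T_{kh}\gamma$ make the foliations match up, giving a global foliation with leaves in $\mathscr{M}_{\alpha}^{per}$ (and conversely such a global foliation restricts to one of the strip).

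The main obstacle I anticipate is the bookkeeping in the second paragraph: one must be careful that on $\Xi_{N}$ the functions $b_{\alpha}^{\pm}$ really do agree with $b_{\mu},b_{\nu}$ up to an additive constant (so differences are representative-independent), and that the two instances of Lemma \ref{6} are applied with the correct ordering and normalization $b_{\gamma_{i}}(\gamma_{i}(0))=0$ so that the telescoping is genuinely $N$-independent and produces exactly $-A(\gamma,T_{h}\gamma)$. Everything else is a formal consequence of Theorem \ref{gls}, Lemma \ref{6}, and Theorem \ref{per fol}.
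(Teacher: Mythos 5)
Your proof is correct and follows essentially the same route as the paper's: both reduce the comparison of $[\omega_{\alpha}^{+}]$ and $[\omega_{\alpha}^{-}]$ to the quantity $b_{\gamma}(T_{h}\gamma(0))+b_{T_{h}\gamma}(\gamma(0))$ and then invoke Theorem \ref{per fol}. Your telescoping via Lemma \ref{6} together with the explicit check in the $(q,p)$-direction and the explicit promotion of the strip foliation to a global one merely make precise steps that the paper leaves implicit in its decomposition $b_{\alpha}^{\pm}=l_{\alpha}^{\pm}+\sigma_{\alpha}^{\pm}$.
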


\begin{proof}
By the second item of Proposition \ref{wcf},
\begin{equation}
b_{\alpha}^{\pm}(x)=l_{\alpha}^{\pm}(x)+\sigma_{\alpha}^{\pm}(x),
\end{equation}
where $l_{\alpha}^{\pm}$ are linear on $\mathbb{R}^{2}$ and $\sigma_{\alpha}^{\pm}$ are periodic functions. Our aim is to prove that $l_{\alpha}^{+}\neq l_{\alpha}^{-}$.

By Theorem \ref{rational global vis}, for the Lorentzian Busemann function $b_{\gamma}$ associated to any timelike line $\gamma\in\mathscr{M}^{per}_{\alpha}$, there are two periodic functions $\sigma_{1},\sigma_{2}$ such that
\begin{equation}\label{eq:4}
\begin{split}
b_{\gamma}(x)=l_{\alpha}^{-}(x)+\sigma_{1}(x),\hspace{0.2cm}x>\gamma,\\
b_{\gamma}(x)=l_{\alpha}^{+}(x)+\sigma_{2}(x),\hspace{0.2cm}x<\gamma.
\end{split}
\end{equation}

Choosing a vector $h$ in $H_{1}(\mathbb{T}^{2},\mathbb{Z})\setminus(\mathfrak{T}\cup-\mathfrak{T})$ such that $T_{h}\gamma>\gamma$, by Equation \ref{eq:4} we obtain
\begin{equation}\label{eq:5}
\begin{split}
b_{\gamma}(T_{h}\gamma(0))-b_{\gamma}(\gamma(0))=l_{\alpha}^{-}(h),\\
b_{T_{h}\gamma}(T_{h}\gamma(0))-b_{T_{h}\gamma}(\gamma(0))=l_{\alpha}^{+}(h).
\end{split}
\end{equation}
Subtracting the second equation in Equations \ref{eq:5} from the first, we get
\begin{equation}\label{eq:6}
b_{\gamma}(T_{h}\gamma(0))+b_{T_{h}\gamma}(\gamma(0))=l_{\alpha}^{-}(h)-l_{\alpha}^{+}(h).
\end{equation}
By Theorem \ref{per fol}, $b_{\gamma}(T_{h}\gamma(0))+b_{T_{h}\gamma}(\gamma(0))$ is zero if and only if the strip bounded by $\gamma$ and $T_{h}\gamma$ admits a $\mathscr{M}_{\alpha}^{per}$-foliation, i.e. a foliation of which all leaves are in $\mathscr{M}_{\alpha}^{per}$. So by Equation \ref{eq:6}, $l_{\alpha}^{+}=l_{\alpha}^{-}$ if and only if $\mathbb{R}^{2}$ admits a foliation whose leaves belong to $\mathscr{M}_{\alpha}^{per}$.
\end{proof}

\section{Differentiability of the stable time separation}
S. Suhr established the existence of the stable time separation (which is the counterpart of the stable norm in the Riemannian case, see Definition \ref{std} in the Appedix) $\mathfrak{l}:\mathfrak{T}\rightarrow\mathbb{R}$ for general class A spacetimes $(M,g)$ in his paper \cite{Su2}. In this section, we shall discuss some the differentiability of $\mathfrak{l}$ when $M=\mathbb{T}^{2}$ and prove Theorem \ref{non-diff sts}.

At first, we shall give the definition and several basic properties of the stable time separation.

\begin{The}[{\cite[Theorem 4.1]{Su2}}]\label{sts}
Let $(M,g)$ be a class A spacetime. Then there exists a unique concave function $\mathfrak{l}:\mathfrak{T}\rightarrow\mathbb{R}$ such that for every $\epsilon>0$, there is a constant $C(\epsilon)<\infty$ with
\begin{enumerate}
  \item $|\mathfrak{l}(h)-d(x,y)|\leq C(\epsilon)$ for all $x,y\in\overline{M}$ with $y-x=h\in\mathfrak{T}^{\epsilon}$,
  \item $\mathfrak{l}(\lambda h)=\lambda\mathfrak{l}(h)$, for all $\lambda\geq0$,
  \item $\mathfrak{l}(h+h^{\prime})\geq\mathfrak{l}(h)+\mathfrak{l}(h^{\prime})$,
  \item $\mathfrak{l}(h)=\limsup_{h^{\prime}\rightarrow h}\mathfrak{l}(h)$ for $h\in\partial\mathfrak{T}$ and $h^{\prime}\in\mathfrak{T}$.
\end{enumerate}
We call $\mathfrak{l}$ the stable time separation.
\end{The}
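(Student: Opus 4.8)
The plan is to construct $\mathfrak{l}$ directly by a superadditive limit, in complete analogy with Federer's construction of the stable norm, the only structural change being that the reverse triangle inequality for the Lorentzian distance makes the relevant sequences superadditive rather than subadditive. Fix a basepoint $x_{0}\in\overline{M}$. For an integer class $h\in\mathfrak{T}^{\circ}\cap H_{1}(M,\mathbb{Z})$ and $n$ large enough that $x_{0}+nh\in J^{+}(x_{0})$ (possible for $n\ge n_{0}(h)$ since $\mathfrak{T}$ approximates $J^{+}(x_{0})-x_{0}$ up to the bounded defect of Proposition \ref{cone}), set $a_{n}:=d(x_{0},x_{0}+nh)$. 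Because each deck transformation $T_{nh}$ is a $g$-isometry, the reverse triangle inequality gives $a_{n+m}\ge a_{n}+a_{m}$, so by the superadditive version of Fekete's lemma the limit $\mathfrak{l}(h):=\lim_{n}a_{n}/n=\sup_{n}a_{n}/n$ exists. Finiteness follows from Corollary \ref{causal} together with the pointwise bound $\sqrt{-g(V,V)}\le c_{0}\,|V|$ valid for every causal $V$ (with $c_{0}=\max_{p,|W|=1}\sqrt{|g_{p}(W,W)|}<\infty$ by compactness of $\mathbb{T}^{2}$): every causal segment from $x_{0}$ to $x_{0}+nh$ has $g$-length at most $c_{0}\,C(g,g_{R})\,d_{R}(x_{0},x_{0}+nh)\le c_{0}C(g,g_{R})\,n\,d_{R}(x_{0},x_{0}+h)$. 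Independence of $x_{0}$ is immediate, since changing the basepoint alters $a_{n}$ by a quantity bounded independently of $n$ (again via Corollary \ref{causal}), which disappears after dividing by $n$.

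Next I would record the algebraic properties on integer classes and propagate them. Homogeneity $\mathfrak{l}(kh)=k\,\mathfrak{l}(h)$ for $k\in\mathbb{N}$ is read off the definition by passing to the subsequence $(nk)_n$; this lets me set $\mathfrak{l}(\lambda h):=\lambda\mathfrak{l}(h)$ on every rational ray in $\mathfrak{T}^{\circ}$, giving property (2) on the rationals. Superadditivity $\mathfrak{l}(h+h')\ge\mathfrak{l}(h)+\mathfrak{l}(h')$, i.e. property (3), comes from the reverse triangle inequality applied to the causal chain $x_{0}\le x_{0}+nh\le x_{0}+n(h+h')$ followed by division by $n$. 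A positively $1$-homogeneous superadditive function on a convex cone is concave, so $\mathfrak{l}$ is concave on the rational rays of $\mathfrak{T}^{\circ}$; being finite and concave on a dense subset of the open convex cone $\mathfrak{T}^{\circ}$, it extends uniquely to a locally Lipschitz concave function on all of $\mathfrak{T}^{\circ}$, with (2) and (3) persisting by continuity. Finally I would pass to $\partial\mathfrak{T}$ by the upper-semicontinuous regularization $h\mapsto\limsup_{h'\to h,\,h'\in\mathfrak{T}^{\circ}}\mathfrak{l}(h')$, which is precisely property (4) and preserves homogeneity and superadditivity.

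The substantive point is property (1): the uniform comparison $|\mathfrak{l}(h)-d(x,y)|\le C(\epsilon)$ whenever $y-x=h\in\mathfrak{T}^{\epsilon}$, with $\mathfrak{T}^{\epsilon}$ as in Definition \ref{epsilon-cone}. One inequality is free, since $\mathfrak{l}(h)=\sup_{n}a_{n}/n\ge a_{1}=d(x_{0},x_{0}+h)$ and $d(x,y)$ depends on $y-x$ only up to the bounded basepoint error, whence $d(x,y)\le\mathfrak{l}(h)+\mathrm{const}$. The reverse bound $d(x,y)\ge\mathfrak{l}(h)-C(\epsilon)$, uniform over the strict cone, is the heart of the matter and the step I expect to be hardest. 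The idea is to control the superadditivity defect $\mathfrak{l}(h)-d(x_{0},x_{0}+h)$ uniformly on $\mathfrak{T}^{\epsilon}$: take a near-maximal causal segment realizing $d(x,x+nh)$ for large $n$, subdivide it into $n$ pieces whose endpoint displacements remain inside a bounded tube around the ray $\mathbb{R}_{+}h$ (this tube control is exactly what $h\in\mathfrak{T}^{\epsilon}$ buys, through Corollary \ref{causal} and the existence and oscillation estimates for maximizers in Proposition \ref{causal maximizer} and Theorem \ref{structure for lines}), and estimate the telescoping loss. The crucial feature is that every constant can be taken to depend on $\epsilon$ alone, not on the particular $h\in\mathfrak{T}^{\epsilon}$ nor on the endpoints; this uniformity is where the strictness of the cone and the uniform causality estimate of Corollary \ref{causal} are indispensable.

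Uniqueness then follows formally from (1)--(4). If $\mathfrak{l}'$ also satisfies the four conditions, fix $h\in\mathfrak{T}^{\circ}$ and $\epsilon$ with $h\in\mathfrak{T}^{\epsilon}$; applying (1) to the pair $(x_{0},x_{0}+nh)$ gives $|\mathfrak{l}'(nh)-d(x_{0},x_{0}+nh)|\le C(\epsilon)$, and dividing by $n$ and invoking the homogeneity (2) of $\mathfrak{l}'$ forces $\mathfrak{l}'(h)=\lim_{n}d(x_{0},x_{0}+nh)/n=\mathfrak{l}(h)$. Thus $\mathfrak{l}'=\mathfrak{l}$ on $\mathfrak{T}^{\circ}$, and property (4) propagates the equality to $\partial\mathfrak{T}$, which completes the argument.
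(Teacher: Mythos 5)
First, a point of comparison: the paper does not prove Theorem \ref{sts} at all --- it is quoted verbatim from Suhr \cite[Theorem 4.1]{Su2} as imported background, so there is no internal proof to measure your argument against. Your proposal reconstructs what is essentially the route Suhr (following Burago's construction of the stable norm) actually takes: define $\mathfrak{l}$ on integral classes by a Fekete limit of $d(x_{0},x_{0}+nh)/n$, which exists by superadditivity coming from the reverse triangle inequality and the fact that deck transformations are $g$-isometries; propagate homogeneity and superadditivity to rational rays; extend by concavity to $\mathfrak{T}^{\circ}$ and by upper-semicontinuous regularization to $\partial\mathfrak{T}$; deduce uniqueness formally from (1) and (2). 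That skeleton, and in particular the uniqueness argument, is correct.

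The genuine gap is in property (1), and it is larger than your phrase ``the step I expect to be hardest'' suggests, because the same unproved ingredient is silently used earlier. Twice you invoke the claim that $d(x,y)$ depends on $y-x$ ``only up to a bounded error'' (once to get basepoint-independence of $a_{n}$ up to a bounded additive quantity, once to get the ``free'' inequality $d(x,y)\leq\mathfrak{l}(h)+\mathrm{const}$). This quasi-invariance of $d$ under moving the endpoints within a fixed displacement class is not immediate from Corollary \ref{causal}: that corollary only bounds $d$ from above by a multiple of $d_{R}$, while the needed two-sided comparison requires rerouting a maximizer from $x$ to $y$ through $x'$ and $y'$ using viciousness, the reverse triangle inequality, and the fact that the rerouting cost is uniformly bounded on a fundamental domain --- and the resulting constant degenerates as $h$ approaches $\partial\mathfrak{T}$, which is exactly why the statement carries an $\epsilon$. (For basepoint-independence of the limit $\lim a_{n}/n$ a weaker sandwich argument suffices and is genuinely easy; the bounded-difference claim you state is not.) More seriously, your sketch of the hard inequality $d(x,y)\geq\mathfrak{l}(h)-C(\epsilon)$ subdivides a near-maximal segment from $x$ to $x+nh$ into $n$ pieces and compares each piece's length to $d$ of a pair with displacement approximately $h$; to close the telescoping estimate you must compare those $n$ values of $d$ back to $d(x,y)$ itself, which is again the quasi-invariance, and you must show the per-piece error is $O(1/n)$ in aggregate rather than $O(1)$ per piece. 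As written the argument is circular at that point. The correct execution (this is the content of \cite[Section 4]{Su2} and of Proposition \ref{A1-2}) controls the oscillation of maximizers around the ray $\mathbb{R}_{+}h$ and the Lipschitz dependence of $d$ on its endpoints over $\mathfrak{T}^{\epsilon}$ before the subdivision, and neither is supplied here. Everything downstream of (1) --- concavity, the boundary extension, uniqueness --- is fine.
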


The above theorem can be seen as a description for $\mathfrak{l}$ in the most general cases.
\begin{Rem}
For a general class A spacetime $(M,g)$, we note that:
\begin{enumerate}
  \item Since $\mathfrak{l}$ is concave on the stable time cone $\mathfrak{T}$, it is locally Lipschitz and differentiable almost everywhere on $\mathfrak{T}^{\circ}$.
  \item Since $\mathfrak{l}$ is linear on every half line $\overline{\alpha}\subseteq\mathfrak{T}$ emanating from the origin, it is differentiable along any radial direction on $\mathfrak{T}$.
\end{enumerate}
\end{Rem}

Let us return to our setting that $M=\mathbb{T}^{2}$ and $\overline{M}=\mathbb{R}^{2}$. Define
$$
D_{v}\mathfrak{l}(h):=\lim_{t\rightarrow0^{+}}\frac{\mathfrak{l}(h+tv)-\mathfrak{l}(h)}{t} ,$$
i.e. the direction derivative of $\mathfrak{l}$ at $h$ along $v$. By the concavity of $\mathfrak{l}$, $D_{v}\mathfrak{l}(h)$ always exists when $h\in\mathfrak{T}^{\circ}$. We notice that for a fixed $h\in\mathfrak{l}^{-1}(1)\cap\mathfrak{T}^{\circ}$, $D_{v}\mathfrak{l}(h)$ is positively homogenous of degree one as a function of direction $v$. We shall denote this function by $D_{\cdot}\mathfrak{l}(h)$. In general, we  say a concave function $f$ is differentiable at $x$ along straight line $L$ if $f$ is differentiable at $x$ when it is restricted to $L$.

Define the level set $\mathfrak{l}^{-1}(1)$ to be the unit sphere of $\mathfrak{l}$. By the second item in Theorem \ref{sts}, there is a unique $h\in\overline{\alpha}$ such that $\mathfrak{l}(h)=1$ for every $\alpha\in(m^{-},m^{+})$.

First we have the following lemma concerning the relationship between the direction derivatives of $\mathfrak{l}$ on its unit sphere and Busemann functions.

\begin{Lem}\label{5}
Let $(\mathbb{T}^{2},g)$ be a class A Lorentzian 2-torus and $\mathfrak{l}:\mathfrak{T}\rightarrow\mathbb{R}$ be the  stable time separation. Then for any $h\in\mathfrak{l}^{-1}(1)\cap\mathfrak{T}^{\circ}$ and any $\gamma\in\mathscr{M}_{\alpha}$ with $h\in \overline{\alpha}$, there is a constant $A$ depending only on $g,g_{R}$ and $\alpha$ such that $|b_{\gamma}(x)-D_{v}\mathfrak{l}(h)|\leq A$. Here, $v=x-\gamma(0)\in H_{1}(\mathbb{T}^{2},\mathbb{R})$.
\end{Lem}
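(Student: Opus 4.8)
The plan is to translate the defining limit of the Busemann function into a difference of values of the stable time separation $\mathfrak{l}$, and then control that difference by the superdifferential of $\mathfrak{l}$ at $h$. Normalize $b_{\gamma}$ by $b_{\gamma}(\gamma(0))=0$, so that $b_{\gamma}(x)=\lim_{s\to\infty}[\,s-d(x,\gamma(s))\,]$ and, writing $h_{s}:=\gamma(s)-\gamma(0)\in H_{1}(\mathbb{T}^{2},\mathbb{R})$, we have $\gamma(s)-x=h_{s}-v$. Since $\alpha\in(m^{-},m^{+})\subseteq\mathfrak{T}^{\circ}$, the number $\theta_{\alpha}:=\mathrm{dist}_{\|\cdot\|}(\alpha,\partial\mathfrak{T})$ is positive, and for every fixed $x$ and all large $s$ both $h_{s}$ and $h_{s}-v$ lie in the strict cone $\mathfrak{T}^{\theta_{\alpha}/2}$; hence Theorem \ref{sts}(1) replaces $d$ by $\mathfrak{l}$ with an error at most $C(\theta_{\alpha}/2)$, \emph{uniformly} in $x$. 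Using $d(\gamma(0),\gamma(s))=s$ this yields $b_{\gamma}(x)=\lim_{s\to\infty}[\,\mathfrak{l}(h_{s})-\mathfrak{l}(h_{s}-v)\,]+O(1)$. By Corollary \ref{ray osc} and Theorem \ref{structure for lines}(1) the oscillation of $h_{s}$ about $\overline{\alpha}$ is bounded by a constant $B=B(g,g_{R})$, so we may write $h_{s}=sh+w_{s}$ with $\|w_{s}\|\le B$ and $\mathfrak{l}(h_{s})=s+O(1)$. The whole statement thus reduces to bounding $\mathfrak{l}(h_{s})-\mathfrak{l}(h_{s}-v)$ against $D_{v}\mathfrak{l}(h)$ uniformly in $x$ and $\gamma$.

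For the lower bound I would use pure convex analysis. Since $\mathfrak{l}$ is concave and positively homogeneous of degree one (Theorem \ref{sts}(2)--(3)), its superdifferential $\partial^{+}\mathfrak{l}(h)$ is a compact convex set of covectors, each $p\in\partial^{+}\mathfrak{l}(h)$ satisfying Euler's relation $\langle p,h\rangle=\mathfrak{l}(h)=1$, the global domination $\mathfrak{l}(y)\le\langle p,y\rangle$, and $\|p\|\le L(\alpha)$ (the Lipschitz constant of $\mathfrak{l}$ on $\mathfrak{T}^{\theta_{\alpha}/2}$, cf.\ Theorem \ref{lip Bu}); moreover $D_{v}\mathfrak{l}(h)=\min_{p\in\partial^{+}\mathfrak{l}(h)}\langle p,v\rangle$, attained at some $p^{\ast}$. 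Then
$$
\mathfrak{l}(h_{s})-\mathfrak{l}(h_{s}-v)\ \ge\ \mathfrak{l}(h_{s})-\langle p^{\ast},h_{s}-v\rangle=\big(\mathfrak{l}(h_{s})-\langle p^{\ast},h_{s}\rangle\big)+\langle p^{\ast},v\rangle\ \ge\ D_{v}\mathfrak{l}(h)-2L(\alpha)B,
$$
using $\mathfrak{l}(h_{s})\ge s-L(\alpha)B$ and $\langle p^{\ast},h_{s}\rangle=s+\langle p^{\ast},w_{s}\rangle\le s+L(\alpha)B$. This gives $b_{\gamma}(x)\ge D_{v}\mathfrak{l}(h)-A$ with $A$ depending only on $g,g_{R},\alpha$.

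The upper bound is the main obstacle. For any supergradient $q_{s}\in\partial^{+}\mathfrak{l}(h_{s}-v)$, Euler's relation at $h_{s}-v$ together with domination at $h_{s}$ gives $\mathfrak{l}(h_{s})-\mathfrak{l}(h_{s}-v)\le\langle q_{s},h_{s}\rangle-\langle q_{s},h_{s}-v\rangle=\langle q_{s},v\rangle$, so I must show $\langle q_{s},v\rangle\le D_{v}\mathfrak{l}(h)+O(1)$ uniformly. The naive appeal to semicontinuity of $\partial^{+}\mathfrak{l}$ fails, and the clean lower-bound trick has no symmetric counterpart; what saves it is a \emph{sign-matching} between the two transverse directions. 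The point $h_{s}-v=\gamma(s)-x$ lies, up to the bounded oscillation $B$, on the side of the half-line $\overline{\alpha}$ dictated by the transverse component of $-v$, so as $s\to\infty$ its superdifferential is driven onto the face of $\partial^{+}\mathfrak{l}(h)$ lying on that same side; and this is precisely the face on which $\langle\cdot,v\rangle$ attains its minimum, because the minimizing supergradient $p^{\ast}$ for $v$ sits on the side opposite to the transverse sign of $v$. Thus when $v$ has a transverse component larger than the oscillation scale, $q_{s}$ coincides with $p^{\ast}$ for large $s$ and $\langle q_{s},v\rangle=D_{v}\mathfrak{l}(h)$ exactly; in the borderline regime where $v$ is nearly tangent to $\overline{\alpha}$, the ambiguity in $q_{s}$ is harmless since then the transverse part of $v$ is itself bounded by $B$, whence $|\langle q_{s}-p^{\ast},v\rangle|\le \mathrm{diam}\,\partial^{+}\mathfrak{l}(h)\cdot B$, a constant depending only on $g,g_{R},\alpha$.

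Combining the two bounds yields $|b_{\gamma}(x)-D_{v}\mathfrak{l}(h)|\le A$ with $A=A(g,g_{R},\alpha)$; all constants entering ($\theta_{\alpha}$, $C(\theta_{\alpha}/2)$, $B$, $L(\alpha)$, $\mathrm{diam}\,\partial^{+}\mathfrak{l}(h)$) are uniform over $x\in\mathbb{R}^{2}$ and over $\gamma\in\mathscr{M}_{\alpha}$, as required. I expect the delicate point to be making the alignment argument of the third paragraph rigorous for a general concave $\mathfrak{l}$ (rather than a piecewise-linear model), i.e.\ quantifying how fast $\partial^{+}\mathfrak{l}(h_{s}-v)$ is absorbed into the correct face of $\partial^{+}\mathfrak{l}(h)$; here the bounded-oscillation estimate of Theorem \ref{structure for lines}(1) reconciled with the radial homogeneity of $\mathfrak{l}$ (which makes $\partial^{+}\mathfrak{l}$ constant along rays) is the essential input.
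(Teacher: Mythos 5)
Your opening reduction --- replacing $d(x,\gamma(s))$ by $\mathfrak{l}(\gamma(s)-x)$ via Theorem \ref{sts}(1) and then by $\mathfrak{l}(sh-v)$ via the bounded oscillation from Theorem \ref{structure for lines}(1) together with the local Lipschitz constant of $\mathfrak{l}$ --- is exactly the paper's first step, and your lower bound via a minimizing supergradient $p^{\ast}\in\partial^{+}\mathfrak{l}(h)$ is correct. The problem is the upper bound, and it is not merely a missing detail: the inequality you are aiming at, $\mathfrak{l}(h_{s})-\mathfrak{l}(h_{s}-v)\leq D_{v}\mathfrak{l}(h)+O(1)$ uniformly in $v$, is false whenever $\mathfrak{l}$ fails to be differentiable at $h$ --- which is precisely the rational case that Theorem \ref{non-diff sts} is about, so it cannot be excluded. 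By positive homogeneity, $s-\mathfrak{l}(sh-v)=-s\bigl[\mathfrak{l}(h-v/s)-\mathfrak{l}(h)\bigr]\rightarrow -D_{-v}\mathfrak{l}(h)$ as $s\rightarrow\infty$ (the difference quotient of a concave function is monotone in $t$, so the limit exists); for concave $\mathfrak{l}$ one has $-D_{-v}\mathfrak{l}(h)\geq D_{v}\mathfrak{l}(h)$, and the gap is positively homogeneous of degree one in $v$, hence unbounded over $v=x-\gamma(0)$, $x\in\mathbb{R}^{2}$, as soon as it is nonzero for a single $v$. Concretely, for the model $\mathfrak{l}(y_{1},y_{2})=y_{1}-a|y_{2}|$, $h=(1,0)$, $v=(0,N)$: $\mathfrak{l}(sh)-\mathfrak{l}(sh-v)=aN$ while $D_{v}\mathfrak{l}(h)=-aN$. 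This also shows that your ``sign-matching'' heuristic points the wrong way: $h_{s}-v$ sits on the side of $\overline{\alpha}$ determined by $-v$, so the supergradient $q_{s}$ there pairs with $v$ to give $\max_{p\in\partial^{+}\mathfrak{l}(h)}\langle p,v\rangle=-D_{-v}\mathfrak{l}(h)$, not the minimum $D_{v}\mathfrak{l}(h)$.

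The paper's own argument is shorter and avoids superdifferentials entirely: it writes $\mathfrak{l}(Th+\gamma(0)-x)=T\,\mathfrak{l}\bigl(h+\tfrac{\gamma(0)-x}{T}\bigr)$ and shows that $T-d(x,\gamma(T))$ differs from $-T\bigl[\mathfrak{l}(h+\tfrac{\gamma(0)-x}{T})-\mathfrak{l}(h)\bigr]$ by at most $L(\tfrac{\epsilon}{2})\kappa+C(\tfrac{\epsilon}{2})$; letting $T\rightarrow\infty$ identifies $b_{\gamma}(x)$, up to that constant, with the one-sided directional derivative of $\mathfrak{l}$ at $h$ in the direction $\gamma(0)-x$, i.e.\ with the quantity $-D_{-v}\mathfrak{l}(h)$ that your own computation produces. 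The lemma must be read with that direction (this is also the reading under which the applications in Section 7, where both $kv$ and $-kv$ are substituted, go through). So the salvageable part of your argument is the first paragraph plus the homogeneity observation; the two-sided superdifferential scheme should be dropped, since its upper half targets a statement that is false in the non-differentiable case.
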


\begin{proof}
For any fixed $x\in\mathbb{R}^{2}$, the mapping $\varphi_{x}$ which assigns every point $y\in\mathbb{R}^{2}$ to the vector $y-x\in H_{1}(\mathbb{T}^{2},\mathbb{R})$ is surjective.

For any $h\in\mathfrak{l}^{-1}(1)\cap\mathfrak{T}^{\circ}$, there is an asymptotic direction $\alpha\in(m^{-},m^{+})$ such that $h\in\overline{\alpha}\subseteq\mathfrak{T}^{\epsilon}$ for some $\epsilon>0$. By Theorem \ref{structure for lines}, every timelike line $\gamma$ with asymptotic direction $\alpha$ satisfies
\begin{equation}\label{ieq:12}
\|\gamma(T)-\gamma(0)-Th\|\leq \kappa(\epsilon,g,g_{R})
\end{equation}
for any $T\geq0$. Since $\mathfrak{l}$ is Lipschitz on $\mathfrak{T}^{\frac{\epsilon}{2}}$, we denote the Lipschitz constant by $L(\frac{\epsilon}{2})$. By Inequality \ref{ieq:12}, we have
\begin{equation}\label{ieq:8}
\begin{split}
&|\mathfrak{l}(Th+\gamma(0)-x)-\mathfrak{l}(\gamma(T)-x)|\\
\leq&L(\frac{\epsilon}{2})\|\gamma(T)-\gamma(0)-Th\|\\
\leq&L(\frac{\epsilon}{2})\kappa(\epsilon,g,g_{R})
\end{split}
\end{equation}
for any $T>0$.
By Theorem \ref{sts} and Proposition \ref{asym'}, there exist constants $T_0>0 $ and $C(\frac{\epsilon}{2})$ such that
\begin{equation}\label{ieq:9}
|\mathfrak{l}(\gamma(T)-x)-d(x,\gamma(T))|\leq C(\frac{\epsilon}{2})
\end{equation}
for $T\geq T_{0}>0$. Thus, we obtain
\begin{equation}
\begin{split}
&|T-d(x,\gamma(T))+T[\mathfrak{l}(h+\frac{\gamma(0)-x}{T})-\mathfrak{l}(h)]|\\
=&|T\mathfrak{l}(h+\frac{\gamma(0)-x}{T})-d(x,\gamma(T))|\\
=&|\mathfrak{l}(Th+\gamma(0)-x)-d(x,\gamma(T))|\\
\leq&|\mathfrak{l}(Th+\gamma(0)-x)-\mathfrak{l}(\gamma(T)-x)|+|\mathfrak{l}(\gamma(T)-x)-d(x,\gamma(T))|\\
\leq&L(\frac{\epsilon}{2})\kappa(\epsilon,g,g_{R})+C(\frac{\epsilon}{2}).
\end{split}
\end{equation}
Here, the first and second equalities follow from the fact that $h\in\mathfrak{l}^{-1}(1)$ and the second item of Theorem \ref{sts}, the first inequality follows by adding up the Inequalities \ref{ieq:8}, \ref{ieq:9}.
Setting  $v=x-\gamma(0)$ and letting $T$ go to infinity, we obtain
\begin{equation}
|b_{\gamma}(x)-D_{v}\mathfrak{l}(h)|\leq L(\frac{\epsilon}{2})\kappa(\epsilon,g,g_{R})+C(\frac{\epsilon}{2}).
\end{equation}
By choosing $A=L(\frac{\epsilon}{2})\kappa(\epsilon,g,g_{R})+C(\frac{\epsilon}{2})$, we complete the proof.
\end{proof}

The second lemma concerns the equivalence of the differentiability of the stable time separation and its unit sphere.
\begin{Lem}\label{dus}
Let $(\mathbb{T}^{2},g)$ be a class A Lorentzian 2-torus and $\mathfrak{l}:\mathfrak{T}\rightarrow\mathbb{R}$ its stable time separation. For  $h\in\mathfrak{T}^{\circ}$, the unit sphere of $\mathfrak{l}$ is differentiable at $h$ if and only if $\mathfrak{l}$ is differentiable at $h$.
\end{Lem}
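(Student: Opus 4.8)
The plan is to reduce the statement to a standard duality in two-dimensional convex analysis, exploiting that $\mathfrak{l}$ is concave and positively homogeneous of degree one on $\mathfrak{T}$. First I would reduce to the case $\mathfrak{l}(h)=1$: since $\mathfrak{l}$ is $1$-homogeneous, a supergradient $p$ (i.e.\ a vector in $H_{1}(\mathbb{T}^{2},\mathbb{R})\cong\mathbb{R}^{2}$ with $\mathfrak{l}(x)\le\mathfrak{l}(h)+\langle p,x-h\rangle$ for all $x\in\mathfrak{T}$) at $\lambda h$ coincides with one at $h$ for every $\lambda>0$; consequently $\mathfrak{l}$ is differentiable at $h$ if and only if it is differentiable at the point $\mathfrak{l}^{-1}(1)\cap\overline{\alpha}$, which is exactly the point at which differentiability of the unit sphere is tested. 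So from now on I assume $\mathfrak{l}(h)=1$, i.e.\ $h\in\mathfrak{l}^{-1}(1)\cap\mathfrak{T}^{\circ}$.

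The central object is the convex superlevel set $C:=\{x\in\mathfrak{T}\mid\mathfrak{l}(x)\ge1\}$, whose boundary inside $\mathfrak{T}^{\circ}$ is precisely the unit sphere $\mathfrak{l}^{-1}(1)$ and which is convex because $\mathfrak{l}$ is concave. The key step is to establish an affine bijection between the superdifferential $\partial\mathfrak{l}(h)$ and the supporting lines of $C$ at $h$. I would show: (i) every supergradient $p$ of $\mathfrak{l}$ at $h$ satisfies Euler's relation $\langle p,h\rangle=\mathfrak{l}(h)=1$ and $\mathfrak{l}(x)\le\langle p,x\rangle$ for all $x\in\mathfrak{T}$, so that $\{x\mid\langle p,x\rangle=1\}$ supports $C$ at $h$; and (ii) conversely, any supporting line of $C$ at $h$, written $\{\langle q,x\rangle=c\}$ with $C\subseteq\{\langle q,\cdot\rangle\ge c\}$, must have $c>0$, and after normalizing $p:=q/c$ one recovers $\langle p,h\rangle=1$ together with $\langle p,x\rangle\ge\mathfrak{l}(x)$ for all $x$ by testing on $x/\mathfrak{l}(x)\in C$. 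These two facts identify $\partial\mathfrak{l}(h)$, which lies in the affine line $\{\langle\cdot,h\rangle=1\}$ and is a compact segment by concavity, with the family of supporting lines of $C$ at $h$.

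With this correspondence in hand, the conclusion follows from two classical facts of convex analysis: a concave function on an open subset of $\mathbb{R}^{2}$ is differentiable at a point if and only if its superdifferential there is a singleton; and a convex curve is differentiable at a point if and only if it admits a unique supporting line there. Since the bijection above is affine, and hence preserves the property of consisting of a single element, $\partial\mathfrak{l}(h)$ is a singleton exactly when $C$ has a unique supporting line at $h$, i.e.\ exactly when the unit sphere is differentiable at $h$; this is the asserted equivalence.

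I expect the main obstacle to be the verification of direction (ii) of the bijection, specifically the claim $c>0$ that rules out a supporting line through the origin. This is precisely where the hypothesis $h\in\mathfrak{T}^{\circ}$ (rather than $h\in\partial\mathfrak{T}$) is essential: if the supporting line passed through the origin and through the interior ray $\overline{\alpha}$, the unit-sphere points lying on the two angular sides of $h$ would fall on opposite sides of that line, contradicting $C\subseteq\{\langle q,\cdot\rangle\ge c\}$. This is the only place where the geometry of the cone $\mathfrak{T}$ enters the argument; everything else is formal convex duality between $\mathfrak{l}$ and its level set.
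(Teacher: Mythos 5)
Your proof is correct, but it takes a genuinely different route from the paper's. The paper argues the two implications separately: for one direction it takes the differential $P$ of $\mathfrak{l}$ and notes that a corner of the level curve would force $\langle P,h_{1}\rangle=\langle P,h_{2}\rangle=0$ for two independent vectors, hence $P=0$, contradicting linear growth along $\overline{\alpha}$; for the converse it observes that $\mathfrak{l}$ is differentiable along the radial direction (by homogeneity) and along the tangent direction of the level curve, and invokes the appendix Lemma \ref{convex diff} (a concave function differentiable along $n$ independent lines is differentiable). You instead set up a single duality: Euler's relation gives $\langle p,h\rangle=\mathfrak{l}(h)$ and $\mathfrak{l}\leq\langle p,\cdot\rangle$ for every supergradient $p$, which identifies $\partial\mathfrak{l}(h)$ with the supporting lines of $C=\{\mathfrak{l}\geq1\}$ at $h$, so both differentiability statements become the single condition that this set is a singleton. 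Your approach buys symmetry (both implications from one bijection) and isolates exactly where $h\in\mathfrak{T}^{\circ}$ enters (excluding a supporting line through the origin); the paper's buys brevity and sidesteps the normalization issues entirely. Two small points you should tighten: first, to conclude $c>0$ you must also exclude $c<0$, which is immediate because $2h\in C$ forces $2c\geq c$; second, testing $x/\mathfrak{l}(x)$ only yields $\langle p,x\rangle\geq\mathfrak{l}(x)$ where $\mathfrak{l}(x)>0$, and for $x\in\partial\mathfrak{T}$ one should appeal to the $\limsup$ property in item (4) of Theorem \ref{sts} to extend the supergradient inequality to all of $\mathfrak{T}$. Neither is a real gap.
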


\begin{proof}
Fix a point $h$ on $\mathfrak{l}^{-1}(1)\cap\mathfrak{T}^{\circ}$, then $h\in\overline{\alpha}$ for some $\alpha\in(m^{-},m^{+})$. From Theorem \ref{sts}, $\mathfrak{l}$ is concave on $\mathfrak{T}^{\circ}$, then the set $C(1):=\{h\in\mathfrak{T}^{\circ}|\mathfrak{l}(h)\geq1\}$ is convex. So the unit sphere of $\mathfrak{l}$ is the boundary of $C(1)$ and is locally a graph of a convex function.

Assume that $\mathfrak{l}$ is differentiable at $h$, denote the differential of $\mathfrak{l}$ at $h$ by $P$. If $\mathfrak{l}^{-1}(1)$ is non-differentiable at $h$, there are two linearly independent vectors $h_{1},h_{2}\in T_{h}\mathbb{R}^{2}$ (this follows from $\mathfrak{l}^{-1}(1)$ is locally a graph of a convex function) such that $\langle P,h_{1}\rangle=\langle P,h_{2}\rangle=0$. So we get $P=0$ which contradicts the fact that $\mathfrak{l}$ grows linearly on $\overline{\alpha}$.

Assume that $\mathfrak{l}^{-1}(1)$ is differentiable at $h$, then there exists a nonzero vector $h^{\prime}$ such that the curve $h+th^{\prime},t\in(-\delta,\delta)$ is tangent to $\mathfrak{l}^{-1}(1)$ at $h$ and $h,h^{\prime}$ are linearly independent. In other words, $\mathfrak{l}$ is differentiable at $h$ along two staight lines with directional vectors $h$ (by linearity) and $h^{\prime}$. By Lemma \ref{convex diff}, we conclude that $\mathfrak{l}$ is differentiable at $h$.
\end{proof}

Now we can prove Theorem \ref{non-diff sts}. We will call $h\in\mathfrak{l}^{-1}(1)$ rational (irrational) if $h\in\overline{\alpha}$ with some rational (irrational) asymptotic direction $\alpha$.

\noindent \textit{Proof of Theorem \ref{non-diff sts}.} By Lemma \ref{dus}, it is sufficient to prove $\mathfrak{l}:\mathfrak{T}^{\circ}\rightarrow\mathbb{R}$ is differentiable (non-differentiable) at any irrational (rational) $h\in\mathfrak{l}^{-1}(1)$. By Lemma \ref{convex diff}, this amounts to prove that there exists a non-radial direction $v$ such that $\mathfrak{l}|_{\{h+tv|t\in\mathbb{R}\}\cap\mathfrak{T}^{\circ}}$ is differentiable (or non-differentiable) at an irrational (a rational) $h$. Since $\mathfrak{l}$ is concave on $\mathfrak{T}^{\circ}$, we only need to prove there exists a non-radial direction $v$ such that $D_{v}\mathfrak{l}(h)=(\neq)-D_{-v}\mathfrak{l}(h)$ when $h$ is irrational (rational). In the following, let $b_{\gamma}$ be the Lorentzian Busemann function for some timelike line $\gamma$ with asymptotic direction in $(m^{-},m^{+})$.

Assume that $\alpha\in(m^{-},m^{+})$ is irrational and $\gamma\in\mathscr{M}_{\alpha}$. By Theorem \ref{irrational global vis}, we could write $b_{\gamma}(x)=l_{\alpha}(x)+\sigma_{\gamma}(x)$, where $l_{\alpha}$ is  a linear function  depending only on $\alpha$, $\sigma_{\gamma}$ is a periodic function on $\mathbb{R}^{2}$. Choose any nonzero $v\in H_{1}(\mathbb{T}^{2},\mathbb{Z})$, then $v\notin\overline{\alpha}$. By Lemma \ref{5}, for any $k\in\mathbb{Z}$, there exists some constant $A$ such that
\begin{equation}
|l_{\alpha}(kv)-D_{kv}\mathfrak{l}(h)|\leq A.
\end{equation}
By the linearity of $l_{\alpha}(\cdot)$ and the positive homogeneity of $D_{\cdot}\mathfrak{l}(h)$,
\begin{equation}
D_{v}\mathfrak{l}(h)=l_{\alpha}(v)=-l_{\alpha}(-v)=D_{-v}\mathfrak{l}(h).
\end{equation}

Assume that $\alpha\in(m^{-},m^{+})$ is rational and $\gamma\in\mathscr{M}_{\alpha}^{per}$. By Theorem \ref{gls} and Theorem \ref{rational global vis}, we can write
\begin{equation}
\begin{split}
b_{\gamma}(x)=l_{\alpha}^{-}(x)+\sigma_{1}(x),\hspace{0.2cm}x>\gamma,\\
b_{\gamma}(x)=l_{\alpha}^{+}(x)+\sigma_{2}(x),\hspace{0.2cm}x<\gamma,
\end{split}
\end{equation}
where $l_{\alpha}^{\pm}$ are linear functions  depending only on $\alpha$, $\sigma_{1,2}$ are periodic functions on $\mathbb{R}^{2}$. We choose a vector $v$ in $H_{1}(\mathbb{T}^{2},\mathbb{Z})\setminus(\mathfrak{T}\cup-\mathfrak{T})$ such that $T_{v}\gamma>\gamma$. By Lemma \ref{5}, for any $k\in\mathbb{N}$, there exists some constant $A$ such that
\begin{equation}
\begin{split}
|l_{\alpha}^{-}(kv)-D_{kv}\mathfrak{l}(h)|\leq A,\\
|l_{\alpha}^{+}(-kv)-D_{-kv}\mathfrak{l}(h)|\leq A.
\end{split}
\end{equation}
By Theorem \ref{non-diff} and the positive homogeneity of $l_{\alpha}^{\pm}(\cdot)$ and $D_{\cdot}\mathfrak{l}(h)$,
\begin{equation}
D_{v}\mathfrak{l}(h)=l_{\alpha}^{-}(v)\neq-l_{\alpha}^{+}(-v)=D_{-v}\mathfrak{l}(h).
\end{equation}\qed

\begin{Rem}
In the setting of class A Lorentzian 2-tori, by almost the same discussion as in \cite[Section 4, Page 386]{Ma}, it is easy to prove that $\mathfrak{l}^{-1}(1)$ is strictly concave on $\mathfrak{T}^{\circ}$.
\end{Rem}

\section{Appendix}
In this section, we collect part of elementary concepts and results in global Lorentzian geometry which are frequently used in this article. For a comprehensive introduction to this topic, see standard textbooks \cite{B-E-E}, \cite{O'N-B book}. Our presentation strongly relies on Suhr's papers \cite{Su1}, \cite{Su2}.

First, we shall state a result concerning a closed Riemannian manifold $(M,g_{R})$.
\begin{The}[\cite{Bu}]\label{std}
Let $(M,g_{R})$ be a compact Riemannian manifold. Then there exists a unique norm $\|\cdot\|:H_{1}(M,\mathbb{R})\rightarrow\mathbb{R}$ and a constant std$(g_{R})<\infty$ such that
$$
|\text{dist}(x,y)-\|x-y\||\leq\text{std}(g_{R})
$$
for any $x,y\in\overline{M}$.
Here $\|\cdot\|$ is called to be  the stable norm of $g_{R}$ on $H_{1}(M,\mathbb{R})$.
\end{The}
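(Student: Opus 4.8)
The plan is to realise the stable norm through minimal representing lengths on $M$ and then to build a bounded-error dictionary between it and the Riemannian distance on the abelian cover $\overline{M}$. For an integral class $h\in H_{1}(M,\mathbb{Z})$ set $\ell(h):=\inf\{L^{g_{R}}(c):c\text{ a closed loop in }M,\ [c]=h\}$; since every integral class is carried by a single loop and loops may be concatenated, $\ell$ is subadditive, $\ell(h+h')\le\ell(h)+\ell(h')$. By Fekete's lemma $\|h\|:=\lim_{n\to\infty}\ell(nh)/n=\inf_{n}\ell(nh)/n$ exists, and I would extend it to $H_{1}(M,\mathbb{Q})$ by positive homogeneity and then to $H_{1}(M,\mathbb{R})$ by continuity, checking that the limit is a genuine norm: homogeneity and the triangle inequality are inherited from $\ell$, while nondegeneracy follows by pairing $h$ with a fixed closed $1$-form of bounded comass, which yields $\|h\|\ge c|h|$ with $c>0$ (here compactness of $M$ is used). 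Uniqueness of a norm satisfying the stated inequality is then automatic, because that inequality forces $\|\cdot\|$ to be the asymptotic cone of $\operatorname{dist}$.

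The second, easy half of the estimate is $\|x-y\|\le\operatorname{dist}(x,y)+C_{1}$. Given $x,y\in\overline{M}$, take a minimizing $g_{R}$-geodesic between them and project it to a path $c$ in $M$ from $\pi(y)$ to $\pi(x)$ of length $\operatorname{dist}(x,y)$. Closing $c$ up with an auxiliary arc of length $\le\operatorname{diam}(M,g_{R})$ produces a loop whose homology class lies within bounded $\|\cdot\|$-distance of $x-y$ (the discrepancy coming only from the closing arc, which stays in a compact region) and whose length is at most $\operatorname{dist}(x,y)+\operatorname{diam}(M,g_{R})$. Since $\|[\text{loop}]\|\le\ell([\text{loop}])\le L^{g_{R}}(\text{loop})$, this gives the claim with $C_{1}$ depending only on $g_{R}$.

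The third, substantial step is the reverse inequality $\operatorname{dist}(x,y)\le\|x-y\|+C_{2}$. Let $\Gamma$ denote the deck group and $\sigma_{h}$ the deck transformation of class $h$. First I would reduce to integral displacements: because $\overline{M}/\Gamma=M$ is compact and the lattice $H_{1}(M,\mathbb{Z})$ has bounded covering radius in the stable norm, one may move $x$ a bounded $g_{R}$-distance by a deck transformation so that $x-y=h\in H_{1}(M,\mathbb{Z})$, at the cost of an additive constant. It then remains to bound $\operatorname{dist}(y,\sigma_{h}y)$, which equals $\ell(h)$ up to $2\operatorname{diam}(M,g_{R})$ by the same lifting–projecting of loops. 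Thus everything reduces to the uniform gap estimate $\ell(h)\le\|h\|+C$ for all $h\in H_{1}(M,\mathbb{Z})$.

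\textbf{Main obstacle.} This last estimate is exactly the content of Burago's theorem on periodic metrics \cite{Bu} and is the only genuinely hard point. \emph{A priori} a single representative of $h$ may be far longer than $\|h\|=\inf_{n}\ell(nh)/n$, and subadditivity alone — even restricted to the ray $\{nh\}_{n}$ — does not bound the defect $\ell(h)-\|h\|$, since a nonnegative subadditive sequence of sublinear growth need not be bounded. The resolution is geometric: take near-optimal long cycles representing $Nh$, lift them to quasi-geodesics from a point $p$ to $\sigma_{h}^{N}p$, and use a pigeonhole/averaging argument over the translates $\sigma_{h}^{-k}$ to show that the configuration is, up to an error independent of $h$, periodic, so that the cost per period agrees with $\|h\|$ within an additive constant. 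Carrying this comparison through uniformly in $h$ is the crux; granting it, combining the three steps and absorbing all constants into $\operatorname{std}(g_{R})$ completes the proof.
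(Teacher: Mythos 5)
The paper does not prove this statement at all: Theorem \ref{std} is imported verbatim from Burago's paper on periodic metrics and used as a black box, so there is no internal argument to measure your proposal against. On its own terms, your skeleton is the correct and standard one: defining $\|h\|=\lim_{n}\ell(nh)/n$ via Fekete, extending by homogeneity and continuity to $H_{1}(M,\mathbb{R})$, getting nondegeneracy by pairing with closed $1$-forms, proving $\|x-y\|\le\operatorname{dist}(x,y)+C_{1}$ by projecting a minimizing geodesic and closing it up, reducing the converse to the uniform estimate $\ell(h)\le\|h\|+C$ over integral $h$, and deducing uniqueness from the asymptotic-cone characterisation. The bookkeeping you gloss over (torsion classes, the bounded ambiguity in the meaning of $x-y$ for non-lattice displacements, basepoint changes costing $2\operatorname{diam}(M,g_{R})$) is genuinely absorbable into the constant.

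The gap is the one you flag yourself, and it is not a small one: the estimate $\ell(h)\le\|h\|+C$ with $C$ independent of $h$ \emph{is} the theorem, and your one-sentence account of it --- lift near-optimal cycles for $Nh$ and ``use a pigeonhole/averaging argument over the translates $\sigma_{h}^{-k}$ to show that the configuration is, up to an error independent of $h$, periodic'' --- is a heuristic, not an argument. Nothing you write excludes the scenario you yourself raise, namely $\ell(nh)/n\to\|h\|$ for each $h$ while $\sup_{h}(\ell(h)-\|h\|)=\infty$. What has to be proved is an inequality of the shape $\ell(Nh)\ge N\bigl(\ell(h)-C\bigr)$ with $C=C(g_{R})$ independent of both $N$ and $h$; letting $N\to\infty$ then gives $\ell(h)\le\|h\|+C$. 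Burago's proof of this cuts a near-minimal representative of $Nh$ into $N$ arcs of equal length, rounds the division points to points in a single $\Gamma$-orbit (here compactness of $M$ enters, contributing a bounded error \emph{per cut}), and then runs a combinatorial rearrangement of the resulting $N$ displacement vectors, whose sum is $Nh$ and whose individual costs are each at most $\ell(Nh)/N+C'$; the delicate point is precisely that the total error is $O(N)$ with a constant not depending on $h$. Until that splicing and rearrangement argument is written out with its uniform constant, what you have is a correct reduction of the theorem to its crux rather than a proof of it.
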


In Riemannian geometry, there are some important concepts that relate the Riemannian structure to the metric or topology structure. The same concepts also lie at the foundation of Loretzian geometry. But on the contrary, they are far from being well-known. Only several general properties are proved.

\begin{defn}\label{length}
Let $(M,g)$ be a spacetime and $\gamma:[a,b]\rightarrow M$ a causal curve, the Lorentzian length of $\gamma$ is defined by
$$
L^{g}(\gamma):=\int_{a}^{b}\sqrt{-g(\dot{\gamma}(t),\dot{\gamma}(t))}dt.
$$
Since causal curves always admit Lipschitz parametrization, the above definition makes no confusion.
\end{defn}

\begin{Pro}\label{usc}
If a sequence of causal curves $\gamma_{n}:[a,b]\rightarrow M$, parameterized by the arclength w.r.t. $g_{R}$, converges uniformly to a causal curve $\gamma:[a,b]\rightarrow M$, then
$$
L^{g}(\gamma)\geq\limsup_{n\rightarrow\infty}L^{g}(\gamma_{n}).
$$
\end{Pro}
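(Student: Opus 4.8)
The plan is to reduce the statement to a local computation and then exploit the concavity of the Lorentzian norm together with the weak compactness of the velocity fields. First I would cover the compact image $\gamma([a,b])$ by finitely many coordinate charts and choose a subdivision $a=s_0<\dots<s_N=b$ so that each $\gamma([s_{i-1},s_i])$ lies in a single chart; by uniform convergence the same holds for $\gamma_n$ once $n$ is large, and since $L^g$ is additive it suffices to prove the inequality on each subinterval. Thus I may assume all curves take values in one chart, identified with a bounded open set of $\mathbb{R}^2$, and write $L^g(\sigma)=\int_a^b F(\sigma(t),\dot\sigma(t))\,dt$ with $F(x,V)=\sqrt{-g_{ij}(x)V^iV^j}$ for future-causal $V$.

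Because each $\gamma_n$ is parametrized by $g_R$-arclength, the velocities $\dot\gamma_n$ are bounded in $L^\infty$; the identity $\gamma_n(t)-\gamma_n(a)=\int_a^t\dot\gamma_n$ together with the uniform convergence $\gamma_n\to\gamma$ then forces $\dot\gamma_n\rightharpoonup\dot\gamma$ weakly-$*$ in $L^\infty([a,b],\mathbb{R}^2)$. I would next remove the dependence of the integrand on the base point: since $s\mapsto\sqrt s$ is $\tfrac12$-H\"older and the metric coefficients are uniformly continuous on the relatively compact chart, $F$ is uniformly continuous in $x$ uniformly for $|V|_{g_R}\le 1$, whence $\sup_t|F(\gamma_n(t),\dot\gamma_n(t))-F(\gamma(t),\dot\gamma_n(t))|\to 0$ and it is enough to bound $\limsup_n\int_a^b F(\gamma(t),\dot\gamma_n(t))\,dt$.

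The decisive structural fact is that $V\mapsto F(x,V)$ is concave on the future-causal cone: it is positively homogeneous of degree one and superadditive, the latter being exactly the infinitesimal reverse triangle inequality $F(x,V+W)\ge F(x,V)+F(x,W)$. Hence $F(x,\cdot)$ is the infimum of the linear functionals dominating it,
\begin{equation*}
F(x,V)=\inf_{\xi\in K(x)}\langle\xi,V\rangle,\qquad K(x):=\{\xi:\langle\xi,W\rangle\ge F(x,W)\ \text{for all future-causal}\ W\}.
\end{equation*}
Fixing $\epsilon>0$, I would choose, by a measurable selection theorem, a measurable map $t\mapsto\xi_\epsilon(t)\in K(\gamma(t))$ that is an $\epsilon$-minimizer for $\dot\gamma(t)$, i.e. $\langle\xi_\epsilon(t),\dot\gamma(t)\rangle\le F(\gamma(t),\dot\gamma(t))+\epsilon$, with $|\xi_\epsilon(t)|\le C(\epsilon)$ bounded uniformly in $t$. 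Since $\xi_\epsilon(t)\in K(\gamma(t))$ we have $F(\gamma(t),\dot\gamma_n(t))\le\langle\xi_\epsilon(t),\dot\gamma_n(t)\rangle$ for a.e.\ $t$, and integrating and using weak-$*$ convergence against the fixed bounded field $\xi_\epsilon$ gives
\begin{equation*}
\limsup_{n\to\infty}\int_a^b F(\gamma,\dot\gamma_n)\,dt\le\int_a^b\langle\xi_\epsilon,\dot\gamma\rangle\,dt\le L^g(\gamma)+\epsilon(b-a).
\end{equation*}
Letting $\epsilon\to 0$ and summing over the subintervals yields $\limsup_n L^g(\gamma_n)\le L^g(\gamma)$.

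The main obstacle is precisely the instants where $\dot\gamma(t)$ is lightlike. There $F(\gamma(t),\cdot)$ fails to be differentiable and admits no finite affine support touching its graph, so the exact minimizing covectors in $K(\gamma(t))$ escape to infinity; this is why I select not exact minimizers but $\epsilon$-minimizers of controlled norm $C(\epsilon)$ (which, as one checks in a flat Minkowski model, may be taken of size $\sim 1/\epsilon$), keeping $\xi_\epsilon$ a legitimate bounded test field for each fixed $\epsilon$ before passing to the limit. An alternative route that avoids differentiability altogether is to combine the reverse triangle inequality $L^g(\gamma_n)\le\sum_i d(\gamma_n(s_{i-1}),\gamma_n(s_i))$ with the continuity of the Lorentzian distance on the globally hyperbolic $(\mathbb{R}^2,g)$ and with the identity $L^g(\gamma)=\inf_{\mathcal P}\sum_i d(\gamma(s_{i-1}),\gamma(s_i))$; there the difficulty is merely relocated into the nontrivial direction of this last identity, which is again concentrated near the lightlike and non-smooth points of $\gamma$.
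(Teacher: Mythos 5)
The paper does not prove this proposition at all: it is listed in the appendix as a standard fact of global Lorentzian geometry, with the classical references (Beem--Ehrlich--Easley, Penrose) implicitly supplying the argument. The textbook proof is essentially your ``alternative route'': cover the curve by convex normal neighborhoods, use local maximality of geodesics to bound $L^{g}(\gamma_{n}|_{[s_{i-1},s_i]})$ by the local Lorentzian distance $d_{U}(\gamma_{n}(s_{i-1}),\gamma_{n}(s_{i}))$, pass to the limit using continuity of $d_{U}$, and then let the mesh go to zero via the identity $L^{g}(\gamma)=\inf_{\mathcal{P}}\sum_{i}d_{U}(\gamma(s_{i-1}),\gamma(s_{i}))$. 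Your main argument is a genuinely different, duality-style proof: it trades the local geometry of geodesics for the concavity and positive homogeneity of $V\mapsto\sqrt{-g(V,V)}$ on the future cone, represents the integrand as an infimum of linear functionals, and converts uniform convergence of the curves into weak-$*$ convergence of the velocities. This is the Lorentzian mirror of Tonelli's lower semicontinuity for convex integrands, and the $\epsilon$-minimizer device correctly handles the absence of finite supporting covectors at lightlike velocities. What it buys is independence from convex normal neighborhoods and from any maximality property of geodesics; what it costs is the measurable-selection machinery and the functional-analytic setup.

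One step needs repair as written: the inequality $F(\gamma(t),\dot\gamma_{n}(t))\leq\langle\xi_{\epsilon}(t),\dot\gamma_{n}(t)\rangle$ is only guaranteed when $\dot\gamma_{n}(t)$ lies in the future causal cone of $g$ \emph{at the point} $\gamma(t)$, whereas the hypothesis only gives causality at $\gamma_{n}(t)$; for the same reason $F(\gamma(t),\dot\gamma_{n}(t))$ itself may be undefined where the quadratic form turns positive. This is not fatal: extend $F$ by $F(x,V):=\sqrt{\max(0,-g_{ij}(x)V^{i}V^{j})}$, note that $\dot\gamma_{n}(t)$ lies within distance $o(1)$ of the cone at $\gamma(t)$ uniformly in $t$ as $n\to\infty$, and absorb the resulting error using the $\tfrac12$-H\"older modulus of $F$ in the metric coefficients together with the uniform bound $|\xi_{\epsilon}|\leq C(\epsilon)$, which makes the correction vanish as $n\to\infty$ for each fixed $\epsilon$. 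With that patch (or, equivalently, by running the argument for a slightly widened cone field and letting the widening go to zero at the end), the proof is complete.
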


\begin{defn}\label{dis}
Let $(M,g)$ be a spacetime, we define the time separation or the Lorentzian distance function as $d(p,q):=sup\{L^{g}(\gamma):\gamma\in C^{+}(p,q)\}$, where $C^{+}(p,q)$
denotes the set of future-directed causal curves connecting $p$ with $q$. If $C^{+}(p,q)=\emptyset$, then set $d(p,q):=0$.
\end{defn}

\begin{Pro}
For general spacetime $(M,g)$, the time separation is only lower semicontinuous on $M\times M$. If $(M,g)$ is globally hyperbolic, the time separation is continuous and there exists a maximal causal geodesic connecting $p$ with $q$ for all $q\in J^{+}(p)$.
\end{Pro}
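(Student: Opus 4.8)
The plan is to prove the three assertions in turn: lower semicontinuity of the time separation $d$ on an arbitrary spacetime, its upgrade to continuity under global hyperbolicity, and the existence of a maximal connecting geodesic. The common engine for the last two assertions will be a limit curve argument of exactly the type encoded in \cite[Lemma 2.4]{G-H}, combined with the upper semicontinuity of Lorentzian length recorded in Proposition \ref{usc}.

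For lower semicontinuity, fix $p,q\in M$ and sequences $p_n\to p$, $q_n\to q$; the goal is $\liminf_n d(p_n,q_n)\ge d(p,q)$. If $d(p,q)=0$ this is immediate from $d\ge 0$, so assume $d(p,q)>0$, which forces $q\in I^{+}(p)$. Given $\varepsilon>0$, choose a future-directed timelike curve $\gamma:[0,b]\to M$ from $p$ to $q$ with $L^{g}(\gamma)>d(p,q)-\varepsilon$ (Definition \ref{dis}). For a small fixed $\delta>0$, openness of the chronological relations yields $p_n\in I^{-}(\gamma(\delta))$ and $q_n\in I^{+}(\gamma(b-\delta))$ for all large $n$; splicing short timelike curves from $p_n$ to $\gamma(\delta)$ and from $\gamma(b-\delta)$ to $q_n$ onto $\gamma|_{[\delta,b-\delta]}$ produces causal curves from $p_n$ to $q_n$ of length at least $L^{g}(\gamma|_{[\delta,b-\delta]})$. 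Letting first $n\to\infty$, then $\delta\downarrow 0$, and finally $\varepsilon\downarrow 0$ gives the claim.

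Assume now that $(M,g)$ is globally hyperbolic. For upper semicontinuity I would again take $p_n\to p$, $q_n\to q$ and near-maximal causal curves $\gamma_n$ from $p_n$ to $q_n$ with $L^{g}(\gamma_n)\ge d(p_n,q_n)-\tfrac1n$. Since $q_n\in J^{+}(p_n)$, the images eventually lie in a fixed compact set of the form $J^{+}(K_1)\cap J^{-}(K_2)$, where $K_1\supseteq\{p_n\}\cup\{p\}$ and $K_2\supseteq\{q_n\}\cup\{q\}$ are compact and such causal diamonds are compact by global hyperbolicity. Reparametrizing by $g_{R}$-arc length and passing to a subsequence, the limit curve lemma produces a causal curve $\gamma$ from $p$ to $q$ with $\gamma_n\to\gamma$ uniformly. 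Proposition \ref{usc} then gives $\limsup_n L^{g}(\gamma_n)\le L^{g}(\gamma)\le d(p,q)$, hence $\limsup_n d(p_n,q_n)\le d(p,q)$. Combined with the lower semicontinuity already established, $d$ is continuous on $M\times M$.

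The existence of a maximizer runs along the same lines but with \emph{fixed} endpoints: for $q\in J^{+}(p)$ take $\gamma_n$ from $p$ to $q$ with $L^{g}(\gamma_n)\to d(p,q)$, extract a uniformly convergent subsequence with causal limit $\gamma$ from $p$ to $q$ by the limit curve lemma, and use Proposition \ref{usc} to obtain $L^{g}(\gamma)\ge\limsup_n L^{g}(\gamma_n)=d(p,q)$; since also $L^{g}(\gamma)\le d(p,q)$, the curve $\gamma$ is maximal. Finally $\gamma$ is, up to reparametrization, a geodesic: a maximal causal curve cannot have a corner, for otherwise the curve-lengthening lemma of Section 4 would produce a strictly longer connecting curve, and a corner-free causal maximizer satisfies the geodesic equation by the first variation formula. \textbf{The main obstacle} is the limit curve construction itself: one must verify that the $g_{R}$-arc-length reparametrizations of the $\gamma_n$ have uniformly bounded domains and are equicontinuous so that Arzel\`a--Ascoli applies, and that the uniform limit of causal curves is again causal and joins the prescribed endpoints. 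Both points rely essentially on global hyperbolicity through the compactness of $J^{+}(p)\cap J^{-}(q)$, and on nothing specific to the class A structure.
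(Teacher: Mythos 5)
The paper offers no proof of this proposition: it sits in the appendix as a standard fact quoted from the textbook references \cite{B-E-E} and \cite{O'N-B book}, so there is no in-paper argument to compare against. Your proof is precisely the standard textbook one (lower semicontinuity by splicing near-maximal timelike curves using the openness of $I^{\pm}$; upper semicontinuity and existence of maximizers via the limit curve lemma, compactness of $J^{+}(K_1)\cap J^{-}(K_2)$, and Proposition \ref{usc}), and it is essentially correct; the one step that deserves more care is the final regularity claim, since the limit curve is a priori only Lipschitz, so ``maximizer implies geodesic up to reparametrization'' is normally established by forcing the curve to coincide with radial geodesics in convex normal neighborhoods rather than by a literal corner/first-variation argument --- which in any case does not apply verbatim to null maximizers with $q\in J^{+}(p)\setminus I^{+}(p)$, where $L^{g}=0$ and one must instead invoke the push-up property to conclude the curve is a reparametrized null geodesic.
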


The following definition gives a picture of what the time looks like in the theory of general relativity.
\begin{defn}\label{fun}
Let $(M,g)$ be a spacetime. A function $\tau:M\rightarrow\mathbb{R}$ is called
\begin{enumerate}
  \item a time function if it is continuous and strictly increasing on each future-directed causal curve in $(M,g)$;
  \item a temporal function if it is $C^{1}$ and has a past-directed timelike gradient at every point on $M$.
\end{enumerate}
\end{defn}

\begin{Rem}
Any temporal function on $(M,g)$ must be a time function, and if it satisfies the Lorentzian eikonal equation $g(\nabla\tau,\nabla\tau)=-1$, it is then a global viscosity (in fact, classical) solution to the Lorentzian eikonal equation. The latter case could happen only when $M$ is noncompact.
\end{Rem}

Finally, we recall two important properties of the so called class $A_{1}$ spacetimes defined in \cite[Sections 2 and 3]{Su3}. As S. Suhr has proved in \cite{Su3}, class A Lorentzian 2-tori are also class $A_{1}$, so these two properties apply to our case. They are crucial for the proof of our main results.

The first concerns how the global behavior of maximal curves effects their local property.
\begin{Pro}[{\cite[Proposition 4.1]{Su4}}]\label{A1-1}
Let $(M,g)$ be of class $A_{1}$. Then for any $\epsilon>0$ there exist $\delta>0$ and $K<\infty$ such that
$$
\dot{\gamma}(t)\in\text{Time}^{\epsilon}(M^{2},[g])_{\gamma(t)}
$$
for all maximizers $\gamma:[a,b]\rightarrow M$ with $\gamma(b)-\gamma(a)\in\mathfrak{T}^{\delta}\setminus B_{K}(0)$ and all $t\in[a,b]$.
\end{Pro}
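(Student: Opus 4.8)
The plan is to argue by contradiction and to reduce the assertion, which concerns finite maximal segments, to a uniform statement about maximal \emph{lines}, where the rigidity of null geodesics can be exploited. First I would record the monotonicity of the conclusion: shrinking $\mathfrak{T}^{\delta}\setminus B_{K}(0)$ (increasing $\delta$ or $K$) only weakens the hypothesis, and for $\delta$ so large that $\mathfrak{T}^{\delta}=\emptyset$ the statement is vacuous, which disposes of the range of $\epsilon$ for which $\mathrm{Time}^{\epsilon}$ is too thin to contain the velocity of any maximizer with central displacement. Fix an $\epsilon$ in the remaining range and a candidate $\delta$; suppose no admissible $K$ works. Instantiating the negation at this fixed $\delta$ and at $K=n$, I obtain maximizers $\gamma_{n}:[a_{n},b_{n}]\to M$, parametrized by $g_{R}$-arc length, with $h_{n}:=\tilde\gamma_{n}(b_{n})-\tilde\gamma_{n}(a_{n})\in\mathfrak{T}^{\delta}$, $\|h_{n}\|\to\infty$, and points $t_{n}\in[a_{n},b_{n}]$ where $\dot\gamma_{n}(t_{n})\notin\mathrm{Time}^{\epsilon}(M^{2},[g])_{\gamma_{n}(t_{n})}$. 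By Corollary \ref{causal} and Theorem \ref{std} the $g_{R}$-lengths $b_{n}-a_{n}$ are comparable to $\|h_{n}\|$, hence tend to infinity, and after passing to a subsequence $h_{n}/\|h_{n}\|$ converges to a direction $\alpha\in\overline{\mathfrak{T}^{\delta}}\subset(m^{-},m^{+})$.

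The heart of the argument is the following uniform statement for lines, which I would prove separately: for any compact set $\mathcal K\subset(m^{-},m^{+})$ there is $\epsilon_{1}(\mathcal K)>0$ such that every timelike line $\gamma\in\mathscr M_{\beta}$ with $\beta\in\mathcal K$ satisfies $\dot\gamma(t)\in\mathrm{Time}^{\epsilon_{1}}$ for all $t$. To see this, suppose instead there are lines $\gamma_{k}\in\mathscr M_{\beta_{k}}$, $\beta_{k}\to\beta\in\mathcal K$, and parameters $s_{k}$ with $\sqrt{-g(\dot\gamma_{k}(s_{k}),\dot\gamma_{k}(s_{k}))}\to 0$. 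Recentering each $\gamma_{k}$ by the deck transformation carrying $\gamma_{k}(s_{k})$ into a fixed fundamental domain, I extract via Ascoli--Arzel\`a a limit curve $\gamma_{\infty}$; since pregeodesics are integral curves of a smooth flow on the compact $g_{R}$-unit causal bundle (\cite[Proposition 4.12]{Su2}), the convergence is $C^{1}$, so $\gamma_{\infty}$ is a maximal line and $\dot\gamma_{\infty}(0)$ is \emph{null}. Because the causal character of a geodesic is constant, $\gamma_{\infty}$ is a null line, whence its asymptotic direction lies in $\partial\mathfrak{T}=\overline m^{-}\cup\overline m^{+}$. On the other hand the recentered lines still belong to $\mathscr M_{\beta_{k}}$, so by continuity of the asymptotic direction (Theorem \ref{structure for lines}(4)) $\gamma_{\infty}$ has asymptotic direction $\beta\in(m^{-},m^{+})$ — a contradiction. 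Taking $\mathcal K=\overline{\mathfrak{T}^{\delta}}\cap S^{1}$ and $\delta$ large enough that $\epsilon_{1}(\mathcal K)>\epsilon$ is what pins down the required $\delta$.

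With this lemma in hand I would return to the sequence $\gamma_{n}$. Recentering at the bad points $t_{n}$ and extracting a $C^{1}$ limit as above produces a maximal segment through a point with velocity satisfying $\mathrm{dist}(\dot\gamma_{\infty}(0),\mathrm{Light})\le\epsilon|\dot\gamma_{\infty}(0)|$. If the recentered domains $[a_{n}-t_{n},b_{n}-t_{n}]$ exhaust $\mathbb{R}$, the limit is a full line with asymptotic direction $\alpha$ (again by Theorem \ref{structure for lines}(4) together with the two-sided control of $\gamma_{n}(T_{2})-\gamma_{n}(T_{1})$ from the stable time separation of Theorem \ref{sts}), and the lines lemma gives $\dot\gamma_{\infty}(0)\in\mathrm{Time}^{\epsilon_{1}}$ with $\epsilon_{1}>\epsilon$, contradicting the previous inequality. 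The compactness of $\mathrm{Time}^{1,\epsilon}$ (Theorem \ref{cpt}) is what guarantees the velocities do not escape and that the limiting velocity is genuine.

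The main obstacle is exactly the reduction in the last paragraph: I must arrange that the bad point $t_{n}$ is \emph{interior} after recentering, i.e. that $t_{n}-a_{n}\to\infty$ and $b_{n}-t_{n}\to\infty$, so that the limit is a line (or at least a two-sided-infinite object) rather than a short maximizer near an endpoint, where near-null velocity is harmless. Ruling out the endpoint case is where the global hypotheses must be converted into local information: one shows, using the two-sided estimate $|\mathfrak{l}(h)-d(x,y)|\le C(\delta)$ of Theorem \ref{sts} together with the superadditivity and strict concavity of $\mathfrak{l}$, that a near-null excursion forces the displacement over any definite sub-arc to point near $\partial\mathfrak{T}$, so that a maximizer whose \emph{total} displacement stays in $\mathfrak{T}^{\delta}$ and whose length is large cannot place such an excursion near either end without violating maximality of the whole segment. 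Making this length/centrality bookkeeping quantitative, so that the interior reduction always succeeds, is the step I expect to require the most care.
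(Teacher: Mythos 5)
First, a point of reference: the paper itself offers no proof of this proposition — it is quoted in the appendix directly from Suhr's work \cite{Su4} — so there is no internal argument to compare yours against, and your attempt must stand on its own. Its core is sound: the compactness argument for \emph{lines} (recentre by deck transformations, extract a $C^{1}$-limit via the pregeodesic ODE, observe that a causal pregeodesic with one null tangent is a null line, hence has asymptotic direction $m^{\pm}$, contradicting continuity of the asymptotic direction) is correct and is indeed the standard mechanism. But the passage from that lemma to arbitrary finite maximizers is exactly where the content of the statement lies, and it is not closed. Two steps fail. (i) To conclude that the recentred segments $\gamma_{n}$ converge to a \emph{line with asymptotic direction $\alpha$} you need uniform control of the sub-arc displacements $\tilde\gamma_{n}(T_{2})-\tilde\gamma_{n}(T_{1})$ of arbitrary maximal segments, not merely of lines (Theorem \ref{structure for lines}(1)) or of rays (Corollary \ref{ray osc}); Theorem \ref{structure for lines}(4) does not apply because the $\gamma_{n}$ are not lines. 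Your appeal to ``two-sided control \dots from the stable time separation'' would have to run through strict concavity of $\mathfrak{l}$ transverse to rays; but Theorem \ref{sts} supplies only concavity, and strict concavity is obtained in this paper only at the end of Section 7 as a consequence of the main theorems, which themselves rest on the present proposition — that route is circular here. (ii) The endpoint case, which you flag yourself, is a genuine obstruction and your sketch does not resolve it: if $b_{n}-t_{n}$ stays bounded, the offending sub-arc has bounded $g_{R}$-length and hence bounded displacement, and a bounded vector is never quantitatively ``near $\partial\mathfrak{T}$'' in any useful sense (every bounded vector lies within bounded distance of $\partial\mathfrak{T}$, since $0\in\partial\mathfrak{T}$); nor does a single nearly null velocity near an endpoint produce a competitor curve that beats a segment which is already maximal. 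So the mechanism ``total displacement in $\mathfrak{T}^{\delta}$ plus maximality forces sub-arcs to be central'' gives no information on bounded terminal sub-arcs, and a separate uniform statement for rays up to and including their initial points would have to be proved.

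A further remark: your argument is entirely specific to the $2$-torus (it runs through $\mathscr{M}_{\beta}$, asymptotic directions and Theorem \ref{structure for lines}) and never invokes the class $A_{1}$ hypothesis, whereas the proposition is stated for general class $A_{1}$ spacetimes; in Suhr's proof the $A_{1}$ condition (linear growth of $d$ into the cone) is precisely what substitutes for the two-dimensional structure theory and what makes the sub-arc and endpoint estimates go through. For the purposes of this paper, where only the $2$-torus case is used, a two-dimensional proof would be acceptable, but as it stands the reduction from finite segments to lines — the actual substance of the proposition — is the part that still has to be supplied.
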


The second contains an answer of a problem which lies at the foundation of the global Lorentzian geometry. The problem is to find some appropriate spacetime on which the time separation (or Lorentzian distance function) is Lipschitz w.r.t some Riemannian structure.
\begin{Pro}[{\cite[Theorem 4.3]{Su4}}]\label{A1-2}
Let $(M,g)$ be of class $A_{1}$. Then for any $\epsilon>0$ there exist constants $K(\epsilon),L(\epsilon)<\infty$ such that $(x,y)\mapsto d(x,y)$ is $L(\epsilon)$-Lipschitz on $\{(x,y)\in\overline{M}\times\overline{M}|y-x\in\mathfrak{T}^{\epsilon}\setminus B_{K(\epsilon)}(0)\}$.
\end{Pro}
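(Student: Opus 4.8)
The plan is to prove the estimate by a direct Lorentzian comparison argument: I would control $d(x',y')-d(x,y)$ by reconnecting the perturbed endpoints to a maximal geodesic between the original endpoints, the cost of each reconnection being linear in the $g_R$-displacement of the corresponding endpoint. The crucial structural input is that maximizers whose separation lies deep inside the cone and is of large $g_R$-size are \emph{uniformly timelike}, which lets all geometric constants be taken uniform over $\overline M=\mathbb R^2$ by periodicity.

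First I would fix the uniform timelikeness. Given the region parameter $\epsilon$, choose $\epsilon'=\epsilon'(\epsilon)>0$ small enough that the $\delta$ produced by Proposition \ref{A1-1} (applied with target timelikeness $\epsilon'$) satisfies $\delta\le\epsilon$. Then, enlarging $K(\epsilon)$ if necessary, every maximal geodesic $\gamma:[0,T]\to\overline M$ (parametrized by $g$-arclength, $T=d(x,y)$) joining a pair with $y-x\in\mathfrak T^{\epsilon}\setminus B_{K(\epsilon)}(0)$ satisfies $\dot\gamma(t)\in\mathrm{Time}^{\epsilon'}(\mathbb T^2,[g])_{\gamma(t)}$ for all $t$; I also take $K(\epsilon)$ large enough that such pairs are genuinely chronologically related (using Proposition \ref{cone}), so that maximal geodesics exist by global hyperbolicity. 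By Theorem \ref{cpt} the $g_R$-speed along such a $\gamma$ is bounded by $\mathrm K(g,g_R)/\epsilon'$, and together with the compactness of $\mathbb T^2$ this yields a \textbf{uniform cone-reach lemma}: there is $\lambda=\lambda(\epsilon)<\infty$ so that for any such $\gamma$ and any $\rho>0$ one has $B_\rho^{g_R}(\gamma(s))\subseteq I^-(\gamma(s+\lambda\rho))$ and symmetrically $B_\rho^{g_R}(\gamma(s))\subseteq I^+(\gamma(s-\lambda\rho))$, whenever the shifted parameter lies in $[0,T]$. This is the geometric heart of the argument: uniform timelikeness forces the light cones to open at a definite rate, and periodicity of $g$ over the compact torus makes that rate uniform in $s$ and in $\gamma$.

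With the reach lemma in hand the lower bound is immediate. Let $(x,y),(x',y')$ lie in the region, write $r=d_R(x,x')+d_R(y,y')$ and assume first $\lambda r<T$. Set $\tau=\lambda\,d_R(x,x')$ and $\tau'=\lambda\,d_R(y,y')$; the reach lemma gives $x'\in I^-(\gamma(\tau))$ and $y'\in I^+(\gamma(T-\tau'))$, so concatenating a causal curve $x'\rightsquigarrow\gamma(\tau)$, the segment $\gamma|_{[\tau,T-\tau']}$, and a causal curve $\gamma(T-\tau')\rightsquigarrow y'$ produces a future-directed causal curve from $x'$ to $y'$. By the reverse triangle inequality and maximality of $\gamma$,
\begin{equation*}
d(x',y')\ \ge\ d(\gamma(\tau),\gamma(T-\tau'))\ =\ T-\tau-\tau'\ =\ d(x,y)-\lambda r .
\end{equation*}
Running the same construction from a maximal geodesic joining $x'$ to $y'$ gives the reverse inequality, so $|d(x,y)-d(x',y')|\le\lambda r$ whenever $\lambda r<T$, which is exactly the Lipschitz bound with $L(\epsilon)=\lambda(\epsilon)$.

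It remains to dispose of the regime $\lambda r\ge T=d(x,y)$, and here I would use the a priori two-sided comparison between $d$ and $d_R$ on the region: Theorem \ref{sts}(1) together with Lipschitz continuity of the concave $\mathfrak l$ on $\mathfrak T^{\epsilon/2}$ gives $d(\cdot,\cdot)\le L_0\,d_R(\cdot,\cdot)+C'$, while the speed bound from Theorem \ref{cpt} gives $d(x,y)\ge(\epsilon'/\mathrm K(g,g_R))\,d_R(x,y)$; moreover removal of $B_{K(\epsilon)}(0)$ forces a uniform positive lower bound on $d(x,y)$, hence on $r$ in this regime, which lets me absorb the additive constant $C'$ into a multiple of $r$. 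Since $d(x,y)\le\lambda r$ and, by the same comparisons, $d(x',y')\le C_1 r$, the difference is again $\le L(\epsilon)\,r$ after enlarging $L(\epsilon)$. The main obstacle is the uniform cone-reach lemma: making the reconnection cost linear in the displacement \emph{with a constant depending only on $\epsilon$} requires combining Proposition \ref{A1-1} (uniform timelikeness of the maximizer), Theorem \ref{cpt} (uniform speed, hence uniform opening of the $\mathrm{Time}^{\epsilon'}$-cones), and the periodicity of $g$ to transport these local estimates uniformly across all of $\overline M$; the large-displacement case and the overlap condition $\tau+\tau'<T$ are comparatively routine, handled by enlarging $K(\epsilon)$.
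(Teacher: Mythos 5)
First, a point of reference: the paper contains no proof of Proposition \ref{A1-2} at all --- it is imported verbatim from \cite[Theorem 4.3]{Su4} and stated in the appendix --- so your attempt can only be judged on its own merits, not against an internal argument. On those merits, your overall scheme is the natural one and is close in spirit to Suhr's: uniform timelikeness of maximizers from Proposition \ref{A1-1} (your monotone choice of $\epsilon'$ so that $\delta(\epsilon')\leq\epsilon$ is acceptable, though strictly speaking the monotonicity of $\delta$ in $\epsilon'$ is borrowed from \cite{Su4} rather than from the statement quoted here), causal reconnection of perturbed endpoints at linear cost, and a separate treatment of the regime $\lambda r\geq T$. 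The reconnection step itself is sound: granted your reach lemma and $\tau+\tau'<T$, the concatenated causal curve and the reverse triangle inequality do give $d(x',y')\geq d(x,y)-\lambda r$, and symmetrizing (using that $(x',y')$ also lies in the region) yields the Lipschitz bound; the absorption of additive constants when $\lambda r\geq T$ works because Theorem \ref{sts}(1) gives $d(x,y)\geq\mathfrak{l}(y-x)-C(\epsilon)\geq c_{\epsilon}\|y-x\|-C(\epsilon)$, which is uniformly large on the region after enlarging $K(\epsilon)$.

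The genuine gap is the uniform cone-reach lemma as you state it, namely \emph{for any} $\rho>0$. The justification you offer --- compactness of $\mathrm{Time}^{1,\epsilon'}(\mathbb{T}^{2},g)$ (Theorem \ref{cpt}), smooth dependence of geodesics, and periodicity --- is a local argument: it produces $\rho_{0}>0$ and $\lambda$ with $B^{g_R}_{\rho}(\gamma(s))\subseteq I^{-}(\gamma(s+\lambda\rho))$ only for $\rho\leq\rho_{0}$, since for large $\rho$ the points of the ball are far from $\gamma$, where you have no timelikeness information, and merely knowing that causal displacements lie within bounded $\mathrm{dist}_{\|\cdot\|}$ of $\mathfrak{T}$ (Proposition \ref{cone}) is not enough: near $\partial\mathfrak{T}$, adding a spacelike vector of size comparable to $\rho$ can leave the cone, destroying chronology. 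And you genuinely use large $\rho$: in your main regime $\lambda r<T$ the displacements $d_{R}(x,x'),d_{R}(y,y')$ are only bounded by $T/\lambda$, which is unbounded. Two repairs are available. Either prove that displacements along \emph{uniformly timelike} maximizers lie in a strict subcone $\mathfrak{T}^{\epsilon''}$ up to an additive constant --- the argument of Proposition \ref{asym'} and of the Claim in Theorem \ref{lip Bu}, made uniform over all such maximizers via the transversal $1$-forms of Proposition \ref{cone} --- and then conclude $p\ll\gamma(s+\lambda\rho)$ from Theorem \ref{sts}(1), since $d>0$ once the displacement lies in $\mathfrak{T}^{\epsilon''}\setminus B_{K''}(0)$. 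Or avoid large $\rho$ altogether by telescoping: $\mathfrak{T}^{\epsilon}$ is convex (on $\mathfrak{T}$ the function $h\mapsto\mathrm{dist}_{\|\cdot\|}(h,\partial\mathfrak{T})$ is concave and $h\mapsto\epsilon\|h\|$ is convex), and choosing $\ell\in(\mathfrak{T}^{*})^{\circ}$ with $\ell\geq c\|\cdot\|$ on $\mathfrak{T}$ shows that segments joining two points of $\mathfrak{T}^{\epsilon}\setminus B_{K}(0)$ stay in $\mathfrak{T}^{\epsilon}\setminus B_{K'}(0)$; hence a chain $x=p_{0},\dots,p_{N}=x'$ with $g_{R}$-steps at most $\rho_{0}$ keeps every pair $(p_{i},y)$ in a region where Proposition \ref{A1-1} and the small-radius lemma apply, and summing the one-step estimates (each legitimate because $d(p_{i},y)$ is uniformly large) gives the Lipschitz bound in $x$, then likewise in $y$. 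With either repair your argument closes; without one, the crucial lemma is asserted beyond what your reasoning establishes.
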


A basic lemma in convex analysis is needed in the proof of Lemma \ref{dus}. For completeness, we give the  proof here.
\begin{Lem}\label{convex diff}
Let $f:\mathbb{R}^{n}\rightarrow\mathbb{R}$ be a concave function. Suppose $f$ is differentiable at $x$ along straight lines $L_{i}$, where $L_{i}:=\{x+tV_{i}|t\in\mathbb{R}\}$, $V_{1},V_{2},...,V_{n}$ are $n$ linearly independent  vectors, then $f$ is differentiable at $x$.
\end{Lem}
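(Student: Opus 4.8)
The plan is to exploit the standard description, for a finite concave function on $\mathbb{R}^n$, of one-sided directional derivatives in terms of the superdifferential (the set of supergradients)
$$
\partial^{+}f(x):=\{p\in\mathbb{R}^{n}\mid f(y)\leq f(x)+\langle p,y-x\rangle\text{ for all }y\in\mathbb{R}^{n}\}.
$$
Since $f$ is concave and finite everywhere, $\partial^{+}f(x)$ is a nonempty, compact, convex set, and the key elementary fact is that $f$ is differentiable at $x$ \emph{if and only if} $\partial^{+}f(x)$ is a singleton. Thus the entire lemma reduces to showing that the hypotheses force $\partial^{+}f(x)$ to consist of a single point. The tool I would use is the support-function formula for the (one-sided) directional derivative: for every $v$ the limit $f'(x;v):=\lim_{t\to 0^{+}}\frac{f(x+tv)-f(x)}{t}$ exists by concavity, and $f'(x;v)=\min_{p\in\partial^{+}f(x)}\langle p,v\rangle$ (the concave counterpart of the support-function description of $g'(x;v)$ for the convex function $g=-f$).

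Next I would translate the hypothesis ``$f$ is differentiable along $L_{i}$'' into a statement about $\partial^{+}f(x)$. The restriction $t\mapsto f(x+tV_{i})$ is a concave function of one real variable, and its differentiability at $t=0$ means precisely that its right and left derivatives coincide, i.e. $f'(x;V_{i})=-f'(x;-V_{i})$. Invoking the formula above, $f'(x;V_{i})=\min_{p}\langle p,V_{i}\rangle$ while $-f'(x;-V_{i})=-\min_{p}\langle p,-V_{i}\rangle=\max_{p}\langle p,V_{i}\rangle$, both extrema taken over $p\in\partial^{+}f(x)$. Hence differentiability along $L_{i}$ is equivalent to
$$
\min_{p\in\partial^{+}f(x)}\langle p,V_{i}\rangle=\max_{p\in\partial^{+}f(x)}\langle p,V_{i}\rangle,
$$
that is, the linear functional $p\mapsto\langle p,V_{i}\rangle$ is constant on $\partial^{+}f(x)$.

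Finally I would conclude by linear algebra. Fixing any two supergradients $p,q\in\partial^{+}f(x)$, the previous step gives $\langle p-q,V_{i}\rangle=0$ for every $i=1,\dots,n$; since $V_{1},\dots,V_{n}$ are linearly independent in $\mathbb{R}^{n}$ they form a basis, so the only vector orthogonal to all of them is $0$, whence $p=q$. Therefore $\partial^{+}f(x)$ is a singleton and $f$ is differentiable at $x$. The argument is essentially routine once the superdifferential formalism is in place, so there is no genuine obstacle; the only point demanding a little care is the sign bookkeeping in passing from ``differentiable along $L_{i}$'' to the symmetric condition $f'(x;V_{i})+f'(x;-V_{i})=0$ and in expressing the left derivative as $-f'(x;-V_{i})$.
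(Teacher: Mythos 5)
Your proof is correct and takes essentially the same route as the paper's: both arguments show that differentiability along $L_{i}$ forces every supergradient $P$ to satisfy $\langle P,V_{i}\rangle=D_{V_{i}}f(x)$ (the paper by sandwiching $-\langle P,-V_{i}\rangle\leq -D_{-V_{i}}f(x)=D_{V_{i}}f(x)\leq\langle P,V_{i}\rangle$, you via the equivalent min/max support-function description), and then conclude from the linear independence of $V_{1},\dots,V_{n}$ that the superdifferential is a singleton.
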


\begin{proof}
Fix a point $x\in\mathbb{R}^{n}$, denote the set of super-gradients of $f$ at $x$ by $D^{+}f(x)$ (namely, $D^{+}f(x)=\{p\in T_{x}\mathbb{R}^{n}|f(y)-f(x)-\langle p,y-x\rangle\leq0,\text{for any }y\in\mathbb{R}^{n}\}$) and the directional derivative of $f$ at $x$ along $V$ by $D_{V}f(x)$. Since $f$ is differentiable along straight lines $L_{i}$, by using the concavity of $f$, we have
\begin{equation}
-\langle P,-V_{i}\rangle\leq-D_{-V_{i}}f(x)=D_{V_{i}}f(x)\leq\langle P,V_{i}\rangle
\end{equation}
for any $P\in D^{+}f(x)$.
So $\langle P,V_{i}\rangle=D_{V_{i}}f(x)$ for any $P\in D^{+}f(x)$. Now let $P,P^{\prime}$ be two elements in $D^{+}f(x)$, then we have
\begin{equation}
\langle P-P^{\prime},V_{i}\rangle=0.
\end{equation}
Since $V_{i}(i=1,...n)$ are linearly independent, we must have $P-P^{\prime}=0$. Thus the set of super-gradients of $f$ at $x$ degenerates to a singleton and $f$ is differentiable at $x$.
\end{proof}

\textbf{Acknowledgements}
The first author would like to thank Professor C.-Q. Cheng for leading him to the topic of Aubry-Mather theory for relativistic mechanical system and for sharing much research experience with him. The second author would like to thank Professor V. Bangert for providing a paper copy of E. Schelling's Diplomarbeit \cite{Sc}. Both authors would like to thank Dr. S. Suhr for giving a mini-course on Lorentzian Aubry-Mather theory at Nanjing University on January, 2015, which leads them into this beautiful field. Finally, both authors want to thank Professor C.-Q. Cheng and Professor W. Cheng for many helps and encouragements.


\begin{thebibliography}{9999}
\bibitem{A-G-H 1} L. Andersson, G. J. Galloway, R. Howard, {\em The cosmological time function}, Classical Quantum Gravity 15 (1998), no. 2, 309-322.

\bibitem{A-G-H 2} L. Andersson, G. J. Galloway, R. Howard, {\em A strong maximum principle for weak solutions of quasi-linear elliptic equations with applications to Lorentzian and Riemannian geometry}, Comm. Pure Appl. Math. 51 (1998), no. 6, 581-624.

\bibitem{A-H} L. Andersson, R. Howard, {\em Comparison and rigidity theorems in semi-Riemannian geometry}, Communications in Analysis and Geometry, (1997), 6(4):819-877.

\bibitem{Ba 1} V. Bangert, {\em Mather sets for twist maps and geodesics on tori}, Dynamics reported, Vol. 1, 1-56,  (1988).

\bibitem{Ba 2} V. Bangert, {\em Geodesic rays, Busemann functions and monotone twist maps}, Calc. Var. Partial Differential Equations 2 (1994), no. 1, 49-63.

\bibitem{B-E-E} J. K. Beem, P. E. Ehrlich, K. L. Easley, {\em Global Lorentzian geometry}, Second edition, Marcel Dekker, Inc. (1996).

\bibitem{Bu} D. Yu. Burago, {\em Periodic metrics}, Representation and dynamical systems (Adv. Soviet Math), 9, 205-210 (1992).

\bibitem{C-S} P. Cannarsa, C. Sinestrari, {\em Semiconcave functions, Hamilton-Jacobi equations, and optimal control}, Progress in Nonlinear Differential Equations and their Applications, 58, Birkh\"auser Boston, Inc., Boston, MA, (2004).

\bibitem{C-J} X. Cui , L. Jin, {\em The negative of regular cosmological time function is a viscosity solution}, Journal of Mathematical Physics, (2014), 55(10): 102705.

\bibitem{Fathi-book} A. Fathi, {\em Weak KAM theorem in
Lagragian dynamics}, to be published by Cambridge University Press.

\bibitem{G-H} G. J. Galloway, A. Horta, {\em Regularity of Lorentzian Busemann functions}, Trans. Amer. Math. Soc. 348 (1996), no. 5, 2063-2084.

\bibitem{H-H} G. Hector, U. Hirsch, {\em Introduction to the Geometry of Foliations, part A}, Friedr. Vieweg $\&$ Sohn, Braunschweig,  (1986).


\bibitem{M-S} E. Minguzzi, M. S$\acute{a}$nchez, {\em The causal hierarchy of spacetimes} In Recent developoments in pseudo-Riemannian geometry, Eur. Math. Soc., Z$\ddot{u}$rich (2008), 299-358.

\bibitem{M} M. Morse, {\em A fundamental class of geodesics on any closed surface of genus greater than one}, Trans. Amer. Math. Soc. 26 (1924), no. 1, 25-60.

\bibitem{Ma} J. Mather, {\em Minimal measures}, Comment. Math. Helv. 64 (1989), no. 3, 375-394.

\bibitem{O'N-B book} B. O'Neill, {\em Semi-Riemannian geometry. With applications to relativity}, Pure and Applied Mathematics, 103. Academic Press, Inc. New York, 1983. xiii+468 pp.

\bibitem{Penrose} R. Penrose, {\em Techniques of differential topology in relativity}, Conference Board of the Mathematical Sciences Regional Conference Series in Applied Mathematics, No. 7 (1987).

\bibitem{Sc} E. Scheling, {\em Maximale Geod$\ddot{a}$tische auf Lorentz-Mannigfaltigkeiten (Maximal Geodesics On Lorentzian Manifold)}, Diploma Thesis, Albert-Ludwigs-Universit$\ddot{a}$t Freiburg (University of Freiburg), 1995.

\bibitem{Su1} S. Suhr, {\em Class A spacetimes}, Geom. Dedicata 160 (2012), 91-117.

\bibitem{Su2} S. Suhr, {\em Closed geodesics in Lorentzian surfaces}, Trans. Amer. Math. Soc. 365 (2013), no. 3, 1469-1486.

\bibitem{Su3} S. Suhr, {\em Length maximizing invariant measures in Lorentzian geometry}, arxiv:/1102.1386

\bibitem{Su4} S. Suhr, {\em Aubry-Mather theory and Lipschitz continuity of the time separation}, arXiv:/1104.3849v1.

\bibitem{V} C. Villani, {\em Optimal transport, old and new},  Springer,  (2008).
\end{thebibliography}
\end{document}